\documentclass[12pt]{amsart}
\usepackage{amssymb,amsmath,tabularx,mathrsfs,mathbbol,yfonts,upgreek}
\usepackage{amsthm,verbatim,comment}
\usepackage{amsmath}
\usepackage[bookmarks=true]{hyperref}
\usepackage{pstricks,pst-node,pst-plot}
\usepackage{geometry}
\usepackage{stmaryrd}
\usepackage{paralist}
\usepackage[all]{xy}
\usepackage{array}
\usepackage{setspace}
\usepackage{ytableau}
\usepackage{lipsum}
\geometry{a4paper, top=4cm, left=3cm, right=3cm, bottom=4cm}
\numberwithin{equation}{section}

\DeclareSymbolFontAlphabet{\mathbb}{AMSb}
\DeclareSymbolFontAlphabet{\mathbbl}{bbold}

\newtheorem{thm}{Theorem}[section]
\newtheorem{thmx}{Theorem}

\newtheorem{lem}[thm]{Lemma}
\newtheorem{prop}[thm]{Proposition}
\newtheorem{cor}[thm]{Corollary}

\newtheorem*{ques*}{Question}
\theoremstyle{definition}

\newtheorem{nota}[thm]{Notation}
\newtheorem{eg}[thm]{Example}
\newtheorem{rem}[thm]{Remark}
\theoremstyle{remarks}

\newtheorem*{rem*}{Remarks}

\newtheoremstyle{case}{}{}{}{}{}{:}{ }{}
\theoremstyle{case}

\newcommand{\Tr}{\mathrm{Tr}}

\newcommand{\sym}[1]{\mathfrak{S}_{#1}}

\renewcommand{\O}{\mathcal{O}}

\newcommand{\B}{\mathcal{B}}

\newcommand{\F}{\mathbb{F}}

\newcommand\blfootnote[1]{%
  \begingroup
  \renewcommand\thefootnote{}\footnote{#1}%
  \addtocounter{footnote}{-1}%
  \endgroup
}
\title{On the symmetric and exterior powers of Young permutation modules}
\begin{document}
\author{Yu Jiang}
\address[Y. Jiang]{Division of Mathematical Sciences, Nanyang Technological University, SPMS-MAS-05-34, 21 Nanyang Link, Singapore 637371.}
\email[Y. Jiang]{jian0089@e.ntu.edu.sg}

%\author{Kay Jin Lim}
%\address[K. J. Lim]{Division of Mathematical Sciences, School of Physical and Mathematical Sciences, Nanyang Technological University, 21 Nanyang Link, Singapore 637371.}
%\email[K. J. Lim]{limkj@ntu.edu.sg}

\begin{abstract}
Let $n$ be a positive integer and $\lambda$ be a partition of $n$. Let $M^\lambda$ be the Young permutation module labelled by $\lambda$. In this paper, we study symmetric and exterior powers of $M^\lambda$ in positive characteristic case. We determine the symmetric and exterior powers of $M^\lambda$ that are projective. All the indecomposable exterior powers of $M^\lambda$ are also classified. We then prove some results for indecomposable direct summands that have the largest complexity in direct sum decompositions of some symmetric and exterior powers of $M^\lambda$. We end by parameterizing all the Scott modules that are isomorphic to direct summands of the symmetric or exterior square of $M^\lambda$ and determining their corresponding multiplicities explicitly.
\end{abstract}
\maketitle
\noindent\blfootnote{\textbf{Mathematics Subject Classification 2010.} 20C30 (primary), 20C20 (secondary).\\
\textbf{Keywords and phrases.} Complexity; Symmetric powers; Exterior powers; Symmetric group; Young permutation modules.}
\section{Introduction}
Let $G$ be a finite group and fix a field $\F$ of positive characteristic $p$. The complexity of a finite-dimensional $\F G$-module $M$, defined by Alperin and Evens in \cite{JAlperin} and denoted by $c_G(M)$, describes the growth rate of a minimal projective resolution of $M$. However, the invariant is usually very difficult to compute and is still poorly understood even in the case of group algebras of symmetric groups.

As usual, the symmetric and exterior powers of $M$ are natural $\F G$-modules by the diagonal action of $G$. In \cite[Lemma 1 (i), (ii)]{BensonLim}, the authors obtained an optimal upper bound for the complexities of symmetric and exterior powers of $R$, where $R$ is a direct sum of some copies of the regular module $\F G$. Given a positive integer $a$, if $a>1$, let $S^aM$ and $\Lambda^aM$ be the $a$th symmetric and exterior power of $M$ respectively. Let $\nu_p(a)$ be the largest non-negative integer $b$ such that $p^b\mid a$. Using the Brauer quotients of $p$-permutation $\F G$-modules, We generalize their result as follows.
\begin{thmx}\label{T;A}
Let $G$ be a finite group with $p$-rank $r$ and $P$ be a non-zero projective $\F G$-module. Let $a$ be a positive integer and $a>1$. Then $c_G(S^aP)=\min\{\nu_p(a),r\}$. Moreover, if $a\leq\dim_\F P$, then $c_G(\Lambda^aP)=\min\{\nu_p(a),r\}$.
\end{thmx}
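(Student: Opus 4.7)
The plan is to establish matching upper and lower bounds on $c_G(S^aP)$ and $c_G(\Lambda^aP)$, both equal to $\min\{\nu_p(a),r\}$, by reducing to the Benson--Lim result for symmetric and exterior powers of a free $\F G$-module. The philosophy is that a projective module is a direct summand of a free module (which drives the upper bound) and restricts to a free module on any $p$-subgroup (which drives the lower bound).

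\textbf{Upper bound.} I would first embed $P$ as a direct summand of a free module $R=(\F G)^{\oplus k}$ for suitable $k$. The classical binomial-type decompositions
\[
S^a(P\oplus Q)\cong\bigoplus_{i+j=a}S^iP\otimes S^jQ,\qquad \Lambda^a(P\oplus Q)\cong\bigoplus_{i+j=a}\Lambda^iP\otimes\Lambda^jQ,
\]
valid over any commutative base ring, then realise $S^aP$ and $\Lambda^aP$ as direct summands of $S^aR$ and $\Lambda^aR$ respectively. Since the complexity of a summand is at most that of the ambient module, $c_G(S^aP)\leq c_G(S^aR)$ and $c_G(\Lambda^aP)\leq c_G(\Lambda^aR)$, both bounded above by $\min\{\nu_p(a),r\}$ via \cite[Lemma 1]{BensonLim}.

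\textbf{Lower bound.} Here I would exploit the standard inequality $c_G(M)\geq c_E(\Res^G_EM)$, applied to an elementary abelian $p$-subgroup $E\leq G$ of rank exactly $r$. Since $P$ is projective, $\Res^G_EP$ is projective over $\F E$; as $\F E$ is local, it is in fact free, say $\Res^G_EP\cong(\F E)^{\oplus m}$ with $m=\dim_\F P/|E|\geq 1$. Symmetric and exterior powers commute with restriction, so $\Res^G_ES^aP\cong S^a\bigl((\F E)^{\oplus m}\bigr)$ and similarly for $\Lambda^a$. Applying the Benson--Lim formula to $E$, whose $p$-rank is $r$, pins each of $c_E(\Res^G_ES^aP)$ and $c_E(\Res^G_E\Lambda^aP)$ at $\min\{\nu_p(a),r\}$. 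For the exterior case, the assumption $a\leq\dim_\F P=m|E|$ is exactly the non-vanishing hypothesis needed to invoke the Benson--Lim equality at $E$.

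The main obstacle is ensuring that Benson--Lim genuinely supplies equality, not merely the upper bound: the excerpt describes their result as an \emph{optimal} upper bound, which I interpret as attained on every non-zero free module, and this is what underwrites the lower-bound step above. If the literature only yields the upper inequality, I would supplement the argument with a direct Brauer-quotient computation at $E$, as signalled by the author's remark that the generalisation proceeds via Brauer quotients of $p$-permutation modules; this would show $c_E$ attains $\min\{\nu_p(a),r\}$ by analysing $S^aP(E)$ and $\Lambda^aP(E)$. In either case, the reduction from arbitrary projective $P$ to the free case, by a summand argument upstairs and restriction to a maximal elementary abelian $p$-subgroup downstairs, is the essential move.
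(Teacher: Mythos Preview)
Your proposal is correct and takes a genuinely different route from the paper. The paper proves Theorem~A directly and self-containedly via Brauer quotients: using the decompositions $(\Delta^aP)(E)\cong \Delta^a(P(E))\oplus \Delta_a(P,E)$ established in its Lemmas~3.3 and~3.4, it shows by an explicit orbit-counting argument on a permutation basis that $(\Delta^aP)(E)\neq 0$ for every elementary abelian $E$ of rank $u=\min\{\nu_p(a),r\}$, and that $(\Delta^aP)(F)=0$ once $\operatorname{rank}(F)>u$; the formula $c_G(M)=\max\{\operatorname{rank}(E):M(E)\neq 0\}$ for $p$-permutation modules then finishes. No appeal to Benson--Lim is made in the proof itself.

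Your argument instead \emph{reduces} to the free case: the summand embedding $P\mid R$ gives the upper bound, and restriction to a maximal-rank elementary abelian $E$, where $P$ becomes free over $\F E$, gives the lower bound. This is cleaner and more conceptual provided Benson--Lim indeed supplies the \emph{equality} $c_E(\Delta^a(\F E)^{\oplus m})=\min\{\nu_p(a),r\}$; your hedging on this point is apt, since the paper's introduction describes their result only as an ``optimal upper bound''. Your fallback---a direct Brauer-quotient computation at $E$---would essentially reproduce one half of the paper's argument. What the paper's approach buys, beyond self-containment, is that the structural Lemmas~3.3 and~3.4 on Brauer quotients of symmetric and exterior powers are reused repeatedly in Sections~3--7 (for Proposition~\ref{P;exteriorproj}, Lemma~\ref{L;symmetriccomplexity}, Corollary~\ref{C;exteriorsquare}, etc.), so developing that machinery here is not wasted effort. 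What your approach buys is modularity: it isolates exactly what is new (passing from free to projective) and shows it follows formally from standard properties of complexity under summands and restriction.
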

For the modules of a symmetric group, notice that the class of Young permutation modules contains the regular module. Following the notation of \cite{GJ1}, let $\sym{n}$ be the symmetric group on $n$ letters and write $\lambda\vdash n$ to imply that $\lambda$ is a partition of $n$. Let $\lambda\vdash n$ and $M^\lambda$ be the $\F \sym{n}$-Young permutation module labelled by $\lambda$. Theorem \ref{T;A} motivates us to study $S^aM^\lambda$ and $\Lambda^aM^\lambda$. We determine the symmetric and exterior powers of $M^\lambda$ that are projective (see Propositions \ref{P;symmetricproj}, \ref{P;exteriorproj}). As another main result of this paper, all the indecomposable exterior powers of $M^\lambda$ are also classified (see Theorem \ref{T;indecomposable} and Remark \ref{R;Remark} (i)).

Given $\F G$-modules $M$ and $N$, write $N\mid M$ to indicate that $N$ is isomorphic to a direct summand of $M$. Let $Y^\lambda$ be the $\F\sym{n}$-Young module labelled by $\lambda$. A well-known fact tells us that there exists some $\mu\vdash n$ such that $Y^\mu\mid M^\lambda$ and
$c_{\sym{n}}(Y^\mu)=c_{\sym{n}}(M^\lambda)$. Moreover, all the partitions of $n$ that satisfy the two conditions can be explicitly determined by $\lambda$. We show two similar results for some symmetric and exterior powers of $M^\lambda$. To state our results precisely, assume further that $\lambda=(\lambda_1,\ldots,\lambda_\ell)\vdash n$. For all $1\leq i\leq \ell$, write $\lambda_i=pq_i+r_i$, where $q_i$, $r_i$ are non-negative integers and $0\leq r_i<p$. Let $q_{\lambda}=(pq_1,\ldots,pq_\ell)$, $r_{\lambda}=(r_1,\ldots,r_\ell)$ and use $\overline{\alpha}$ to denote the rearrangement of a composition $\alpha$. Let $q_{\lambda}\cup(p)$ be the rearrangement of the composition $(pq_1,\ldots,pq_\ell, p)$. We prove the following:
\begin{thmx}\label{T;B}
Let $a$, $n$ be positive integers and $a>1$. Let $\lambda\vdash n$.
\begin{enumerate}
\item [\em (i)] There exists some $\mu\vdash n$ such that $Y^\mu\mid S^aM^\lambda$ and $c_{\sym{n}}(Y^\mu)=c_{\sym{n}}(S^aM^\lambda)$.
\item [\em (ii)]Assume that $\lambda\neq (n)$. Then there exists some $\mu\vdash n$ such that $Y^\mu\mid \Lambda^2M^\lambda$ and  $c_{\sym{n}}(Y^\mu)=c_{\sym{n}}(\Lambda^2M^\lambda)$.
\end{enumerate}
Moreover, the partition $\mu$ in both \em (i) and $\mathrm{(ii)}$ can be explicitly determined by $\lambda$.
\end{thmx}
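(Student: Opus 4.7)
The plan is to exhibit in each case an explicit candidate partition $\mu$, show that $Y^\mu$ is a direct summand of the relevant symmetric or exterior power, and verify that its complexity matches that of the ambient module. Throughout I use that $M^\lambda$, $S^aM^\lambda$, and $\Lambda^2M^\lambda$ are $p$-permutation $\F\sym{n}$-modules whose indecomposable summands are Young modules, so the search for $\mu$ reduces to identifying the orbit whose stabiliser has maximum $p$-rank.

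The opening observation is that $M^\lambda$ itself embeds as a direct summand of $S^aM^\lambda$: the orbit of the constant multiset $\{T_0,\dots,T_0\}$ of tabloids of shape $\lambda$ has stabiliser $\sym{\lambda}$, yielding $\Ind_{\sym{\lambda}}^{\sym{n}}\F = M^\lambda$ inside $S^aM^\lambda$. Consequently the known maximum-complexity Young summand $Y^{\mu_0}$ of $M^\lambda$---determined explicitly by $\lambda$ via its $p$-adic data $(q_\lambda,r_\lambda)$---is automatically a summand of $S^aM^\lambda$. If $c_{\sym{n}}(S^aM^\lambda) = c_{\sym{n}}(M^\lambda)$ this already finishes (i) with $\mu = \mu_0$. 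The only way the two complexities can differ is through the symmetric-power correction provided by Theorem \ref{T;A}: writing $M^\lambda = P \oplus Q$ with $P$ the projective part and applying the Cauchy decomposition $S^aM^\lambda = \bigoplus_{i+j=a} S^iP \otimes S^jQ$ exhibits $S^aP$ as a summand of complexity $\min\{\nu_p(a),r\}$, giving $c_{\sym{n}}(S^aM^\lambda) \geq \max\{c_{\sym{n}}(M^\lambda),\min\{\nu_p(a),r\}\}$.

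When $p \mid a$ I would construct an orbit that realises this larger complexity by choosing a $p$-cycle $g \in \sym{n}$ permuting $p$ distinct tabloids $T_1,\ldots,T_p$ and assembling the $a$-multiset from this cyclic block (each $T_i$ with multiplicity $a/p$, or a combination with a constant completion). The stabiliser then contains both a Sylow $p$-subgroup of $\sym{\lambda}$ and an extra independent $p$-element, producing a stabiliser whose $p$-rank matches that of a Sylow of the Young subgroup attached to $q_\lambda \cup (p)$, and this dictates the explicit form of $\mu$. Part (ii) follows the same script applied to unordered pairs of distinct tabloids in $\Lambda^2M^\lambda$: the hypothesis $\lambda \neq (n)$ is exactly what guarantees that two distinct tabloids exist, and when $p = 2$ the extra $p$-rank comes from involutions swapping pairs, again pinning down the explicit $\mu$.

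The main obstacle I anticipate is the matching upper bound: proving that $c_{\sym{n}}(S^aM^\lambda) \leq c_{\sym{n}}(Y^\mu)$, i.e.\ that no other multiset (or pair) orbit produces a stabiliser of strictly higher $p$-rank than the candidate constructed above. I plan to handle this through the Brauer-quotient machinery underlying Theorem \ref{T;A}, combined with a Mackey/vertex analysis of the non-projective part $Q$ organised around the expansions $\lambda_i = pq_i + r_i$; Theorem \ref{T;A} controls the contribution from the projective part, and the remaining verification reduces to a combinatorial estimate on the $p$-ranks of multiset stabilisers inside $\sym{n}$.
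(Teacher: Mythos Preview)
Your opening claim that the indecomposable summands of $S^aM^\lambda$ and $\Lambda^2M^\lambda$ are all Young modules is false: for $p=2$ the simple module $D^{(3,1)}$ is a summand of both $S^2M^{(2^2)}$ and $\Lambda^2M^{(2^2)}$ (Example~\ref{E;counterexample}(i)), and it is not a Young module. So the problem does not reduce to merely locating an orbit of maximum $p$-rank stabiliser; you must also argue that the Scott module of that transitive piece is a Young module, and this is exactly where the work lies. The paper does this by choosing $t_{\lambda,a}$ so that $P_{\lambda,a}\leq K(t_{\lambda,a})\leq \mathfrak{S}_{\lambda,a}$ with $P_{\lambda,a}$ a Sylow $p$-subgroup of the Young subgroup $\mathfrak{S}_{\lambda,a}$ (Lemmas~\ref{L;subgroup0}--\ref{L;subgroup2}); the Scott module is then a summand of $M^\eta$ for an explicit $\eta$ and hence a Young module.

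Your lower-bound mechanism is also too weak. Splitting off the projective part $P$ of $M^\lambda$ and invoking Theorem~\ref{T;A} on $S^aP$ gives nothing when $M^\lambda$ has no projective summand (e.g.\ whenever no part of $\lambda$ is $\geq p!$ times a suitable integer), and in any case the target complexity is $\mathrm{rank}(\sym{\lambda})+d_{\lambda,a}$ with $d_{\lambda,a}=\min\{\nu_p(a),\lfloor n/p\rfloor-\mathrm{rank}(\sym{\lambda})\}$ (Lemma~\ref{L;symmetriccomplexity}), not $\max\{c_{\sym{n}}(M^\lambda),\min\{\nu_p(a),r\}\}$. Correspondingly, your construction with a single $p$-cycle and the partition $q_\lambda\cup(p)$ only reaches rank $\mathrm{rank}(\sym{\lambda})+1$; you need $d_{\lambda,a}$ independent $p$-cycles acting on the residual boxes $r_\lambda$, yielding $\lambda^a\cup(p^{d_{\lambda,a}})$ rather than $q_\lambda\cup(p)$. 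Finally, for part~(ii) with $p>2$ the orbit modules $W(t)\cong(\F^t){\uparrow^{\sym{n}}}$ can have $\F^t$ a genuine sign character of $L(t)$, so they are not transitive permutation modules and the Scott-module route is unavailable; the paper instead runs a case analysis (Lemmas~\ref{L;np=2}--\ref{L;2pparts}) that compares $(\F^t){\uparrow^{\sym{n}}}$ with nearby $M^\eta$ via Theorem~\ref{T;Donkin} to pin down an explicit Young summand. Your sketch for~(ii) does not address this.
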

\begin{thmx}\label{T;C}
Let $n$ be a positive integer and $\lambda\vdash n$. Let $\overline{r_{\lambda}}=(p-1,1)$ and $\mu=q_\lambda\cup(p)$. Let $M$ be an indecomposable $\F\sym{n}$-module.
\begin{enumerate}
\item [\em (i)] If $M\mid S^pM^\lambda$ and $c_{\sym{n}}(M)=c_{\sym{n}}(S^pM^\lambda)$, then $M\cong Y^\nu$, where $Y^\nu\mid M^\mu$. Moreover, all the partitions labelling these Young modules can be explicitly determined by $\mu$.
\item [\em (ii)] If $p=2$, $M\mid \Lambda^2M^\lambda$ and $c_{\sym{n}}(M)=c_{\sym{n}}(\Lambda^2M^\lambda)$, then $M\cong Y^\nu$, where $Y^\nu\mid M^\mu$. Moreover, all the partitions labelling these Young modules can be explicitly determined by $\mu$.
\end{enumerate}
\end{thmx}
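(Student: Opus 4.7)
The plan is to combine the vertex classification of Young modules with Brauer-quotient techniques for $p$-permutation modules; parts (i) and (ii) proceed in parallel.

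First, I would establish that any indecomposable summand $M$ appearing in the statement is a Young module. Indeed, $S^pM^\lambda$ is a genuine permutation $\F\sym{n}$-module: a permutation basis of $M^\lambda$ (indexed by the cosets of $\sym{\lambda}$) induces a permutation basis of $S^pM^\lambda$ consisting of unordered $p$-multisets of such cosets. When $p=2$, signs disappear and $\Lambda^2M^\lambda$ is likewise a permutation module. Since every $p$-permutation $\F\sym{n}$-module is a direct sum of Young modules, one has $M\cong Y^\nu$ for some $\nu\vdash n$.

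Next, I would determine $c := c_{\sym{n}}(S^pM^\lambda)$ (respectively $c_{\sym{n}}(\Lambda^2M^\lambda)$). The hypothesis $\overline{r_\lambda}=(p-1,1)$ forces $\sum_ir_i=p$, so $|\mu|=n$; combined with Theorem~\ref{T;B}, this gives $c \geq c_{\sym{n}}(M^\mu)=\sum_iq_i+1$. For the reverse inequality I would decompose the permutation module into transitive summands $\Ind_H^{\sym{n}}\F$, where $H$ runs over stabilizers of $p$-multisets of $\sym{\lambda}$-cosets, and bound the complexity of each by the $p$-rank of a Sylow $p$-subgroup of $H$. The restrictive assumption then shows that the bound $\sum_iq_i+1$ is achieved only when the two residue parts of $\lambda$ (of sizes $p-1$ and $1$) combine into a single block of size $p$, matching the definition of $\mu$.

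Third, given a summand $Y^\nu$ with $c_{\sym{n}}(Y^\nu)=c$, I would apply the Green correspondence at a vertex $Q$ of $Y^\nu$ of maximum $p$-rank. Computing the Brauer quotient $(S^pM^\lambda)(Q)$ (or $(\Lambda^2M^\lambda)(Q)$) and comparing with $M^\mu(Q)$ via the orbit decomposition of step two identifies $Y^\nu$ as a summand of $M^\mu$. The explicit list of permissible $\nu$ in the ``Moreover'' clause is then extracted by enumerating the Young summands of $M^\mu$, for instance via $p$-Kostka numbers or Klyachko's formula.

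The main obstacle, and the reason for the restrictive hypothesis $\overline{r_\lambda}=(p-1,1)$, is the sharpness statement in step two together with the Brauer-quotient matching in step three: one must rule out that any alternative multiset stabilizer $H$ gives rise to a Young summand of strictly larger vertex $p$-rank, and check that symmetrising or antisymmetrising does not introduce additional Young summands outside $M^\mu$. This amounts to a delicate combinatorial book-keeping on the orbits of $\sym{n}$ acting on $p$-multisets of $\sym{\lambda}$-cosets, and is precisely where the arithmetic of $\mu = q_\lambda \cup (p)$ is used decisively.
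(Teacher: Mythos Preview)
Your Step~1 contains a genuine error: it is \emph{not} true that every $p$-permutation $\F\sym{n}$-module decomposes as a direct sum of Young modules. Young modules are exactly the indecomposable summands of the Young permutation modules $M^\mu$, and their vertices are Sylow $p$-subgroups of Young subgroups. An arbitrary transitive permutation module $\Ind_H^{\sym{n}}\F$ can have indecomposable trivial-source summands (for instance Scott modules) whose vertices are not of this form; these are not Young modules. So you cannot conclude a priori that $M\cong Y^\nu$ merely from $M\mid S^pM^\lambda$.

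The paper avoids this by reversing your logical order. Using the orbit decomposition you sketch in Step~2, one writes $S^pM^\lambda=\bigoplus_i V(t_i)$ with each $V(t_i)\cong(\F_{K(t_i)}){\uparrow^{\sym{n}}}$ transitive. The paper singles out a distinguished basis element $t_{\lambda,p}$ and proves two things: (a) the stabiliser $K(t_{\lambda,p})$ is exactly $\sym{n}$-conjugate to $\sym{\mu}$, so $V(t_{\lambda,p})\cong M^\mu$; and (b) every other $V(t_i)$ has \emph{strictly smaller} complexity. Point (b) is the real work: separately for $p>2$ and $p=2$, one uses the classification of elementary abelian $p$-subgroups of $\sym{n}$ of rank $n/p$ to show that if $c_{\sym{n}}(V(t))$ attains the maximum then the $p$-multiset underlying $t$ is forced to lie in the single orbit $\mathcal{O}^s(t_{\lambda,p})$. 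Once (a) and (b) hold, Krull--Schmidt gives $M\mid M^\mu$ directly, and only \emph{then} does $M\cong Y^\nu$ follow, with the explicit list of $\nu$ read off from Theorem~\ref{T;Donkin}. Your Step~2 gestures at precisely this analysis; carried out carefully it already proves the theorem, and neither the false Step~1 nor the Green-correspondence machinery of Step~3 is needed.
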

Our final result is as follows. We refer the reader to Theorem \ref{T;multiplicity} for more details.
\begin{thmx}\label{T;D}
Let $n$ be a positive integer and $\lambda\vdash n$. The vertices of all the $\F\sym{n}$-Scott modules that are isomorphic to direct summands of $S^2M^\lambda$ or $\Lambda^2M^\lambda$ are determined. Moreover, the corresponding multiplicity of such a Scott module in $S^2M^\lambda$ or $\Lambda^2M^\lambda$ is also explicitly determined.
\end{thmx}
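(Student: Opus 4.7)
The plan is to analyse $M^\lambda \otimes M^\lambda$ via the Mackey decomposition and the involution swapping the two tensor factors, then read off the Scott summands of $S^2M^\lambda$ and $\Lambda^2M^\lambda$ from the resulting permutation module pieces. Explicitly, I would write
\[
M^\lambda \otimes M^\lambda \cong \bigoplus_A \F\uparrow_{\sym{A}}^{\sym{n}},
\]
where $A$ ranges over non-negative integer matrices with row and column sums both equal to $\lambda$ and $\sym{A}$ is the Young-type stabiliser attached to the entries of $A$. This is the classical parameterisation of the double cosets $\sym{\lambda}\backslash\sym{n}/\sym{\lambda}$. The factor-swap corresponds to transposition $A \mapsto A^{T}$ on the index set, so in odd characteristic $S^2M^\lambda$ and $\Lambda^2M^\lambda$ are recovered from the $+1$ and $-1$ eigenspaces of this involution; in characteristic two I would work with the short exact sequence $0 \to \Lambda^2M^\lambda \to M^\lambda\otimes M^\lambda \to S^2M^\lambda \to 0$. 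A transposition orbit of size two contributes one copy of $\F\uparrow_{\sym{A}}^{\sym{n}}$ to each of $S^2M^\lambda$ and $\Lambda^2M^\lambda$, whereas for a symmetric matrix $A = A^{T}$ I would decompose the induced summand further by analysing the swap action on its natural basis, using that diagonal entries $a_{ii}$ contribute fixed blocks while off-diagonal pairs $\{a_{ij},a_{ji}\}$ with $i\neq j$ get exchanged.

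Next I would identify the Scott content of each permutation module that appears. A standard fact is that if $Q$ is a Sylow $p$-subgroup of $H$ then $Sc(\sym{n},Q)$ is the unique Scott summand of $\F\uparrow_H^{\sym{n}}$ and occurs with multiplicity one; this follows from the characterisation of $Sc(\sym{n},Q)$ as the indecomposable summand of $\F\uparrow_Q^{\sym{n}}$ containing the trivial submodule, together with $\F \mid \F\uparrow_Q^H$ whenever $[H:Q]$ is coprime to $p$, combined with the one-dimensionality of $\mathrm{Hom}_{\sym{n}}(\F,\F\uparrow_H^{\sym{n}})$. Hence every permutation piece obtained above yields exactly one Scott summand, whose vertex is $\sym{n}$-conjugate to a Sylow $p$-subgroup of the corresponding $\sym{A}$. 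Collecting these, the multiplicity of a fixed Scott module is a count of matrices $A$, weighted according to the symmetric/anti-symmetric bookkeeping from the previous step, whose $\sym{A}$ has the prescribed Sylow $p$-subgroup up to conjugacy.

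The main obstacle I expect is twofold. First, in characteristic two the lack of a direct-sum splitting $M^\lambda\otimes M^\lambda = S^2M^\lambda\oplus\Lambda^2M^\lambda$ forces a more delicate Brauer-quotient argument: at each relevant $p$-subgroup $Q$ I need to verify that the trivial $N_{\sym{n}}(Q)/Q$-summands of $(M^\lambda\otimes M^\lambda)(Q)$ distribute correctly between the submodule $\Lambda^2M^\lambda$ and the quotient $S^2M^\lambda$, which I would check by tracking explicit basis elements under the swap and exploiting that the Brauer construction is exact on $p$-permutation modules. Second, distinct matrices $A$ can yield Young-type stabilisers sharing a Sylow $p$-subgroup up to $\sym{n}$-conjugacy, so the multiplicity of each Scott module must be assembled combinatorially from all such $A$; I anticipate that sorting the matrices by the $p$-parts $pq_i$ and residues $r_i$ coming from the expression $\lambda_i = pq_i+r_i$ introduced in the excerpt will produce a clean closed-form count.
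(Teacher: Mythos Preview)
Your skeleton matches the paper's: both parametrise the pieces of $S^2M^\lambda$ and $\Lambda^2M^\lambda$ by matrices $A$ with row and column sums $\lambda$ (the double cosets $\sym{\lambda}\backslash\sym{n}/\sym{\lambda}$), pair off $A$ with $A^T$, and read the Scott summand of each piece from the Sylow $p$-subgroup of its stabiliser. The paper works directly with the $\sym{n}$-orbits on the bases $\mathcal{T}(\lambda)_2^s$ and $\mathcal{T}(\lambda)_2^e$ rather than via $M^\lambda\otimes M^\lambda$; for $p=2$ this avoids your short-exact-sequence detour entirely, since $\Lambda^2M^\lambda$ already carries a $p$-permutation basis (Lemma~\ref{L;Basis}).

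There are, however, two genuine gaps. First, for symmetric $A=A^T$ the stabiliser in $\sym{n}$ of the corresponding basis element of $\Lambda^2M^\lambda$ is $L=(\sym{A})\rtimes\langle z\rangle$, where $z$ is an involution swapping the $(i,j)$- and $(j,i)$-blocks, and the associated summand is $(\F^t){\uparrow}_L^{\sym{n}}$ with $z$ acting on $\F^t$ by $-1$. When $p>2$ this module has no trivial submodule and hence \emph{no} Scott summand (Lemma~\ref{L;Homlemma}, Theorem~\ref{T;multiplicity}(iii)); your claim that ``every permutation piece obtained above yields exactly one Scott summand'' overcounts here. Second, and more seriously, when $p=2$ and $A=A^T$ the Sylow $2$-subgroup $P_A$ of $(\sym{A})\rtimes\langle z\rangle$ is generally \emph{not} conjugate to any Sylow of a Young subgroup (Lemma~\ref{L;p=2Youngsubgroups}: it is so only when the off-diagonal of $A$ contributes a single power of $2$). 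Classifying these $P_A$ up to $\sym{n}$-conjugacy is the technical heart of the paper and occupies all of Section~8; the invariants turn out to be the $p$-adic digit sums $D(A)$ and $U(A)$ of the diagonal and strictly-upper-triangular entries of $A$ (Theorem~\ref{T;conjugacy}), not the residues $r_i$ and quotients $q_i$ of the parts $\lambda_i$. The ``clean closed-form count'' you anticipate in terms of $q_\lambda$ and $r_\lambda$ does not exist, and without an analogue of Theorem~\ref{T;conjugacy} the multiplicities in the $p=2$ symmetric case remain undetermined.
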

This paper is organized as follows. In Section $2$, we gather the notation and give a brief summary of the required knowledge. Section $3$ includes the proof of Theorem \ref{T;A}. We also determine the symmetric and exterior powers of $M^\lambda$ that are projective. In Section $4$, we classify all the indecomposable exterior powers of $M^\lambda$. Sections $5$ and $6$ contain the proof of Theorem \ref{T;B}. Theorem \ref{T;C} is shown in Section $7$. By a theorem proved in Section $8$, in Section $9$, we prove Theorem \ref{T;D}.

\section{Notation and Preliminaries}
Fix a field $\F$ of positive characteristic $p$ throughout the whole paper. For a given finite group $G$, we write $H\leq G$ (resp. $H<G$) to indicate that $H$ is a subgroup (resp. proper subgroup) of $G$. If $H\leq G$, let $N_G(H)$ be the normalizer of $H$ in $G$. For a non-empty set $S$, let $\langle S\rangle_\F$ be the $\F$-linear space generated by $S$. By convention, $\langle\varnothing\rangle_\F$ is the zero space. All the $\F G$-modules considered in this paper are finitely generated left $\F G$-modules. Induction and restriction of $\F G$-modules are presented by $\uparrow$ and $\downarrow$ respectively. Use $\F_{G}$ to denote the trivial $\F G$-module throughout the whole paper and omit the subscript if there is no confusion. We fix the notation and present some required results in the following subsections.

\subsection{Representation theory of finite groups}
We assume that the reader is familiar with modular representation theory of finite groups. For a general background on this topic, one may refer to \cite{JAlperin1} or \cite{HNYT}. Let $\mathbb{N}$ be the set of natural numbers.

Let $G$ be a finite group and $M$, $N$ be $\F G$-modules. Write $N\mid M$ if $N$ is isomorphic to a direct summand of $M$, i.e., $M\cong L\oplus N$ for some $\F G$-module $L$. If $N$ is indecomposable, for a decomposition of $M$ into a direct sum of indecomposable modules, the number of indecomposable direct summands that are isomorphic to $N$ is well-defined by the Krull--Schmidt Theorem and is denoted by $[M:N]$. Let $P\leq G$ and $N$ be indecomposable. Following \cite{JGreen}, say $P$ a vertex of $N$ if $P$ is a minimal subgroup of $G$ (with respect to the inclusion of subgroups of $G$) subject to the condition that $N\mid (N{\downarrow_P}){\uparrow^G}$. All the vertices of $N$ can form a $G$-conjugacy class of a $p$-subgroup of $G$. Let $P$ be a vertex of $N$. There exists some indecomposable $\F P$-module $S$ such that $N\mid S{\uparrow^G}$. Call $S$ a $P$-source of $N$. All the $P$-sources of $N$ are $N_G(P)$-conjugate to each other. If $N$ has a trivial $P$-source, then $N$ is called a trivial source $\F G$-module.
Let $1<a\in \mathbb{N}$ and $S^aM$ be the $a$th symmetric power of $M$. If $a\leq\dim_\F M$, let $\Lambda^aM$ be the $a$th exterior power of $M$. By convention, write $S^0M=\Lambda^0M=\F$ and $S^1M=\Lambda^1M=M$. For the definitions of these objects, one may refer to \cite[1.14]{Benson}. In particular, they are natural $\F G$-modules by the diagonal action of $G$. Let $M^{{\otimes}^a}$ be the $a$th inner tensor product of $M$ over $\F$. If $a<p$, it is well-known that $S^aM\mid M^{{\otimes}^a}$ and $\Lambda^aM\mid M^{{\otimes}^a}$. Let $L$ be an $\F G$-module. We also have $S^c(M\oplus L)\cong\bigoplus_{i=0}^c(S^iM\otimes S^{c-i}L)$ and $\Lambda^c(M\oplus L)\cong\bigoplus_{i=0}^c(\Lambda^iM\otimes \Lambda^{c-i}L)$ as $\F G$-modules, where $c\in \mathbb{N}$ and the inner tensor products of modules are over $\F$.
\subsection{p-permutation modules and Scott modules}
Let $G$ be a finite group and $M$ be an $\F G$-module. The module $M$ is said to be a $p$-permutation module if, for every $p$-subgroup $P$ of $G$, there exists an $\F$-basis $\B _P$ of $M$ (depending on P) that is permuted by $P$. It is well-known that the indecomposable $p$-permutation $\F G$-modules are exactly the trivial source $\F G$-modules. Also note that the class of $p$-permutation $\F G$-modules is closed by taking direct sums and direct summands. Therefore, the projective $\F G$-modules are $p$-permutation $\F G$-modules. Let $P$ be a $p$-subgroup of $G$ and $M$ be a $p$-permutation $\F G$-module. Let $\B _P(M)$ be a fixed $\F$-basis of $M$ permuted by $P$. Let $1<a\in \mathbb{N}$, $d=\dim_\F M$ and $\B_P(M)=\{v_1,\ldots,v_d\}$. Set $\B_{P,a}^s(M)=\{u:u=v_{i_1}\odot\cdots\odot v_{i_a}\in S^aM,\ 1\leq i_1\leq\cdots\leq i_a\leq d\}$ and note that $\B_{P,a}^s(M)$ is an $\F$-basis of $S^aM$ permuted by $P$. By the definition of a $p$-permutation $\F G$-module, $S^aM$ is a $p$-permutation $\F G$-module. Similarly, we have
\begin{lem}\label{L;Basis}
Let $P$ be a $p$-subgroup of a finite group $G$ and $1<a\in \mathbb{N}$. Let $M$ be a $p$-permutation $\F G$-module. Let $a\leq d=\dim_\F M$ and $\B _P(M)=\{v_1,\ldots,v_d\}$. Then $\{\alpha_uu: u=v_{i_1}\wedge\cdots\wedge v_{i_a}\in \Lambda^aM,\ 1\leq i_1<\cdots<i_a\leq d\}$ is an $\F$-basis of $\Lambda^aM$ permuted by $P$, where the scalar $\alpha_u$ depends on $u$ and is either $1$ or $-1$. In particular, $\Lambda^aM$ is a $p$-permutation $\F G$-module.
\end{lem}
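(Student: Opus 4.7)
The plan is to identify the claimed spanning set, for each fixed $P$, as the standard basis $\{v_I : v_I = v_{i_1}\wedge\cdots\wedge v_{i_a},\ 1\le i_1<\cdots<i_a\le d\}$ of $\Lambda^aM$ rescaled by units $\alpha_I \in \{\pm 1\}\subseteq \F^\times$. Such a rescaling is automatically again an $\F$-basis, so the only genuine content of the lemma is to \emph{choose} the signs $\alpha_I$ so that $P$ truly permutes the set $\{\alpha_I v_I\}$. The ``in particular'' clause then follows at once from the definition of a $p$-permutation module since the $p$-subgroup $P$ was arbitrary.

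For $\pi \in P$ the action on $\B_P(M)$ is given by a permutation $\sigma_\pi$ of $\{1,\dots,d\}$, and a direct expansion of the wedge gives
$$\pi\cdot v_I \;=\; v_{\sigma_\pi(i_1)}\wedge\cdots\wedge v_{\sigma_\pi(i_a)} \;=\; \epsilon(\pi,I)\,v_{\sigma_\pi(I)},$$
where $\epsilon(\pi,I)\in\{\pm 1\}$ is the sign of the permutation required to list $(\sigma_\pi(i_1),\dots,\sigma_\pi(i_a))$ in increasing order. So I need signs $\alpha_I$ obeying the transport identity $\alpha_{\sigma_\pi(I)} = \epsilon(\pi,I)\,\alpha_I$ for every $\pi\in P$ and every $a$-subset $I$; this would give $\pi\cdot(\alpha_Iv_I) = \alpha_{\sigma_\pi(I)}v_{\sigma_\pi(I)}$ as required.

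The construction proceeds orbit by orbit on the action of $P$ on $a$-subsets of $\{1,\dots,d\}$: in each orbit pick a representative $I_0$, set $\alpha_{I_0}=1$, and for any other $I$ in that orbit set $\alpha_I = \epsilon(\pi_I,I_0)$ for some chosen $\pi_I\in P$ with $\sigma_{\pi_I}(I_0)=I$. The transport identity for arbitrary $\pi$ then drops out of the multiplicative chain rule $\epsilon(\pi\pi',J)=\epsilon(\pi,\sigma_{\pi'}(J))\,\epsilon(\pi',J)$ taken with $J=I_0$ and $\pi'=\pi_I$. The hard part, and really the only point to verify, is well-definedness of $\alpha_I$ independently of the choice of $\pi_I$: two such choices differ by an element of the set-stabiliser $\mathrm{Stab}_P(I_0)$, and $\pi\mapsto\epsilon(\pi,I_0)$ is a group homomorphism from the $p$-group $\mathrm{Stab}_P(I_0)$ to $\{\pm 1\}$. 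When $p=2$ the issue disappears in $\F$ and one may simply take $\alpha_I=1$ throughout; when $p$ is odd, the image of a $p$-group in the order-$2$ group $\{\pm 1\}$ must be trivial, so $\epsilon(\cdot,I_0)$ kills the stabiliser and the assignment $\alpha_I$ is consistent, completing the argument.
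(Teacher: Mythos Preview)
Your proof is correct and essentially follows the same idea as the paper's: both observe that the standard wedge basis is permuted by $P$ up to sign and then fix the signs. The paper outsources the sign-fixing step to \cite[Lemma 2.6]{GLDM}, whereas you give a self-contained cocycle argument, constructing the $\alpha_I$ orbit-by-orbit and using that a $p$-group (for odd $p$) has no nontrivial homomorphism to $\{\pm 1\}$ to guarantee well-definedness on stabilisers. Your version has the advantage of being self-contained; the paper's is shorter by citation. Conceptually they are the same proof.
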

\begin{proof}
Note that $\B=\{u:u=v_{i_1}\wedge\cdots\wedge v_{i_a}\in\Lambda^aM,\ 1\leq i_1<\cdots<i_a\leq d\}$ is an $\F$-basis of $\Lambda^aM$. For any $g\in G$ and $u\in\B$, there exists some $v\in \B$ such that $gu=v$ or $gu=-v$. The first statement thus follows by \cite[Lemma 2.6]{GLDM}. For any $p$-subgroup $Q$ of $G$ and the fixed $\B_Q(M)$, we get an $\F$-basis of $\Lambda^aM$ permuted by $Q$. The second statement follows by the definition of a $p$-permutation $\F G$-module.
\end{proof}
For the given $\B_P(M)$, if $a\leq d$, let $\B_{P,a}^e(M)$ be the $\F$-basis of $\Lambda^aM$ in Lemma \ref{L;Basis}. Let $H\leq G$. For the transitive permutation $\F G$-module $(\F_H){\uparrow^G}$, it is well-known that $(\F_H){\uparrow^G}$ has a unique trivial $\F G$-submodule and a unique trivial $\F G$-quotient module. Moreover, there exists an indecomposable $\F G$-direct summand $S$ of $(\F_H){\uparrow^G}$ with a trivial $\F G$-submodule and a trivial $\F G$-quotient module. The module $S$, unique up to isomorphism, is called the Scott module of $(\F_H){\uparrow^G}$ and is denoted by $Sc_G(H)$. It is well-known that the vertices of $Sc_G(H)$ are $G$-conjugate to a Sylow $p$-subgroup of $H$. Moreover, $Sc_G(H)$ is a trivial source $\F G$-module. Let $K\leq G$ and $Q$ be a Sylow $p$-subgroup of $H$. We remark that $Sc_G(H)\cong Sc_G(K)$ if and only if a Sylow $p$-subgroup of $H$ is $G$-conjugate to a Sylow $p$-subgroup of $K$. Therefore, we usually use $Sc_G(Q)$ to denote $Sc_G(H)$. For more properties of $Sc_G(Q)$, one may refer to \cite{Burry}.

\subsection{Brauer quotients and complexities of modules}
We now describe the Brauer quotients of $p$-permutation modules developed by Brou\'{e} in \cite{Broue}. Let $G$ be a finite group and $P\leq G$. Let $M$ be an $\F G$-module. For a given non-empty set $S\subseteq M$, we set $S^P=\{v\in S: \forall\ q\in P,\ qv=v\}$. If $S$ is an $\F G$-submodule of $M$, then $S^P$ is an $\F [N_G(P)/P]$-module. For any $p$-subgroup $Q$ of $P$, the relative trace map from $M^Q$ to $M^P$, denoted by $\mathrm{Tr}_Q^P$, is defined to be $$ \Tr_Q^P(v)=\sum_{g\in \{P/Q\}}gv,$$
where $\{P/Q\}$ is a complete set of representatives of left cosets of $Q$ in $P$. Note that the $\F$-linear map $\Tr_Q^P$ is independent of the choices of $\{P/Q\}$. We also define
$$\Tr^P(M)=\sum Tr_Q^P(M^Q),$$
where the sum runs over all the members of $\{Q<P:Q\ \text{is}\ \text{a}\ \text{$p$-group}\}$.
It is clear to see that $\Tr^P(M)$ is an $\F[N_G(P)/P]$-submodule of $M^P$. The Brauer quotient of $M$ with respect to $P$, written as $M(P)$, is the $\F[N_G(P)/P]$-module $$M^P/\Tr^P(M).$$ Notice that $M(P)=0$ unless $P$ is a $p$-subgroup of $G$. If $P$ is a $p$-subgroup of $G$ and $M$ is indecomposable, it is well-known that $M(P)\neq 0$ only if $P$ is $G$-conjugate to a $p$-subgroup of a vertex of $M$. We collect some well-known results as follows.
\begin{lem}\label{L;Brauerquotient}
Let $P$ be a $p$-subgroup of a finite group $G$. Let $M$ be a trivial source $\F G$-module with a vertex $V$. Let $N$ be a $p$-permutation $\F G$-module.
\begin{enumerate}
\item [\em (i)] We have $M(P)\neq 0$ if and only if $P$ is $G$-conjugate to a $p$-subgroup of $V$.
\item [\em (ii)] We have $N(P)\cong\langle \B_P(N)^P\rangle_\F$ as $\F[N_G(P)/P]$-modules.
\item [\em (iii)]We have $M\mid N$ if and only if $M(V)\mid N(V)$ and $[N:M]=[N(V):M(V)]$.
\end{enumerate}
\end{lem}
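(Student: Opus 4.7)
The three statements are classical facts about Brauer quotients of trivial source modules, essentially due to Broué \cite{Broue}, and the cleanest order to prove them is (ii), (i), (iii), so that each part feeds the next.

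For (ii), I would use the $P$-orbit decomposition of the permuted basis $\B_P(N)$. The $P$-invariants $N^P$ split as a direct sum, indexed by $P$-orbits $O \subseteq \B_P(N)$, of lines spanned by $\sum_{v \in O} v$. Singleton orbits contribute exactly $\langle \B_P(N)^P \rangle_\F$, while any orbit with stabilizer $Q < P$ contributes $\Tr_Q^P(v) \in \Tr^P(N)$. Thus $N^P = \langle \B_P(N)^P \rangle_\F \oplus \Tr^P(N)$, and passing to the quotient gives the $\F$-linear isomorphism $N(P) \cong \langle \B_P(N)^P \rangle_\F$. The isomorphism is $\F[N_G(P)/P]$-linear because $N_G(P)$-conjugation permutes $P$-orbits preserving stabilizers, and in particular permutes the $P$-fixed basis vectors.

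For (i), since $M$ has a trivial $V$-source, $M \mid (\F_V){\uparrow^G}$, and so $M(P)$ is an $\F[N_G(P)/P]$-summand of $((\F_V){\uparrow^G})(P)$. Applying (ii) to the canonical coset basis identifies the latter with the span of those cosets $gV$ satisfying $g^{-1}Pg \leq V$, which is non-zero precisely when $P$ is $G$-conjugate to a subgroup of $V$; this yields the necessary direction. The converse, that $M(V) \neq 0$ when $V$ is a vertex of $M$, is the classical non-vanishing result from \cite{Broue}: if one assumed $M(V) = 0$, then $M^V = \Tr^V(M)$, and a Higman-type argument applied inside $(\F_V){\uparrow^G}$ would force $M$ to be relatively projective with respect to a proper subgroup of $V$, contradicting minimality of the vertex.

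For (iii), I would write $N = M^{\oplus k} \oplus N'$ with $k = [N : M]$ and $N'$ admitting no summand isomorphic to $M$. The Brauer functor is additive, so $N(V) = M(V)^{\oplus k} \oplus N'(V)$. The key input, which I would invoke rather than reprove, is Broué's theorem that $L \mapsto L(V)$ induces an injection on isomorphism classes of indecomposable trivial source $\F G$-modules with vertex $V$, landing in indecomposable $\F[N_G(V)/V]$-modules. Combined with (i), which annihilates summands of $N'$ whose vertex does not contain a $G$-conjugate of $V$, together with the Krull--Schmidt theorem, this forces $M(V)$ not to appear as a summand of $N'(V)$, so $[N(V):M(V)]=k$ and both equivalences in (iii) follow. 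The main obstacle is exactly this injectivity statement: without it one only obtains the inequality $[N(V):M(V)] \geq [N:M]$ and cannot detect $M$ inside $N$ from its Brauer quotient alone.
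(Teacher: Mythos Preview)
Your sketch is correct and follows the standard arguments due to Brou\'e. Note, however, that the paper does not actually prove this lemma: it is stated under the heading ``We collect some well-known results as follows'' and is attributed implicitly to \cite{Broue}, with no proof given. So there is no ``paper's own proof'' to compare against; your proposal simply supplies the details that the paper omits. One small point worth making explicit in your part (i): to pass from $M(V)\neq 0$ to $M(P)\neq 0$ for $P$ conjugate into $V$, you are using that a $V$-fixed vector in a basis permuted by $V$ is also $P$-fixed, which is immediate but perhaps worth a sentence.
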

The $p$-rank of $G$ is the largest non-negative integer $m$ subject to the condition that $G$ has an elementary abelian $p$-subgroup of order $p^m$. Let $\text{rank}(G)$ be the $p$-rank of $G$.
The groups that are mutually isomorphic have the same $p$-rank. The complexity of $M$, denoted by $c_G(M)$, is the smallest non-negative integer $c$ such that
$$\lim_{\ell\rightarrow\infty}\frac{\dim_\F P_\ell}{\ell^c}=0,$$ where $\cdots\rightarrow P_1\rightarrow P_0\rightarrow M$ is a minimal projective resolution of $M$. A well-known fact tells us that $c_G(\F)=\mathrm{rank}(G)$. Moreover, $c_G(M)=0$ if and only if $M$ is projective. Let $H$ be a finite group and $N$ be an $\F H$-module. Let $M\boxtimes N$ be the outer tensor product of $M$ and $N$ over $\F$. It is a natural $\F[G\times H]$-module. We summarize some well-known properties of $c_G(M)$ as follows.
\begin{lem}\label{L;complexity}
Let $G$ and $H$ be finite groups and $M$ be an $\F G$-module.
\begin{enumerate}
\item [\em (i)] Let $N$, $L$ be $\F G$-modules. If $M\cong N\oplus L$, then $c_G(M)=\max\{c_G(N),c_G(L)\}$.
\item [\em (ii)]If $M$ is an indecomposable $\F G$-module with a vertex $P$ and a $P$-source $S$, then $c_G(M)=c_P(S)$.
\item[\em (iii)]If $N$ is an $\F H$-module, then $c_{G\times H}(M\boxtimes N)=c_{G}(M)+c_H(N)$.
\item [\em (iv)]If $H\leq G$ and $N$ is an $\F H$-module, then $c_{G}(N{\uparrow^G})=c_H(N)$.
\item [\em (v)]Let $\varepsilon^G$ be the set of all elementary abelian $p$-subgroups of $G$. If $M$ is a $p$-permutation module, then $c_G(M)=\max\{\mathrm{rank}(E): E\in\varepsilon^G,\ M(E)\neq 0\}$.
\end{enumerate}
\end{lem}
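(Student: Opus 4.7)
The plan is that Lemma \ref{L;complexity} gathers five well-known facts about the complexity of modules, and the proof will essentially be a sequence of citations to standard references such as \cite{Benson} and \cite{JAlperin1}, with brief justifications where convenient.

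For (i), the key observation is that the direct sum of minimal projective resolutions of $N$ and $L$ is a minimal projective resolution of $N\oplus L$, so the $\ell$th term has dimension growing like $\ell^{\max\{c_G(N),c_G(L)\}}$. Part (iii) follows from the fact that the total complex of the tensor double complex of minimal projective resolutions over $\F G$ and $\F H$ is a minimal projective resolution of $M\boxtimes N$ over $\F[G\times H]$; the additive formula then comes from the standard convolution estimate for polynomial growth. For (iv), since $\F G$ is free as a right $\F H$-module, induction carries projectives to projectives and preserves exactness, so the induced resolution has term dimensions $[G:H]$ times those of the original, which leaves the growth rate unchanged after passing to a minimal direct summand at each stage.

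Part (ii) is the usual Green correspondence argument. From $M\mid S{\uparrow^G}$ together with (i) and (iv) we obtain $c_G(M)\leq c_P(S)$. For the reverse inequality, a vertex-source argument (or Mackey decomposition combined with the Green correspondence) shows that $S$ is isomorphic to a direct summand of $M\downarrow_P$, whence $c_P(S)\leq c_P(M\downarrow_P)\leq c_G(M)$, where the last inequality is the standard fact that restriction does not increase complexity.

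The main obstacle is (v), the Avrunin--Scott--Benson-type characterization of complexity for $p$-permutation modules. The plan is to first reduce to the indecomposable case via (i), then use (ii) to express $c_G(M)$ in terms of a vertex-source pair, and finally combine Brou\'e's description of Brauer quotients with the classical rank variety theorem, which identifies $c_G(M)$ with the maximal rank of an elementary abelian $p$-subgroup $E$ for which $M\downarrow_E$ is non-projective. For a $p$-permutation module this latter condition is equivalent to $M(E)\neq 0$, yielding the stated equality.
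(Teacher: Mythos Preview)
The paper does not prove this lemma at all: it is introduced with ``We summarize some well-known properties of $c_G(M)$ as follows'' and left without proof, so there is no argument to compare yours against. Your sketches for (i)--(iv) are standard and fine.

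There is, however, a genuine error in your plan for (v). You assert that for a $p$-permutation module the condition ``$M{\downarrow_E}$ is non-projective'' is equivalent to ``$M(E)\neq 0$''. This is false once $\mathrm{rank}(E)\geq 2$. Take $E=C_p\times C_p$ and $M=\F[E/H]$ for a subgroup $H$ of order $p$: then $M{\downarrow_E}=M$ is not projective (the only projective indecomposable $\F E$-module is $\F E$ itself), yet $M(E)=0$ since no basis vector is fixed by all of $E$. So the pointwise equivalence you invoke does not hold, and the bridge from the rank-variety description to the Brauer-quotient description breaks.

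The clean route, which the paper's surrounding lemmas already set up, bypasses rank varieties entirely. After reducing to an indecomposable trivial source module $M$ with vertex $P$, part (ii) gives $c_G(M)=c_P(\F_P)=\mathrm{rank}(P)$. Then Brou\'e's result (Lemma~\ref{L;Brauerquotient}(i) in the paper) says $M(E)\neq 0$ if and only if $E$ is $G$-conjugate to a subgroup of $P$; the maximal rank of such an $E$ is exactly $\mathrm{rank}(P)$, and (v) follows.
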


\subsection{Combinatorics}
Let $\mathbb{N}_0=\mathbb{N}\cup\{0\}$, $n\in \mathbb{N}$ and $\mathbf{n}=\{1,\ldots,n\}$. Let $\sym{S}$ be the symmetric group acting on a finite set $S$. Let $\sym{n}=\sym{\mathbf{n}}$ and $\sym{\varnothing}=1$. A composition of $n$ is a finite sequence of non-negative integers $(\lambda_1,\ldots,\lambda_{\ell})$ such that $\sum_{i=1}^\ell\lambda_i=n$. If this sequence is non-increasing and $\lambda_\ell\neq 0$, call this composition a partition of $n$. As the empty set, the unique composition of $0$ is denoted by $\varnothing$. It is also the unique partition of $0$. Let $a$, $m\in \mathbb{N}_0$. Write $\lambda\models m$ (resp. $\lambda\vdash m$) to indicate that $\lambda$ is a composition (resp. partition) of $m$. In this paper, we shall use the exponential expression of sequences of integers. For example, $(1,2,2,1)=(1,2^2,1)$. Let $\unlhd$ be the dominance order of partitions. Let $\lambda=(\lambda_1,\ldots,\lambda_{\ell})\models n$ and call each entry of $\lambda$ a part of $\lambda$. Let $|\lambda|=\sum_{i=1}^\ell\lambda_i=n$ and $\overline{\lambda}$ be the partition obtained from $\lambda$ by omitting the zero parts of $\lambda$ and rearranging the non-zero parts of $\lambda$. Let $\ell(\lambda)$ be the number of parts of $\overline{\lambda}$. By convention, $\ell(\varnothing)=|\varnothing|=0$ and $\overline{\varnothing}=\varnothing$. Let $\mu=(\mu_1,\ldots,\mu_{\ell'})\models m$. Set $a\mu=(a\mu_1,\ldots,a\mu_{\ell'})\models am$,
$\mu\cup\lambda=\overline{(\mu_1,\ldots,\mu_{\ell'},\lambda_1,\ldots,\lambda_{\ell})}\vdash m+n$ and $\varnothing\cup \mu=\mu\cup\varnothing=\overline{\mu}$. Put $\varnothing+\lambda=\lambda+\varnothing=\lambda$ and $\varnothing+\varnothing=\varnothing$. Let $m>0$. If $\ell'\leq \ell$,
\begin{align*}
\lambda+\mu=\mu+\lambda=(\lambda_1+\mu_1,\ldots,\lambda_{\ell'}+\mu_{\ell'},\lambda_{\ell'+1},\ldots,\lambda_\ell)\models m+n.
\end{align*}
Assume further that $\mu\vdash m$. It is called a $p$-restricted partition of $m$ if $\mu_i-\mu_{i+1}<p$ for all $1\leq i<\ell'$ and $\mu_{\ell'}<p$. By \cite[Lemma 7.5]{GJ}, the $p$-adic expansion of $\mu$ is the unique sum $\sum_{i=0}^xp^i\mu(i)$ for some $x\in\mathbb{N}_0$, where $\mu(i)$ is either $\varnothing$ (a sequence of zeros) or a $p$-restricted partition of $|\mu(i)|$ for all $0\leq i\leq x$, $\mu(x)\neq\varnothing$ and $\mu=\sum_{i=0}^xp^i\mu(i)$.

The Young diagram of $\lambda$, denoted by $[\lambda]$, is a left aligned array of boxes, where the $i\mathrm{th}$ row of the array has exactly $\lambda_i$ boxes for all $1\leq i\leq\ell$. In particular, the $i\mathrm{th}$ row of $[\lambda]$ is empty if $\lambda_i=0$. We regard the $1\mathrm{th}$ row (resp. $\ell\mathrm{th}$ row) of $[\lambda]$ as the top row (resp. bottom row) of $[\lambda]$. We will not distinguish between $\lambda$ and $[\lambda]$. Via a bijection, all the boxes of $\lambda$ are replaced by the numbers $1,\ldots,n$. A result of this process is called a $\lambda$-tableau. Following this definition,
we view each row of a $\lambda$-tableau as a set of numbers. Let $\mathrm{T}(\lambda)$ be the set of all the $\lambda$-tableaux. Given $s$, $t\in \mathrm{T}(\lambda)$, write $s\approx t$ if the $i\mathrm{th}$ rows of $s$ and $t$ are equal as sets for all $1\leq i\leq\ell$. Let $\{t\}$ be the equivalence class containing $t$ with respect to $\approx$ and call it a $\lambda$-tabloid. Regard the rows of $t$ as the rows of $\{t\}$. Let $\mathcal{T}(\lambda)$ be the set of all the  $\lambda$-tabloids. To display a $\lambda$-tabloid $\{t\}$, draw lines between the rows of $t$ and rewrite the numbers of each row of $t$ in increasing order from left to right. For example, if $\lambda=(2,0,3)\models 5$ and
\[t=\ {\begin{matrix}
4 & 1   \\
         \\
3 & 5 &2 \\
\end{matrix}}
\ \ ,\ \ \ \ \
\{t\}=\ {\begin{matrix}
\cline{1-2}
\cline{1-2}
1 & 4   \\ \cline{1-2}
         \\ \cline{1-3}
2 & 3 &5 \\ \hline
\end{matrix}}
\ \ .\]

\subsection{Modules of symmetric groups}
We now briefly present some material of representation theory of symmetric groups needed in the paper. One can refer to \cite{GJ1} or \cite{GJ3} for a background on this topic. Given $H\leq\sym{n}$, let $\mathbf{n}/H$ be the set of orbits of $\mathbf{n}$ under the natural action of $H$. Let $\lambda=(\lambda_1,\ldots,\lambda_\ell)\vdash n$. The Young subgroup of $\sym{n}$ with respect to $\lambda$, denoted by $\sym{\lambda}$, is defined to be $
\sym{\lambda_1}\times\cdots\times\sym{\lambda_\ell},$ where the first factor $\sym{\lambda_1}$ acts on the set $\{1,\ldots, \lambda_1\}$, the second factor acts on the set $\{\lambda_1+1,\ldots, \lambda_1+\lambda_2\}$ and so on. The Young permutation module with respect to $\lambda$, denoted by $M^{\lambda}$, is the $\F$-linear space generated by all $\lambda$-tabloids, where $\sym{n}$ permutes these $\lambda$-tabloids. It is also isomorphic to $(\F_{\sym{\lambda}}){\uparrow^{\sym{n}}}$. Since $M^\lambda$ is a permutation module, observe that $M^\lambda$ is a $p$-permutation module. Let $d=\dim_\F M^\lambda$ and  $1<a\in \mathbb{N}$. For any given order of the members of $\mathcal{T}(\lambda)$, say $\mathcal{T}(\lambda)=\{\{t_1\},\ldots,\{t_d\}\}$, let
\begin{align}
&\mathcal{T}(\lambda)_a^s=\{\{t_{i_1}\}\odot\cdots\odot\{t_{i_a}\}\in S^aM^\lambda: 1\leq i_1\leq\cdots\leq i_a\leq d\},\\
&\mathcal{T}(\lambda)_a^e=\{\{t_{i_1}\}\wedge\cdots\wedge\{t_{i_a}\}\in \Lambda^aM^\lambda: 1\leq i_1<\cdots<i_a\leq d\}.
\end{align}
Note that $\mathcal{T}(\lambda)_a^s$ is an $\F$-basis of $S^aM^\lambda$. If $a\leq d$, $\mathcal{T}(\lambda)_a^e$ is also an $\F$-basis of $\Lambda^aM^\lambda$.
As $\mathcal{T}(\lambda)$ forms an $\F$-basis of $M^\lambda$ that can be permuted by any $p$-subgroup of $\sym{n}$, for any $p$-subgroup $P$ of $\sym{n}$, we fix $\B_P(M^\lambda)=\mathcal{T}(\lambda)$. We need the following result.
\begin{lem}\label{PModules}\cite[Lemma 2.7]{JLW}
Let $\lambda\vdash n$ and $P$ be a $p$-subgroup of $\sym{n}$. Then $\{t\}\in \mathcal{T}(\lambda)^P$ if and only if the set of numbers in each row of $\{t\}$ is a union set of some members of $\mathbf{n}/P$. In particular, both $|\mathcal{T}(\lambda)^P|$ and $\dim_\F M^\lambda(P)$ are the number of unordered ways to insert the orbits in $\mathbf{n}/P$ into the rows of $\lambda$.
\end{lem}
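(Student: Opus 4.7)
The plan is to read off both assertions directly from the permutation action of $\sym{n}$ on the tabloid basis of $M^\lambda$, combined with Lemma \ref{L;Brauerquotient}(ii). First I would record each $\{t\}\in\mathcal{T}(\lambda)$ as the ordered $\ell$-tuple $(R_1,\ldots,R_\ell)$ of pairwise disjoint subsets of $\mathbf{n}$, where $R_i$ is the set of entries appearing in the $i$-th row of $\{t\}$ (with $R_i=\varnothing$ when $\lambda_i=0$) and $|R_i|=\lambda_i$. The action of $\sigma\in\sym{n}$ on $M^\lambda$ then sends this tabloid to the one recorded by $(\sigma(R_1),\ldots,\sigma(R_\ell))$, so $\{t\}\in\mathcal{T}(\lambda)^P$ if and only if $P$ stabilises each $R_i$ setwise, which is equivalent to saying that each $R_i$ is a union of orbits in $\mathbf{n}/P$. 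This yields the first assertion.

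For the counting part, I would then observe that the equality of $|\mathcal{T}(\lambda)^P|$ with the number of ``unordered'' ways to distribute the $P$-orbits among the rows of $\lambda$ is merely a rewording of the characterisation above: choosing such a $\{t\}$ amounts to assigning each orbit $O\in\mathbf{n}/P$ to some row, subject to the constraint that the orbits assigned to row $i$ have total size $\lambda_i$, the order within a single row being irrelevant because rows of a tabloid are sets. For the second equality, since we have fixed $\B_P(M^\lambda)=\mathcal{T}(\lambda)$, Lemma \ref{L;Brauerquotient}(ii) immediately gives $M^\lambda(P)\cong\langle\mathcal{T}(\lambda)^P\rangle_\F$ as $\F[N_{\sym{n}}(P)/P]$-modules, so in particular $\dim_\F M^\lambda(P)=|\mathcal{T}(\lambda)^P|$.

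No serious obstacle is anticipated, since the result is essentially a bookkeeping exercise translating ``the set of entries in each row is $P$-stable'' into a combinatorial count of orbit packings, with the analytic content entirely absorbed into the already-granted Lemma \ref{L;Brauerquotient}(ii). The only minor pitfall I would flag in the write-up is to handle cleanly the case when $\lambda$ has zero parts or repeated parts, so as not to conflate the natural order on the rows of $\lambda$ (which is preserved by the very definition of a tabloid) with the absence of order among the entries within a given row, which is what the word ``unordered'' in the statement refers to.
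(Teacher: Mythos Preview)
Your proposal is correct. Note that the paper does not actually supply a proof of this lemma: it is quoted verbatim from \cite[Lemma 2.7]{JLW} and stated without argument. Your write-up is the standard one-paragraph justification---identifying a tabloid with the ordered tuple of its row sets, reading off $P$-fixedness as setwise $P$-stability of each row, and then invoking Lemma~\ref{L;Brauerquotient}(ii) for the Brauer quotient dimension---and there is nothing further to compare against.
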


The representatives of all isomorphism classes of indecomposable direct summands of Young permutation modules are called Young modules and are parameterized by James in \cite[Theorem 3.1]{GJ2}. Namely, let $\lambda$, $\mu\vdash n$. For a given decomposition of $M^\lambda$ into a direct sum of indecomposable modules, there exists a unique indecomposable direct summand of $M^\lambda$ that contains the Specht module labelled by $\lambda$. The module, unique up to isomorphism, is denoted by $Y^\lambda$. It is well-known that $Y^\lambda$ is a self-dual trivial source module. James proved that $[M^\lambda:Y^\lambda]=1$ and $[M^{\lambda}:Y^{\mu}]\neq 0$ only if $\lambda\unlhd \mu$. We thus get
\begin{equation*}
M^\lambda\cong Y^\lambda\oplus\bigoplus_{\lambda\lhd \mu}[M^\lambda:Y^\mu]Y^\mu.
\end{equation*}
Recall that the vertices of Young modules are the Sylow $p$-subgroups of Young subgroups. In \cite{Donkin2}, Donkin introduced the signed Young modules for the case $p>2$ and parameterized them by the pairs of partitions. Recall that they are trivial source modules. We shall use $Y(\alpha|p\beta)$ to denote the $\F\sym{n}$-signed Young module labelled by the partitions $\alpha,\beta$, where $|\alpha|+p|\beta|=n$.
We end this section by two known results.
\begin{thm}\cite[(3.6)]{Donkin}\label{T;Donkin}
Let $\lambda$, $\mu\vdash n$ and $\lambda$ have the $p$-adic expansion $\sum_{i=0}^mp^i\lambda(i)$ for some $m\in \mathbb{N}_0$. Then $Y^\lambda\mid M^\mu$ if and only if there exists a sum $\sum_{i=0}^mp^i\nu[i]$ such that $\nu[i]\models|\lambda(i)|$ and $\overline{\nu[i]}\unlhd \lambda(i)$ for all $0\leq i\leq m$ and $\mu=\sum_{i=0}^mp^i\nu[i]$.
\end{thm}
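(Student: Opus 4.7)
The plan is to use the Brauer quotient criterion (Lemma \ref{L;Brauerquotient}(iii)) together with the combinatorial description in Lemma \ref{PModules}. Fix a vertex $V$ of $Y^\lambda$; recall that it is a Sylow $p$-subgroup of the Young subgroup $\sym{\lambda}$. Using the standard $p$-adic expansions of the individual parts of $\lambda$ (compatible with the partition-level expansion $\lambda=\sum_{i=0}^mp^i\lambda(i)$), I would realize $V$ concretely so that its orbits on $\mathbf{n}$ consist of exactly $|\lambda(i)|$ orbits of size $p^i$ for each $0\leq i\leq m$. A crucial observation is that the action of $N_{\sym{n}}(V)/V$ on any Brauer quotient of a permutation $\F\sym{n}$-module factors through the quotient onto $\prod_{i=0}^m\sym{|\lambda(i)|}$ permuting $V$-orbits of each common size.

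Next, I would describe $M^\mu(V)$ explicitly. By Lemma \ref{PModules}, each member of $\mathcal{T}(\mu)^V$ records a distribution of $V$-orbits into the rows of $\mu$. Writing $\nu[i]_k$ for the number of size-$p^i$ orbits placed in row $k$, such a distribution is valid exactly when $\mu_k=\sum_ip^i\nu[i]_k$ and $\nu[i]:=(\nu[i]_1,\ldots,\nu[i]_{\ell(\mu)})\models|\lambda(i)|$ for every $i$. Grouping the basis by the tuple $(\nu[i])_i$ and tracking the $\prod_i\sym{|\lambda(i)|}$-action gives
\begin{equation*}
M^\mu(V)\cong\bigoplus_{\mu=\sum_ip^i\nu[i]}\bigotimes_{i=0}^mM^{\overline{\nu[i]}}
\end{equation*}
as $\F[N_{\sym{n}}(V)/V]$-modules (inflated along the projection above), where each $M^{\overline{\nu[i]}}$ is a Young permutation module for $\sym{|\lambda(i)|}$ and the outer sum runs over tuples of compositions $\nu[i]\models|\lambda(i)|$ with $\mu=\sum_ip^i\nu[i]$.

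The next essential input is the factorization $Y^\lambda(V)\cong\bigotimes_{i=0}^mY^{\lambda(i)}$ as $\F[N_{\sym{n}}(V)/V]$-modules. Granting this, Lemma \ref{L;Brauerquotient}(iii) turns $Y^\lambda\mid M^\mu$ into the question whether $\bigotimes_iY^{\lambda(i)}$ is a direct summand of the decomposition above. Since each $\lambda(i)$ is $p$-restricted, the classical Young-module result $Y^{\lambda(i)}\mid M^{\overline{\nu[i]}}\iff\overline{\nu[i]}\unlhd\lambda(i)$ (necessity is recalled in the excerpt, and sufficiency for $p$-restricted targets is standard, following by induction on the dominance order from $[M^{\lambda(i)}:Y^{\lambda(i)}]=1$) combined with the Krull--Schmidt theorem applied to $\prod_i\sym{|\lambda(i)|}$ yields precisely the stated characterization.

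The main obstacle will be establishing the factorization $Y^\lambda(V)\cong\bigotimes_iY^{\lambda(i)}$, which says that the Brauer construction splits along the $p$-adic expansion of $\lambda$. I would proceed by induction on $m$. The base case $m=0$ is tautological, as then $\lambda$ is already $p$-restricted. For the inductive step, I would exploit compatibility of Brauer quotients with external tensor products under the direct product decomposition of $V$ coming from the $p$-adic expansion, together with a description of $Y^{p\mu}(V')$ (where $V'$ is the top layer of $V$) in terms of $Y^\mu$ via the natural embedding of symmetric groups arising from grouping letters into size-$p$ blocks. Matching these module-theoretic identifications with the combinatorial decomposition of $M^\mu(V)$ is the technical heart of the proof.
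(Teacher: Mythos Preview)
The paper does not prove this theorem at all: it is quoted verbatim from Donkin \cite[(3.6)]{Donkin}, whose argument goes through the tilting theory of $\GL_n$ and Schur--Weyl duality rather than any direct symmetric-group computation. So there is no ``paper's own proof'' to compare against; your proposal is an alternative route to a result the paper merely cites.

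Your Brauer-quotient strategy is a legitimate and known alternative, essentially the line taken by Grabmeier \cite{GJ} and revisited by later authors. The decomposition of $M^\mu(V)$ you write down is correct and does follow from Lemma~\ref{PModules} as you say, and the reduction via Lemma~\ref{L;Brauerquotient}(iii) to a question about $\prod_i\sym{|\lambda(i)|}$-modules is sound. Two points deserve more care than you give them. First, the factorization $Y^\lambda(V)\cong\boxtimes_iY^{\lambda(i)}$ is indeed the heart of the matter, but your inductive sketch is vague: the step ``describe $Y^{p\mu}(V')$ in terms of $Y^\mu$'' hides a genuine identification of Green correspondents that needs the vertex theory of Young modules (again Grabmeier) or an equivalent input, and is not a routine unwinding of definitions. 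Second, your base case ``$Y^{\lambda(i)}\mid M^{\overline{\nu[i]}}\iff\overline{\nu[i]}\unlhd\lambda(i)$ for $p$-restricted $\lambda(i)$'' is true, but the justification you offer (``induction on dominance from $[M^{\lambda(i)}:Y^{\lambda(i)}]=1$'') does not work as stated: knowing the diagonal $p$-Kostka numbers are~$1$ does not by itself force the sub-diagonal ones to be positive. One needs an extra argument, for instance that $Y^{\lambda(i)}$ is projective (hence $[M^\nu:Y^{\lambda(i)}]$ is a composition multiplicity of a specific simple module in $M^\nu$, which one then locates via the Specht filtration), or an appeal to ordinary characters. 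Neither gap is fatal, but both require real work that your outline does not supply.
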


\begin{lem}\label{HemmerNakano}\cite[Proposition 3.2.2, Theorem 3.3.2]{DHDN}
Let $\lambda\vdash n$ and $\lambda$ have $p$-adic expansion $\sum_{i=0}^mp^i\lambda(i)$ for some $m\in \mathbb{N}_0$. Then we have $\ c_{\sym{n}}(M^\lambda)=\mathrm{rank}(\sym{\lambda})$ and $c_{\sym{n}}(Y^\lambda)=\frac{n-|\lambda(0)|}{p}$.
\end{lem}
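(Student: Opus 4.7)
My plan is to handle the two equalities separately; the first is routine, while the second requires a combinatorial optimisation over the admissible $\mu$'s produced by Theorem \ref{T;Donkin} matched against a Brauer-quotient computation.

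For the identity $c_{\sym{n}}(M^\lambda)=\mathrm{rank}(\sym{\lambda})$, note that $M^\lambda\cong(\F_{\sym{\lambda}}){\uparrow^{\sym{n}}}$ by the very definition of a Young permutation module, so Lemma \ref{L;complexity}(iv) gives $c_{\sym{n}}(M^\lambda)=c_{\sym{\lambda}}(\F)$, and the cited fact $c_G(\F)=\mathrm{rank}(G)$ finishes this part.

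For the identity $c_{\sym{n}}(Y^\lambda)=(n-|\lambda(0)|)/p$, since $Y^\lambda$ is indecomposable with trivial source, Lemma \ref{L;complexity}(ii) reduces this to showing $\mathrm{rank}(P)=(n-|\lambda(0)|)/p$ for any vertex $P$ of $Y^\lambda$. For the \emph{upper bound}, Theorem \ref{T;Donkin} asserts that every $\mu=\sum_i p^i\nu[i]$ with $\nu[i]\models|\lambda(i)|$ and $\overline{\nu[i]}\unlhd\lambda(i)$ satisfies $Y^\lambda\mid M^\mu=(\F_{\sym{\mu}}){\uparrow^{\sym{n}}}$, forcing $P$ to be $\sym{n}$-conjugate into a Sylow $p$-subgroup of $\sym{\mu}$; hence $\mathrm{rank}(P)\leq\sum_j\lfloor\mu_j/p\rfloor=(n-\sum_j(\mu_j\bmod p))/p$. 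Since $\mu_j\equiv\nu[0]_j\pmod p$ and $\nu[0]_j\bmod p\leq\nu[0]_j$, one has $\sum_j(\mu_j\bmod p)\leq|\nu[0]|=|\lambda(0)|$, with equality realised by the valid choice $\nu[0]=(1^{|\lambda(0)|})$ (for which $\overline{\nu[0]}=(1^{|\lambda(0)|})\unlhd\lambda(0)$) together with any admissible $\nu[i]$ for $i\geq 1$. This delivers $\mathrm{rank}(P)\leq(n-|\lambda(0)|)/p$.

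For the matching \emph{lower bound}, apply Lemma \ref{L;complexity}(v): it suffices to exhibit an elementary abelian $p$-subgroup $E\leq\sym{n}$ of rank $(n-|\lambda(0)|)/p$ with $Y^\lambda(E)\neq 0$. Take $E$ generated by $(n-|\lambda(0)|)/p$ pairwise disjoint $p$-cycles, positioned so that their supports cover all but the residues encoded by $\lambda(0)$ under a row-wise partition of $\mathbf{n}$ compatible with $\sym{\lambda}$. Using Lemma \ref{PModules} one counts $\dim_\F M^\lambda(E)$ combinatorially, and via Lemma \ref{L;Brauerquotient}(iii) applied to a $\mu$ attaining the optimum above, one descends the non-vanishing from $M^\mu(E)$ to $Y^\lambda(E)$. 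The principal obstacle is exactly this descent step: ensuring that the nontrivial Brauer quotient is carried by $Y^\lambda$ itself rather than by some strictly dominating Young summand $Y^\nu$ of $M^\mu$. A clean way around it is to invoke the Klyachko--Grabmeier--Erdmann classification of vertices of Young modules, which identifies $P$ (up to conjugacy) as a Sylow $p$-subgroup of $\prod_{i\geq 1}\sym{p^i|\lambda(i)|}$ and simultaneously secures both bounds via $\mathrm{rank}(\sym{k})=\lfloor k/p\rfloor$.
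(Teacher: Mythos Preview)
The paper does not prove this lemma; it is quoted verbatim from \cite[Proposition~3.2.2, Theorem~3.3.2]{DHDN} with no argument supplied. Your proposal therefore goes further than the paper itself, and the comparison is really between your argument and the original source.

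Your treatment of $c_{\sym{n}}(M^\lambda)=\mathrm{rank}(\sym{\lambda})$ is clean and correct, and coincides with the standard one-line proof.

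For $c_{\sym{n}}(Y^\lambda)=(n-|\lambda(0)|)/p$, your upper-bound argument via Theorem~\ref{T;Donkin} is correct and pleasant: optimising over admissible $\mu$ with $\nu[0]=(1^{|\lambda(0)|})$ does yield the bound. However, the lower bound is where the content lies, and your proposal does not close it. The suggested use of Lemma~\ref{L;Brauerquotient}(iii) requires already knowing a vertex $V$ of $Y^\lambda$, which is precisely what is in question; and the ``descent'' from $M^\mu(E)\neq 0$ to $Y^\lambda(E)\neq 0$ cannot be effected by the tools listed, since $M^\mu(E)$ may be entirely supported on summands $Y^\nu$ with $\nu\rhd\lambda$. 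You correctly identify this obstacle, but your resolution---invoking the Grabmeier/Erdmann description of the vertex of $Y^\lambda$---is not a way \emph{around} the difficulty: that vertex theorem is exactly the input Hemmer--Nakano use, and once it is granted the complexity formula follows immediately from Lemma~\ref{L;complexity}(ii). In other words, after the gap your argument collapses to the cited result, so the upper-bound portion is the only genuinely independent contribution.
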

\section{Proof of Theorem \ref{T;A}}
The aim of this section is to finish the proof of Theorem \ref{T;A}. Moreover, we also determine the symmetric and exterior powers of a Young permutation module that are projective. Some lemmas are required as a preparation.

\begin{lem}\label{L;symmetric}
Let $G$ be a finite group and $P$ be a $p$-subgroup of $G$. Let $1<a\in \mathbb{N}$ and $M$ be an $\F G$-module. Then $\langle S\rangle_\F\subseteq \mathrm{Tr}^P(S^aM)$, where
$$S=\{u\in S^aM:u=w\odot v_1\odot\cdots\odot v_{a-1},\ w\in \mathrm{Tr}^P(M),\ v_1,\ldots,v_{a-1}\in M^P\}.$$
\end{lem}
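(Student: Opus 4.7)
The plan is to show that a general generator of $\langle S\rangle_\F$ already lies in $\Tr^P(S^aM)$, by pushing the relative trace through the symmetric product using the fact that the non-$w$ factors are $P$-fixed.

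First, I would pick an arbitrary generator $u = w\odot v_1\odot\cdots\odot v_{a-1}$ of $\langle S\rangle_\F$, where $w\in\Tr^P(M)$ and $v_1,\ldots,v_{a-1}\in M^P$. By definition of $\Tr^P(M)$ one can write
\begin{equation*}
w=\sum_{j}\Tr_{Q_j}^P(x_j),\qquad Q_j<P\ \text{a $p$-subgroup},\ x_j\in M^{Q_j},
\end{equation*}
so by linearity it suffices to handle a single summand, i.e.\ to show that
\begin{equation*}
\Tr_{Q}^P(x)\odot v_1\odot\cdots\odot v_{a-1}\in\Tr^P(S^aM)
\end{equation*}
whenever $Q<P$ is a $p$-subgroup, $x\in M^Q$ and $v_1,\ldots,v_{a-1}\in M^P$.

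Next I would observe that since $M^P\subseteq M^Q$, the element $y:=x\odot v_1\odot\cdots\odot v_{a-1}$ lies in $(S^aM)^Q$. The diagonal action of any $g\in P$ on $y$ gives
\begin{equation*}
g\cdot y=(gx)\odot(gv_1)\odot\cdots\odot(gv_{a-1})=(gx)\odot v_1\odot\cdots\odot v_{a-1},
\end{equation*}
because each $v_i$ is $P$-fixed. Taking a complete set $\{P/Q\}$ of left coset representatives and summing, the factors $v_1,\ldots,v_{a-1}$ can be pulled out of the sum, yielding
\begin{equation*}
\Tr_Q^P(y)=\Bigl(\sum_{g\in\{P/Q\}}gx\Bigr)\odot v_1\odot\cdots\odot v_{a-1}=\Tr_Q^P(x)\odot v_1\odot\cdots\odot v_{a-1}.
\end{equation*}
Since $y\in(S^aM)^Q$ and $Q<P$, the left hand side is in $\Tr^P(S^aM)$ by definition.

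Combining this identity with the linear decomposition of $w$ shows that every generator of $\langle S\rangle_\F$ lies in $\Tr^P(S^aM)$, hence $\langle S\rangle_\F\subseteq\Tr^P(S^aM)$. There is no real obstacle; the only point to verify carefully is that the relative trace commutes with tensoring against $P$-fixed factors, which is immediate from the diagonal action.
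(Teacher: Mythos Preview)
Your proof is correct and essentially identical to the paper's: both reduce by linearity to a single term $\Tr_Q^P(x)\odot v_1\odot\cdots\odot v_{a-1}$ and then use that the $P$-fixed factors pass through the relative trace, yielding $\Tr_Q^P(x\odot v_1\odot\cdots\odot v_{a-1})\in\Tr^P(S^aM)$. The paper separates out the trivial case $P=1$ explicitly, but otherwise the arguments match line for line.
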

\begin{proof}
We assume that $P\neq 1$. For any $Q<P$, $x\in M^Q$ and $y_1,\ldots,y_{a-1}\in M^P$, it is enough to show that $\mathrm{Tr}^P_Q(x)\odot y_1\odot \cdots\odot y_{a-1}\in \mathrm{Tr}^P(S^aM)$. We have
\begin{align*}
\mathrm{Tr}^P_Q(x)\odot y_1\odot \cdots\odot y_{a-1}=&(\sum_{g\in \{P/Q\}}\!gx)\odot y_1\odot \cdots\odot y_{a-1}\\=&\sum_{g\in \{P/Q\}}\!g(x\odot y_1\odot \cdots\odot y_{a-1})\\=& \mathrm{Tr}^P_Q(x\odot y_1\odot \cdots\odot y_{a-1})\in \mathrm{Tr}^P(S^aM),
\end{align*}
as desired. As the case $P=1$ is trivial, the lemma follows.
\end{proof}
By a similar computation, we deduce the following result.
\begin{lem}\label{L;exterior}
Let $G$ be a finite group and $P$ be a $p$-subgroup of $G$. Let $1<a\in \mathbb{N}$ and $M$ be an $\F G$-module. If $a\leq\dim_\F M$, then $\langle S\rangle_\F\subseteq \mathrm{Tr}^P(\Lambda^aM)$, where
\begin{align*}
S=\{u\in \Lambda^aM: u=w\wedge v_1\wedge\cdots\wedge v_{a-1},\ w\in \mathrm{Tr}^P(M),\ v_1,\ldots,v_{a-1}\in M^P\}.
\end{align*}
\end{lem}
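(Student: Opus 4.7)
The plan is to follow the template of the proof of Lemma \ref{L;symmetric} essentially verbatim, swapping the symmetric product $\odot$ for the wedge product $\wedge$. The hypothesis $a\leq\dim_\F M$ is only needed so that $\Lambda^aM$ is a nontrivial space in which the expressions make sense; once that is granted, the algebra is identical because both $\odot$ and $\wedge$ are $\F$-multilinear in each slot and the diagonal $G$-action commutes with each of them.

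First I would reduce to the nontrivial case $P\neq 1$ (if $P=1$, then $\mathrm{Tr}^P(M)=0$, so $S=\{0\}$ and the inclusion is trivial). Then, since $\langle S\rangle_\F$ is spanned by elements of the form $\mathrm{Tr}^P_Q(x)\wedge v_1\wedge\cdots\wedge v_{a-1}$ with $Q<P$, $x\in M^Q$ and $v_1,\ldots,v_{a-1}\in M^P$, it suffices to show each such element lies in $\mathrm{Tr}^P(\Lambda^aM)$.

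The key computation is
\begin{align*}
\mathrm{Tr}^P_Q(x)\wedge v_1\wedge\cdots\wedge v_{a-1}
&=\Bigl(\sum_{g\in\{P/Q\}}gx\Bigr)\wedge v_1\wedge\cdots\wedge v_{a-1}\\
&=\sum_{g\in\{P/Q\}}(gx)\wedge v_1\wedge\cdots\wedge v_{a-1}\\
&=\sum_{g\in\{P/Q\}}g(x\wedge v_1\wedge\cdots\wedge v_{a-1})\\
&=\mathrm{Tr}^P_Q(x\wedge v_1\wedge\cdots\wedge v_{a-1})\in\mathrm{Tr}^P(\Lambda^aM),
\end{align*}
where in the third equality I use that each $v_i\in M^P$ is fixed by every $g\in P\supseteq\{P/Q\}$, so $g$ may be pulled through the wedges, and in the last step that $x\wedge v_1\wedge\cdots\wedge v_{a-1}\in(\Lambda^aM)^Q$ because each $v_i$ is $Q$-fixed and $x$ is $Q$-fixed.

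There is no genuine obstacle here, as the author indicates by saying \emph{by a similar computation}; the only subtlety worth flagging is that the wedge product introduces no sign issues in this argument (the $v_i$ sit in the same positions on both sides of every equality), so the computation goes through cleanly.
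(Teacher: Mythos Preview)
Your proof is correct and is exactly the approach the paper intends: the paper simply says ``by a similar computation'' in reference to Lemma~\ref{L;symmetric}, and your argument carries out that computation with $\wedge$ in place of $\odot$, using only multilinearity and the diagonal $G$-action. Your remarks on the trivial case $P=1$ and on the absence of sign issues are accurate and complete the justification.
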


Let $H$ be a subgroup of a finite group $G$ and $M$ be an $\F G$-module. Let $S$ be a non-empty multiset whose members are vectors of $M$ and $S'$ be the underlying set of $S$. Write $H\Rrightarrow S$ if $S'\not\subseteq M^H$ and $S$ is closed under the action of $H$ on $M$. Also write $H\Rightarrow S$ if $H\Rrightarrow S$ and $S$ is a set. In particular, $H\Rrightarrow S$ implies that $S$ may have repeated members.

\begin{lem}\label{L;symmetricBrauer}
Let $G$ be a finite group and $P$ be a $p$-subgroup of $G$. Let $M$ be a $p$-permutation $\F G$-module and  $\mathcal{B}_P(M)=\{v_1,\ldots, v_d\}$, where $d=\dim_\F M$. Let $1<a\in \mathbb{N}$. Then $(S^aM)(P)\cong S^a(M(P))\oplus S_a(M,P)$ as $\F[N_G(P)/P]$-modules, where
$S_a(M,P)$ is the $\F$-linear space generated by
$$\{u\in \mathcal{B}_{P,a}^s(M): u=v_{i_1}\odot\cdots\odot v_{i_a},\ 1\leq i_1\leq\cdots\leq i_a\leq d, \ P\Rrightarrow\{v_{i_1},\ldots,v_{i_a}\}\}.$$
\end{lem}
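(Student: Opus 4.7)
The plan is to apply Lemma \ref{L;Brauerquotient}(ii) to $S^aM$ via its natural $P$-permutation basis $\B_{P,a}^s(M)$, partition this basis according to whether the underlying multisets sit inside $\B_P(M)^P$, and construct an $\F[N_G(P)/P]$-linear copy of $S^a(M(P))$ inside $(S^aM)(P)$ using Lemma \ref{L;symmetric}. First, since $S^aM$ is a $p$-permutation $\F G$-module with $P$-permutation basis $\B_{P,a}^s(M)$ (as noted just before Lemma \ref{L;Basis}), Lemma \ref{L;Brauerquotient}(ii) gives $(S^aM)(P)\cong\langle\B_{P,a}^s(M)^P\rangle_\F$ as $\F[N_G(P)/P]$-modules. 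A monomial $u=v_{i_1}\odot\cdots\odot v_{i_a}$ lies in $\B_{P,a}^s(M)^P$ precisely when the multiset $\{v_{i_1},\ldots,v_{i_a}\}$ is $P$-closed. Partition $\B_{P,a}^s(M)^P=A\sqcup B$, where $A$ consists of monomials with every factor in $\B_P(M)^P$ and $B=\{u:P\Rrightarrow\{v_{i_j}\}\}$ as in the statement; this yields the vector-space decomposition $(S^aM)(P)=\langle A\rangle_\F\oplus S_a(M,P)$.

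Next, I will construct an $\F[N_G(P)/P]$-linear inclusion $\iota:S^a(M(P))\hookrightarrow (S^aM)(P)$ with image $\langle A\rangle_\F$. The $\F[N_G(P)]$-linear inclusion $M^P\hookrightarrow M$ induces an injection $S^a(M^P)\hookrightarrow S^aM$ whose image lies in $(S^aM)^P$. Composing with the canonical surjection $(S^aM)^P\twoheadrightarrow(S^aM)(P)$ and invoking Lemma \ref{L;symmetric}, any symmetric tensor in $S^a(M^P)$ having a factor from $\Tr^P(M)$ is in $\Tr^P(S^aM)$ and therefore maps to zero. Thus the composite descends to an $\F[N_G(P)/P]$-linear map $\iota:S^a(M^P/\Tr^P(M))=S^a(M(P))\to(S^aM)(P)$. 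Identifying $M(P)\cong\langle\B_P(M)^P\rangle_\F$ via Lemma \ref{L;Brauerquotient}(ii), the map $\iota$ carries a standard basis of $S^a(M(P))$ bijectively onto $A$, so $\iota$ is injective with image exactly $\langle A\rangle_\F$, giving $\langle A\rangle_\F\cong S^a(M(P))$ as $\F[N_G(P)/P]$-modules.

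Combining the above yields the claimed isomorphism $(S^aM)(P)\cong S^a(M(P))\oplus S_a(M,P)$. The main technical obstacle is verifying that $S_a(M,P)=\langle B\rangle_\F$ is itself an $\F[N_G(P)/P]$-submodule, so the decomposition holds as a direct sum of submodules rather than merely of vector spaces. I expect to handle this by fully exploiting the $p$-permutation hypothesis: one can choose $\B_P(M)$ to be a permutation basis for a Sylow $p$-subgroup $Q$ of $N_G(P)$ containing $P$, available since $M$ restricted to such a $p$-subgroup is a permutation module. Then any $g\in Q$ permutes $\B_P(M)$ and hence permutes $\B_{P,a}^s(M)$, preserving both $A$ and $B$, which gives $Q/P$-invariance of $S_a(M,P)$. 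For general $g\in N_G(P)$, write $gv=\alpha_v+\beta_v$ with $\alpha_v\in\langle\B_P(M)^P\rangle_\F$ and $\beta_v\in\langle\B_P(M)\setminus\B_P(M)^P\rangle_\F$; since $g$ preserves $\Tr^P(M)$, the identity $\sum_{v'\in Pv}\alpha_{v'}=0$ holds for every non-trivial $P$-orbit $Pv$. Expanding $g\cdot u$ for $u\in B$ in this decomposition and using Lemma \ref{L;symmetric} to absorb all mixed terms into $\Tr^P(S^aM)$ should force the $\langle A\rangle_\F$-component to vanish modulo $\Tr^P(S^aM)$, completing the invariance argument.
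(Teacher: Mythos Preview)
Your approach coincides with the paper's: apply Lemma~\ref{L;Brauerquotient}(ii) to the $P$-permutation basis $\B_{P,a}^s(M)$, partition the fixed monomials into $A$ (all factors in $\B_P(M)^P$) and $B$ (the rest), and build the $\F[N_G(P)/P]$-map $S^a(M(P))\to(S^aM)(P)$ using Lemma~\ref{L;symmetric}. The paper's map $\phi$ is precisely your $\iota$, and the dimension count finishing the identification of $\langle A\rangle_\F$ with $S^a(M(P))$ is the same.

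You go further than the paper by explicitly flagging the one delicate point---that $S_a(M,P)=\langle B\rangle_\F$ must itself be an $\F[N_G(P)/P]$-submodule---which the paper dispatches in one line with ``$N_G(P)$ permutes the orbits of $\B_P(M)$ under the action of $P$'', offered without further comment. Your proposed fix for general $g\in N_G(P)$ is, however, not watertight: expanding $gu$ with $gv=\alpha_v+\beta_v$, the $\langle A\rangle_\F$-component of $gu$ is exactly the pure-$\alpha$ product $\alpha_{i_1}\odot\cdots\odot\alpha_{i_a}$, and the orbit relation $\sum_{v'\in Pv}\alpha_{v'}=0$ alone does not force this to vanish (for a two-element orbit with $\alpha_2=-\alpha_1\neq 0$ one gets $-\alpha_1^{\odot 2}\neq 0$). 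The mixed $\alpha$--$\beta$ terms land in the span of non-$P$-fixed monomials, not in $\Tr^P(S^aM)$ on the nose, so Lemma~\ref{L;symmetric} does not absorb them in the way you suggest. That said, the paper's own argument carries exactly the same unresolved step; in every application in the paper the issue is moot, since either only the $\F$-vector-space splitting is used (all complexity computations merely test nonvanishing of Brauer quotients), or one is working with $M=M^\lambda$ and $\B_P(M^\lambda)=\mathcal{T}(\lambda)$, a genuine $\sym{n}$-permutation basis, for which $N_G(P)$ really does permute the $P$-orbits and both summands are honest submodules.
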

\begin{proof}
The case $P=1$ is trivial. We thus assume that $P\neq 1$. By the definition, note that $(S^aM)(P)=\langle\{u+\mathrm{Tr}^P(S^aM): u\in (\mathcal{B}_{P,a}^s(M))^P\}\rangle_\F$. It is also clear that $u=v_{i_1}\odot\cdots\odot v_{i_a}\in (\mathcal{B}_{P,a}^s(M))^P$ if and only if
$\{v_{i_1},\ldots,v_{i_a}\}$ is closed under the action of $P$ on $M$. Since $N_G(P)$ permutes the orbits of $\mathcal{B}_P(M)$ under the action of $P$, $(S^aM)(P)=M_1\oplus M_2$ as $\F[N_G(P)/P]$-modules, where
\begin{align*}
&M_1=\langle\{u+\mathrm{Tr}^P(S^aM): u=v_{i_1}\odot\cdots\odot v_{i_a}\in \mathcal{B}_{P,a}^s(M),\  v_{i_1},\ldots,v_{i_a}\in(\mathcal{B}_P(M))^P\}\rangle_{\F},\\
&M_2=\langle\{u+\mathrm{Tr}^P(S^aM): u=v_{i_1}\odot\cdots\odot v_{i_a}\in \mathcal{B}_{P,a}^s(M),\  P\Rrightarrow\{v_{i_1},\ldots,v_{i_a}\}\}\rangle_{\F}.
\end{align*}
Observe that $M_2\cong S_a(M,P)$ as $\F[N_G(P)/P]$-modules. It suffices to prove that $M_1\cong S^a(M(P))$ as $\F[N_G(P)/P]$-modules. The natural map $\pi$ from $(S^aM)^P$ to $(S^aM)(P)$ induces an $\F$-linear map $\phi$ from $S^a(M(P))$ to $M_1$ by sending each vector  $(v_{i_1}+\mathrm{Tr}^P(M))\odot\cdots\odot(v_{i_a}+\mathrm{Tr}^P(M))$ to $v_{i_1}\odot\cdots\odot v_{i_a}+\mathrm{Tr}^P(S^aM)$, where we have $1\leq i_1\leq\cdots\leq i_a\leq d$ and $v_{i_1},\ldots,v_{i_a}\in(\mathcal{B}_P(M))^P$. By Lemma \ref{L;symmetric}, we know that $\phi$ is well-defined. Also notice that $\phi$ is a surjective $\F[N_G(P)/P]$-homomorphism. As $\dim_\F M_1=\dim_\F S^a(M(P))$, we conclude that $\phi$ is an $\F[N_G(P)/P]$-isomorphism. So $M_1\cong S^a(M(P))$ as $\F[N_G(P)/P]$-modules and the lemma follows.
\end{proof}

By Lemma \ref{L;exterior} and mimicking the proof of Lemma \ref{L;symmetricBrauer}, one deduces
\begin{lem}\label{L;exteriorBrauer}
Let $G$ be a finite group and $P$ be a $p$-subgroup of $G$. Let $1<a\in \mathbb{N}$. Let $M$ be a $p$-permutation $\F G$-module and $\mathcal{B}_P(M)=\{v_1,\ldots, v_d\}$, where $d=\dim_\F M$. If $a\leq d$, then $(\Lambda^aM)(P)\cong \Lambda^a(M(P))\oplus \Lambda_a(M,P)$ as $\F[N_G(P)/P]$-modules, where
$\Lambda_a(M,P)$ is the $\F$-linear space generated by
$$\{\alpha_uu\in \mathcal{B}_{P,a}^e(M): u=v_{i_1}\wedge\cdots\wedge v_{i_a},\ 1\leq i_1<\cdots<i_a\leq d, \ P\Rightarrow \{v_{i_1},\ldots,v_{i_a}\}\}.$$
\end{lem}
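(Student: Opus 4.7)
The plan is to mirror the proof of Lemma \ref{L;symmetricBrauer} step by step, substituting Lemma \ref{L;exterior} for Lemma \ref{L;symmetric} and using the signed basis $\mathcal{B}_{P,a}^e(M) = \{\alpha_u u\}$ of Lemma \ref{L;Basis} in place of $\mathcal{B}_{P,a}^s(M)$. The case $P = 1$ is trivial: every basis vector is $P$-fixed, $\Lambda_a(M, 1) = 0$, and both sides collapse to $\Lambda^a M$. Henceforth assume $P \neq 1$.

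Since $P$ permutes the basis $\mathcal{B}_{P,a}^e(M)$, the Brauer quotient $(\Lambda^a M)(P)$ is spanned by the images $\alpha_u u + \mathrm{Tr}^P(\Lambda^a M)$ of the $P$-fixed members of that basis. A basis element indexed by $1 \leq i_1 < \cdots < i_a \leq d$ is $P$-fixed exactly when the index set $\{i_1, \ldots, i_a\}$ is a union of $P$-orbits on $\{1, \ldots, d\}$, which splits the spanning set into two disjoint parts according to whether every $v_{i_j}$ lies in $(\mathcal{B}_P(M))^P$ or whether $P$ acts non-trivially on $\{v_{i_1}, \ldots, v_{i_a}\}$ (equivalently $P \Rightarrow \{v_{i_1}, \ldots, v_{i_a}\}$; the strict inequalities force this collection to be a genuine set, which is why the hypothesis involves $\Rightarrow$ rather than $\Rrightarrow$). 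Because $N_G(P)$ permutes the $P$-orbits on $\mathcal{B}_P(M)$, both parts are $\F[N_G(P)/P]$-stable, yielding a decomposition $(\Lambda^a M)(P) = M_1 \oplus M_2$ in which inspection of bases identifies $M_2$ with $\Lambda_a(M, P)$.

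For the identification $M_1 \cong \Lambda^a(M(P))$, observe that inside $M_1$ every factor $v_{i_j}$ is pointwise $P$-fixed, so Lemma \ref{L;Basis} permits the choice $\alpha_u = 1$ on these basis elements and the sign bookkeeping evaporates. The alternating $a$-linear map $(x_1, \ldots, x_a) \mapsto x_1 \wedge \cdots \wedge x_a + \mathrm{Tr}^P(\Lambda^a M)$ on $M^P$ descends, by the universal property of the exterior power, to a well-defined $\F[N_G(P)/P]$-linear map $\phi : \Lambda^a(M(P)) \to M_1$; the crucial vanishing whenever some input $x_j$ lies in $\mathrm{Tr}^P(M)$ is exactly what Lemma \ref{L;exterior} supplies. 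By Lemma \ref{L;Brauerquotient}(ii) both source and target of $\phi$ have $\F$-dimension $\binom{|(\mathcal{B}_P(M))^P|}{a}$, and $\phi$ is manifestly surjective, so it must be an isomorphism. The only conceptual obstacle beyond the symmetric proof is tracking the signs $\alpha_u$ supplied by Lemma \ref{L;Basis}: they are absorbed into the definition of $\Lambda_a(M, P)$ on the $M_2$ side and collapse to $+1$ on the $M_1$ side, after which the proof becomes a verbatim copy of that of Lemma \ref{L;symmetricBrauer}.
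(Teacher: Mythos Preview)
Your proposal is correct and follows exactly the approach the paper takes: the paper's proof consists of the single sentence ``By Lemma \ref{L;exterior} and mimicking the proof of Lemma \ref{L;symmetricBrauer}, one deduces,'' and you have carried out precisely that mimicry, including the necessary observation that the signs $\alpha_u$ collapse to $+1$ on the $M_1$ part.
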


The correctness of the following lemma is obvious.
\begin{lem}\label{L;Brauer}
Let $G$ be a finite group and $P$ be a $p$-subgroup of $G$. Let $M$ be a $p$-permutation $\F G$-module, $d=\dim_\F M$ and $\mathcal{B}_P(M)=\{v_1,\ldots,v_d\}$.
\begin{enumerate}
\item [\em(i)] Let $1<a\in \mathbb{N}$. Then $S_a(M,P)\neq 0$ if and only if $P\Rrightarrow\{v_{i_1},\ldots,v_{i_a}\}$ for some $1\leq i_1\leq\cdots\leq i_a\leq d$.
\item [\em(ii)] Let $1<a\in \mathbb{N}$. If $a\leq d$, then $\Lambda_a(M,P)\neq 0$ if and only if $P\Rightarrow\{v_{i_1},\ldots,v_{i_a}\}$ for some $1\leq i_1<\cdots<i_a\leq d$.
\end{enumerate}
\end{lem}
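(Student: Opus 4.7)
The plan is straightforward because the statement is essentially a reformulation of the definitions: each of the two spaces vanishes precisely when its defining spanning set is empty, so the only thing to verify is that the listed generators are linearly independent (hence any non-empty collection of them spans a non-zero space).

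For part (i), I would first recall that $\mathcal{B}_{P,a}^s(M)$ is, by construction, an $\F$-basis of $S^aM$, so any subset of it is $\F$-linearly independent. The space $S_a(M,P)$ is defined as the $\F$-span of the subset consisting of those $u=v_{i_1}\odot\cdots\odot v_{i_a}\in \mathcal{B}_{P,a}^s(M)$ for which $P\Rrightarrow\{v_{i_1},\ldots,v_{i_a}\}$. Hence $S_a(M,P)\neq 0$ if and only if this subset is non-empty, which is exactly the stated index condition.

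For part (ii), I would invoke Lemma \ref{L;Basis}: the set $\{\alpha_u u : u=v_{i_1}\wedge\cdots\wedge v_{i_a},\ 1\leq i_1<\cdots<i_a\leq d\}$ is an $\F$-basis of $\Lambda^aM$, where each $\alpha_u\in\{\pm 1\}$. The generators of $\Lambda_a(M,P)$ form a subset of this basis, so they are $\F$-linearly independent, and therefore $\Lambda_a(M,P)\neq 0$ if and only if some strictly increasing tuple $(i_1,\ldots,i_a)$ with $P\Rightarrow\{v_{i_1},\ldots,v_{i_a}\}$ exists.

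No real obstacle arises; the only point requiring any care is that in (ii) the generators are the rescaled vectors $\alpha_u u$ rather than the basis vectors $u$ themselves, but since $\alpha_u\in\{\pm 1\}\subset\F^{\times}$ linear independence is unaffected. Consistent with the author's remark that the result is obvious, the whole content is that a span of non-zero scalar multiples of distinct basis vectors is zero if and only if its indexing set is empty.
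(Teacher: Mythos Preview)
Your proposal is correct and matches the paper's treatment: the paper simply states that the correctness of this lemma is obvious, and your argument spells out exactly why---the defining generators of $S_a(M,P)$ and $\Lambda_a(M,P)$ are subsets of the bases $\mathcal{B}_{P,a}^s(M)$ and $\mathcal{B}_{P,a}^e(M)$ respectively, so each space is non-zero precisely when its generating set is non-empty.
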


Let $G$ be a finite group and $M$ be a $p$-permutation $\F G$-module. By Lemma \ref{L;complexity} (v), recall that $c_G(M)=\max\{\mathrm{rank}(E): E\in\varepsilon^G,\ M(E)\neq0\}$. To determine $c_G(M)$, our strategy is to find an elementary abelian $p$-subgroup $E$ of $G$ with the largest $p$-rank such that $M(E)\neq 0$. Let $\nu_p(a)$ be the largest non-negative integer $b$ such that $p^b\mid a$. For convenience, we restate Theorem \ref{T;A} as follows.

\begin{thm}\label{T;Proj}
Let $G$ be a finite group and $P$ be a non-zero projective $\F G$-module. Let $1<a\in \mathbb{N}$ and $r=\mathrm{rank}(G)$. Then $c_G(S^a P)=\min\{\nu_p(a),r\}$. Moreover, if $a\leq\dim_\F P$, then $c_{G}(\Lambda^aP)=\min\{\nu_p(a),r\}$.
\end{thm}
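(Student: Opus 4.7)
The strategy is to apply Lemma \ref{L;complexity}(v) to the $p$-permutation modules $S^aP$ and $\Lambda^aP$, reducing both complexities to identifying the elementary abelian $p$-subgroups $E\leq G$ for which $(S^aP)(E)\neq 0$ or $(\Lambda^aP)(E)\neq 0$. The Brauer-quotient decompositions of Lemmas \ref{L;symmetricBrauer} and \ref{L;exteriorBrauer} split each such quotient into a ``pure'' piece $S^a(P(E))$ or $\Lambda^a(P(E))$ and a ``mixed'' piece $S_a(P,E)$ or $\Lambda_a(P,E)$, so the plan is to analyse when each piece vanishes.

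The key observation I would establish first is that for any non-trivial $p$-subgroup $E$ of $G$, every $E$-orbit on a fixed basis $\mathcal{B}_E(P)$ has size $|E|$. Indeed, $P{\downarrow_E}$ is a projective, hence free, $\F E$-module; simultaneously the $\F$-space spanned by $\mathcal{B}_E(P)$ is a permutation $\F E$-module, and so decomposes as $\bigoplus_i\F[E/Q_i]$, where projectivity forces each $Q_i=1$. Consequently $\mathcal{B}_E(P)^E=\varnothing$ whenever $E\neq 1$, and Lemma \ref{L;Brauerquotient}(ii) gives $P(E)=0$; hence $S^a(P(E))=0$ and $\Lambda^a(P(E))=0$, and the decompositions collapse to $(S^aP)(E)=S_a(P,E)$ and $(\Lambda^aP)(E)=\Lambda_a(P,E)$ for such $E$.

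Now invoke Lemma \ref{L;Brauer}: $S_a(P,E)\neq 0$ iff some $E$-closed multiset of $a$ elements of $\mathcal{B}_E(P)$ exists, and $\Lambda_a(P,E)\neq 0$ iff some $E$-closed genuine subset of size $a$ exists. Since every $E$-orbit has size $|E|$, such a multiset (resp.\ set) is a union with multiplicities (resp.\ a disjoint union) of orbits, and existence is equivalent to $|E|\mid a$; in the set case one additionally needs $a\leq\dim_\F P$, which is our hypothesis. Combined with $(S^aP)(1)=S^aP\neq 0$ and $(\Lambda^aP)(1)=\Lambda^aP\neq 0$, this yields
\begin{align*}
c_G(S^aP)=c_G(\Lambda^aP)=\max\{\mathrm{rank}(E):E\in\varepsilon^G,\ |E|\mid a\}=\min\{\nu_p(a),r\},
\end{align*}
the maximum being realised by any elementary abelian subgroup of that rank sitting inside a Sylow $p$-subgroup of $G$.

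The only non-routine step is the identification of the $E$-action on $\mathcal{B}_E(P)$ as free; once that is in hand, the rest is a direct application of the preparatory lemmas together with a counting argument on free $E$-orbits. I do not expect serious obstacles beyond making this orbit analysis precise.
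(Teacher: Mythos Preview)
Your proposal is correct and follows essentially the same approach as the paper: both proofs apply Lemma~\ref{L;complexity}(v), use Lemmas~\ref{L;symmetricBrauer}--\ref{L;Brauer} to reduce the non-vanishing of $(\Delta^aP)(E)$ to an orbit-counting question on $\mathcal{B}_E(P)$, and exploit the fact that every $E$-orbit on $\mathcal{B}_E(P)$ has size $|E|$ to conclude that non-vanishing is equivalent to $|E|\mid a$. The only difference is in how this last fact is justified: you argue directly that $P{\downarrow_E}$ is free and invoke Krull--Schmidt on the permutation decomposition $\bigoplus_i\F[E/Q_i]$, whereas the paper argues that a fixed point of a non-trivial subgroup $K\leq E$ would force $P(K)\neq 0$ via Lemma~\ref{L;Brauerquotient}(ii), contradicting Lemma~\ref{L;Brauerquotient}(i); the two arguments are equivalent and equally short.
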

\begin{proof}
Set $u=\min\{\nu_p(a),r\}$ and use $\Delta^a P$ to denote either $\Lambda^a P$ or $S^a P$. We require that $a\leq\dim_\F P$ if $\Delta^a P=\Lambda^aP$. We first verify that $(\Delta^aP)(E)\neq 0$ for any elementary abelian $p$-subgroup $E$ of $G$ with $p$-rank $u$. It then suffices to prove that $\text{rank}(F)\leq u$ for any elementary abelian $p$-subgroup $F$ of $G$ such that $(\Delta^aP)(F)\neq0$. Since $u\leq r$, there indeed exists an elementary abelian $p$-subgroup of $G$ with $p$-rank $u$. For any elementary abelian $p$-subgroup $E$ of $G$ with $p$-rank $u$, if $u=0$, then $(\Delta^aP)(E)=\Delta^a P\neq0$. If $u>0$, let $\mathcal{B}_E(P)=\{v_1,\ldots,v_d\}$, where $d=\dim_\F P$. As $P$ is projective, we claim that $\mathcal{B}_E(P)$ has no fixed points under the action of any non-trivial subgroup of $E$. Otherwise, by Lemma \ref{L;Brauerquotient} (ii), $P(K)\neq 0$ for some non-trivial $K\leq E$, which contradicts with Lemma \ref{L;Brauerquotient} (i). The claim is shown. So $\mathcal{B}_E(P)$ is a union set of some orbits of size $p^u$ under the action of $E$. If $\Delta^a P=\Lambda^a P$, as $p^u\mid a$ and $a\leq d$, one can choose $a/p^u$ $E$-orbits from $\mathcal{B}_E(P)$ and define $S$ to be the union set of these orbits. We thus have $E\Rightarrow S$, which implies that $\Lambda_a(P,E)\neq 0$ by Lemma \ref{L;Brauer} (ii). So $(\Delta^aP)(E)\neq 0$ by Lemma \ref{L;exteriorBrauer}. If $\Delta^aP=S^aP$, as $p^u\mid a$, one can choose a single $E$-orbit $\O$ from $\mathcal{B}_E(P)$ and put $S$ to be the multiset satisfying the conditions that $S'=\O$ and each member of $S'$ has multiplicity $a/p^u$ in $S$. So $S$ has $a$ members and $P\Rrightarrow S$, which implies that $S_a(P,E)\neq0$ by Lemma \ref{L;Brauer} (i). We also have  $(\Delta^aP)(E)\neq 0$ by Lemma \ref{L;symmetricBrauer}.

Let $F$ be an elementary abelian $p$-subgroup of $G$ such that $(\Delta^aP)(F)\neq 0$. So $|F|\mid p^r$ as $\text{rank}(G)=r$. If $|F|=1$, we are done. If $|F|>1$, for the given $\B_F(P)$, as $P$ is projective, $(\Delta^aP)(F)\neq 0$ implies that $\Lambda_a(P,F)\neq 0$ and $S_a(P,F)\neq 0$ by Lemmas \ref{L;Brauerquotient} (i), \ref{L;symmetricBrauer} and \ref{L;exteriorBrauer}. If $\Delta^a P=\Lambda^a P$, since $\Lambda_a(P,F)\neq 0$, by Lemma \ref{L;Brauer} (ii), there exists some $S\subseteq \mathcal{B}_F(P)$ such that $|S|=a$ and $F\Rightarrow S$. By letting $F$ and $S$ play the roles of $E$ and $\B_E(P)$ respectively in the claim, note that $S$ is a union set of some orbits of size $|F|$ under the action of $F$. So $|F|\mid a$. We thus have $\mathrm{rank}(F)\leq \nu_p(a)$ and $\text{rank}(F)\leq \min\{\nu_p(a),r\}$. The case $\Delta^a P=S^aP$ is similar. The proof is now complete.
\end{proof}
For a more general case, an inequality is given as follows.
\begin{lem}\label{L;compare}
Let $G$ be a finite group and $M$ be a $p$-permutation $\F G$-module. Let $1<a\in \mathbb{N}$ and $d=\dim_\F M$. If $a\leq d$, then $c_G(\Lambda^a M)\leq c_G(S^a M)$.
\end{lem}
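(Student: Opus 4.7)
The plan is to compare the Brauer quotients of the two modules elementary-abelian subgroup by subgroup and then appeal to Lemma \ref{L;complexity} (v). Both $S^aM$ and $\Lambda^aM$ are $p$-permutation $\F G$-modules (the latter by Lemma \ref{L;Basis}, the former by the discussion preceding it), so Lemma \ref{L;complexity} (v) reduces the desired inequality to the set-theoretic inclusion
\begin{equation*}
\{E\in\varepsilon^G : (\Lambda^aM)(E)\neq 0\}\subseteq\{E\in\varepsilon^G : (S^aM)(E)\neq 0\}.
\end{equation*}
Taking the maximum of $\mathrm{rank}(E)$ over the two sides then yields the conclusion.

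To establish this inclusion I would fix an elementary abelian $p$-subgroup $E\leq G$ with $(\Lambda^aM)(E)\neq 0$ and apply Lemma \ref{L;exteriorBrauer}, which reports that either $\Lambda^a(M(E))\neq 0$ or $\Lambda_a(M,E)\neq 0$. In the first case $\dim_\F M(E)\geq a$, hence $S^a(M(E))\neq 0$, and Lemma \ref{L;symmetricBrauer} forces $(S^aM)(E)\neq 0$. In the second case Lemma \ref{L;Brauer} (ii) supplies $a$ distinct vectors $v_{i_1},\ldots,v_{i_a}\in\B_E(M)$ with $E\Rightarrow\{v_{i_1},\ldots,v_{i_a}\}$; regarding this set as a multiset in which every element has multiplicity one, its underlying set is still not contained in $M^E$ and it is still closed under the action of $E$, so $E\Rrightarrow\{v_{i_1},\ldots,v_{i_a}\}$. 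Lemma \ref{L;Brauer} (i) then gives $S_a(M,E)\neq 0$, and Lemma \ref{L;symmetricBrauer} again forces $(S^aM)(E)\neq 0$.

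The only subtlety I anticipate is the translation between the set-theoretic hypothesis $\Rightarrow$ attached to $\Lambda_a(M,E)$ and its multiset counterpart $\Rrightarrow$ attached to $S_a(M,E)$. Since every set is tautologically a multiset with unit multiplicities, this translation is essentially free, so no real obstacle stands in the way beyond the bookkeeping above. In particular, no finer structural property of $M$ is invoked beyond its being a $p$-permutation $\F G$-module, and the argument makes no use of the projectivity hypothesis that powered Theorem \ref{T;Proj}.
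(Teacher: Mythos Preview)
Your proposal is correct and follows essentially the same route as the paper's proof: reduce via Lemma \ref{L;complexity} (v) to the implication $(\Lambda^aM)(E)\neq 0\Rightarrow(S^aM)(E)\neq 0$, split into the two cases supplied by Lemma \ref{L;exteriorBrauer}, and in each case feed the conclusion into Lemma \ref{L;symmetricBrauer}. The only cosmetic difference is that in the first case the paper invokes merely $M(E)\neq 0$ (enough because symmetric powers of nonzero spaces never vanish), whereas you use the stronger fact $\dim_\F M(E)\geq a$; both are valid.
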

\begin{proof}
For any elementary abelian $p$-subgroup $E$ of $G$, we show that $(S^aM)(E)\neq 0$ if $(\Lambda^aM)(E)\neq0$,
which will complete the proof. The case $E=1$ is trivial. We thus assume $E\neq 1$. For the given $\B_E(M)$, by Lemma \ref{L;exteriorBrauer}, the condition $(\Lambda^aM)(E)\neq0$ implies $\Lambda^a(M(E))\neq0$ or $\Lambda_a(M,E)\neq0$. If $\Lambda^a(M(E))\neq 0$, then $M(E)\neq 0$ and $(S^aM)(E)\neq0$ by Lemma \ref{L;symmetricBrauer}. If $\Lambda_a(M,E)\neq 0$, by Lemma \ref{L;Brauer} (ii), there exists some $S\subseteq \mathcal{B}_E(M)$ such that $|S|=a$ and $E\Rightarrow S$. So $E\Rrightarrow S$ and $S_a(M,E)\neq0$ by Lemma \ref{L;Brauer} (i). We have $(S^aM)(E)\neq 0$ by Lemma \ref{L;symmetricBrauer}. The lemma follows.
\end{proof}

Our next goal is to determine the symmetric and exterior powers of a Young permutation module that are projective. Given $\lambda\vdash n$, we have fixed $\mathcal{B}_P(M^\lambda)=\mathcal{T}(\lambda)$ for any $p$-subgroup $P$ of $\sym{n}$. Also recall that $\text{rank}(\sym{n})=\lfloor \frac{n}{p}\rfloor$.

\begin{lem}\label{L;symmetriccomplexity}
Let $\lambda\vdash n$, $1<a\in \mathbb{N}$ and $r=\mathrm{rank}(\sym{\lambda})$. Then
\[c_{\sym{n}}(S^aM^\lambda)=\begin{cases}
r, &\text{if}\ \nu_p(a)=0,\\
r+\nu_p(a), &\text{if}\ \nu_p(a)>0,\ \nu_p(a)+r\leq \lfloor \frac{n}{p} \rfloor,\\
\lfloor \frac{n}{p} \rfloor, &\text{if}\ \nu_p(a)>0,\ \nu_p(a)+r>\lfloor \frac{n}{p} \rfloor.
\end{cases}
\]
\end{lem}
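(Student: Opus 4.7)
The plan is to use Lemma \ref{L;complexity}(v) to identify $c_{\sym{n}}(S^aM^\lambda)$ with the largest $p$-rank of an elementary abelian $p$-subgroup $E\leq\sym{n}$ satisfying $(S^aM^\lambda)(E)\neq 0$, and then to detect non-vanishing via Lemma \ref{L;symmetricBrauer}, which splits the question into $M^\lambda(E)\neq 0$ or $S_a(M^\lambda,E)\neq 0$. The goal is to prove this maximum equals $\min\{r+\nu_p(a),\lfloor n/p\rfloor\}$, which using $r\leq\lfloor n/p\rfloor$ separates into the three stated cases.

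For the lower bound, let $E_0$ be the natural elementary abelian $p$-subgroup of $\sym{\lambda}$ built from $\lfloor\lambda_i/p\rfloor$ disjoint within-row $p$-cycles in each factor $\sym{\lambda_i}$, so $\mathrm{rank}(E_0)=r$ and the canonical $\lambda$-tabloid $\{t_0\}$ is $E_0$-fixed. Set $b=\min\{\nu_p(a),\lfloor n/p\rfloor-r\}$. The $n-pr$ points outside the support of $E_0$ admit $\lfloor n/p\rfloor-r$ further disjoint $p$-cycles, and because each row of $\{t_0\}$ contains fewer than $p$ such leftover points, I can choose $b$ of them, $g_1,\ldots,g_b$, each spanning at least two rows of $\{t_0\}$. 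Setting $E=E_0\langle g_1,\ldots,g_b\rangle$, elementary abelian of rank $r+b$ by disjointness of supports, I will track how any product $g_1^{c_1}\cdots g_b^{c_b}$ rearranges its support between distinct rows of $\{t_0\}$ to obtain $\mathrm{stab}_E(\{t_0\})=E_0$. Hence the $E$-orbit of $\{t_0\}$ has size exactly $p^b$; taking it with multiplicity $a/p^b$ (an integer since $p^b\mid a$) produces an $E$-closed multiset of $a$ tabloids containing a non-fixed element, so Lemmas \ref{L;Brauer}(i) and \ref{L;symmetricBrauer} yield $(S^aM^\lambda)(E)\neq 0$. When $b=0$ the alternative $S^a(M^\lambda(E_0))\neq 0$ is used directly.

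For the upper bound, let $E\leq\sym{n}$ be elementary abelian with $(S^aM^\lambda)(E)\neq 0$. If $M^\lambda(E)\neq 0$, then $E$ fixes some $\lambda$-tabloid and lies in a conjugate of $\sym{\lambda}$, whence $\mathrm{rank}(E)\leq r$. Otherwise $S_a(M^\lambda,E)\neq 0$; decomposing the witnessing multiset $S$ into $E$-orbits, write $a=a_0+a_1$ with $a_0$ the total multiplicity on fixed tabloids and $a_1=\sum n_ip^{s_i}$ running over non-trivial orbits with $s_i\geq 1$. Then $p\mid a_1$, so $a_0\equiv a\pmod p$, which forces $a_0>0$ whenever $\nu_p(a)=0$. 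If $a_0>0$, a fixed tabloid forces $\mathrm{rank}(E)\leq r$; if $a_0=0$ (so $\nu_p(a)>0$), the minimum $s_{\min}:=\min_is_i$ satisfies $p^{s_{\min}}\mid a$, hence $s_{\min}\leq\nu_p(a)$, and a tabloid in the corresponding orbit has $E$-stabilizer of rank $\mathrm{rank}(E)-s_{\min}$ that fixes a tabloid, so of rank at most $r$. In every subcase $\mathrm{rank}(E)\leq r+\nu_p(a)$, and combined with $\mathrm{rank}(E)\leq\mathrm{rank}(\sym{n})=\lfloor n/p\rfloor$ this completes the upper bound.

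The main obstacle will be verifying $\mathrm{stab}_E(\{t_0\})=E_0$ in the lower bound construction: I need to show that for any non-zero exponent tuple $(c_1,\ldots,c_b)\in\{0,\ldots,p-1\}^b$, the element $g_1^{c_1}\cdots g_b^{c_b}$ fails to preserve the row structure of $\{t_0\}$. Since the supports of the $g_i$ are pairwise disjoint and each crosses at least two rows of $\{t_0\}$, any non-trivial $g_i^{c_i}$ makes row-altering shifts only within its own support, and those shifts from different $i$ cannot cancel. This gives the orbit of $\{t_0\}$ of exact size $p^b$, after which the divisibility $p^b\mid a$ produces the required multiset; the remaining orbit-size bookkeeping in the upper bound is routine given the dimension description in Lemma \ref{PModules}.
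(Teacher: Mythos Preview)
Your proof is correct and follows essentially the same approach as the paper: both reduce via Lemma~\ref{L;complexity}(v) and Lemma~\ref{L;symmetricBrauer} to an orbit--stabilizer analysis on $\mathcal{T}(\lambda)$, constructing a witnessing $E$ of rank $r+b$ from an $E_0$-fixed tabloid together with extra $p$-cycles, and bounding above by observing that the stabilizer of any tabloid has rank at most $r$. Your presentation is somewhat cleaner in that you prove the unified formula $\min\{r+\nu_p(a),\lfloor n/p\rfloor\}$ directly via matching lower and upper bounds, whereas the paper splits into the three stated cases; your explicit verification that $\mathrm{stab}_E(\{t_0\})=E_0$ replaces the paper's indirect argument (which obtains the orbit size $p^u$ by contradiction with $c_{\sym{n}}(M^\lambda)=r$), but the content is the same.
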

\begin{proof}
It suffices to find an elementary abelian $p$-subgroup $E$ of $\sym{n}$ with the largest $p$-rank such that $(S^aM^\lambda)(E)\neq 0$. Let $u=\nu_p(a)$ and $b=\lfloor \frac{n}{p}\rfloor$. Define a $p$-cycle $s_i=((i-1)p+1,\ldots,ip)$ for all $1\leq i\leq b$. We distinguish three cases.
\begin{enumerate}[\text{Case} 1:]
\item $u=0$.
\end{enumerate}
Let $E$ be an elementary abelian $p$-subgroup of $\sym{n}$ such that $\mathrm{rank}(E)=r$ and $M^\lambda(E)\neq0$. We thus have $S^a(M^\lambda(E))\neq 0$ and $(S^aM^\lambda)(E)\neq0$ by Lemma \ref{L;symmetricBrauer}. For any elementary abelian $p$-subgroup $F$ of $\sym{n}$ satisfying $\text{rank}(F)>r$, Note that $M^\lambda(F)=0$. Otherwise, by Lemmas \ref{L;complexity} (v) and \ref{HemmerNakano}, $r<\text{rank}(F)\leq c_{\sym{n}}(M^\lambda)=r$, which is a contradiction. So $S^a(M^\lambda(F))=0$ and $|\mathcal{T}(\lambda)^F|=0$ by Lemma \ref{L;Brauerquotient} (ii). Therefore, if there exists some multiset $S$ such that $S$ has $a$ members, $S'\subseteq \mathcal{T}(\lambda)$ and $F\Rrightarrow S$, we get $p\mid a$, which contradicts with the assumption $u=0$. By Lemma \ref{L;Brauer} (i), $S_a(M^\lambda,F)=0$. So $(S^aM^\lambda)(F)=0$ by Lemma \ref{L;symmetricBrauer}. Therefore, $c_{\sym{n}}(S^aM^\lambda)=r$.
\begin{enumerate}[\text{Case} 2:]
\item $u>0$ and $u+r\leq b$.
\end{enumerate}
As $u+r\leq b$, let $E=\langle \bigcup_{i=1}^{u+r}\{s_i\}\rangle$ and $F=\langle\bigcup_{i=1}^r\{s_i\}\rangle$. By Lemma \ref{PModules}, there exists some $\{t\}\in \mathcal{T}(\lambda)^F$. Note that the orbit $\O$ of $\mathcal{T}(\lambda)$ containing $\{t\}$ under the action of $E$ has size $p^{u}$. Otherwise, there exists some $F<H\leq E$ such that $\{t\}\in \mathcal{T}(\lambda)^H$. By Lemma \ref{L;Brauerquotient} (ii), $M^\lambda(H)\neq 0$. This implies that
$r<\mathrm{rank}(H)\leq c_{\sym{n}}(M^\lambda)=r$ by Lemmas \ref{L;complexity} (v) and
\ref{HemmerNakano}. This is a contradiction. As $p^u\mid a$, let $S$ be the multiset satisfying the conditions $S'=\O$ and each member of $S'$ has multiplicity $a/p^u$ in $S$. So $S$ has $a$ members and $E\Rrightarrow S$, which implies that $S_a(M^\lambda,E)\neq 0$ by Lemma \ref{L;Brauer} (i). So $(S^aM^\lambda)(E)\neq 0$ by Lemma \ref{L;symmetricBrauer}. For any elementary abelian $p$-subgroup $L$ of $\sym{n}$ satisfying $\text{rank}(L)>u+r$, note that $S^a(M^\lambda(L))=0$ as $M^\lambda(L)=0$. Also notice that each orbit of $\mathcal{T}(\lambda)$ under the action of $L$ has size at least $p^{u+1}$.
If $S_a(M^\lambda,L)\neq 0$, by Lemma \ref{L;Brauer} (i), there exists some multiset $S$ such that $S$ has $a$ members, $S'\subseteq\mathcal{T}(\lambda)$ and $L\Rrightarrow S$. We thus have $p^{u+1}\mid a$. This contradicts with the fact $u=\nu_p(a)$. So $(S^aM^\lambda)(L)=0$ by Lemma \ref{L;symmetricBrauer}. Therefore, $c_{\sym{n}}(S^aM^\lambda)=u+r$.
\begin{enumerate}[\text{Case} 3:]
\item $u>0$ and $u+r>b$.
\end{enumerate}
Let $\tilde{E}=\langle\bigcup_{i=1}^b\{s_i\}\rangle$ and $\tilde{F}=\langle\bigcup_{i=1}^r\{s_i\}\rangle$. Let $\tilde{E}$ and $\tilde{F}$ play the roles of $E$ and $F$ in Case 2 respectively. As $b-r<u$, the proof of Case $2$ shows that we have $S^a(M^\lambda(\tilde{E}))\neq0$ or $S_a(M^\lambda,\tilde{E})\neq 0$. Therefore, $(S^aM^\lambda)(\tilde{E})\neq 0$ by Lemma \ref{L;symmetricBrauer}. The fact $\mathrm{rank}(\sym{n})=b$ thus forces that $c_{\sym{n}}(S^aM^\lambda)=b$. The proof is now complete.
\end{proof}

\begin{prop}\label{P;symmetricproj}
Let $\lambda=(\lambda_1,\ldots,\lambda_\ell)\vdash n$ and $a\in \mathbb{N}_0$. Then $S^aM^\lambda$ is projective if and only if $n<p$ or $p\nmid a$ and $\lambda_i<p$ for all $1\leq i\leq \ell$.
\end{prop}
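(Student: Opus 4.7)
The plan is to convert projectivity into a complexity statement via the equivalence that an $\F\sym{n}$-module $N$ is projective if and only if $c_{\sym{n}}(N)=0$, and then read off Lemma \ref{L;symmetriccomplexity}. Since that lemma is stated only for $a>1$, the first step is to dispose of the boundary values $a\in\{0,1\}$ by hand.

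For $a=0$, one has $S^0M^\lambda=\F$ by the convention fixed in Section 2, and the trivial module $\F$ is projective exactly when $p\nmid |\sym{n}|=n!$, i.e., when $n<p$. Since $p\mid 0$, the second disjunct in the stated condition fails automatically when $a=0$, so the proposition reads correctly in this case. For $a=1$, one has $S^1M^\lambda=M^\lambda$ and Lemma \ref{HemmerNakano} gives $c_{\sym{n}}(M^\lambda)=\mathrm{rank}(\sym{\lambda})$; this vanishes precisely when every $\lambda_i<p$, which matches the stated condition since $p\nmid 1$.

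For $a>1$, I would apply Lemma \ref{L;symmetriccomplexity} directly. Writing $r=\mathrm{rank}(\sym{\lambda})$ and $b=\lfloor n/p\rfloor$, the three branches of that lemma output the integers $r$, $r+\nu_p(a)$, and $b$, respectively. In the subcase $n<p$ one has $r=b=0$; if $\nu_p(a)=0$ we fall into branch 1 with output $0$, while if $\nu_p(a)>0$ the inequality $\nu_p(a)+r>b$ holds and we fall into branch 3, also with output $0$. Hence $S^aM^\lambda$ is projective whenever $n<p$, matching the first disjunct. In the subcase $n\geq p$ one has $b\geq 1$, so branches 2 and 3 each yield a strictly positive integer and the complexity vanishes if and only if branch 1 applies and evaluates to $0$, i.e., $\nu_p(a)=0$ and $r=0$. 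Since $r=0$ is equivalent to $\lambda_i<p$ for all $i$, this is precisely the second disjunct.

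There is no genuine obstacle beyond careful case bookkeeping, because the substantive content has already been absorbed into Lemma \ref{L;symmetriccomplexity}. The only point requiring a moment's care is verifying that the two boundary values $a\in\{0,1\}$ are correctly covered by the stated disjunction; in particular, interpreting $p\mid 0$ in the natural way ensures that the formula gives the right outcome at $a=0$.
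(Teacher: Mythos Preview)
Your proposal is correct and matches the paper's own approach: the paper states Proposition~\ref{P;symmetricproj} immediately after Lemma~\ref{L;symmetriccomplexity} with no proof, leaving it as a direct corollary of that complexity formula, which is precisely what you do. Your explicit handling of the boundary cases $a\in\{0,1\}$ (which fall outside the hypothesis $a>1$ of Lemma~\ref{L;symmetriccomplexity}) is a point the paper leaves to the reader, and your treatment of them is accurate.
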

We need another notation to study all the projective exterior powers of a Young permutation module. Let $b=\lfloor\frac{n}{p}\rfloor$. Set $t_i=\prod_{j=1}^i((j-1)p+1,\ldots,jp)\in\sym{n}$ and put $F_i=\langle t_i\rangle$ for all $1\leq i\leq b$. Given $\lambda\vdash n$, for all $1\leq i\leq b$, set $m_{\lambda,i}=\dim_\F M^\lambda(F_i)$ and $\overline{m_{\lambda,i}}=\dim_\F M^\lambda-m_{\lambda,i}$. By Lemma \ref{PModules}, it is easy to compute $m_{\lambda,i}$ for all $1\leq i\leq b$.
For our purpose, note that $\overline{m_{\lambda,i}}\geq 0$ for all $1\leq i\leq b$.
\begin{prop}\label{Projectivity}\label{P;exteriorproj}
Let $\lambda\vdash n$, $1<a\in \mathbb{N}$ and $b=\lfloor\frac{n}{p}\rfloor$. If $a\leq\dim_\F M^\lambda$, then $\Lambda^a M^\lambda$ is projective if and only if, for all $1\leq i\leq b$, if $a=x+py$ for some $x, y\in \mathbb{N}_0$, then we have either $m_{\lambda,i}<x$ or $\overline{m_{\lambda,i}}<py$.
\end{prop}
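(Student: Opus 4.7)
The plan is to reduce projectivity of $\Lambda^aM^\lambda$ to a Brauer-quotient calculation at cyclic subgroups of order $p$, and then read off the combinatorial condition from the orbit structure of such a subgroup on $\mathcal{T}(\lambda)$. By Lemma \ref{L;Basis}, $\Lambda^aM^\lambda$ is a $p$-permutation $\F\sym{n}$-module, so projective if and only if every indecomposable summand has trivial vertex. Combining this with Lemma \ref{L;Brauerquotient} (i) (and the observation that every non-trivial $p$-subgroup contains a subgroup of order $p$), projectivity of $\Lambda^aM^\lambda$ is equivalent to $(\Lambda^aM^\lambda)(F)=0$ for every subgroup $F\le\sym{n}$ of order $p$. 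Since Brauer quotients respect $\sym{n}$-conjugation and since the conjugacy classes of order-$p$ subgroups of $\sym{n}$ are represented by $F_1,\ldots,F_b$, it thus suffices to verify $(\Lambda^aM^\lambda)(F_i)=0$ for all $1\le i\le b$.

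Next I would apply Lemma \ref{L;exteriorBrauer} to each $F_i$ to get the decomposition
$$(\Lambda^aM^\lambda)(F_i)\cong\Lambda^a(M^\lambda(F_i))\oplus\Lambda_a(M^\lambda,F_i).$$
By Lemma \ref{L;Brauerquotient} (ii), $\dim_\F M^\lambda(F_i)=m_{\lambda,i}$, so the first summand vanishes if and only if $a>m_{\lambda,i}$. For the second summand I use that, because $|F_i|=p$, the $F_i$-orbits on $\mathcal{B}_{F_i}(M^\lambda)=\mathcal{T}(\lambda)$ have size $1$ (the $m_{\lambda,i}$ fixed tabloids) or $p$ (the remaining $\overline{m_{\lambda,i}}/p$ free orbits). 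A subset $S\subseteq\mathcal{T}(\lambda)$ of size $a$ satisfies $F_i\Rightarrow S$ exactly when $S$ is the disjoint union of $x$ fixed points and $y\ge 1$ free orbits with $x+py=a$. By Lemma \ref{L;Brauer} (ii), such a subset exists, equivalently $\Lambda_a(M^\lambda,F_i)\neq 0$, if and only if there exist $x,y\in\mathbb{N}_0$ with $y\ge 1$, $x+py=a$, $x\le m_{\lambda,i}$ and $py\le\overline{m_{\lambda,i}}$.

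To finish, I would merge the two vanishing criteria. The case $y=0$, $x=a$ reproduces the condition $a>m_{\lambda,i}$ for $\Lambda^a(M^\lambda(F_i))=0$ (noting $\overline{m_{\lambda,i}}\ge 0$, so the clause $\overline{m_{\lambda,i}}<py$ cannot hold when $y=0$), while the cases $y\ge 1$ govern $\Lambda_a(M^\lambda,F_i)$. Hence $(\Lambda^aM^\lambda)(F_i)=0$ is equivalent to: for every decomposition $a=x+py$ with $x,y\in\mathbb{N}_0$, either $m_{\lambda,i}<x$ or $\overline{m_{\lambda,i}}<py$. Quantifying over $1\le i\le b$ gives the stated criterion.

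The argument is essentially bookkeeping once the reduction to $F_1,\ldots,F_b$ is in hand; the only delicate point is justifying that checking order-$p$ subgroups up to conjugation suffices, which rests on Lemma \ref{L;Brauerquotient} (i) applied to each indecomposable summand of the $p$-permutation module $\Lambda^aM^\lambda$. The matching between the two disjoint sources of non-vanishing (the ``diagonal'' piece $\Lambda^a(M^\lambda(F_i))$ and the ``mixed'' piece $\Lambda_a(M^\lambda,F_i)$) and the two regimes $y=0$ versus $y\ge 1$ in the combinatorial condition is the main structural observation; once it is noted, the equivalence is immediate.
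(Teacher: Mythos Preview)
Your proposal is correct and follows essentially the same approach as the paper: reduce projectivity of the $p$-permutation module $\Lambda^aM^\lambda$ to the vanishing of $(\Lambda^aM^\lambda)(F_i)$ for the representatives $F_1,\ldots,F_b$ of conjugacy classes of order-$p$ subgroups, apply Lemma~\ref{L;exteriorBrauer} to split this into the piece $\Lambda^a(M^\lambda(F_i))$ (governed by $y=0$) and the piece $\Lambda_a(M^\lambda,F_i)$ (governed by $y\ge 1$), and read off the combinatorial condition via Lemma~\ref{L;Brauer} (ii). The only cosmetic difference is that the paper invokes Lemma~\ref{L;complexity} (v) for the ``not projective $\Rightarrow$ some $(\Lambda^aM^\lambda)(F_i)\neq 0$'' direction, whereas you argue directly from Lemma~\ref{L;Brauerquotient} (i) and additivity of the Brauer quotient; both routes are equivalent here.
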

\begin{proof}
If $\Lambda^aM^\lambda$ is projective, by Lemma \ref{L;Brauerquotient} (i), $(\Lambda^aM^\lambda)(F_i)=0$ for all $1\leq i\leq b$. For any $1\leq i\leq b$, by Lemma \ref{L;exteriorBrauer}, $0=(\Lambda^aM^\lambda)(F_i)\cong\Lambda^a(M^\lambda(F_i))\oplus\Lambda_a(M^\lambda,F_i)$. In particular, $\Lambda^a(M^\lambda(F_i))=0$ and $\Lambda_a(M,F_i)=0$. Note that $\Lambda^a(M^\lambda(F_i))=0$ gives us $m_{\lambda,i}<a$. Therefore, it suffices to check the decompositions $a=x+py$, where $x,y\in \mathbb{N}_0$ and $x<a$. For some $1\leq i\leq b$, suppose that $m_{\lambda,i}\geq x$ and $\overline{m_{\lambda,i}}\geq py$ for such a decomposition $a=x+py$. By Lemma \ref{L;Brauerquotient} (ii), $|\mathcal{T}(\lambda)^{F_i}|\geq x$. Moreover, as $F_i$ is a cyclic group of order $p$ and $\overline{m_{\lambda,i}}\geq py$, when acted by $F_i$, the number of orbits of $\mathcal{T}(\lambda)$ having size $p$ is at least $y$. Let $\mathcal{X}\subseteq \mathcal{T}(\lambda)^{F_i}$ and $\mathcal{Y}\subseteq \mathcal{T}(\lambda)$, where $|\mathcal{X}|=x$ and $\mathcal{Y}$ is a union set of exactly $y$ orbits of $\mathcal{T}(\lambda)$ having size $p$ under the action of $F_i$. Since $x<a$ and $y>0$, we have $|\mathcal{X}\cup\mathcal{Y}|=a$ and $F_i\Rightarrow \mathcal{X}\cup\mathcal{Y}$, which implies that $\Lambda_a(M^\lambda, F_i)\neq 0$ by Lemma \ref{L;Brauer} (ii). So $(\Lambda^aM^\lambda)(F_i)\neq0$ by Lemma \ref{L;exteriorBrauer}. This is a contradiction. We get the desired assertion.

Conversely, if the assertion holds, suppose that $\Lambda^aM^\lambda$ is not projective. By Lemma \ref{L;complexity} (v), there exists a non-trivial $p$-subgroup $P$ of $\sym{n}$ such that $(\Lambda^a(M^\lambda))(P)\neq0$. We may assume further that $P=F_i$ for some $1\leq i\leq b$. By Lemma \ref{L;exteriorBrauer}, we have $\Lambda^a(M^\lambda(F_i))\neq 0$ or $\Lambda_a(M^\lambda, F_i)\neq 0$. For the decomposition $a=a+0$, we deduce that $\dim_\F M^\lambda(F_i)=m_{\lambda,i}<a$ by the assertion. So $\Lambda^a(M^\lambda(F_i))=0$ and $\Lambda_a(M^\lambda,F_i)\neq 0$. By Lemma \ref{L;Brauer} (ii), we  deduce that there exists some $S\subseteq\mathcal{T}(\lambda)$ such that $|S|=a$ and $F_i\Rightarrow S$. Let $x=|S^{F_i}|$ and $y$ be the number of orbits of $S$ having size $p$ under the action of $F_i$. We have $a=x+py$, $m_{\lambda,i}\geq x$ and $\overline{m_{\lambda,i}}\geq py$. This contradicts with the assertion. So $\Lambda^a M^\lambda$ is projective and we are done.
\end{proof}

We end this section with an example. Let $p=3$, $n=5$, $a=8$, $\lambda=(3,2)\vdash 5$. All allowable decompositions of $8$ in Proposition \ref{P;exteriorproj} are exactly $8+0$, $5+3$, $2+6$. By Lemma \ref{PModules}, $m_{(3,2),1}=1<2$, $5$, $8$. By Proposition \ref{P;exteriorproj}, $\Lambda^8M^{(3,2)}$ is projective. By this example and Lemma \ref{L;symmetriccomplexity}, the inequality in Lemma \ref{L;compare} can be strict.
\section{Indecomposable exterior powers of Young permutation modules}
In this section, we classify all the indecomposable exterior powers of $\F\sym{n}$-Young permutation modules. For our purpose, for any $m\in \mathbb{N}$, let $sgn(m)$ denote the $\F\sym{m}$-sign module and omit the parameter if there is no confusion. Also recall that $Y(\alpha|p\beta)$ is the $\F\sym{n}$-signed Young module labelled by partitions $\alpha$, $\beta$, where $|\alpha|+p|\beta|=n$.
\begin{lem}\label{L;(n-1,1)}
Let $a\in \mathbb{N}$ and $1<a<n$. Then $\Lambda^aM^{(n-1,1)}$ is indecomposable if and only if precisely one of the following situations holds:
\begin{enumerate}
\item[{\em (i)}] $p>2$, $p\mid n$, $a=pb+r$, where $b,\ r\in\mathbb{N}_0$ and $0\leq r<p$, in this case, $\Lambda^aM^{(n-1,1)}\cong Y((n-a,1^r)|(pb))$;
\item[{\em (ii)}] $p=2$, $2\mid n$, in this case, $\Lambda^aM^{(n-1,1)}\cong Y^{\overline{(n-a,a)}}$, where $2^m\leq n<2^{m+1}$ for some $m\in \mathbb{N}$ and $2^{i-1}\leq \min \{n-a,a\}<2^i$, $\min\{n-a,a\}\equiv\frac{n-2^m}{2}\pmod {2^{i-1}}$ for some $1\leq i\leq m$.
\end{enumerate}
\end{lem}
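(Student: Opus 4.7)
My plan is to exploit the standard identification of $M^{(n-1,1)}$ with the natural permutation module $\F^{n}$ on $\mathbf{n}$. Writing $\{v_{1},\ldots,v_{n}\}$ for the basis of $(n-1,1)$-tabloids, the module $\Lambda^{a}M^{(n-1,1)}$ has a basis permuted up to sign by $\sym{n}$, with stabilizer of $v_{1}\wedge\cdots\wedge v_{a}$ equal to $\sym{\{1,\ldots,a\}}\times\sym{\{a+1,\ldots,n\}}$ acting as $\sgn\boxtimes\F$. Frobenius reciprocity therefore yields
\[
\Lambda^{a}M^{(n-1,1)}\cong\bigl(\sgn_{\sym{a}}\boxtimes\F_{\sym{n-a}}\bigr){\uparrow^{\sym{n}}},
\]
so the question is reduced to the indecomposable direct summands of a (signed) Young permutation module.

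For case (ii) with $p=2$, the sign module is trivial, and the above becomes $M^{\overline{(n-a,a)}}$. I would apply Theorem \ref{T;Donkin} directly: indecomposability holds iff the $2$-adic expansion of $\overline{(n-a,a)}$ admits no refinement $\sum_{i\geq 0}2^{i}\nu[i]$ (consistent with the dominance constraint on each $\nu[i]$) other than the $2$-adic expansion itself. Unwinding this combinatorially in terms of the binary expansions of $n-a$ and $a$ should show that the sole-summand condition translates into the stated requirement $2\mid n$ together with the congruence $\min\{n-a,a\}\equiv(n-2^{m})/2\pmod{2^{i-1}}$ whenever $2^{i-1}\leq\min\{n-a,a\}<2^{i}$.

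For case (i) with $p>2$, write $a=pb+r$ with $0\leq r<p$. Using $\sgn_{\sym{a}}{\downarrow_{\sym{r}\times\sym{pb}}}\cong\sgn_{\sym{r}}\boxtimes\sgn_{\sym{pb}}$ and transitivity of induction,
\[
\Lambda^{a}M^{(n-1,1)}\cong\bigl(\sgn_{\sym{r}}\boxtimes\sgn_{\sym{pb}}\boxtimes\F_{\sym{n-a}}\bigr){\uparrow^{\sym{n}}}.
\]
This is a signed Young permutation module in the sense of Donkin, which decomposes as a direct sum of signed Young modules $Y(\gamma|p\delta)$ with $|\gamma|+p|\delta|=n$ and $|\delta|=b$; the summand containing the appropriate induced Specht module is $Y((n-a,1^{r})|(pb))$. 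Requiring the decomposition to consist of this single summand forces $p\mid n$, and once this holds the stated isomorphism $\Lambda^{a}M^{(n-1,1)}\cong Y((n-a,1^{r})|(pb))$ follows.

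The principal obstacle I anticipate is the arithmetic combinatorics in case (ii): proving that every non-trivial refinement of the $2$-adic expansion of $\overline{(n-a,a)}$ actually produces a distinct Young summand of $M^{\overline{(n-a,a)}}$, so that indecomposability genuinely pins down the stated congruence, will require a careful bijective/counting argument tied to the precise shape of the binary expansion of $n$. Case (i) is structurally cleaner once the signed-Young analogue of Theorem \ref{T;Donkin} is invoked, but one still has to verify the non-trivial equivalence between the divisibility $p\mid n$ and the absence of further refinements of the partition pair $((n-a,1^{r}),(b))$.
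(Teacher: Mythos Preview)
Your identification $\Lambda^{a}M^{(n-1,1)}\cong(\sgn_{\sym{a}}\boxtimes\F_{\sym{n-a}}){\uparrow^{\sym{n}}}$ is correct, and for $p=2$ this gives $\Lambda^{a}M^{(n-1,1)}\cong M^{\overline{(n-a,a)}}$ exactly as the paper observes. The difference is entirely one of scope: the paper's proof is two sentences long and consists of citations. For $p>2$ it invokes \cite[Theorem~1.4(i), Proposition~7.1]{GLDM}, which already contains the signed-Young-module analysis you sketch; for $p=2$ it invokes \cite[Theorem~2(b)]{Gill}, which already contains the binary combinatorics classifying indecomposable two-part permutation modules. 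You are proposing to redo the content of those references from first principles.

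That is a legitimate route, and your outline is structurally sound, but be aware that the ``principal obstacle'' you flag in case~(ii) is precisely the substance of Gill's paper, and the signed-Young refinement argument in case~(i) is precisely the substance of the GLDM result. One technical caution: Theorem~\ref{T;Donkin} tells you \emph{which} $Y^{\mu}$ occur as summands of $M^{\lambda}$, not their multiplicities; indecomposability of $M^{\lambda}$ is equivalent to $Y^{\mu}\nmid M^{\lambda}$ for every $\mu\rhd\lambda$ (since $[M^{\lambda}:Y^{\lambda}]=1$), so Donkin's criterion does suffice in principle, but unwinding it into the stated congruence on $\min\{a,n-a\}$ is a genuine computation, not a formality. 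If your goal is to match the paper, simply cite \cite{GLDM} and \cite{Gill}; if your goal is a self-contained argument, expect the bulk of the work to lie in those two combinatorial verifications.
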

\begin{proof}
When $p>2$, according to \cite[Theorem 1.4 (i), Proposition 7.1]{GLDM}, $\Lambda^aM^{(n-1,1)}$ is indecomposable if and only if (i) holds. When $p=2$,  $\Lambda^aM^{(n-1,1)}\cong M^{\overline{(n-a,a)}}$. By \cite[Theorem 2 (b)]{Gill}, $\Lambda^aM^{(n-1,1)}$ is indecomposable if and only if (ii) holds. We are done.
\end{proof}

One more notation is required here. Given $\lambda=(\lambda_1,\ldots,\lambda_\ell)\vdash n$, let $c(\lambda)$ be the largest subscript $c$ of parts of $\lambda$ such that $\lambda_{c}\geq 2$. Set $c(\lambda)=0$ if $\lambda=(1^n)$. Write
\[
f^\lambda=\begin{cases}
\dim_\F M^\lambda-\sum_{i=1}^{c(\lambda)}\frac{(n-2)!}{(\lambda_i-2)!\prod_{j\neq i}\lambda_j!}, & \text{if}\ c(\lambda)>0,\\
\dim_\F M^{(1^n)}, & \text{if}\ c(\lambda)=0.\\\end{cases}\]

\begin{lem}\label{L;powerd}
Let $\lambda\vdash n$ and $d=\dim_\F M^\lambda>1$. Then
\[\Lambda^d M^\lambda\cong\begin{cases}
\F,& \text{if}\ p=2\ \text{or}\ p>2\ \text{and}\ 4\mid f^\lambda,\\
sgn, & \text{otherwise}.\\\end{cases}\]
\end{lem}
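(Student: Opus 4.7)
The plan is to exploit the fact that $\Lambda^d M^\lambda$ is one-dimensional (being the top exterior power of a $d$-dimensional space), so the action of $\sym{n}$ is given by a linear character $\epsilon:\sym{n}\to\F^\times$. When $p=2$, $\F^\times$ has no elements of order $2$, so $\epsilon$ is trivial and $\Lambda^d M^\lambda\cong\F$ immediately. When $p>2$, the only linear characters of $\sym{n}$ are $\mathbf{1}$ and $\mathrm{sgn}$, so I only need to evaluate $\epsilon$ on any single transposition, say $(1,2)$.

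The next step will be to express $\epsilon((1,2))$ as a combinatorial sign. Fix an enumeration $\mathcal{T}(\lambda)=\{\{t_1\},\ldots,\{t_d\}\}$ and set $v=\{t_1\}\wedge\cdots\wedge\{t_d\}$, a spanning vector of $\Lambda^d M^\lambda$. The transposition $(1,2)$ permutes the $\lambda$-tabloids by some $\sigma\in\sym{d}$; thus $(1,2)\cdot v=\mathrm{sgn}(\sigma)\,v$. Letting $F$ be the number of $\lambda$-tabloids fixed by $(1,2)$, the non-fixed tabloids are paired up by $(1,2)$, so $\sigma$ is a product of $(d-F)/2$ disjoint transpositions, giving $\epsilon((1,2))=(-1)^{(d-F)/2}$.

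The combinatorial heart is to compute $F$ and identify $d-F$ with $f^\lambda$. A tabloid is fixed by $(1,2)$ exactly when $1$ and $2$ sit in the same row. For each row index $i$ with $\lambda_i\geq 2$ (equivalently $1\leq i\leq c(\lambda)$), the number of $\lambda$-tabloids having both $1$ and $2$ in row $i$ is the multinomial coefficient
\[
\frac{(n-2)!}{(\lambda_i-2)!\prod_{j\neq i}\lambda_j!},
\]
since the remaining $n-2$ entries are freely distributed among the rows with the row-$i$ capacity reduced by $2$. When $c(\lambda)=0$ (i.e.\ $\lambda=(1^n)$), no tabloid is fixed. Summing and comparing with the definition of $f^\lambda$ gives $F=d-f^\lambda$ in both cases, whence $\epsilon((1,2))=(-1)^{f^\lambda/2}$ (the exponent is an integer since non-fixed tabloids pair up).

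Assembling everything: for $p>2$, $\epsilon$ is trivial iff $f^\lambda/2$ is even iff $4\mid f^\lambda$, yielding $\Lambda^d M^\lambda\cong\F$; otherwise $\epsilon=\mathrm{sgn}$ and $\Lambda^d M^\lambda\cong\mathrm{sgn}$. There is no real obstacle in this argument: the only piece of bookkeeping is the split between $c(\lambda)=0$ and $c(\lambda)>0$ in the definition of $f^\lambda$, both of which reduce to the same identity $f^\lambda=d-F$; and one must remember to invoke the vanishing of $-1$ in characteristic $2$ to dispose of the first case cleanly before doing any counting.
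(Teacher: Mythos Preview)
Your proposal is correct and follows essentially the same route as the paper: both restrict attention to the action of the transposition $(1,2)$ on the wedge of all $\lambda$-tabloids, count the tabloids fixed by $(1,2)$ (those with $1$ and $2$ in the same row), and read off the sign $(-1)^{f^\lambda/2}$. The only cosmetic difference is that the paper invokes its Lemma~\ref{PModules} to identify the fixed-point count as $d-f^\lambda$, whereas you compute the multinomial directly; and the paper handles $p=2$ implicitly via $(-1)^y=1$ rather than by a separate character-theoretic remark.
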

\begin{proof}
Let $y=\frac{f^\lambda}{2}$. As $\dim_\F \Lambda^dM^\lambda=1$ and $\F$, $sgn$ are the unique one-dimensional $\F\sym{n}$-modules. it suffices to consider $(\Lambda^dM^\lambda){\downarrow_{\langle (1,2)\rangle}}$. Let
$S=\mathcal{T}(\lambda)^{\langle (1,2)\rangle}$ and note that $\langle (1,2)\rangle$ acts on $ \mathcal{T}(\lambda)\setminus S$. Set $x=|S|$ and observe that each orbit of $\mathcal{T}(\lambda)\setminus S$ under the action of $\langle (1,2)\rangle$ has size $2$. By Lemma \ref{PModules}, $x=d-f^\lambda$ and $\mathcal{T}(\lambda)\setminus S$ has $y$ orbits under the action of $\langle (1,2)\rangle$. Let $\mathcal{T}(\lambda)=\{\{t_1\},\ldots,\{t_d\}\}$. Define
$$ t=\{t_1\}\wedge \{t_2\}\wedge\cdots\wedge \{t_{d-x-1}\}\wedge \{t_{d-x}\}\wedge \{t_{d-x+1}\}\wedge\cdots\wedge \{t_d\}\in \Lambda^d M^\lambda,$$
where $\{t_i\}\in S $ for all $d-x+1\leq i\leq d$. Moreover, for all $1\leq j\leq d-x-1$ and $2\nmid j$, $\{\{t_j\},\{t_{j+1}\}\}$ is an orbit of $\mathcal{T}(\lambda)\setminus S$ under the action of $\langle (1,2)\rangle$. We thus have $(1,2)t=(-1)^yt$. The lemma thus follows.
\end{proof}
Let $a\in \mathbb{N}$ and $M$, $N$ be $\F\sym{n}$-modules. For further discussion, recall that all the inner tensor products of modules are over $\F$ and $\Lambda^a(M\oplus N)\cong\bigoplus_{i=0}^a (\Lambda^iM\otimes\Lambda^{a-i}N)$.
\begin{lem}\label{L;powerd-1}
Let $\lambda\vdash n$ and $d=\dim_\F M^\lambda>2$. If $\lambda\neq (n-1,1)$, then $\Lambda^{d-1}M^\lambda$ is indecomposable if and only if $p=2$, $2\mid n$, $\lambda$ is one of the $m$ partitions $(n-k_i,k_i)$ except for $(n-1,1)$, where $m\in \mathbb{N}$, $2^m\leq n<2^{m+1}$, $2^{i-1}\leq k_i<2^i$ and $k_i\equiv \frac{n-2^m}{2}\pmod {2^{i-1}}$ for any $1\leq i\leq m$. If $\Lambda^{d-1}M^\lambda$ is indecomposable, then $\Lambda^{d-1}M^\lambda\cong Y^\lambda$.
\end{lem}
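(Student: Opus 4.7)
The plan is to reduce the study of $\Lambda^{d-1}M^\lambda$ to that of $M^\lambda$ itself, and then to classify the indecomposable Young permutation modules under our hypotheses. For any $d$-dimensional $\F G$-module $V$, the perfect pairing $\Lambda^{d-1}V\otimes V\to \Lambda^d V$, $(\omega,v)\mapsto \omega\wedge v$, yields a natural $\F G$-isomorphism $\Lambda^{d-1}V\cong V^{\ast}\otimes \Lambda^d V$. Since $M^\lambda$ is a permutation module, it is self-dual, so
\[
\Lambda^{d-1}M^\lambda\cong M^\lambda\otimes \Lambda^d M^\lambda.
\]
By Lemma \ref{L;powerd}, $\Lambda^d M^\lambda$ is one-dimensional (either $\F$ or $sgn$). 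Tensoring with a one-dimensional module is an autoequivalence of $\modcat{\F\sym{n}}$, so it preserves indecomposability. Hence $\Lambda^{d-1}M^\lambda$ is indecomposable if and only if $M^\lambda$ is indecomposable, and in that case $M^\lambda\cong Y^\lambda$ by the very definition of Young modules.

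It therefore remains to classify those $\lambda\vdash n$, with $\lambda\neq (n-1,1)$ and $\dim_\F M^\lambda>2$, for which $M^\lambda$ is indecomposable. For $p=2$ and $\lambda=\overline{(n-k,k)}$ of length at most $2$, this is exactly Gill's classification \cite[Theorem 2 (b)]{Gill}, already invoked in Lemma \ref{L;(n-1,1)} (ii); excluding $k=1$ recovers precisely the list of $(n-k_i,k_i)$ in the statement. For all remaining $\lambda$ (namely $p=2$ with $\ell(\lambda)\geq 3$, and $p>2$ with $\lambda\neq (n),(n-1,1)$), I will show $M^\lambda$ is decomposable by producing a second Young summand via Theorem \ref{T;Donkin}.

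The construction is as follows: starting from the $p$-adic expansion $\lambda=\sum_i p^i\lambda(i)$, I choose a partition $\mu\rhd \lambda$ of $n$ together with compositions $\nu[i]\models |\lambda(i)|$ satisfying $\overline{\nu[i]}\unlhd \mu(i)$ and $\lambda=\sum_i p^i\nu[i]$, typically obtained by merging two parts of $\lambda$ (or two digits within a row of $\lambda$) to form $\mu$. A short case analysis confirms the existence of such a $\mu$ in every remaining situation, so Theorem \ref{T;Donkin} gives $Y^\mu\mid M^\lambda$ with $\mu\neq \lambda$, forcing $M^\lambda$ to be decomposable. Finally, when $M^\lambda$ is indecomposable we must lie in the $p=2$ case above, where $sgn\cong \F$ and Lemma \ref{L;powerd} gives $\Lambda^d M^\lambda\cong \F$, so the reduction of the first paragraph yields $\Lambda^{d-1}M^\lambda\cong Y^\lambda$, as claimed.

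The main obstacle will be the explicit combinatorial construction of $\mu$ and the compositions $\nu[i]$ in all the multi-row cases, requiring careful bookkeeping with the $p$-adic digits of the parts of $\lambda$, especially for $p=2$ with $\ell(\lambda)\geq 3$, where one must verify that Gill-type exceptional phenomena cannot arise beyond the two-row shapes.
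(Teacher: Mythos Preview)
Your reduction $\Lambda^{d-1}M^\lambda\cong (M^\lambda)^\ast\otimes\Lambda^d M^\lambda\cong M^\lambda\otimes\Lambda^d M^\lambda$ is correct and in fact slightly cleaner than what the paper does: the paper splits into two cases, using the formula $\Lambda^{d-1}(M\oplus N)\supseteq(\Lambda^x M\otimes\Lambda^{y-1}N)\oplus(\Lambda^{x-1}M\otimes\Lambda^y N)$ when $p>2$, and the identification $\Lambda^{d-1}M^\lambda\cong M^\lambda$ (via $sgn\cong\F$) when $p=2$. Your twist-by-a-line argument handles both at once.

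However, the ``main obstacle'' you anticipate is illusory. The classification of all indecomposable Young permutation modules in any characteristic is precisely the content of Gill's \cite[Theorem~2]{Gill}, not just part~(b). Part~(a) already says that for $p>2$ the only indecomposable $M^\lambda$ are $M^{(n)}$ and, when $p\mid n$, $M^{(n-1,1)}$; and part~(b) says that for $p=2$ the indecomposable ones are exactly the two-row shapes $(n-k_i,k_i)$ listed in the statement (no $\ell(\lambda)\geq 3$ examples exist). The paper simply quotes Gill for both cases. So there is no need for any case analysis with Theorem~\ref{T;Donkin}: once you have reduced to the indecomposability of $M^\lambda$, cite \cite[Theorem~2]{Gill} in full and you are done.
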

\begin{proof}
When $p>2$, by \cite[Theorem 2 (a)]{Gill}, $M^\lambda$ is decomposable since $\lambda\neq(n-1,1)$ and $(n)$. So $M^\lambda\cong M\oplus N$ for some non-zero $\F\sym{n}$-modules $M$ and $N$. We set $x=\dim_\F M$, $y=\dim_\F N$ and note that $d=x+y$. We have $\Lambda^xM\otimes\Lambda^{y-1}N\mid \Lambda^{d-1}M^\lambda$ and $\Lambda^{x-1}M\otimes\Lambda^{y}N\mid \Lambda^{d-1}M^\lambda$. So $\Lambda^{d-1}M^\lambda$ is decomposable. When $p=2$, notice that $\Lambda^{d-1}M^\lambda\cong M^\lambda$. By \cite[Theorem 2 (b)]{Gill}, $\Lambda^{d-1}M^\lambda$ is indecomposable if and only if the given case occurs. Moreover, if the case occurs, $\Lambda^{d-1}M^\lambda\cong Y^\lambda$.
\end{proof}

\begin{lem}\label{L;powerd-2}
Let $\lambda\vdash n$, $1<a\in \mathbb{N}$ and $d=\dim_\F M^\lambda$. If $\ell(\lambda)\geq 3$ and $a\leq d-2$, then $\Lambda^{a}M^\lambda$ is decomposable.
\end{lem}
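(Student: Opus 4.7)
The strategy is to decompose $M^\lambda$ non-trivially and then apply the exterior-power formula for a direct sum of modules, mirroring the $p>2$ argument in the proof of Lemma \ref{L;powerd-1}. First I would invoke Gill's classification of indecomposable Young permutation modules (\cite[Theorem 2]{Gill}, already used in Lemmas \ref{L;(n-1,1)} and \ref{L;powerd-1}): every indecomposable $M^\mu$ satisfies $\ell(\mu) \leq 2$, so the hypothesis $\ell(\lambda) \geq 3$ forces a non-trivial decomposition $M^\lambda \cong M \oplus N$ into non-zero $\F\sym{n}$-modules. Set $x = \dim_\F M$ and $y = \dim_\F N$, so that $x, y \geq 1$ and $x + y = d$.

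Next, I would apply the isomorphism $\Lambda^a(M \oplus N) \cong \bigoplus_{i=0}^a \Lambda^i M \otimes \Lambda^{a-i} N$ recalled in Section 2. A summand $\Lambda^i M \otimes \Lambda^{a-i} N$ is non-zero precisely when $\max\{0, a - y\} \leq i \leq \min\{a, x\}$, so it suffices to show that this range of indices contains at least two integers; any two distinct non-zero summands then witness the required non-trivial direct-sum decomposition of $\Lambda^a M^\lambda$.

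The verification splits into four cases, according to whether $a \leq x$ and $a \leq y$. If $a \leq \min\{x, y\}$, the range is $[0, a]$ of size $a + 1 \geq 3$; if exactly one of the inequalities $a \leq x$ and $a \leq y$ fails, the range has size $\min\{x, y\} + 1 \geq 2$; and if $a > \max\{x, y\}$, the range has size $x + y - a + 1 = d - a + 1 \geq 3$, where the hypothesis $a \leq d - 2$ is invoked. The main obstacle, and the only place where the tight bound $a \leq d - 2$ is essential, is this last case; it also explains why the analogous situation $a = d - 1$ has to be treated separately in Lemma \ref{L;powerd-1}.
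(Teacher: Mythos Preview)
Your proposal is correct and follows essentially the same approach as the paper: invoke Gill's theorem to split $M^\lambda\cong M\oplus N$ non-trivially, then use the formula $\Lambda^a(M\oplus N)\cong\bigoplus_{i}\Lambda^iM\otimes\Lambda^{a-i}N$ to locate at least two non-zero summands. The paper organizes the verification into two cases ($x\leq a$ versus $x>a$) and exhibits an explicit adjacent pair of indices in each, whereas you count the full range $[\max\{0,a-y\},\min\{a,x\}]$ and split into four cases, but this is a purely cosmetic difference.
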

\begin{proof}
Since $\ell(\lambda)\geq 3$, $M^\lambda$ is decomposable by \cite[Theorem 2]{Gill}. So $M^\lambda\cong M\oplus N$ for some non-zero $\F\sym{n}$-modules $M$ and $N$. Let $x=\dim_\F M$, $y=\dim_\F N$ and note that $d=x+y$. If $x\leq a$, as $1<a\leq d-2$, note that $0\leq a-x\leq d-2-x=y-2$ and $1\leq a-x+1\leq d-2-(x-1)=y-1$. So $\Lambda^{x} M\otimes\Lambda^{a-x}N\mid\Lambda^a M^\lambda$ and $\Lambda^{x-1} M\otimes\Lambda^{a-x+1}N\mid\Lambda^aM^\lambda$. If $x>a$, $\Lambda^{a} M\mid\Lambda^a M^\lambda$ and $\Lambda^{a-1} M\otimes N\mid\Lambda^a M^\lambda$. We thus get that $\Lambda^a M^\lambda$ is decomposable.
\end{proof}

We are left with the case $\lambda=(n-r,r)\vdash n$ and $r>1$. In this case, we view $M^\lambda$ naturally as the $\F\sym{n}$-module generated by the subsets of $\mathbf{n}$ having size $r$. Write $P(r)=\{s\subseteq \mathbf{n}: |s|=r\}$ and define a total order $\leq_\ell$ for $P(r)$ as follows. Let $s=\{a_1,\ldots,a_r\}$,  $\tilde{s}=\{b_1,\ldots,b_r\}\in P(r)$, where $a_1<\cdots<a_r$ and $b_1<\cdots<b_r$. Write $s<_\ell \tilde{s}$ if $a_1<b_1$ or there exists some $i\in \mathbb{N}$ such that $1\leq i<r$, $a_j=b_j$ for all $1\leq j\leq i$ and $a_{i+1}<b_{i+1}$. We have $s=_\ell \tilde{s}$ if and only if $s=\tilde{s}$. We label $P(r)$ to be $\{s_1,\ldots, s_d\}$, where $d=\dim_\F M^{(n-r,r)}$ and $s_i<_\ell s_j$ for all $1\leq i<j\leq d$. If $1<a\in \mathbb{N}$ and $a\leq d$, $P(r,a)=\{s_{i_1}\wedge\cdots\wedge s_{i_a}\in \Lambda^aM^{(n-r,r)}:1\leq i_1<\cdots<i_a\leq d\}$ is an $\F$-basis of $\Lambda^aM^{(n-r,r)}$. Given $v=s_{i_1}\wedge\cdots\wedge s_{i_a}\in P(r,a)$, for all $1\leq j\leq n$, define $n_j^v$ to be the number of $j$ in the sets $s_{i_1},\ldots,s_{i_a}$. Put $m_k^v=|\{1\leq i\leq n:n_i^v=k\}|$ for all $k\geq0$. Note that $ra=\sum_{i=1}^nn_i^v$ and $n=\sum_{i\geq0}m_i^v$. Given $u, v\in P(r,a)$, there exists $g\in\sym{n}$ such that $gu=v$ or $gu=-v$ only if $m_k^{u}=m_k^v$ for all $k\geq0$.

We use an example to illustrate all these definitions. Let $a=3$, $\lambda=(3,2)\vdash 5$ and $v=\{1,2\}\wedge\{1,3\}\wedge\{1,4\}\in P(2,3)$. So $\{1,2\}<_\ell \{1,3\}$. We have $n_1^v=3,$ $n_{2}^v=n_{3}^v=n_{4}^v=1$, $n_{5}^v=0$, $m_0^v=m_3^v=1$, $m_1^v=3$ and $m_i^v=0$ for all $i\in \mathbb{N}\setminus\{1,3\}$.

\begin{lem}\label{L;twopart}
Let $a, r\in \mathbb{N}$, $2<2r\leq n$ and $1<a\leq d=\dim_\F M^{(n-r,r)}$. If $a\leq d-2$, then $\Lambda^a M^{(n-r,r)}$ is decomposable.
\end{lem}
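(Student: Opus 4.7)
The plan is to split $\Lambda^a M^{(n-r,r)}$ as a direct sum indexed by the type invariant $(m_k^v)_{k\ge 0}$ that the paper introduces just above the lemma. For each multiset $\tau$, put
$$
V_\tau = \langle v \in P(r,a) : m_k^v = \tau_k \text{ for all } k \ge 0\rangle_\F.
$$
Because the $\sym{n}$-action sends a basis vector of $P(r,a)$ to $\pm$ another basis vector of the same type (as remarked just before the lemma), each $V_\tau$ is an $\F\sym{n}$-submodule, and
$$
\Lambda^a M^{(n-r,r)} = \bigoplus_\tau V_\tau.
$$
Decomposability therefore reduces to producing two basis vectors in $P(r,a)$ of distinct types.

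To separate types I would use the scalar invariant $f(T) = \sum_{\{u,v\}\subset T} |u\cap v|$ for an $a$-subset $T \subset P(r)$: a direct count yields $\sum_j (n_j^v)^2 = ar + 2f(T)$, so $f$ depends only on the type and $f(T_1)\ne f(T_2)$ already forces distinct types. Suppose for contradiction that $f$ is constant on the family of $a$-subsets. For every admissible single-element swap $T \mapsto T' = T \setminus \{s\} \cup \{s'\}$ (with $s \in T$ and $s' \notin T$), a short computation gives
$$
0 \;=\; f(T') - f(T) \;=\; \sum_{u \in T \setminus \{s\}} \bigl(|u \cap s'| - |u \cap s|\bigr).
$$
Now fix any pair $s \ne s'$ in $P(r)$, set $X = P(r)\setminus\{s,s'\}$ and $\phi(u) = |u\cap s'| - |u\cap s|$ for $u \in X$: as $Y = T \setminus \{s\}$ ranges over the $(a-1)$-subsets of $X$, the display says $\sum_{u\in Y}\phi(u) = 0$ for every such $Y$. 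The hypothesis $2 \le a \le d - 2$ is exactly $1 \le a-1 \le |X|-1$, so the standard differencing argument on $k$-subsets (subtracting two $(a-1)$-subsets that differ in a single element forces $\phi$ to be constant on $X$, after which the sum condition makes that constant zero) yields $\phi \equiv 0$ on $X$.

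To produce a contradiction I would exhibit a single triple $(s, s', u)$ with $u \in X$ and $\phi(u) \ne 0$. Using $r \ge 2$ and $2r \le n$, set $s = \{1, 2, \ldots, r\}$, $s' = \{1, 2, \ldots, r-1, r+1\}$ and $u = \{r, r+2, r+3, \ldots, 2r\}$; then $|u| = r$, $u \ne s$ and $u \ne s'$ (as $2r \notin s$ and $r \notin s'$), while $|u \cap s| = 1$ and $|u \cap s'| = 0$, so $\phi(u) = -1 \ne 0$. This contradicts the vanishing of $\phi$ just derived, so $f$ is non-constant, at least two distinct types appear in $P(r,a)$, and $\bigoplus_\tau V_\tau$ has at least two non-zero summands. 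The main step requiring care is the elementary differencing lemma; it is precisely the hypothesis $2 \le a \le d - 2$ that puts us in the range $1 \le a-1 \le |X|-1$ where it applies, which is why the conclusion cannot be extended to $a=d-1$.
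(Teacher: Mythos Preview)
Your argument is correct and takes a genuinely different route from the paper's.

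Both proofs rest on the same starting observation (already supplied by the paper): the $\sym{n}$-action on $P(r,a)$ preserves the type $(m_k^v)_{k\ge 0}$, so $\Lambda^a M^{(n-r,r)}$ splits as $\bigoplus_\tau V_\tau$, and it suffices to exhibit two basis vectors of different type. From this point the two arguments diverge. The paper proceeds by an explicit, case-by-case construction: it writes down $u=s_1\wedge\cdots\wedge s_a$ and a companion vector, then splits into $a\ge t$ versus $a<t$ (with $t=\binom{n-1}{r-1}$) and several sub-cases, in each one comparing a particular coordinate $m_i^u$ against $m_i^v$ by hand. Your approach instead passes to the single integer invariant $f(T)=\sum_{\{u,v\}\subset T}|u\cap v|=\tfrac12\bigl(\sum_j (n_j)^2-ar\bigr)$, assumes it is constant, and via the swap identity $f(T')-f(T)=\sum_{u\in T\setminus\{s\}}(|u\cap s'|-|u\cap s|)$ together with the elementary differencing lemma on $(a-1)$-subsets deduces that $|u\cap s'|=|u\cap s|$ for \emph{every} $u\in P(r)\setminus\{s,s'\}$; a single explicit triple then gives the contradiction. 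The payoff of your route is that it eliminates all case analysis and makes the role of the hypothesis $2\le a\le d-2$ completely transparent (it is exactly the range in which the differencing lemma applies). The paper's approach, by contrast, is more concrete and self-contained, never invoking an auxiliary combinatorial lemma, at the cost of several special cases. Both are short; yours is the more uniform of the two.
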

\begin{proof}
We shall pick $x$, $y\in P(r,a)$ and show that $m_i^x\neq m_{i}^{y}$ for some $i\geq 0$. This fact implies the correctness of the lemma. Let $u=s_1\wedge\cdots\wedge s_a\in P(r,a)$ and $v=s_2\wedge\cdots\wedge s_a\wedge s_d$. As $a\leq d-2$, $s_a<_\ell s_{d-1}$ and $v\in P(r,a)$. Following the total order $\leq_\ell$, $s_1=\{1,\ldots, r\}$, $s_{d-1}=\{n-r, n-r+2,\ldots, n\}$ and $s_d=\{n-r+1,\ldots, n\}$. Let $S_b=\{\{b,i_1,\ldots,i_{r-1}\}:\{b,i_1,\ldots,i_{r-1}\}\in P(r)\}$ and put $t=|S_b|$. Note that $t={n-1\choose r-1}=\frac{rd}{n}$, $n_1^u=\min\{a,t\}$ and $n_b^u\leq t$ for all $b\in \mathbf{n}$. We distinguish two cases.
\begin{enumerate}[\text{Case} 1:]
\item $a\geq t.$
\end{enumerate}
In this case, $n_1^u=t$. As $a\leq d-2$, neither $s_{d-1}$ nor $s_d$ occurs in $u$ as a component. So, for all $n-r+2\leq i\leq n$, we have $n_i^u\leq t-2$. If $n_{n-r+1}^u\leq t-2$, then we have $m_t^u>m_t^v$, which implies that $\Lambda^a M^{(n-r,r)}$ is decomposable. If $n_{n-r+1}^u=t-1$ and $n-r>2$, then $\{n-r,n-r+1,\ldots, n-1\}$ has to occur in $u$ as a component. By the definitions of $u$ and $\leq_\ell$,
as $r>1$, notice that $n_2^u=t$. So we still have $m_t^u>m_t^v$ and $\Lambda^a M^{(n-r,r)}$ is decomposable. If $n-r=2$ and $n_3^u=t-1$, as $2<2r\leq n$, $n_3^u=\frac{rd}{n}-1=2$, $a=4$ and $u=\{1,2\}\wedge\{1,3\}\wedge\{1,4\}\wedge\{2,3\}$. Set $z=\{1,2\}\wedge\{1,3\}\wedge\{2,4\}\wedge\{3,4\}$ and note that $m_2^u=2<4=m_2^z$. This means that $\Lambda^4M^{(2,2)}$ is decomposable.
\begin{enumerate}[\text{Case} 2:]
\item $a<t.$
\end{enumerate}
In this case, $n_1^u=a$. As $1<a$, both $\{1,\ldots,r\}$ and $\{1,\ldots,r-1,r+1\}$ occur in $u$ as components. So, for all $n-r+2\leq i\leq n$, $n_i^u\leq a-2$. If $2r<n$, then $r+1<n-r+1$ and $n_{n-r+1}^u\leq a-2$ as well. So $m_a^u>m_a^v$ and $\Lambda^a M^{(n-r,r)}$ is decomposable. If $2r=n$ and $a>2$, note that $\{1,\ldots,r\}$, $\{1,\ldots,r-1,r+1\}$ and $\{1,\ldots, r-1,r+2\}$ occur in $u$ as components. In particular, we have $n_{r+1}^u\leq a-2$. Therefore, following the case $2r<n$, we have $m_a^u>m_a^v$ and $\Lambda^a M^{(n-r,r)}$ is decomposable. If $2r=n$ and $a=2$, then $u=\{1,\ldots, r\}\wedge\{1,\ldots,r-1, r+1\}$. Set $z=\{1,\ldots,r\}\wedge\{r+1,\ldots,2r\}$. We have $m_2^u=r-1>0=m_2^z$, which implies that $\Lambda^a M^{(n-r,r)}$ is decomposable. The lemma follows by combining the two listed cases.
\end{proof}

We now summarize what we have got by the following theorem. The proof of it is from Lemmas \ref{L;(n-1,1)}, \ref{L;powerd}, \ref{L;powerd-1}, \ref{L;powerd-2} and \ref{L;twopart}.

\begin{thm}\label{T;indecomposable}
Let $\lambda\vdash n$, $1<a\in \mathbb{N}$ and $a\leq d=\dim_\F M^\lambda$. Then $\Lambda^aM^\lambda$ is indecomposable if and only if exactly one of the following cases occurs:
\begin{enumerate}
\item [\em(i)] $a=d$. In this case, \[\Lambda^d M^\lambda\cong\begin{cases}
\F,& \text{if}\ p=2\ \text{or}\ p>2\ \text{and}\ 4\mid f^\lambda,\\
sgn, & \text{otherwise};\\\end{cases}\]
\item [\em(ii)] $p=2$, $2\mid n$, $a=d-1$, $\lambda$ is one of the $m$ partitions $(n-k_i,k_i)$ except for $(n-1,1)$, where $m\in \mathbb{N}$, $2^m\leq n<2^{m+1}$, $2^{i-1}\leq k_i<2^i$ and $k_i\equiv \frac{n-2^m}{2}\pmod {2^{i-1}}$ for any $1\leq i\leq m$. In this case, $\Lambda^{d-1}M^\lambda\cong Y^\lambda$;
\item[{\em (iii)}] $p>2$, $p\mid n$, $a<n$, $\lambda=(n-1,1)$, $a=pb+r$, where $b,\ r\in\mathbb{N}_0$ and $0\leq r<p$. In this case, $\Lambda^aM^{(n-1,1)}\cong Y((n-a,1^r)|(pb))$;
\item[{\em (iv)}] $p=2$, $2\mid n$, $a<n$, $\lambda=(n-1,1)$. In this case, $\Lambda^aM^{(n-1,1)}\cong Y^{\overline{(n-a,a)}}$, where $2^m\leq n<2^{m+1}$ for some $m\in \mathbb{N}$ and $2^{i-1}\leq \min \{n-a,a\}<2^i$, $\min\{n-a, a\}\equiv\frac{n-2^m}{2}\pmod {2^{i-1}}$ for some $1\leq i\leq m$.
\end{enumerate}
\end{thm}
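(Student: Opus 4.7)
The plan is a case analysis on $a$ relative to $d=\dim_\F M^\lambda$, combined with a case split on the shape of $\lambda$; since $a>1$ forces $d\geq 2$, the partition $\lambda=(n)$ is automatically excluded. The key observation is that the five preceding lemmas together handle the three regimes $a=d$, $a=d-1$ and $a\leq d-2$ exhaustively, so the entire argument is an assembly step rather than a fresh computation.

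First I would dispose of $a=d$: here $\Lambda^dM^\lambda$ is one-dimensional and therefore indecomposable, and Lemma \ref{L;powerd} (whose hypothesis $d>1$ is automatic from $d\geq a>1$) identifies it as $\F$ or $sgn$ according to the congruence on $f^\lambda$ modulo $4$, giving item (i). Next I would treat $a=d-1$. If $\lambda=(n-1,1)$, then Lemma \ref{L;(n-1,1)} applies directly (with $a=d-1=n-2<n$) and yields the structure in (iii) or (iv). If $\lambda\neq(n-1,1)$, then Lemma \ref{L;powerd-1} pinpoints when $\Lambda^{d-1}M^\lambda$ is indecomposable and records the isomorphism $\Lambda^{d-1}M^\lambda\cong Y^\lambda$; this is item (ii).

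For $2\leq a\leq d-2$ I would split on the length $\ell(\lambda)$. If $\ell(\lambda)\geq 3$, Lemma \ref{L;powerd-2} gives decomposability outright. If $\ell(\lambda)=2$, I separate $\lambda=(n-1,1)$, handled again by Lemma \ref{L;(n-1,1)} (which covers the full range $1<a<n$ and thus contributes the remaining occurrences of (iii) and (iv)), from $\lambda=(n-r,r)$ with $r\geq 2$, where Lemma \ref{L;twopart} forces decomposability; the hypothesis $2<2r\leq n$ in that lemma is automatic since $r\geq 2$ and $n-r\geq r$.

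The main obstacle is bookkeeping rather than novel content: one must verify that the four indecomposability lemmas jointly cover every pair $(\lambda,a)$ with $1<a\leq d$, and that the isomorphism types listed in (i)--(iv) are correctly inherited from the relevant subcase. In particular, the conditions under which Lemma \ref{L;(n-1,1)} declares $\Lambda^aM^{(n-1,1)}$ indecomposable must align exactly with the statements of (iii) and (iv) across both the $a=d-1$ and $a\leq d-2$ regimes, and the $r=2$ boundary of Lemma \ref{L;twopart} must be checked to leave no gap with Lemma \ref{L;powerd-2}.
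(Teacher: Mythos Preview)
Your proposal is correct and matches the paper's approach exactly: the paper's proof simply cites Lemmas \ref{L;(n-1,1)}, \ref{L;powerd}, \ref{L;powerd-1}, \ref{L;powerd-2} and \ref{L;twopart}, and your case analysis is precisely the way to assemble them. One arithmetic slip: for $\lambda=(n-1,1)$ one has $d=\dim_\F M^{(n-1,1)}=n$, so $a=d-1=n-1$ (not $n-2$), though this does not affect the argument since Lemma \ref{L;(n-1,1)} covers the full range $1<a<n$.
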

\begin{rem}\label{R;Remark}
We have the following remarks.
\begin{enumerate}[(i)]
\item In \cite{Gill}, Gill classified all the indecomposable $\F\sym{n}$-Young permutation modules (see \cite[Theorem 2]{Gill}). As the trivial $\F\sym{n}$-module is indecomposable, by his result and Theorem \ref{T;indecomposable}, all the indecomposable exterior powers of $\F\sym{n}$-Young permutation modules are now classified.
\item Let $1<a\in \mathbb{N}$ and $\lambda\vdash n$. Note that $S^aM^\lambda$ is indecomposable if and only if $\lambda=(n)$. Therefore, all the indecomposable symmetric powers of $\F\sym{n}$-Young permutation modules are also clear.
\end{enumerate}
\end{rem}

\section{The symmetric powers of Young permutation modules}
The first part of Theorem \ref{T;B} is proved in this section. Let $1<a\in \mathbb{N}$ and $\lambda\vdash n.$ We can find a Young module $Y^\mu$ such that $Y^\mu\mid S^aM^\lambda$ and
$c_{\sym{n}}(Y^\mu)=c_{\sym{n}}(S^aM^\lambda)$. We begin with fixing the required notation.

\begin{nota}\label{N;notation}
Let $\lambda=(\lambda_1,\ldots,\lambda_\ell)\vdash n$ and $1<a\in \mathbb{N}$.
\begin{enumerate}[(i)]
\item Let $m\in \mathbb{N}$, $\mu=(\mu_1,\ldots,\mu_{\ell'})\models m$ and $\{t\}\in \mathcal{T}(\mu)$. For all $1\leq i\leq \ell'$, if $\mu_i\neq 0$, let $R_i(\{t\})$ be the set of entries in the $i\mathrm{th}$ row of $\{t\}$. Otherwise, $R_i(\{t\})=\varnothing$. Put $d_{\lambda,a}=\min\{\nu_p(a),\lfloor \frac{n}{p}\rfloor-\mathrm{rank}(\sym{\lambda})\}$ and $e_{\lambda,a}=n-pd_{\lambda,a}$. Note that $e_{\lambda,a}\geq 0$. Let the numbers $1,\ldots,m$ replace the boxes of $\mu$ from left to right and from the top to the bottom successively and use $t^\mu$ to denote the obtained $\mu$-tableau.
\item For all $1\leq i\leq \ell$, write $\lambda_i=pq_i+r_i$, where $q_i$, $r_i\in \mathbb{N}_0$ and $0\leq r_i<p$. Set $q_\lambda=(pq_1,\ldots,pq_\ell)$ and
   $r_\lambda=(r_1,\ldots,r_\ell)$. Note that $|{q_\lambda}|=\mathrm{rank}(\sym{\lambda})p$ and $|r_\lambda|=n-\mathrm{rank}(\sym{\lambda})p\geq pd_{\lambda,a}$. Moreover, $q_\lambda$ is a non-increasing sequence. Let $r_{\lambda,a}$ be the composition obtained from $r_{\lambda}$ by removing $pd_{\lambda,a}$ boxes from right to left and from the bottom to the top successively. Let $\lambda^a=q_\lambda+r_{\lambda,a}\models e_{\lambda,a}$.
\item Define a $\lambda$-tabloid $\mathfrak{s}_\lambda$ as follows. If $0<\mathrm{rank}(\sym{\lambda})<\frac{n}{p}$, for all $1\leq i\leq \ell$, put
\begin{align*}
R_i(\mathfrak{s}_\lambda)=\begin{cases}R_i(\{t^{q_\lambda}\})\cup \{\mathrm{rank}(\sym{\lambda})p+x:x\in R_i(\{t^{r_{\lambda}}\})\}, & \text{if}\ r_i>0,\\
R_i(\{t^{q_\lambda}\}), & \text{if}\ r_i=0.\end{cases}
\end{align*}
Otherwise, set $\mathfrak{s}_\lambda=\{t^\lambda\}$. For the case $e_{\lambda,a}>0$,
define a $\lambda^a$-tabloid $\mathfrak{s}_{\lambda,a}$ as follows. If $0<e_{\lambda,a}<n$, for all $1\leq i\leq \ell$, set $R_i(\mathfrak{s}_{\lambda,a})=R_i(\mathfrak{s}_\lambda)\setminus \{e_{\lambda,a}+1,\ldots,n\}$. If $e_{\lambda,a}=n$, we have $d_{\lambda,a}=0$, $r_{\lambda,a}=r_\lambda$ and $\lambda^a=\lambda$. Set $\mathfrak{s}_{\lambda,a}=\mathfrak{s}_{\lambda}$.
\item Let   $H_{\lambda,a}=\{g\in\mathfrak{S}_{e_{\lambda,a}}\!:g\mathfrak{s}_{\lambda,a}=\mathfrak{s}_{\lambda,a}\}\leq \sym{e_{\lambda,a}}$ for the case $e_{\lambda,a}>0$. If $e_{\lambda,a}=0$, put $H_{\lambda,a}=1$. If $d_{\lambda,a}>0$, define a $p$-cycle $s_{\lambda,a,i}=(e_{\lambda,a}+(i-1)p+1,\ldots, e_{\lambda,a}+ip)$ for all $1\leq i\leq d_{\lambda,a}$. Let $P$ be a Sylow $p$-subgroup of $H_{\lambda,a}$ and put
$$P_{\lambda,a}=\begin{cases}
  P\times\langle\bigcup_{i=1}^{d_{\lambda,a}}\{s_{\lambda,a,i}\}\rangle, &\text{if}\ d_{\lambda,a}>0,\\
  P, & \text{if}\ d_{\lambda,a}=0.\\
  \end{cases}$$
    So $P_{\lambda,a}$ is unique up to $H_{\lambda,a}$-conjugation. Let $\mathfrak{S}_{\lambda,a}= H_{\lambda,a}\times (\sym{p})^{d_{\lambda,a}},$
    where, if $d_{\lambda,a}>0$, the first factor $\sym{p}$ acts on the set $\{e_{\lambda,a}+1,\ldots,e_{\lambda,a}+p\}$, the second factor $\sym{p}$ acts on the set $\{e_{\lambda,a}+p+1,\ldots,e_{\lambda,a}+2p\}$ and so on. So $P_{\lambda,a}$ is a Sylow $p$-subgroup of $\mathfrak{S}_{\lambda,a}$ and $\mathfrak{S}_{\lambda,a}$ is $\sym{n}$-conjugate to $\sym{\eta}$, where $\eta=\lambda^a\cup(p^{d_{\lambda,a}})$.
\item Let $d=\dim_\F M^\lambda$ and label $\mathcal{T}(\lambda)=\{\{t_1\},\ldots,\{t_d\}\}$, where $\{t_1\}=\{t^\lambda\}$. By this order and $(2.1)$, $\mathcal{T}(\lambda)_a^s$ is defined. Given $t\in \mathcal{T}(\lambda)_a^s$, let $\mathcal{O}^s(t)$ be the orbit of $\mathcal{T}(\lambda)_a^s$ containing $t$ under the action of $\sym{n}$ and set $V(t)=\langle \mathcal{O}^s(t)\rangle_\F$. Therefore, $V(t)\cong (\F_{K(t)}){\uparrow}^{\sym{n}}$, where
$K(t)=\{g\in\sym{n}: gt=t\}\leq \sym{n}$. For some $x\in \mathbb{N}$,
    \begin{equation}
    S^aM^\lambda=\bigoplus_{i=1}^x V(t_i)\cong\bigoplus_{i=1}^x (\F_{K(t_i)}){\uparrow}^{\sym{n}},
    \end{equation}
    where $x\leq d$ and $t_1,\ldots,t_x\in \mathcal{T}(\lambda)_a^s$. If $a\leq d$, by $(2.2)$, this order defines $\mathcal{T}(\lambda)_a^e$. Given $t\in \mathcal{T}(\lambda)_a^e$, let $\mathcal{O}^e(t)=\{s\in\mathcal{T}(\lambda)_a^e:\exists\ g\in\sym{n},\ gt=s\ \text{or}\ gt=-s\}$ and $W(t)=\langle \mathcal{O}^e(t)\rangle_\F$. Let $L(t)=\{g\in\sym{n}:gt=t\ \text{or}\ gt=-t\}\leq \sym{n}$. Note that $W(t)\cong (\F ^t){\uparrow^{\sym{n}}}$, where $\F ^t$ is a one-dimensional $\F L(t)$-module. If $t'\in \O^e(t)$, also note that $L(t')$ is $\sym{n}$-conjugate to $L(t)$ and $(\F^{t'}){\uparrow^{\sym{n}}}\cong (\F^{t}){\uparrow^{\sym{n}}}$. Moreover, $\F ^{t}=\F$ if $p=2$. For some $y\in \mathbb{N}$, $y\leq d$ and $t_1,\ldots,t_y\in \mathcal{T}(\lambda)_a^e$,
     \begin{equation}
    \Lambda^aM^\lambda=\bigoplus_{i=1}^y W(t_i)\cong\bigoplus_{i=1}^y (\F ^{t_i}){\uparrow}^{\sym{n}}.
    \end{equation}
\end{enumerate}
\end{nota}
\begin{eg}\label{E;example}
An example illustrates most of the definitions in Notation \ref{N;notation}. Let $p=3$, $n=11$, $a=6$ and $\lambda=(5,4,2)\vdash 11$. We have $d_{\lambda,a}=1$, $e_{\lambda,a}=8$, $q_\lambda=(3^2,0)\models 6$, $r_\lambda=(2,1,2)\models 5$, $r_{\lambda,a}=(2)\vdash2$ and $\lambda^a=(5,3,0)\models 8$. Moreover,
\[\{t^\lambda\}=\ {\begin{matrix}
\hline
1 & 2 & 3 & 4 & 5\\ \hline
6 & 7 & 8 & 9\\ \cline{1-4}
10 & 11 \\ \cline{1-2}
\end{matrix}},\
\mathfrak{s}_{\lambda}=\ {\begin{matrix}
\hline
1 & 2 & 3 & 7 & 8\\ \hline
4 & 5 & 6 & 9\\ \cline{1-4}
10 & 11 \\ \cline{1-2}
\end{matrix}},\
\mathfrak{s}_{\lambda,a}=\ {\begin{matrix}
\hline
1 & 2 & 3 & 7 & 8\\ \hline
4 & 5 & 6 \\ \cline{1-3}
\\ \cline{1-3}
\end{matrix}},
\]
$H_{\lambda,a}=\sym{\{1,2,3,7,8\}}\times \sym{\{4,5,6\}}$ and $\mathfrak{S}_{\lambda,a}=H_{\lambda,a}\times \sym{\{9,10,11\}}$. Also note that $P_{\lambda,a}$ is $H_{\lambda,a}$-conjugate to $\langle (1,2,3), (4,5,6),(9,10,11)\rangle$.
\end{eg}
\begin{lem}\label{L;defi}
Let $\lambda\vdash n$ and $1<a\in\mathbb{N}$. Let $\mathcal{O}_{\lambda,a}$ be the orbit of $\mathcal{T}(\lambda)$ containing $\mathfrak{s}_\lambda$ under the action of $P_{\lambda,a}$.
\begin{enumerate}
\item[{\em (i)}] For any $\{t\}\in \mathcal{O}_{\lambda,a}$, if $e_{\lambda,a}>0$, then $R_i(\mathfrak{s}_{\lambda,a})\subseteq R_i(\{t\})$ for all $1\leq i\leq \ell(\lambda)$.
\item [{\em (ii)}] We have $|\mathcal{O}_{\lambda,a}|=p^{d_{\lambda,a}}$. In particular, $|\mathcal{O}_{\lambda,a}|\mid a$.
\end{enumerate}
\end{lem}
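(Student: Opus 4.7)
The plan is to exploit the direct product decomposition $P_{\lambda,a}=P\times C$, where $C=\langle s_{\lambda,a,1},\ldots,s_{\lambda,a,d_{\lambda,a}}\rangle$, and to analyse each factor separately. The factor $P$ sits inside $\mathfrak{S}_{e_{\lambda,a}}\cap H_{\lambda,a}$, so it fixes every letter of $\{e_{\lambda,a}+1,\ldots,n\}$ and, being a subgroup of $H_{\lambda,a}$, preserves each row $R_i(\mathfrak{s}_{\lambda,a})$ setwise. The factor $C$ fixes $\{1,\ldots,e_{\lambda,a}\}$ pointwise and acts on $\{e_{\lambda,a}+1,\ldots,n\}$ by independent $p$-cycles on the blocks $B_i=\{e_{\lambda,a}+(i-1)p+1,\ldots,e_{\lambda,a}+ip\}$.

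For (i), I would fix $g=hc\in P_{\lambda,a}$ with $h\in P$ and $c\in C$, and check that $g$ sends $R_i(\mathfrak{s}_{\lambda,a})$ into $R_i(g\mathfrak{s}_\lambda)$. Any $x\in R_i(\mathfrak{s}_{\lambda,a})\subseteq\{1,\ldots,e_{\lambda,a}\}$ is fixed by $c$, and then $h(x)\in R_i(\mathfrak{s}_{\lambda,a})$ because $h\in H_{\lambda,a}$ stabilises $\mathfrak{s}_{\lambda,a}$ rowwise. Since $R_i(\mathfrak{s}_{\lambda,a})\subseteq R_i(\mathfrak{s}_\lambda)$, this immediately gives $R_i(\mathfrak{s}_{\lambda,a})\subseteq R_i(g\mathfrak{s}_\lambda)$ for every $g\in P_{\lambda,a}$.

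For (ii), combining the above observation with the fact that $P$ fixes each entry of $R_i(\mathfrak{s}_\lambda)\setminus R_i(\mathfrak{s}_{\lambda,a})\subseteq\{e_{\lambda,a}+1,\ldots,n\}$ pointwise yields $h\mathfrak{s}_\lambda=\mathfrak{s}_\lambda$ for all $h\in P$. Consequently $\mathcal{O}_{\lambda,a}=C\mathfrak{s}_\lambda$, and the task reduces to showing that $C$ acts freely on $\mathfrak{s}_\lambda$. Given a non-identity $c=\prod_i s_{\lambda,a,i}^{a_i}\in C$, I would pick an index $i$ with $p\nmid a_i$; then $c|_{B_i}$ is a single $p$-cycle. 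By the construction of $\mathfrak{s}_\lambda$, the entries of $\{e_{\lambda,a}+1,\ldots,n\}$ that lie in row $j$ of $\mathfrak{s}_\lambda$ number at most $r_j<p$. Since $|B_i|=p$, the block $B_i$ necessarily meets at least two rows of $\mathfrak{s}_\lambda$, and because a single $p$-cycle on $p$ letters admits no nonempty proper invariant subset, $c|_{B_i}$ cannot preserve the row partition. Hence $c\mathfrak{s}_\lambda\neq\mathfrak{s}_\lambda$, giving $|\mathcal{O}_{\lambda,a}|=|C|=p^{d_{\lambda,a}}$; the divisibility $p^{d_{\lambda,a}}\mid a$ follows directly from $d_{\lambda,a}\leq\nu_p(a)$.

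The principal technical point is verifying that every block $B_i$ straddles at least two rows of $\mathfrak{s}_\lambda$: this relies on the $p$-adic splitting $\lambda_j=pq_j+r_j$ with $r_j<p$ built into $r_\lambda$, which caps the number of large-indexed entries in each row and thereby forces any non-identity element of $C$ to disturb the row structure. Everything else in the argument is a routine bookkeeping consequence of the product structure of $P_{\lambda,a}$.
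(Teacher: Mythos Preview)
Your proposal is correct and follows essentially the same approach as the paper: both decompose $P_{\lambda,a}=P\times E$ (your $C$), show that $P$ fixes $\mathfrak{s}_\lambda$ because it stabilises each $R_i(\mathfrak{s}_{\lambda,a})$ and fixes $\{e_{\lambda,a}+1,\ldots,n\}$ pointwise, and then show that every non-identity element of $E$ moves $\mathfrak{s}_\lambda$ because each row of $\mathfrak{s}_\lambda$ contains fewer than $p$ of the letters $e_{\lambda,a}+1,\ldots,n$. The only cosmetic difference is that the paper invokes Lemma~\ref{PModules} for this last step, whereas you argue it directly via the invariant-subset property of a $p$-cycle; the content is the same.
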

\begin{proof}
By the definition of $P_{\lambda,a}$, $P_{\lambda,a}=P\times E$, where $P\leq H_{\lambda,a}$, $E\leq\sym{n}$ and $|E|=p^{d_{\lambda,a}}$. Moreover, if $e_{\lambda,a}>0$, for any $e\in E$ and
$x\in\{1,\ldots,e_{\lambda,a}\}$, $e(x)=x$.

For (i), as $e_{\lambda,a}>0$, by the definition of $\mathfrak{s}_{\lambda,a}$, $R_i(\mathfrak{s}_{\lambda,a})\subseteq R_i(\mathfrak{s}_\lambda)$ for all $1\leq i\leq \ell(\lambda)$. As $H_{\lambda,a}$ fixes $\mathfrak{s}_{\lambda,a}$ and all the entries of $\mathfrak{s}_{\lambda,a}$ are exactly the numbers $1,\ldots,e_{\lambda,a}$, if $g\mathfrak{s}_\lambda=\{t\}$ for some $g\in P_{\lambda,a}$, we have $g\mathfrak{s}_{\lambda,a}=\mathfrak{s}_{\lambda,a}$ and $R_i(g\mathfrak{s}_{\lambda,a})\subseteq R_i(g\mathfrak{s}_\lambda)=R_i(\{t\})$ for all $1\leq i\leq \ell(\lambda)$. The two facts imply the correctness of (i).

For (ii), if $e_{\lambda,a}=n$, then $d_{\lambda,a}=0$, $\mathfrak{s}_\lambda=\mathfrak{s}_{\lambda,a}$ and $P_{\lambda,a}=P\leq H_{\lambda,a}$. Therefore, $P_{\lambda,a}$ fixes $\mathfrak{s}_\lambda$ and $|\mathcal{O}_{\lambda,a}|=1$. If $0<e_{\lambda,a}<n$, by (i), $P$ fixes $\mathfrak{s}_\lambda$. By the definition of $\mathfrak{s}_\lambda$, for all $1\leq i\leq \ell(\lambda)$,
$|R_i(\mathfrak{s}_\lambda)\cap\{e_{\lambda,a}+1,\ldots,n\}|<p$. As $e(x)=x$ for any $e\in E$ and $x\in\{1,\ldots,e_{\lambda,a}\}$, by Lemma \ref{PModules}, this implies that $e\mathfrak{s}_\lambda\neq\mathfrak{s}_\lambda$ for all $1\neq e\in E$. So $|\O_{\lambda,a}|=p^{d_{\lambda,a}}$. If $e_{\lambda,a}=0$, by the definitions of $H_{\lambda,a}$ and $d_{\lambda,a}$, $P_{\lambda,a}=E$ and every part of $\lambda$ is strictly less than $p$. By Lemma \ref{PModules}, $e\mathfrak{s}_\lambda\neq\mathfrak{s}_\lambda$ for all $1\neq e\in E$. So $|\O_{\lambda,a}|=p^{d_{\lambda,a}}$. The first assertion is shown. As $d_{\lambda,a}\leq \nu_p(a)$ and $|\O_{\lambda,a}|=p^{d_{\lambda,a}}$, $|\mathcal{O}_{\lambda,a}|\mid a$, as desired.
\end{proof}

Let $\lambda\vdash n$ and $1<a\in \mathbb{N}$. Let $\mathcal{O}_{\lambda,a}$ be the orbit of $\mathcal{T}(\lambda)$ containing $\mathfrak{s}_\lambda$ under the action of $P_{\lambda,a}$. Let $o_{\lambda,a}=|\mathcal{O}_{\lambda,a}|$ and $\mathcal{O}_{\lambda,a}=\{\{s_1\},\ldots,\{s_{o_{\lambda,a}}\}\}$. By Lemma \ref{L;defi} (ii), $a=o_{\lambda,a}x_{\lambda,a}$ for some $x_{\lambda,a}\in\mathbb{N}$. For all $1\leq i\leq o_{\lambda,a}$, formally write $$\{s_i\}^{x_{\lambda,a}}=\underbrace{\{s_i\}\odot\cdots\odot \{s_i\}}_{x_{\lambda,a}\ \text{times}}$$ and let $t_{\lambda,a}=\{s_1\}^{x_{\lambda,a}}\odot\cdots\odot \{s_{o_{\lambda,a}}\}^{x_{\lambda,a}}\in \mathcal{T}(\lambda)_a^s$. Notice that $P_{\lambda,a}\leq K(t_{\lambda,a})$. Moreover, $g\{s_i\}\in \mathcal{O}_{\lambda,a}$ for any $g\in K(t_{\lambda,a})$ and $1\leq i\leq o_{\lambda,a}$.
\begin{lem}\label{L;subgroup0}
Let $\lambda\vdash n$ and $1<a\in \mathbb{N}$. If $d_{\lambda,a}=0$, then $K(t_{\lambda,a})=\mathfrak{S}_{\lambda,a}$.
\end{lem}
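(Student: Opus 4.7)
The plan is to unwind the definitions under the hypothesis $d_{\lambda,a}=0$ and observe that $t_{\lambda,a}$ is just a pure symmetric power $\mathfrak{s}_\lambda^{\odot a}$, after which the claim collapses to a trivial identification.

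First I would record what the hypothesis $d_{\lambda,a}=0$ forces. By Notation \ref{N;notation}(i), $e_{\lambda,a}=n-pd_{\lambda,a}=n$, so in Notation \ref{N;notation}(iii) we fall in the subcase $e_{\lambda,a}=n$ which gives $\mathfrak{s}_{\lambda,a}=\mathfrak{s}_\lambda$. In Notation \ref{N;notation}(iv), because $d_{\lambda,a}=0$ there are no cycles $s_{\lambda,a,i}$ and no $\sym{p}$-factors, so $P_{\lambda,a}=P$ is a Sylow $p$-subgroup of $H_{\lambda,a}$ and $\mathfrak{S}_{\lambda,a}=H_{\lambda,a}$.

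Next I would describe $t_{\lambda,a}$ explicitly. By Lemma \ref{L;defi}(ii), $o_{\lambda,a}=|\mathcal{O}_{\lambda,a}|=p^{d_{\lambda,a}}=1$, so $\mathcal{O}_{\lambda,a}=\{\mathfrak{s}_\lambda\}$ and $x_{\lambda,a}=a$. Hence
\[
t_{\lambda,a}=\underbrace{\mathfrak{s}_\lambda\odot\cdots\odot\mathfrak{s}_\lambda}_{a\text{ times}}=\mathfrak{s}_\lambda^{\odot a}.
\]
For any $g\in\sym{n}$, the diagonal action yields $gt_{\lambda,a}=(g\mathfrak{s}_\lambda)^{\odot a}$.

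Finally I would use the basis $\mathcal{T}(\lambda)_a^s$ from $(2.1)$ to finish. Since the pure powers $\{t\}^{\odot a}$ associated to distinct $\lambda$-tabloids $\{t\}$ are distinct elements of the $\F$-basis $\mathcal{T}(\lambda)_a^s$ of $S^aM^\lambda$, the equality $(g\mathfrak{s}_\lambda)^{\odot a}=\mathfrak{s}_\lambda^{\odot a}$ is equivalent to $g\mathfrak{s}_\lambda=\mathfrak{s}_\lambda$. Therefore
\[
K(t_{\lambda,a})=\{g\in\sym{n}:g\mathfrak{s}_\lambda=\mathfrak{s}_\lambda\}=\{g\in\sym{n}:g\mathfrak{s}_{\lambda,a}=\mathfrak{s}_{\lambda,a}\}=H_{\lambda,a}=\mathfrak{S}_{\lambda,a},
\]
as required. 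There is no serious obstacle in the argument; the only point that needs explicit mention is the injectivity of $\{t\}\mapsto\{t\}^{\odot a}$, which is immediate from the explicit basis of $S^aM^\lambda$ recorded in $(2.1)$.
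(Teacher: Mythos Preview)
Your proof is correct and follows exactly the same approach as the paper's own proof: unwind the definitions under $d_{\lambda,a}=0$ to get $e_{\lambda,a}=n$, $\mathfrak{s}_{\lambda,a}=\mathfrak{s}_\lambda$, $\mathcal{O}_{\lambda,a}=\{\mathfrak{s}_\lambda\}$, $t_{\lambda,a}=\mathfrak{s}_\lambda^{\odot a}$, and conclude $K(t_{\lambda,a})=H_{\lambda,a}=\mathfrak{S}_{\lambda,a}$. The paper compresses all of this into a single sentence, whereas you spell out each step and add the remark about injectivity of $\{t\}\mapsto\{t\}^{\odot a}$, but the argument is identical.
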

\begin{proof}
As $d_{\lambda,a}=0$, by the definitions of $\mathcal{O}_{\lambda,a}$, $t_{\lambda,a}$, $\mathfrak{s}_{\lambda,a}$, $\mathfrak{S}_{\lambda,a}$ and Lemma \ref{L;defi} (ii), $e_{\lambda,a}=n$, $\mathcal{O}_{\lambda,a}=\{\mathfrak{s}_\lambda\}$, $t_{\lambda,a}={\mathfrak{s}_\lambda}^a$, $\mathfrak{s}_\lambda=\mathfrak{s}_{\lambda,a}$ and
$K(t_{\lambda,a})=H_{\lambda,a}=\mathfrak{S}_{\lambda,a}$. This completes the proof.
\end{proof}

\begin{lem}\label{L;subgroup1}
Let $\lambda\vdash n$ and $1<a\in \mathbb{N}$. If $d_{\lambda,a}>0$, then $K(t_{\lambda,a})= H_{\lambda,a}\times K$, where $K\leq \sym{\{e_{\lambda,a}+1,\ldots,n\}}$.
\end{lem}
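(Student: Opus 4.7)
The plan is to prove the claimed equality with $K := K(t_{\lambda,a}) \cap \sym{\{e_{\lambda,a}+1,\ldots,n\}}$. The inclusion $H_{\lambda,a} \leq K(t_{\lambda,a})$ is the easier direction. An element $h \in H_{\lambda,a}$ fixes $\mathfrak{s}_{\lambda,a}$ and acts trivially on $\{e_{\lambda,a}+1,\ldots,n\}$, so it also fixes $\mathfrak{s}_\lambda$; since $h$ commutes with the second factor $E = \langle\bigcup_{i=1}^{d_{\lambda,a}}\{s_{\lambda,a,i}\}\rangle$ of $P_{\lambda,a}$ by disjoint supports, $h$ fixes every element of $\mathcal{O}_{\lambda,a}$ and therefore $t_{\lambda,a}$.

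For the reverse inclusion take $g \in K(t_{\lambda,a})$. Because each $\{s_i\}$ enters $t_{\lambda,a}$ with the common multiplicity $x_{\lambda,a}$, the equation $g t_{\lambda,a} = t_{\lambda,a}$ forces $g$ to permute the distinct tabloids $\{s_1\},\ldots,\{s_{o_{\lambda,a}}\}$ of $\mathcal{O}_{\lambda,a}$. As a consequence $g$ setwise preserves, for every $1 \leq j \leq \ell(\lambda)$,
\[
\mathcal{C}_j := \bigcap_{i=1}^{o_{\lambda,a}} R_j(\{s_i\}).
\]
The whole argument then reduces to the identity $\mathcal{C}_j = R_j(\mathfrak{s}_{\lambda,a})$ for all $j$: given this, $g$ permutes $\bigcup_j R_j(\mathfrak{s}_{\lambda,a}) = \{1,\ldots,e_{\lambda,a}\}$ and respects its row partition, so the restriction of $g$ to this set lies in $H_{\lambda,a}$. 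Writing $g = g_1 g_2$ with $g_1 \in H_{\lambda,a}$ and $g_2 \in \sym{\{e_{\lambda,a}+1,\ldots,n\}}$, the first paragraph gives $g_1 \in K(t_{\lambda,a})$, and so $g_2 \in K$.

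It remains to justify this identity. The inclusion $R_j(\mathfrak{s}_{\lambda,a}) \subseteq \mathcal{C}_j$ is Lemma \ref{L;defi}(i). For the reverse, an element of $\{1,\ldots,e_{\lambda,a}\}$ lies in exactly one row of $\mathfrak{s}_{\lambda,a}$, hence in exactly the same row of every $\{s_i\}$ by Lemma \ref{L;defi}(i), so contributes to $\mathcal{C}_j$ only for $j$ equal to that row. For $x \in \{e_{\lambda,a}+1,\ldots,n\}$, let $j_x$ be the row of $x$ in $\mathfrak{s}_\lambda$ and $O_k := \{e_{\lambda,a}+(k-1)p+1,\ldots,e_{\lambda,a}+kp\}$ the $E$-orbit through $x$; the key combinatorial input is $O_k \not\subseteq R_{j_x}(\mathfrak{s}_\lambda)$. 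Choosing $y \in O_k \setminus R_{j_x}(\mathfrak{s}_\lambda)$ and the unique $e \in \langle s_{\lambda,a,k}\rangle$ with $e(y) = x$, one gets $x \notin e(R_{j_x}(\mathfrak{s}_\lambda)) = R_{j_x}(e\mathfrak{s}_\lambda)$ with $e\mathfrak{s}_\lambda \in \mathcal{O}_{\lambda,a}$, so $x \notin \mathcal{C}_{j_x}$; and $x \notin \mathcal{C}_j$ for $j \neq j_x$ is trivial.

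The main obstacle is this inequality $O_k \not\subseteq R_j(\mathfrak{s}_\lambda)$ for every $j$. By the explicit construction of $\mathfrak{s}_\lambda$, the number of symbols from $\{e_{\lambda,a}+1,\ldots,n\}$ in row $j$ is at most $r_j$, while $|O_k| = p$ and $r_j < p$ by the definition of $r_\lambda$. This compact inequality is the genuine content of the lemma; the surrounding steps are formal bookkeeping using that $H_{\lambda,a}$ and $\sym{\{e_{\lambda,a}+1,\ldots,n\}}$ have disjoint supports inside $\sym{n}$.
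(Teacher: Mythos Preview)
Your proof is correct and follows essentially the same strategy as the paper's: both arguments rest on the fact that each row of any tabloid in $\mathcal{O}_{\lambda,a}$ meets $\{e_{\lambda,a}+1,\ldots,n\}$ in fewer than $p$ elements, so the $p$-cycles $s_{\lambda,a,k}$ genuinely move these symbols between rows. Your packaging via the intersections $\mathcal{C}_j=\bigcap_i R_j(\{s_i\})$ is a slight streamlining of the paper's element-by-element claim (that no $g\in K(t_{\lambda,a})$ sends a symbol above $e_{\lambda,a}$ to one at or below), since the identity $\mathcal{C}_j=R_j(\mathfrak{s}_{\lambda,a})$ simultaneously yields both that $g$ preserves $\{1,\ldots,e_{\lambda,a}\}$ and that its restriction lies in $H_{\lambda,a}$, whereas the paper treats these two facts in separate paragraphs. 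One small omission: you should note that the case $e_{\lambda,a}=0$ (where $\mathfrak{s}_{\lambda,a}$ is undefined) is trivial since then $H_{\lambda,a}=1$.
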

\begin{proof}
As $d_{\lambda,a}>0$, we have $0\leq e_{\lambda,a}<n$. If $e_{\lambda,a}=0$, by the definition of $H_{\lambda,a}$, $H_{\lambda,a}=1$. The desired equality holds trivially. We thus assume that $0<e_{\lambda,a}<n$.

For any $g\in K(t_{\lambda,a})$, we claim that there does not exist a pair $i$, $j\in \mathbf{n}$ such that $i\in\{1,\ldots,e_{\lambda,a}\}$, $j\in\{e_{\lambda,a}+1,\ldots,n\}$ and $g(j)=i$. Suppose that such $g$, $i$ and $j$ exist. Then, for any $\{t\}\in \mathcal{O}_{\lambda,a}$, $i$ and $j$ must lie in the same row of $\{t\}$. Otherwise, if $i\in R_u(\{t\})$, $j\in R_v(\{t\})$ and $u\neq v$, by Lemma \ref{L;defi} (i), $i\in R_u(\mathfrak{s}_{\lambda,a})\cap R_v(g\{t\})$. As $g\in K(t_{\lambda,a})$, $g\{t\}\in\mathcal{O}_{\lambda,a}$. By Lemma \ref{L;defi} (i) again, $i\in R_u(g\{t\})$. This is a contradiction. We thus assume that $i$, $j\in R_u(\{t\})$. As $d_{\lambda,a}>0$, by the definition of $d_{\lambda,a}$, $\lambda\neq (n)$. By the definition of $P_{\lambda,a}$, note that there exist some $m$, $w\in \mathbb{N}$ and $k\in\{e_{\lambda,a}+1,\ldots,n\}\setminus\{j\}$ such that $1\leq m<p$, $1\leq w\leq d_{\lambda,a}$ and  $s^m_{\lambda,a,w}(k)=j$. As $\{t\}\in \O_{\lambda,a}$, by the definition of $\O_{\lambda,a}$, $|R_u(\{t\})\cap\{e_{\lambda,a}+1,\ldots,n\}|<p$. Therefore, we may require that $k\in R_v(\{t\})$ and $u\neq v$. We have $i\in R_u(s^m_{\lambda,a,w}\{t\})$ and $j\in  R_v(s^m_{\lambda,a,w}\{t\})$. Since $s_{\lambda,a,w}\in P_{\lambda,a}$, $s^m_{\lambda,a,w}\{t\}\in \mathcal{O}_{\lambda,a}$. By above discussion, we get a contradiction as $i$, $j$ are not in the same row of $s^m_{\lambda,a,w}\{t\}$. The claim is shown.

By this claim, $K(t_{\lambda,a})\leq H\times K$, where $H$ is the projection of $K(t_{\lambda,a})$ with respect to $\sym{e_{\lambda,a}}$ and $K$ is the projection of $K(t_{\lambda,a})$ with respect to $\sym{\{e_{\lambda,a}+1,\ldots,n\}}$. Note that $H$ fixes $\mathfrak{s}_{\lambda,a}$. Otherwise, if there exist some $h\in H$, $i\in R_c(\mathfrak{s}_{\lambda,a})$ and $j\in R_d(\mathfrak{s}_{\lambda,a})$ such that $c\neq d$ and $h(i)=j$, as there exists some $x\in K(t_{\lambda,a})$ such that $x=hk$ and $k\in K$, we get that $x(i)=j$ and $j\in R_c(x\mathfrak{s}_{\lambda,a})\cap R_d(\mathfrak{s}_{\lambda,a})$. By Lemma \ref{L;defi} (i), $j\in R_c(x\mathfrak{s}_\lambda)\cap R_d(x\mathfrak{s}_\lambda)$, which is a contradiction. So $H\leq H_{\lambda,a}$. Since $H_{\lambda,a}\leq \sym{e_{\lambda,a}}$, by Lemma \ref{L;defi} (i) again, $H_{\lambda,a}\leq K(t_{\lambda,a})$. We have $H=H_{\lambda,a}$ as $H\leq H_{\lambda,a}\leq H\cap K(t_{\lambda,a})$. This equality implies that $K\leq K(t_{\lambda,a})$ and $K(t_{\lambda,a})=H_{\lambda,a}\times K$. The proof is now complete.
\end{proof}
\begin{lem}\label{L;subgroup2}
Let $\lambda\vdash n$ and $1<a\in \mathbb{N}$. If $d_{\lambda,a}>0$, then $K(t_{\lambda,a})\leq \mathfrak{S}_{\lambda,a}.$
\end{lem}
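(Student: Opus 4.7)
The plan is to combine Lemma~\ref{L;subgroup1} with an analysis of how each $g\in K(t_{\lambda,a})$ must interact with the $p$-cycles $s_{\lambda,a,w}$. By Lemma~\ref{L;subgroup1}, write $K(t_{\lambda,a})=H_{\lambda,a}\times K$ with $K\leq\sym{\{e_{\lambda,a}+1,\ldots,n\}}$. Since $\mathfrak{S}_{\lambda,a}=H_{\lambda,a}\times(\sym{p})^{d_{\lambda,a}}$ and each $\sym{p}$ factor is the symmetric group on the $p$-element block $B_w:=\{e_{\lambda,a}+(w-1)p+1,\ldots,e_{\lambda,a}+wp\}$, it suffices to show that every $g\in K$ preserves each $B_w$ setwise.

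Fix $g\in K$. For each $\tau=s_{\lambda,a,1}^{m_1}\cdots s_{\lambda,a,d_{\lambda,a}}^{m_{d_{\lambda,a}}}$, the membership $g\tau\mathfrak{s}_\lambda\in\mathcal{O}_{\lambda,a}$, combined with Lemma~\ref{L;defi}~(i) and the fact that $\tau$ acts trivially on $\{1,\ldots,e_{\lambda,a}\}$ while restricting to the cyclic shift $s_{\lambda,a,w'}^{m_{w'}}$ on each $B_{w'}$, yields the row-by-row identity
\[\sum_{w'=1}^{d_{\lambda,a}}\bigl|s_{\lambda,a,w'}^{m_{w'}}(X_{r,w'})\cap Y_{w',w}\bigr|=|X_{r,w}|\]
for every row index $r$ and every $w$, where $X_{r,w}:=R_r(\mathfrak{s}_\lambda)\cap B_w$ and $Y_{w',w}:=g^{-1}(B_w)\cap B_{w'}$. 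The right side is independent of each $m_{w'}$, so every summand must be constant in its $m_{w'}$. Summing the $w'$-th summand over $m_{w'}\in\mathbb{Z}/p$ (using that $s_{\lambda,a,w'}$ acts freely on $B_{w'}$) gives $p\cdot c=|X_{r,w'}|\cdot|Y_{w',w}|$ for the constant value $c$. Since each part $r_i$ of $r_\lambda$ satisfies $r_i<p$, row $r$ of $\mathfrak{s}_\lambda$ has strictly fewer than $p$ extras, so $|X_{r,w'}|<p$; hence $p\mid|Y_{w',w}|$, forcing $Y_{w',w}\in\{\varnothing,B_{w'}\}$. Because $\bigsqcup_w Y_{w',w}=B_{w'}$ is non-empty, there is a permutation $\pi$ of $\{1,\ldots,d_{\lambda,a}\}$ with $g(B_{w'})=B_{\pi(w')}$, and the displayed identities collapse to $|X_{r,w}|=|X_{r,\pi^{-1}(w)}|$ for all $r$ and $w$.

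The main obstacle is to conclude that $\pi=\mathrm{id}$, and this is where the explicit definition of $\mathfrak{s}_\lambda$ becomes essential. The block $B_w$ corresponds precisely to the $((w-1)p+1)$-th through $(wp)$-th extra entries of $t^{r_\lambda}$ (taken in left-to-right, then top-to-bottom order). Because every row of $r_\lambda$ contributes at most $p-1$ extras, any $p+1$ consecutive extras must span at least two rows; thus, letting $f$ send an extra to its row, we have $\sum_{j=1}^{p}\bigl(f(wp+j)-f((w-1)p+j)\bigr)>0$ for each $w$ (otherwise $f$ would be constant on an interval of $p+1$ extras). Hence the weighted row sum $\sum_r r\cdot|X_{r,w}|$ is strictly increasing in $w$, so the sequences $(|X_{r,w}|)_r$ are pairwise distinct, forcing $\pi=\mathrm{id}$. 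Therefore $g(B_w)=B_w$ for every $w$, so $g\in(\sym{p})^{d_{\lambda,a}}$ and $K(t_{\lambda,a})\leq\mathfrak{S}_{\lambda,a}$.
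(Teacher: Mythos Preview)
Your proof is correct and takes a different route from the paper's. The paper argues by direct contradiction: if some $k\in K$ sent $i\in B_u$ to $j\in B_v$ with $u<v$, then, since every tabloid in $\mathcal{O}_{\lambda,a}$ places the elements of $B_u$ in a fixed row-interval $[b,c]$ and those of $B_v$ in $[d,e]$ with $b<c\le d<e$, one is forced to have $c=d$ and $i\in R_c(\{t\})$ for \emph{every} $\{t\}\in\mathcal{O}_{\lambda,a}$; applying a suitable power of $s_{\lambda,a,u}$ then pushes $i$ out of row $c$, a contradiction. Your approach instead first derives a counting identity (whose right-hand side $|X_{r,w}|$ is justified by the fact that $E=\langle s_{\lambda,a,1},\dots,s_{\lambda,a,d_{\lambda,a}}\rangle$ preserves each $B_w$ setwise, so $|R_r(\{t'\})\cap B_w|$ is constant on $\mathcal{O}_{\lambda,a}$; this is the real content behind your appeal to Lemma~\ref{L;defi}(i)), uses a divisibility argument to show that any $g\in K$ must permute the blocks $B_w$, and then rules out a nontrivial permutation via the strictly increasing weighted row-sum $\sum_r r\,|X_{r,w}|$. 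The paper's argument is shorter and more direct; yours is more systematic and in particular isolates the intermediate fact that $K$ permutes the blocks. One cosmetic slip: the block $B_w$ actually corresponds to the $(\delta+(w-1)p+1)$-th through $(\delta+wp)$-th entries of $t^{r_\lambda}$, where $\delta=e_{\lambda,a}-\mathrm{rank}(\sym{\lambda})p\ge0$, but your monotonicity argument is unaffected by this shift.
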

\begin{proof}
By Lemma \ref{L;subgroup1} and the definitions of $d_{\lambda,a}$ and $\mathfrak{S}_{\lambda,a}$, as $d_{\lambda,a}>0$, we have $0\leq e_{\lambda,a}<n$, $\lambda\neq (n)$, $K(t_{\lambda,a})=H_{\lambda,a}\times K$ and $K\leq \sym{\{e_{\lambda,a}+1,\ldots,n\}}$. It is enough to show that $K$ is contained in $$\underbrace{\sym{p}\times\cdots\times\sym{p}}_{d_{\lambda,a}\ \text{times}},$$
where the first factor $\sym{p}$ acts on the set $\{e_{\lambda,a}+1,\ldots,e_{\lambda,a}+p\}$, the second factor $\sym{p}$ acts on the set $\{e_{\lambda,a}+p+1,\ldots,e_{\lambda,a}+2p\}$ and so on. For all $1\leq u<v\leq d_{\lambda,a}$, we claim that there do not exist $k\in K$, $i,j\in \mathbf{n}$ such that $i\in\{e_{\lambda,a}+(u-1)p+1,\ldots, e_{\lambda,a}+up\}$, $j\in\{e_{\lambda,a}+(v-1)p+1,\ldots, e_{\lambda,a}+vp\}$ and $k(i)=j$. Suppose that such $u$, $v$, $i$, $j$ and $k$ exist. For any $\{t\}\in \mathcal{O}_{\lambda,a}$, by the definitions of $\mathfrak{s}_\lambda$ and $\mathcal{O}_{\lambda,a}$, there exist some $b$, $c$, $d$, $e\in \mathbb{N}$ (independent of the choices of $\{t\}$) such that $b<c\leq d<e$, the numbers $e_{\lambda,a}+(u-1)p+1,\ldots,e_{\lambda,a}+up$ lie in the $b\mathrm{th}$ row,$\ldots,$ $c\mathrm{th}$ row of $\{t\}$, the numbers $e_{\lambda,a}+(v-1)p+1,\ldots,e_{\lambda,a}+vp$ lie in the $d\mathrm{th}$ row, $\ldots$, $e\mathrm{th}$ row of $\{t\}$. Moreover,
$|R_h(\{t\})\cap\{e_{\lambda,a}+1,\ldots,n\}|<p$ for all $b\leq h\leq e$. As $k\in K(t_{\lambda,a})$ by Lemma \ref{L;subgroup1}, we have $k\{t\}\in \mathcal{O}_{\lambda,a}$. Therefore, $j\in R_x(k\{t\})$, where $b\leq x\leq c$ and $d\leq x\leq e$. This forces that $c=d$ and $i\in R_c(\{t\})$. Recall that $s_{\lambda,a,u}=(e_{\lambda,a}+(u-1)p+1,\ldots, e_{\lambda,a}+up)$ and $s_{\lambda,a,u}\in P_{\lambda,a}\leq K(t_{\lambda,a})$. Note that there exists some $m\in \mathbb{N}$ such that $1\leq m<p$, $i\in R_y(s_{\lambda,a,u}^m\{t\})$ and $b\leq y<c$. Note that $s_{\lambda,a,u}^m\{t\}\in \mathcal{O}_{\lambda,a}$ while $i\notin R_c(s_{\lambda,a,u}^m\{t\})$. This is a contradiction. The claim is shown and the desired containment follows.
\end{proof}

%The following corollary may have its own interest.
%\begin{cor}\label{L;sgp=2}
%Let $p=2$, $\lambda\vdash n$ and $1<a\in \mathbb{N}$. Then $K(t_{\lambda,a})=\mathfrak{S}_{\lambda,a}.$
%\end{cor}
%\begin{proof}
%By Lemma \ref{L;subgroup0}, we assume that $d_{\lambda,a}>0$. As $p=2$, by Lemmas \ref{L;subgroup1}, \ref{L;subgroup2} and the definitions of $P_{\lambda,a}$ and $\mathfrak{S}_{\lambda,a}$, $K(t_{\lambda,a})\leq \mathfrak{S}_{\lambda,a}=H_{\lambda,a}\times \langle\bigcup_{i=1}^{d_{\lambda,a}}\{s_{\lambda,a,i}\}\rangle\leq K(t_{\lambda,a}).$ This completes the proof.
%\end{proof}

Let $\lambda\vdash n$ and $1<a\in \mathbb{N}$. We may not always have $K(t_{\lambda,a})=\mathfrak{S}_{\lambda,a}$. For a counterexample, let $p=a=3$, $n=6$ and $\lambda=(3, 1^3)\vdash 6$. Note that $d_{\lambda,a}=1$, $e_{\lambda,a}=3$, $\mathfrak{s}_\lambda=\{t^{\lambda}\}$ and $P_{\lambda,a}=\langle(1,2,3),(4,5,6)\rangle$. Therefore, $\mathcal{O}_{\lambda,a}$ contains exactly
\[\ {{\begin{matrix}
\hline
1 & 2 & 3 \\ \hline
4 & \\ \cline{1-1}
5 & \\ \cline{1-1}
6 & \\ \cline{1-1}
\end{matrix}}},\ \ \ \ \
{{\begin{matrix}
\hline
1 & 2 & 3 \\ \hline
5 & \\ \cline{1-1}
6 & \\ \cline{1-1}
4 & \\ \cline{1-1}
\end{matrix}}}
,\ \ \ \ \
{{\begin{matrix}
\hline
1 & 2 & 3 \\ \hline
6 & \\ \cline{1-1}
4 & \\ \cline{1-1}
5 & \\ \cline{1-1}
\end{matrix}}}.\]
Also notice that $\mathfrak{S}_{\lambda,a}=\sym{(3^2)}$. Observe that $K(t_{\lambda,a})<\mathfrak{S}_{\lambda,a}$.

We state a result to figure out all the Young modules that are also Scott modules.
\begin{thm}\label{T;Young-Scott}\cite[Propositions 12.1.1, 13.1.2]{CHN}
Let $\lambda\vdash n$ and $\lambda$ have the $p$-adic expansion $\sum_{i=0}^mp^i\lambda(i)$ for some $m\in \mathbb{N}_0$. Then $Y^\lambda$ is a Scott module if and only if $\lambda(i)=((p-1)^{m_i},n_i)$, $m_i\geq0$ and $0\leq n_i<p-1$ for all $0\leq i\leq m$.
\end{thm}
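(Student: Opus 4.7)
The plan is to reduce the statement to a condition on the Brauer quotient $Y^\lambda(P)$, where $P$ is a Sylow $p$-subgroup of $\sym{\lambda}$, and then to identify that condition combinatorially using the $p$-adic expansion of $\lambda$. Recall that $Y^\lambda$ is a trivial source $\F\sym{n}$-module with vertex $P$, and that $Sc_{\sym{n}}(\sym{\lambda}) = Sc_{\sym{n}}(P)$. By Brou\'e's correspondence for $p$-permutation modules, among indecomposable trivial source $\F\sym{n}$-modules with vertex $P$, the Scott module $Sc_{\sym{n}}(P)$ is distinguished as the unique one whose Brauer quotient at $P$ is the trivial $\F[N_{\sym{n}}(P)/P]$-module. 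Consequently $Y^\lambda$ is a Scott module if and only if $Y^\lambda(P) \cong \F$ as $\F[N_{\sym{n}}(P)/P]$-modules.

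The next step is to describe $M^\lambda(P)$ and its Young-summand decomposition. By Lemma \ref{PModules}, a basis of $M^\lambda(P)$ is indexed by those $\lambda$-tabloids whose rows are unions of $P$-orbits of $\mathbf{n}$. Using the $p$-adic expansion $\lambda = \sum_i p^i \lambda(i)$, the subgroup $P$ decomposes into a direct product of iterated wreath products of cyclic groups of order $p$, with one tower of height $i$ per box of $\lambda(i)$, so the $P$-orbits on $\mathbf{n}$ have sizes exactly $p^i$, one such orbit per box of $\lambda(i)$. The James decomposition $M^\lambda \cong Y^\lambda \oplus \bigoplus_{\lambda \lhd \mu}[M^\lambda:Y^\mu]Y^\mu$ together with Theorem \ref{T;Donkin} then reduce the problem to identifying which summand $Y^\mu$ of $M^\lambda$ has Brauer quotient containing the unique trivial $\F[N_{\sym{n}}(P)/P]$-summand of $M^\lambda(P)$; call the resulting partition $\nu^{\ast}(\lambda)$.

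To compute $\nu^{\ast}(\lambda)$ explicitly, I would analyse the action of $N_{\sym{n}}(P)/P$ on $P$-fixed tabloids. The normalizer quotient factors (up to the obvious permutations) into blocks indexed by $i$, each essentially a symmetric group acting on the collection of $p^i$-orbits; the trivial summand within the $i$-th block corresponds to distributing these orbits into the rows of $\lambda$ according to the partition $((p-1)^{m_i}, n_i)$, where $|\lambda(i)| = (p-1)m_i + n_i$ with $0 \leq n_i < p-1$. Thus $\nu^{\ast}(\lambda)$ has $i$-th digit $((p-1)^{m_i}, n_i)$, and $Y^\lambda$ is a Scott module precisely when $\lambda = \nu^{\ast}(\lambda)$, i.e., each $\lambda(i)$ takes the stated form.

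The main obstacle is the explicit identification of $\nu^{\ast}(\lambda)$. The $\F[N_{\sym{n}}(P)/P]$-module structure on $Y^\lambda(P)$ requires careful analysis, and one must prove that within the combinatorial family of partitions with a fixed digit-size profile, the specific digit $((p-1)^{m_i}, n_i)$ is uniquely picked out as yielding a trivial Brauer-quotient summand. This calls for detailed bookkeeping of the interaction between the wreath-product structure of $P$, Donkin's $p$-adic combinatorics, and classical Young-module multiplicities; alternatively one may invoke Klyachko's explicit dimension formula for $Y^\lambda$ and reduce to a numerical verification of when $\dim(Y^\lambda)^{\sym{n}} = 1$.
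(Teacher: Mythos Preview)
The paper does not prove this theorem; it is quoted verbatim from \cite[Propositions 12.1.1, 13.1.2]{CHN} and used as a black box. So there is no ``paper's own proof'' to compare against.

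That said, your outline has a genuine inaccuracy and a genuine gap. The inaccuracy is in the Brauer-quotient characterisation of Scott modules: for an indecomposable trivial source module $M$ with vertex $P$, Brou\'e's correspondence sends $M$ to the indecomposable \emph{projective} $\F[N_G(P)/P]$-module $M(P)$, and $Sc_G(P)$ corresponds to the projective cover of the trivial module, not to the trivial module itself. In general $p$ divides $|N_{\sym{n}}(P)/P|$ for $P$ a Sylow $p$-subgroup of a Young subgroup, so your criterion ``$Y^\lambda(P)\cong\F$'' is too strong. The correct test is whether $Y^\lambda(P)$ has a trivial quotient (equivalently, whether the trivial module occurs in its head), which amounts to asking whether $Y^\lambda$ itself has a trivial submodule or quotient.

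The gap is that the substantive work---showing that among all $\mu$ with the same digit-size profile as $\lambda$, the particular choice $\mu(i)=((p-1)^{m_i},n_i)$ is exactly the one for which $Y^\mu$ carries the trivial summand---is acknowledged as ``the main obstacle'' but not carried out. Your proposed route through the $N_{\sym{n}}(P)/P$-action on $P$-fixed tabloids is plausible, but the identification of which Young summand of $M^\lambda(P)$ is the projective cover of the trivial module requires real argument (this is essentially what \cite{CHN} does, via Klyachko's formula and a cohomological interpretation). As written, the proposal is a reasonable plan rather than a proof.
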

\begin{lem}\label{L;Young-Scott2}
Let $\lambda\vdash n$. Then there exists a unique sum $\sum_{i=0}^mp^i\lambda[i]$ such that $m\in \mathbb{N}_0$, $\lambda[i]$ is a composition and each part of $\lambda[i]$ is no more than $p-1$ for all $0\leq i\leq m$ and $\lambda[i]\neq \varnothing$, $\lambda[m]\neq\varnothing$ and $\lambda=\sum_{i=0}^mp^i\lambda[i]$.
\end{lem}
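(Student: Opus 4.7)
The plan is to reduce the statement to the standard base-$p$ expansion of each part of $\lambda$ taken separately. Write $\lambda=(\lambda_1,\ldots,\lambda_\ell)\vdash n$, and for each $1\leq j\leq\ell$ expand
$$\lambda_j=\sum_{i\geq 0}c_{ij}p^i$$
with $c_{ij}\in\{0,1,\ldots,p-1\}$ and only finitely many $c_{ij}$ non-zero; this is the ordinary base-$p$ representation of the non-negative integer $\lambda_j$, which is unique. Let $m$ be the largest index $i$ for which some $c_{ij}$ is non-zero; such an $m$ exists because $n\geq 1$ forces at least one part $\lambda_j$ to be positive.

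For existence, I would define $\lambda[i]=(c_{i1},c_{i2},\ldots,c_{i\ell})$ for $0\leq i\leq m$, regarded as a composition with every part in $\{0,1,\ldots,p-1\}$. The condition $\lambda[m]\neq\varnothing$ holds by the choice of $m$, since at least one entry of $\lambda[m]$ is non-zero. Applying the composition-addition rule of Section $2$ entrywise, the $j$-th entry of $\sum_{i=0}^{m}p^{i}\lambda[i]$ is $\sum_{i=0}^{m}c_{ij}p^{i}=\lambda_j$, so the sum equals $\lambda$.

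For uniqueness, suppose that $\lambda=\sum_{i=0}^{m'}p^{i}\mu[i]$ is another decomposition meeting the stated conditions, and write $\mu[i]=(d_{i1},d_{i2},\ldots)$, padding the shorter compositions by trailing zeros up to a common length of at least $\ell$ so that entrywise comparison is meaningful. Reading the $j$-th entry of both expressions for $\lambda$ yields $\lambda_j=\sum_{i=0}^{m'}d_{ij}p^{i}$ with each $d_{ij}\in\{0,1,\ldots,p-1\}$. By uniqueness of the base-$p$ representation of the integer $\lambda_j$ we conclude $d_{ij}=c_{ij}$ for all $i$ and $j$, and then $m'=m$ is forced by the condition $\mu[m']\neq\varnothing$.

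The main (and essentially only) subtlety is reconciling the paper's composition-addition convention, which implicitly pads the shorter composition by trailing zeros, with the entrywise extraction used in the uniqueness argument; once that bookkeeping is unpacked, the lemma reduces to the uniqueness of base-$p$ representations of non-negative integers.
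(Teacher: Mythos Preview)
Your proposal is correct and follows essentially the same approach as the paper: both reduce existence and uniqueness to the ordinary base-$p$ expansion of each part $\lambda_j$, then assemble the digits into compositions $\lambda[i]$. Your write-up is in fact slightly more careful than the paper's in addressing the padding convention for compositions of different lengths in the uniqueness step.
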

\begin{proof}
Let $\lambda=(\lambda_1,\ldots,\lambda_\ell)$. For all $1\leq b\leq \ell$, let $\lambda_b=\sum_{c\geq0}p^ca_{b,c}$, where $a_{b,c}\in \mathbb{N}_0$ and $a_{b,c}<p$ for all $c\geq 0$. For all $c\geq 0$, set $\lambda[c]=(a_{1,c},\ldots,a_{\ell,c})$. Note that $\lambda=\sum_{i=0}^mp^i\lambda[i]$ and $\lambda[m]\neq \varnothing$ for some $m\in\mathbb{N}_0$. Moreover, for all $0\leq i\leq m$ and $\lambda[i]\neq \varnothing$, each part of $\lambda[i]$ is no more than $p-1$. We get the existence of the desired sum. For all $1\leq b\leq \ell$ and $c\geq 0$, if $a_{b,c}\neq 0$, then $a_{b,c}$ is uniquely determined by $\lambda_b$. This fact gives us the uniqueness of the sum. The proof is now complete.
\end{proof}
Let $\lambda\vdash n$ and $\lambda=\sum_{i=0}^mp^i\lambda[i]$ satisfy all the conditions given in Lemma \ref{L;Young-Scott2}. For all $0\leq i\leq m$, let $|\lambda[i]|=a_i(p-1)+b_i$, where $a_i$, $b_i\in \mathbb{N}_0$ and $b_i<p-1$. Put $\lambda(i)=\overline{((p-1)^{a_i}, b_i)}$ for all $0\leq i\leq m$ and set $s(\lambda)=\sum_{i=0}^mp^i\lambda(i)$. By Lemma \ref{L;Young-Scott2}, $s(\lambda)$ is a well-defined partition. Moreover, $Y^{s(\lambda)}\mid M^\lambda$ and $Y^{s(\lambda)}$ is a Scott module by Theorems \ref{T;Donkin} and
\ref{T;Young-Scott}. We now finish the proof of the first part of Theorem \ref{T;B}.
\begin{prop}\label{P;symmetriccase}
Let $\lambda\vdash n$ and $1<a\in \mathbb{N}$. Let $\mu=s(\lambda^a\cup(p^{d_{\lambda,a}}))$. Then $Y^\mu\mid S^aM^\lambda$ and $c_{\sym{n}}(Y^\mu)=c_{\sym{n}}(S^aM^\lambda)$.
\end{prop}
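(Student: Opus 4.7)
The plan is to exhibit $Y^\mu$ as a direct summand of $S^aM^\lambda$ by identifying it with the Scott module $Sc_{\sym{n}}(P_{\lambda,a})$, which I will show sits inside the summand $V(t_{\lambda,a})$ from the decomposition $(5.1)$.

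The first step is to verify that $P_{\lambda,a}$ is a Sylow $p$-subgroup of $K(t_{\lambda,a})$. The inclusion $P_{\lambda,a}\leq K(t_{\lambda,a})$ has already been observed after the definition of $t_{\lambda,a}$. For the reverse containment $K(t_{\lambda,a})\leq \mathfrak{S}_{\lambda,a}$, I would combine Lemma \ref{L;subgroup0} (handling $d_{\lambda,a}=0$) with Lemmas \ref{L;subgroup1} and \ref{L;subgroup2} (handling $d_{\lambda,a}>0$, where the factorisation $K(t_{\lambda,a})=H_{\lambda,a}\times K$ with $K\leq (\sym{p})^{d_{\lambda,a}}$ places $K(t_{\lambda,a})$ inside $\mathfrak{S}_{\lambda,a}$). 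Since $P_{\lambda,a}$ is a Sylow $p$-subgroup of $\mathfrak{S}_{\lambda,a}$ by Notation \ref{N;notation} (iv), it must then be a Sylow $p$-subgroup of $K(t_{\lambda,a})$ as well.

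The second step is to produce $Y^\mu$ as a summand. By $(5.1)$, $V(t_{\lambda,a})\cong (\F_{K(t_{\lambda,a})})\uparrow^{\sym{n}}$ is a direct summand of $S^aM^\lambda$, and by definition of the Scott module, $Sc_{\sym{n}}(P_{\lambda,a})\mid V(t_{\lambda,a})$. Setting $\eta=\lambda^a\cup(p^{d_{\lambda,a}})$, Notation \ref{N;notation} (iv) tells us that $\mathfrak{S}_{\lambda,a}$ is $\sym{n}$-conjugate to $\sym{\eta}$, so $P_{\lambda,a}$ is $\sym{n}$-conjugate to a Sylow $p$-subgroup of $\sym{\eta}$, giving $Sc_{\sym{n}}(P_{\lambda,a})\mid M^\eta$. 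By Theorems \ref{T;Donkin} and \ref{T;Young-Scott} together with the construction of $s(\eta)$ introduced via Lemma \ref{L;Young-Scott2}, the unique Scott summand of $M^\eta$ is $Y^{s(\eta)}=Y^\mu$, whence $Sc_{\sym{n}}(P_{\lambda,a})\cong Y^\mu\mid S^aM^\lambda$.

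The third step is to match the complexities via Lemma \ref{HemmerNakano}, which gives $c_{\sym{n}}(Y^\mu)=(n-|\mu(0)|)/p$. The construction of $s(\cdot)$ preserves the total size of each $p$-adic layer, so $|\mu(0)|=|\eta[0]|$. Since every part of $q_\lambda$ and of $(p^{d_{\lambda,a}})$ is divisible by $p$ while every part of $r_{\lambda,a}$ lies in $[0,p)$, I obtain $|\eta[0]|=|r_{\lambda,a}|=|r_\lambda|-pd_{\lambda,a}=n-p(\mathrm{rank}(\sym{\lambda})+d_{\lambda,a})$, hence $c_{\sym{n}}(Y^\mu)=\mathrm{rank}(\sym{\lambda})+d_{\lambda,a}$. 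A short check with $d_{\lambda,a}=\min\{\nu_p(a),\lfloor n/p\rfloor-\mathrm{rank}(\sym{\lambda})\}$ confirms that this value coincides with $c_{\sym{n}}(S^aM^\lambda)$ in each of the three cases of Lemma \ref{L;symmetriccomplexity}. The main technical point is step one, namely gluing Lemmas \ref{L;subgroup1} and \ref{L;subgroup2} into the clean containment $K(t_{\lambda,a})\leq \mathfrak{S}_{\lambda,a}$; after that, the Scott-module / Young-module dictionary and an arithmetic comparison finish the argument.
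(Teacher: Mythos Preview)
Your proposal is correct and follows essentially the same route as the paper: use Lemmas \ref{L;subgroup0}--\ref{L;subgroup2} to show $P_{\lambda,a}$ is a Sylow $p$-subgroup of $K(t_{\lambda,a})$, identify $Sc_{\sym{n}}(P_{\lambda,a})\cong Y^{s(\eta)}=Y^\mu$ as a summand of $V(t_{\lambda,a})\mid S^aM^\lambda$, and then match complexities. The only cosmetic difference is in the complexity computation: the paper computes $c_{\sym{n}}(Y^\mu)=\mathrm{rank}(P_{\lambda,a})$ directly via Lemma \ref{L;complexity} (ii) (trivial source, so complexity equals the $p$-rank of the vertex), whereas you go through Lemma \ref{HemmerNakano} and the arithmetic of $|\mu(0)|$; both arrive at $\mathrm{rank}(\sym{\lambda})+d_{\lambda,a}$.
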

\begin{proof}
Let $\eta=\lambda^a\cup(p^{d_{\lambda,a}})$. By (5.1), note that $(\F_{K(t_{\lambda,a})}){\uparrow^{\sym{n}}}\mid S^aM^\lambda$. As $P_{\lambda,a}$ is a Sylow $p$-subgroup of $\mathfrak{S}_{\lambda,a}$ and $P_{\lambda,a}\leq K(t_{\lambda,a})$, by Lemmas \ref{L;subgroup0} and \ref{L;subgroup2}, $P_{\lambda,a}$ is also a Sylow $p$-subgroup of $K(t_{\lambda,a})$. Therefore, $Sc_{\sym{n}}(P_{\lambda,a})\mid (\F_{K(t_{\lambda,a})}){\uparrow^{\sym{n}}}$. In particular, $Sc_{\sym{n}}(P_{\lambda,a})\mid S^aM^\lambda$. By the definition of $\mathfrak{S}_{\lambda,a}$, also notice that $\mathfrak{S}_{\lambda,a}$ is $\sym{n}$-conjugate to $\sym{\eta}$ and $Sc_{\sym{n}}(P_{\lambda,a})\mid M^\eta$. So $Sc_{\sym{n}}(P_{\lambda,a})\cong Y^\mu$ and $Y^\mu\mid S^aM^\lambda$. It suffices to show that both $Y^\mu$ and $S^aM^\lambda$ have the same complexity. Set $r=\mathrm{rank}(P_{\lambda,a})$. If $d_{\lambda,a}=0$, by the definitions of $P_{\lambda,a}$ and $H_{\lambda,a}$, $P_{\lambda,a}$ is $\sym{n}$-conjugate to a Sylow $p$-subgroup of $\sym{\lambda}$. In particular, $r=\mathrm{rank}(\sym{\lambda})$. If $d_{\lambda,a}>0$, by the definitions of $P_{\lambda,a}$ and $\lambda^a$, $r=\mathrm{rank}(\sym{\lambda})+d_{\lambda,a}$. Due to Lemmas \ref{L;complexity} (ii) and \ref{L;symmetriccomplexity}, both $Y^\mu$ and $S^aM^\lambda$ have the same complexity. The lemma follows.
\end{proof}
For an example, in Example \ref{E;example}, the detected partition $\mu$ in Proposition \ref{P;symmetriccase} is $(8,3)$. We conclude this section by providing two other examples.
\begin{eg}\label{E;counterexample}
We have the following examples.
\begin{enumerate}[(i)]
 \item Let $p=2$ and $D^{(3,1)}$ be the unique $2$-dimensional simple module of $\F\sym{4}$. Then $D^{(3,1)}\mid S^2M^{(2^2)}$ and $D^{(3,1)}\mid \Lambda^2M^{(2^2)}$. Furthermore, we also obtain that $c_{\sym{4}}(D^{(3,1)})=c_{\sym{4}}(S^2M^{(2^2)})=c_{\sym{4}}(\Lambda^2M^{(2^2)})=2$ by Lemmas \ref{L;complexity} (i), \ref{L;compare} and \ref{L;symmetriccomplexity}. However, $D^{(3,1)}$ is not a Young module.
  \item Let $p=2$. Then $Y^{(4,2)}\mid S^4M^{(5,1)}$ and $Y^{(4,2)}\mid \Lambda^4 M^{(5,1)}$. Moreover, by Lemmas \ref{L;complexity} (i), \ref{HemmerNakano}, \ref{L;compare} and \ref{L;symmetriccomplexity}, $c_{\sym{6}}(Y^{(4,2)})=c_{\sym{6}}(S^4M^{(5,1)})=c_{\sym{6}}(\Lambda^4M^{(5,1)})=3$. However, although the Young module that we have found in Proposition \ref{P;symmetriccase} is a Scott module, note that $Y^{(4,2)}$ is not a Scott module by Theorem \ref{T;Young-Scott}.
\end{enumerate}
\end{eg}
\section{The exterior squares of Young permutation modules}
In this section, we finish the remaining part of Theorem \ref{T;B}. For our purpose, recall the definitions in Notation \ref{N;notation}. Given $\lambda=(\lambda_1,\ldots,\lambda_\ell)\models n$ and $\{t\}\in \mathcal{T}(\lambda)$, put $R(\{t\})=\{g\in\sym{n}: g\{t\}=\{t\}\}$. Note that $R(\{t\})=\prod_{i=1}^\ell\sym{R_i(\{t\})}$. So $R(\{t\})$ is $\sym{n}$-conjugate to $\sym{\overline{\lambda}}$, where we recall that $\overline{\lambda}$ is the partition whose parts are exactly the non-zero parts of $\lambda$. If $\lambda \vdash n$, also notice that $\ell(r_\lambda)=|\{1\leq i\leq \ell:\ p\nmid\lambda_i\}|$.
\begin{lem}\label{L;subgroup3}
Let $\lambda\vdash n$ and $\lambda\neq (n)$. Let $\{u\}$, $\{v\}\in\mathcal{T}(\lambda)$ and $\{u\}\neq\{v\}$. Let $t$ denote $\{u\}\odot\{v\}$ or $\{u\}\wedge\{v\}$, where $\{u\}\wedge\{v\}\in \mathcal{T}(\lambda)_2^e$. Correspondingly, let $H(t)$ be $K(t)$ or $L(t)$. If there does not exist some $g\in H(t)$ such that $g\{u\}=\{v\}$ and $g\{v\}=\{u\}$, then $H(t)=R(\{u\})\cap R(\{v\})$.
\end{lem}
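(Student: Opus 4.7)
The plan is to prove the two containments separately, with the forward one being immediate and the reverse relying on a case analysis of how $g$ can permute the unordered pair $\{\{u\},\{v\}\}$.

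For the easy direction $R(\{u\})\cap R(\{v\})\subseteq H(t)$: any $g$ fixing both $\{u\}$ and $\{v\}$ satisfies $g(\{u\}\odot\{v\})=g\{u\}\odot g\{v\}=\{u\}\odot\{v\}$ and, in the exterior case, $g(\{u\}\wedge\{v\})=g\{u\}\wedge g\{v\}=\{u\}\wedge\{v\}$ with the trivial sign. So $g\in K(t)$ and $g\in L(t)$ in the respective cases.

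For the reverse direction, take $g\in H(t)$. Since $g$ permutes $\mathcal{T}(\lambda)$, we have $g t = g\{u\}\odot g\{v\}$ or $g t = g\{u\}\wedge g\{v\}$ accordingly. The key observation is that distinct basis elements of $S^2M^\lambda$ are parameterized by unordered multisets of size two from $\mathcal{T}(\lambda)$, while those of $\Lambda^2M^\lambda$ are, up to a sign of $\pm 1$, parameterized by unordered pairs of distinct elements of $\mathcal{T}(\lambda)$. Hence $gt=t$ in the symmetric case, or $gt=\pm t$ in the exterior case, forces the equality of (multi)sets $\{g\{u\},g\{v\}\}=\{\{u\},\{v\}\}$. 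Since $\{u\}\neq\{v\}$, this multiset is in fact a set of size two, leaving exactly two possibilities: either $g\{u\}=\{u\}$ and $g\{v\}=\{v\}$, or $g\{u\}=\{v\}$ and $g\{v\}=\{u\}$. The second possibility is excluded by the hypothesis of the lemma (noting in the exterior case that a swap gives $g t=\{v\}\wedge\{u\}=-t$, so such a swap does lie in $L(t)$ and must be ruled out by the hypothesis rather than automatically). Therefore $g\in R(\{u\})\cap R(\{v\})$, as required.

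The only point requiring a little care is the sign bookkeeping in the exterior case, to confirm that a ``swap'' element genuinely belongs to $L(t)$ (so that it is not automatically ruled out, justifying the need for the hypothesis). This is immediate from $\{v\}\wedge\{u\}=-\{u\}\wedge\{v\}$, after which the argument reduces to the elementary observation about basis elements of symmetric and exterior squares.
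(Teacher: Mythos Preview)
Your proof is correct and follows essentially the same approach as the paper's: both establish the two inclusions, with the nontrivial one $H(t)\subseteq R(\{u\})\cap R(\{v\})$ coming from the observation that any $g\in H(t)$ either fixes both tabloids or swaps them, the latter being ruled out by hypothesis. Your version is simply more explicit, spelling out the multiset reasoning and the sign check in the exterior case that the paper compresses into a single sentence.
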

\begin{proof}
By the hypotheses, for any $h\in H(t)$, $h\{u\}=\{u\}$ and $h\{v\}=\{v\}$. Therefore, we have
$H(t)\leq R(\{u\})\cap R(\{v\})\leq H(t)$. The lemma follows.
\end{proof}

\begin{lem}\label{L;subgroup4}
Let $\lambda\vdash n$ and $\lambda\neq (n)$. Let $\{u\}$, $\{v\}\in\mathcal{T}(\lambda)$ and $\{u\}\neq\{v\}$. Let $t$ denote $\{u\}\odot\{v\}$ or $\{u\}\wedge\{v\}$, where $\{u\}\wedge\{v\}\in \mathcal{T}(\lambda)_2^e$. Correspondingly, let $H(t)$ be $K(t)$ or $L(t)$. If there exists some $g\in H(t)$ such that $g\{u\}=\{v\}$ and $g\{v\}=\{u\}$, then there exists some involution $e$ of $\sym{n}$ such that
$e\{u\}=\{v\}$, $e\{v\}=\{u\}$ and $H(t)=(R(\{u\})\cap R(\{v\}))\rtimes \langle e\rangle$ (the semidirect product of these groups). Moreover, if $g^2=1$, $e$ can be chosen to be $g$.
\end{lem}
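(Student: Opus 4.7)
The plan is to identify $R(\{u\}) \cap R(\{v\})$ as a normal index-$2$ subgroup of $H(t)$ and then produce an involution representing the nontrivial coset.

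First, I would show that $R(\{u\}) \cap R(\{v\})$ has index $2$ in $H(t)$. Any $h \in H(t)$ satisfies $ht = \pm t$; unwinding this on the product $\{u\} \odot \{v\}$ or $\{u\} \wedge \{v\}$ forces $\{h\{u\}, h\{v\}\} = \{\{u\}, \{v\}\}$ (as a multiset), so either $h \in R(\{u\}) \cap R(\{v\})$ or $h$ swaps $\{u\}$ and $\{v\}$. Conversely $R(\{u\}) \cap R(\{v\}) \subseteq H(t)$ since any element fixing both tabloids fixes $t$ with no sign appearing. Hence the map $H(t) \to \mathbb{Z}/2$ recording whether $h$ swaps $\{u\}$ and $\{v\}$ is a homomorphism with kernel $R(\{u\}) \cap R(\{v\})$; the hypothesized $g$ witnesses surjectivity, so the kernel is normal of index $2$.

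Next, I would construct the involution $e$. Write $U_i = R_i(\{u\})$ and $V_i = R_i(\{v\})$. From $g(U_i) = V_i$ and $g(V_i) = U_i$ one obtains, for all $i, j$, restrictions $g \colon U_i \cap V_j \to V_i \cap U_j$, and in particular $|U_i \cap V_j| = |U_j \cap V_i|$. If $g^2 = 1$, set $e := g$ and we are done. Otherwise partition $\mathbf{n}$ into the cells $U_i \cap V_j$ and define $e$ cell-by-cell: on each diagonal cell $U_i \cap V_i$ let $e$ act as the identity, and for each pair $i < j$ let $e$ act on $U_i \cap V_j$ as $g|_{U_i \cap V_j}$ and on $U_j \cap V_i$ as its inverse. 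Then $e \in \sym{n}$ is manifestly an involution, and
\[
e(U_i) = \bigsqcup_j e(U_i \cap V_j) = \bigsqcup_j (V_i \cap U_j) = V_i,
\]
so $e\{u\} = \{v\}$ and $e\{v\} = \{u\}$. In the symmetric case this gives $et = t$, and in the exterior case $et = -t$; either way $e \in H(t)$.

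For the assembly, $e$ has order $2$ and lies outside the kernel $R(\{u\}) \cap R(\{v\})$, so $\langle e \rangle \cap (R(\{u\}) \cap R(\{v\})) = 1$; combined with the normality and the index-$2$ count this forces $H(t) = (R(\{u\}) \cap R(\{v\})) \rtimes \langle e \rangle$. The moreover clause is immediate from the first branch of the construction. The main technical obstacle is the cell-wise construction of $e$ in the case $g^2 \neq 1$; everything else is routine index-$2$ bookkeeping. The essential geometric input is the symmetry $|U_i \cap V_j| = |U_j \cap V_i|$ forced by the swap $g$, which is precisely what allows the local bijections to glue into a single involution of $\mathbf{n}$.
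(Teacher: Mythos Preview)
Your proof is correct and follows essentially the same approach as the paper: both identify $R(\{u\})\cap R(\{v\})$ as an index-$2$ normal subgroup of $H(t)$ and build the involution $e$ cell-by-cell on the partition of $\mathbf{n}$ into the blocks $U_i\cap V_j$ (which are exactly the $R$-orbits the paper uses). The only cosmetic difference is that the paper reaches the cell permutation $U_i\cap V_j\mapsto U_j\cap V_i$ via the normalizer property $g\in N_{\sym{n}}(R)$ and then picks \emph{any} involution swapping paired orbits, whereas you compute $g(U_i\cap V_j)=V_i\cap U_j$ directly from $g(U_i)=V_i$, $g(V_j)=U_j$ and reuse $g$ itself (and its inverse) on the cells; this is a mild streamlining but not a different argument.
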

\begin{proof}
Let $R=R(\{u\})\cap R(\{v\})$. For any $h\in H(t)$, if $h\{u\}=\{v\}$ and $h\{v\}=\{u\}$, we claim that $H(t)=R\cup hR$. Note that $R\leq H(t)$. For any $k\in H(t)$, if $k\{u\}=\{u\}$ and $k\{v\}=\{v\}$, then $k\in R$. If $k\{u\}=\{v\}$ and $k\{v\}=\{u\}$, then $h^{-1}k\{u\}=\{u\}$ and $h^{-1}k\{v\}=\{v\}$. Therefore, $k\in hR$. The claim is shown. By this claim and the hypotheses, $g\in N_{\sym{n}}(R)$. If $g^2=1$, as $g\not\in R$, we are done. If $g^2\neq 1$, let $S$ be the set of orbits of $\mathbf{n}$ under the action of $R$. Notice that each row of $\{u\}$ or $\{v\}$ is a union of some members of $S$. As $g\in N_{\sym{n}}(R)$, $g$ permutes the members of $S$ with the same size. For any $\mathcal{O}_1$, $\mathcal{O}_2\in S$, if $|\mathcal{O}_1|=|\mathcal{O}_2|$, $\mathcal{O}_1\subseteq R_i(\{u\})$, $\mathcal{O}_2\subseteq R_j(\{u\})$ and $g$ sends $\mathcal{O}_1$ to $\mathcal{O}_2$, $\mathcal{O}_2\subseteq R_i(\{v\})$ as $g\{u\}=\{v\}$. Moreover, note that $\mathcal{O}_1\subseteq R_j(\{v\})$. Otherwise, as $g\{v\}=\{u\}$, if $\mathcal{O}_1\not\subseteq R_j(\{v\})$, $\mathcal{O}_2\not\subseteq R_j(\{u\})$. This is a contradiction. Therefore, for all $\mathcal{O}_1$, $\mathcal{O}_2\in S$ satisfying that $\O_1\neq \O_2$ and $g$ sends $\mathcal{O}_1$ to $\mathcal{O}_2$, let $e$ be an involution of $\sym{n}$ swapping all these $\mathcal{O}_1$, $\mathcal{O}_2$. By the above discussion, $e\{u\}=\{v\}$, $e\{v\}=\{u\}$, $e\in N_{\sym{n}}(R)$, $e\notin R$ and $e^2=1$. The lemma thus follows.
\end{proof}
\begin{lem}\label{L;preparation}
Let $\lambda=(\lambda_1,\ldots,\lambda_\ell)\vdash n$, $\lambda\neq(n)$ and $r=\mathrm{rank}(\sym{\lambda})$. Let $a\in R_i(\{t^\lambda\})$ and $b\in R_j(\{t^\lambda\})$, where $1\leq i\neq j\leq \ell$. Let $t=(a,b)t^\lambda$ and $s=\{t^\lambda\}\wedge\{t\}$. Then $L(s)=(R(\{t^\lambda\})\cap R(\{t\}))\times \langle (a,b)\rangle,$ where $R(\{t^\lambda\})\cap R(\{t\})$ is $\sym{n}$-conjugate to a Young subgroup of $\sym{n}$ with $p$-rank
\[\begin{cases} r, &\text{if}\ p\nmid\lambda_i\ \text{and}\ p\nmid \lambda_j,\\
r-1, & \text{if}\ p\nmid\lambda_i\ \text{and}\ p\mid\lambda_j\ \text{or}\  p\mid\lambda_i\ \text{and}\ p\nmid\lambda_j,\\
r-2, & \text{if}\ p\mid\lambda_i\ \text{and}\ p\mid\lambda_j.\end{cases}
\]
\end{lem}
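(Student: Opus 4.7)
The plan is to first identify $L(s)$ explicitly via Lemma \ref{L;subgroup4}, then promote the semidirect product structure to a direct product, and finally compute the $p$-rank by a short case analysis on whether $p$ divides $\lambda_i$ and $\lambda_j$.

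First I would note that the transposition $(a,b)$ sends $\{t^\lambda\}$ to $\{t\}$ (by the very definition of $t$) and $\{t\}$ back to $\{t^\lambda\}$; hence $(a,b)s=-s$, so $(a,b)\in L(s)$. Since $(a,b)$ is an involution, Lemma \ref{L;subgroup4} applies with $g=e=(a,b)$, giving
\[
L(s)=\bigl(R(\{t^\lambda\})\cap R(\{t\})\bigr)\rtimes\langle(a,b)\rangle.
\]
Next I would describe the intersection explicitly. Because $\{t\}=(a,b)\{t^\lambda\}$, we have $R_i(\{t\})=(R_i(\{t^\lambda\})\setminus\{a\})\cup\{b\}$, $R_j(\{t\})=(R_j(\{t^\lambda\})\setminus\{b\})\cup\{a\}$, and $R_k(\{t\})=R_k(\{t^\lambda\})$ for $k\neq i,j$. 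An element of $R(\{t^\lambda\})\cap R(\{t\})$ must preserve each row of both tabloids, hence must preserve each set $R_k(\{t^\lambda\})\cap R_{k'}(\{t\})$. Intersecting row-by-row shows that such an element must fix $a$ and $b$, and can act arbitrarily on $R_i(\{t^\lambda\})\setminus\{a\}$, on $R_j(\{t^\lambda\})\setminus\{b\}$, and on each $R_k(\{t^\lambda\})$ with $k\neq i,j$. Therefore
\[
R(\{t^\lambda\})\cap R(\{t\})=\sym{R_i(\{t^\lambda\})\setminus\{a\}}\times\sym{R_j(\{t^\lambda\})\setminus\{b\}}\times\prod_{k\neq i,j}\sym{R_k(\{t^\lambda\})},
\]
which is $\sym{n}$-conjugate to the Young subgroup $\sym{\overline{\nu}}$ where $\nu$ is obtained from $\lambda$ by replacing $\lambda_i,\lambda_j$ by $\lambda_i-1,\lambda_j-1$.

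To promote the semidirect product to a direct product, observe that every element of $R(\{t^\lambda\})\cap R(\{t\})$ fixes both $a$ and $b$, so it commutes with the transposition $(a,b)$. Since $(a,b)\notin R(\{t^\lambda\})$ (it moves $a$ out of row $i$), we get that $\langle(a,b)\rangle$ meets $R(\{t^\lambda\})\cap R(\{t\})$ trivially and centralises it, hence the product is internal direct:
\[
L(s)=\bigl(R(\{t^\lambda\})\cap R(\{t\})\bigr)\times\langle(a,b)\rangle.
\]

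Finally I would read off the $p$-rank. Using $\mathrm{rank}(\sym{\mu})=\sum_k\lfloor\mu_k/p\rfloor$ and the identity
\[
\lfloor(\lambda_k-1)/p\rfloor=\lfloor\lambda_k/p\rfloor-[\,p\mid\lambda_k\,],
\]
where $[\,\cdot\,]$ is the Iverson bracket, one obtains $\mathrm{rank}(R(\{t^\lambda\})\cap R(\{t\}))=r-[\,p\mid\lambda_i\,]-[\,p\mid\lambda_j\,]$, which is exactly the three-way split stated. No step here is a genuine obstacle; the only point that requires care is the row-intersection bookkeeping that pins down $R(\{t^\lambda\})\cap R(\{t\})$ and confirms that $(a,b)$ centralises it, so that the semidirect product supplied by Lemma \ref{L;subgroup4} is in fact direct.
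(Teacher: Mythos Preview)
Your proof is correct and follows essentially the same route as the paper: both apply Lemma~\ref{L;subgroup4} with the involution $(a,b)$, identify $R(\{t^\lambda\})\cap R(\{t\})$ explicitly as the product of symmetric groups on $R_i(\{t^\lambda\})\setminus\{a\}$, $R_j(\{t^\lambda\})\setminus\{b\}$, and the remaining rows, and use the fact that this intersection fixes $a$ and $b$ to upgrade the semidirect product to a direct one. Your write-up is somewhat more explicit about the semidirect-to-direct step and the $p$-rank calculation, but the argument is the same.
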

\begin{proof}
Observe that $(a,b)\{t^\lambda\}=\{t\}$ and $(a,b)\{t\}=\{t^\lambda\}$. By Notation \ref{N;notation} (v), note that $s=\{t^\lambda\}\wedge\{t\}\in\mathcal{T}(\lambda)_2^e$. Also notice that $a$ and $b$ are fixed under the action of $R(\{t^\lambda\})\cap R(\{t\})$ on $\mathbf{n}$. Therefore, by Lemma \ref{L;subgroup4},
$L(s)=(R(\{t^\lambda\})\cap R(\{t\}))\times \langle (a,b)\rangle$. Let $S_i=R_i(\{t^\lambda\})\setminus\{a\}$ and $S_j=R_j(\{t^\lambda\})\setminus\{b\}$. By the definition of $\{t\}$,
\begin{align}
R(\{t^\lambda\})\cap R(\{t\})=\sym{S_i}\times\sym{S_j}\times \prod_{\substack{k=1\\
                  k\neq i,j\\
                  } }^{\ell}\sym{R_k(\{t^\lambda\})}\leq\sym{\lambda}.
\end{align}
So $R(\{t^\lambda\})\cap R(\{t\})$ is $\sym{n}$-conjugate to a Young subgroup of $\sym{n}$ with the desired $p$-rank. The proof is now complete.
\end{proof}
\begin{lem}\label{L;np=2}
Let $p>2$ and $\lambda=(\lambda_1,\ldots,\lambda_\ell)\vdash n$. If $p\nmid\lambda_i$ and $p\nmid \lambda_j$ for some $1\leq i<j\leq\ell$, let $\nu=(\lambda_1,\ldots,\lambda_i-1,\ldots,\lambda_j-1,\ldots,\lambda_\ell)$ and $\mu=\overline{r_{\nu\cup(1^2)}}+\overline{q_{\nu\cup(1^2)}}$. Then $Y^\mu\mid \Lambda^2M^\lambda$ and $c_{\sym{n}}(Y^\mu)=c_{\sym{n}}(\Lambda^2M^\lambda)=\mathrm{rank}(\sym{\lambda})$.
\end{lem}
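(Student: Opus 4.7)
The plan is to exhibit $Y^\mu$ as a Young-module summand of a trivial-source summand of $\Lambda^2M^\lambda$ coming from Lemma \ref{L;preparation}, and then to separate it from $M^{\nu\cup(2)}$ by a careful application of Donkin's Theorem \ref{T;Donkin}.

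First choose $a\in R_i(\{t^\lambda\})$, $b\in R_j(\{t^\lambda\})$, let $t=(a,b)t^\lambda$, and set $s=\{t^\lambda\}\wedge\{t\}$. By Lemma \ref{L;preparation}, $L(s)=H\times\langle(a,b)\rangle$ where $H=R(\{t^\lambda\})\cap R(\{t\})$ is $\sym{n}$-conjugate to $\sym{\nu}$ and, since $p\nmid\lambda_i$ and $p\nmid\lambda_j$, has $p$-rank $r:=\mathrm{rank}(\sym{\lambda})$. Equation $(5.2)$ gives $W(s)\cong\F^s\uparrow^{\sym{n}}\mid\Lambda^2M^\lambda$, and by construction the one-dimensional $\F^s$ has $H$ acting trivially while $(a,b)$ acts as $-1$. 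Since $p>2$ gives $\F\sym{2}\cong\F\oplus\sgn$, induction in stages through $\sym{\nu}\leq\sym{\nu}\times\sym{2}\leq\sym{n}$ yields
\begin{equation*}
M^{\nu\cup(1^2)}\cong M^{\nu\cup(2)}\oplus\F^s\uparrow^{\sym{n}}.
\end{equation*}

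Write $\alpha=\nu\cup(1^2)$. The defining formula $\mu=\overline{r_\alpha}+\overline{q_\alpha}$ reads componentwise as $\mu_k=(\overline{r_\alpha})_k+p(\overline{q_\alpha}/p)_k$ with $(\overline{r_\alpha})_k<p$, so that $\mu(0)=\overline{r_\alpha}$ and the higher digit-partitions $\mu(i)$ come from the $p$-adic expansion of $\overline{q_\alpha}/p$. For $Y^\mu\mid M^\alpha$, the canonical base-$p$ digit decomposition $\nu[i]_k=(i\text{-th }p\text{-adic digit of }\alpha_k)$ satisfies $\nu[i]\models|\mu(i)|$ and $\overline{\nu[i]}=\mu(i)$ for every $i$, verifying Donkin's criterion. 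For $Y^\mu\nmid M^{\nu\cup(2)}$, any Donkin decomposition $\nu\cup(2)=\sum_i p^i\nu'[i]$ satisfies $\nu'[0]_k\equiv(\nu\cup(2))_k\pmod{p}$ at every $k$; at the position $k_0$ with $(\nu\cup(2))_{k_0}=2$ the bound $2<p$ forces $\nu'[0]_{k_0}=2$, and at each $\nu$-position $k$ we have $\nu'[0]_k\geq(\nu_k\bmod p)$. The sum constraint $|\nu'[0]|=|\mu(0)|=|r_\lambda|=|r_\nu|+2$ then collapses the latter inequalities to equalities, so $\overline{\nu'[0]}=\overline{r_\nu\cup(2)}$. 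Since $(2)\rhd(1,1)$ in dominance, $\overline{r_\nu\cup(2)}\rhd\overline{r_\nu\cup(1^2)}=\mu(0)$ strictly, contradicting $\overline{\nu'[0]}\unlhd\mu(0)$. Therefore $Y^\mu\mid\F^s\uparrow^{\sym{n}}\mid\Lambda^2M^\lambda$.

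For the complexity, $|\mu(0)|=|\overline{r_\alpha}|=|r_\lambda|$ combined with Lemma \ref{HemmerNakano} yields $c_{\sym{n}}(Y^\mu)=(n-|r_\lambda|)/p=r$. Lemma \ref{L;compare} together with Lemma \ref{L;symmetriccomplexity} (case $\nu_p(2)=0$) gives $c_{\sym{n}}(\Lambda^2M^\lambda)\leq c_{\sym{n}}(S^2M^\lambda)=r$, while Lemma \ref{L;complexity}(i) applied to the summand $Y^\mu$ supplies the reverse inequality. The main obstacle is the combinatorial analysis in the middle paragraph: reading off $\mu(0)=\overline{r_\alpha}$ from the componentwise definition of $\mu$, and then using mod-$p$ congruences together with the strict dominance $(2)\rhd(1,1)$ to exclude every possible Donkin decomposition of $\nu\cup(2)$.
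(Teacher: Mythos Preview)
Your argument is correct and follows essentially the same route as the paper: build $s=\{t^\lambda\}\wedge(a,b)\{t^\lambda\}$ via Lemma~\ref{L;preparation}, split $M^{\nu\cup(1^2)}\cong M^{\nu\cup(2)}\oplus(\F^s){\uparrow^{\sym{n}}}$, check $Y^\mu\mid M^{\nu\cup(1^2)}$ and $Y^\mu\nmid M^{\nu\cup(2)}$ through Theorem~\ref{T;Donkin}, and finish the complexity via Lemmas~\ref{HemmerNakano}, \ref{L;compare}, \ref{L;symmetriccomplexity}. The only cosmetic difference is that the paper excludes $Y^\mu\mid M^{\nu\cup(2)}$ by comparing lengths, namely $\ell(\overline{r_{\nu\cup(1^2)}})=\ell(r_{\overline{\nu}})+2>\ell(r_{\overline{\nu}})+1=\ell(\overline{r_{\nu\cup(2)}})$, which immediately gives $\overline{r_{\nu\cup(2)}}\ntrianglelefteq\mu(0)$, whereas you phrase the same obstruction as the strict dominance $\overline{r_\nu}\cup(2)\rhd\overline{r_\nu}\cup(1^2)$.
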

\begin{proof}
Let $r=\mathrm{rank}(\sym{\lambda})$ and $a_k\in R_k(\{t^\lambda\})$ for any $k\in\{i,j\}$. Let $t=(a_i,a_j)t^\lambda$ and note that $\{t\}\neq\{t^\lambda\}$. By Notation \ref{N;notation} (v), also note that $s=\{t^\lambda\}\wedge\{t\}\in\mathcal{T}(\lambda)_2^e$. As $i\neq j$, by Lemma \ref{L;preparation}, $L(s)=(R(\{t^\lambda\})\cap R(\{t\}))\times\langle (a_i,a_j)\rangle$. Let $\eta=\nu\cup(1^2)$ and $\tilde{\eta}=\nu\cup(2)$. As $p>2$, note that $M^{\eta}\cong M^{\tilde{\eta}}\oplus (\F^s){\uparrow^{\sym{n}}}$ by \cite[Corollary 4.6 (i)]{Lim}. By Theorem \ref{T;Donkin}, observe that $Y^\mu\mid M^\eta$. Let $\mu$ have $p$-adic expansion $\sum_{i=0}^mp^i\mu(i)$ for some $m\in \mathbb{N}_0$. For any a $p$-adic decomposition $\tilde{\eta}=\sum_{i=0}^mp^i\sigma[i]$, if $\sigma[i]\models|\mu(i)|$ for all $0\leq i\leq m$, $\mu(0)=\overline{r_\eta}$ and $\sigma[0]=r_{\tilde{\eta}}$. Moreover, $\ell(r_\eta)= \ell(r_{\overline{\nu}})+2>\ell(r_{\overline{\nu}})+1=\ell(r_{\tilde{\eta}})$. Therefore, $\overline{r_{\tilde{\eta}}}\ntriangleleft\overline{r_{\eta}}$ and $Y^\mu\nmid M^{\tilde{\eta}}$ by Theorem \ref{T;Donkin}, which implies that $Y^\mu\mid (\F^s){\uparrow^{\sym{n}}}$ by the Krull-Schmidt Theorem. As $(\F^s){\uparrow^{\sym{n}}}\mid \Lambda^2 M^\lambda$ by $(5.2)$, $Y^\mu\mid \Lambda^2M^\lambda$. As $p\nmid \lambda_i$ and $p\nmid \lambda_j$, by Lemmas \ref{L;complexity} (i) and \ref{HemmerNakano}, we have $c_{\sym{n}}(Y^\mu)=r\leq c_{\sym{n}}(\Lambda^2M^\lambda)$. Since $p>2$, by Lemmas \ref{L;compare} and \ref{L;symmetriccomplexity}, this inequality implies that
$c_{\sym{n}}(Y^\mu)=c_{\sym{n}}(\Lambda^2M^\lambda)=r$. The proof is now complete.
\end{proof}
\begin{lem}\label{L;two-part}
Let $p>2$ and $\lambda=(\lambda_1,\lambda_2)\vdash n$. If $p\mid \lambda_1$ and $0<\lambda_2<p$, let $\mu=(\lambda_1-1,\lambda_2-1)\cup(1^2)$. Then $Y^\mu\mid \Lambda^2M^\lambda$ and $c_{\sym{n}}(Y^\mu)=c_{\sym{n}}(\Lambda^2M^\lambda)=\mathrm{rank}(\sym{\lambda})-1$.
\end{lem}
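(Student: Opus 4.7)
The plan is to adapt the proof of Lemma \ref{L;np=2} to the two-part $p$-divisible setting. The identification of $Y^\mu$ as a direct summand via Lim's Corollary 4.6(i) and Theorem \ref{T;Donkin} will proceed essentially as in the previous lemma; the genuinely new ingredient is the complexity, because $r - 1$ falls strictly below the automatic upper bound $r$ coming from Lemmas \ref{L;compare} and \ref{L;symmetriccomplexity}, so a direct Brauer-quotient analysis of rank-$r$ elementary abelian subgroups will be necessary.

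For the first step, pick $a_1 \in R_1(\{t^\lambda\})$, $a_2 \in R_2(\{t^\lambda\})$, let $t = (a_1,a_2)t^\lambda$ and $s = \{t^\lambda\} \wedge \{t\} \in \mathcal{T}(\lambda)_2^e$. Lemma \ref{L;preparation} applied with $i = 1$, $j = 2$ (so $p \mid \lambda_i$ and $p \nmid \lambda_j$) gives $L(s) = (R(\{t^\lambda\}) \cap R(\{t\})) \times \langle (a_1,a_2) \rangle$, whose first factor is $\sym{n}$-conjugate to a Young subgroup of $p$-rank $r - 1$ where $r = \mathrm{rank}(\sym{\lambda}) = \lambda_1/p$. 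Setting $\nu = (\lambda_1 - 1, \lambda_2 - 1)$, $\eta = \mu = \nu \cup (1^2)$ and $\tilde{\eta} = \nu \cup (2)$, \cite[Corollary 4.6 (i)]{Lim} (using $p > 2$) yields $M^\eta \cong M^{\tilde{\eta}} \oplus (\F^s)\uparrow^{\sym{n}}$. The containment $Y^\mu \mid M^\mu = M^\eta$ is automatic; to exclude $Y^\mu \mid M^{\tilde{\eta}}$, I would appeal to Theorem \ref{T;Donkin}. A direct computation shows the $p$-adic expansion of $\mu$ has $\mu(0) = \overline{(p-1, \lambda_2 - 1, 1, 1)}$ (dropping the zero when $\lambda_2 = 1$). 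For any compatible decomposition $\tilde{\eta} = \sum_i p^i \sigma[i]$, the parts of $\tilde{\eta}$ besides $\lambda_1 - 1$ are all strictly less than $p$, and the sum constraint $|\sigma[0]| = |\mu(0)| = p + \lambda_2$ then forces $\sigma[0] = r_{\tilde{\eta}}$. Since $\overline{r_{\tilde{\eta}}}$ has exactly one fewer positive part than $\mu(0)$ with the same total, the length argument gives $\overline{r_{\tilde{\eta}}} \ntriangleleft \mu(0)$, so $Y^\mu \nmid M^{\tilde{\eta}}$. By Krull--Schmidt and $(5.2)$, $Y^\mu \mid (\F^s)\uparrow^{\sym{n}} \mid \Lambda^2 M^\lambda$.

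For the complexity, Lemma \ref{HemmerNakano} gives $c_{\sym{n}}(Y^\mu) = (n - |\mu(0)|)/p = (\lambda_1 - p)/p = r - 1$, hence $c_{\sym{n}}(\Lambda^2 M^\lambda) \geq r - 1$. The main obstacle is the reverse inequality. I would fix any elementary abelian $p$-subgroup $E \leq \sym{n}$ of rank $r$ and write $o_i$ for the number of $E$-orbits on $\mathbf{n}$ of size $p^i$. The equations $\sum_{i \geq 0} p^i o_i = n = pq + \lambda_2$ (with $q = \lambda_1/p$) and $q \leq \sum_{i \geq 1} i o_i$ (rank constraint on $E$), combined with $o_0 \geq \lambda_2$ (which holds whenever $M^\lambda(E) \neq 0$, since $\lambda_2 < p$ forces every orbit of size $\geq p$ into row $1$), imply $\sum_{i \geq 2}(p^i - pi) o_i \leq 0$. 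Because $p \geq 3$ makes each $p^i - pi > 0$ for $i \geq 2$, this forces $o_i = 0$ for $i \geq 2$, $o_1 = q$ and $o_0 = \lambda_2$, so $\dim_\F M^\lambda(E) = 1$ by Lemma \ref{PModules} and $\Lambda^2(M^\lambda(E)) = 0$. Moreover $p > 2$ ensures every $E$-orbit on $\mathcal{T}(\lambda)$ has size $1$ or a power of $p$, never $2$, so $\Lambda_2(M^\lambda, E) = 0$ by Lemma \ref{L;Brauer} (ii). Lemma \ref{L;exteriorBrauer} therefore gives $(\Lambda^2 M^\lambda)(E) = 0$, and Lemma \ref{L;complexity} (v) delivers $c_{\sym{n}}(\Lambda^2 M^\lambda) \leq r - 1$, as required.
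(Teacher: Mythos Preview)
Your argument is correct. Two points of comparison with the paper's own proof are worth noting. For the summand $Y^\mu\mid\Lambda^2M^\lambda$, the paper bypasses the $M^\eta\cong M^{\tilde\eta}\oplus(\F^s){\uparrow^{\sym n}}$ decomposition entirely and invokes \cite[Corollary 4.8 (i)]{Lim} to obtain $Y^\mu\mid(\F^s){\uparrow^{\sym n}}$ directly; your route via \cite[Corollary 4.6 (i)]{Lim} and Theorem~\ref{T;Donkin} (imitating Lemma~\ref{L;np=2}) is longer but perfectly valid, and your length comparison $\ell(\mu(0))=\ell(\overline{r_{\tilde\eta}})+1$ does force $\overline{r_{\tilde\eta}}\ntrianglelefteq\mu(0)$ since the two partitions have equal size. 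For the upper bound on the complexity, both proofs establish $\dim_\F M^\lambda(E)\leq 1$ for every rank-$r$ elementary abelian $E$, but by different mechanisms: the paper observes that the fixed-point count $o_0$ satisfies $o_0\equiv\lambda_2\pmod p$ and $o_0\geq\lambda_2$, so if $o_0>\lambda_2$ one may adjoin a $p$-cycle on $p$ surplus fixed points to produce a rank-$(r+1)$ elementary abelian $F$ with $M^\lambda(F)\neq 0$, contradicting Lemma~\ref{HemmerNakano}. Your orbit-counting inequality $\sum_{i\geq 2}(p^i-pi)o_i\leq 0$ reaches the same conclusion by a more computational path and has the mild advantage of pinning down the full orbit structure $(o_0,o_1,o_2,\dots)=(\lambda_2,q,0,\dots)$ rather than just the fixed-point count.
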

\begin{proof}
Let $r=\mathrm{rank}(\sym{\lambda})$ and $t=(1,n)t^\lambda$. As $\{t\}\neq \{t^\lambda\}$, by Notation \ref{N;notation} (v), notice that $s=\{t^\lambda\}\wedge\{t\}\in\mathcal{T}(\lambda)_2^e$. By Lemma \ref{L;preparation}, $L(s)=(R(\{t^\lambda\})\cap R(\{t\}))\times\langle (1,n)\rangle$. As $p>2$, by \cite[Corollary 4.8 (i)]{Lim} and (5.2), we have $Y^\mu\mid (\F^s){\uparrow^{\sym{n}}}$ and $(\F^s){\uparrow^{\sym{n}}}\mid \Lambda^2M^\lambda$, which implies that $Y^\mu\mid \Lambda^2M^\lambda$. As $p\mid \lambda_1$ and $0<\lambda_2<p$, by Lemmas \ref{L;complexity} (i), \ref{HemmerNakano}, \ref{L;compare} and \ref{L;symmetriccomplexity}, note that $r-1=c_{\sym{n}}(Y^\mu)\leq c_{\sym{n}}(\Lambda^2M^\lambda)\leq r$. For any elementary abelian $p$-subgroup $E$ of $\sym{n}$ with $p$-rank $r$, we claim that $\dim_\F M^\lambda(E)\leq 1$. If $M^\lambda(E)\neq 0$, note that $\mathbf{n}$ has exactly $\lambda_2$ fixed points under the action of $E$. Otherwise, as $p\mid \lambda_1$ and $0<\lambda_2<p$, $\mathbf{n}$ has at least $p+\lambda_2$ fixed points under the action of $E$. We thus can choose an elementary abelian $p$-subgroup $F$ of $\sym{n}$ such that $\mathrm{rank}(F)>r$ and $M^\lambda(F)\neq 0$, which contradicts with Lemmas \ref{L;complexity} (v) and \ref{HemmerNakano}. So $\dim_\F M^\lambda(E)=1$ by Lemmas \ref{PModules}. The claim is shown. As $p>2$, by the claim and Lemma \ref{L;exteriorBrauer}, $(\Lambda^2 M^\lambda)(E)=0$. Therefore, $c_{\sym{n}}(Y^\mu)=c_{\sym{n}}(\Lambda^2M^\lambda)=r-1$ by Lemma \ref{L;complexity} (v). The proof is now complete.
\end{proof}
\begin{lem}\label{L;2pparts}
Let $p>2$ and $\lambda=(\lambda_1,\ldots,\lambda_\ell)\vdash n$, where $\lambda_i\geq p$ and $\lambda_j\geq p$ for some $1\leq i<j\leq \ell$. Let $\nu=(\lambda_1,\ldots,\lambda_i-p,\ldots,\lambda_j-p,\ldots,\lambda_\ell)$ and $\mu=\overline{r_{\nu\cup(p,p)}}+\overline{q_{\nu\cup(p,p)}}$. Then $Y^\mu\mid\Lambda^2M^\lambda$ and
$c_{\sym{n}}(Y^\mu)=c_{\sym{n}}(\Lambda^2M^\lambda)=\mathrm{rank}(\sym{\lambda})$.
\end{lem}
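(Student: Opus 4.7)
The plan is to extend the method of Lemmas \ref{L;np=2} and \ref{L;two-part}, this time replacing the single transposition with a product of $p$ disjoint transpositions between the two rows. I would fix subsets $A_i=\{a_1,\ldots,a_p\}\subseteq R_i(\{t^\lambda\})$ and $A_j=\{b_1,\ldots,b_p\}\subseteq R_j(\{t^\lambda\})$, set $g=\prod_{k=1}^p(a_k,b_k)$, $t=gt^\lambda$, and $s=\{t^\lambda\}\wedge\{t\}$. Since $g$ is an involution with $g\{t^\lambda\}=\{t\}\neq\{t^\lambda\}$, Notation \ref{N;notation}(v) gives $s\in\mathcal{T}(\lambda)_2^e$ and Lemma \ref{L;subgroup4} yields $L(s)=H\rtimes\langle g\rangle$ with $H=R(\{t^\lambda\})\cap R(\{t\})$. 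A row-stabiliser calculation, analogous to the proof of Lemma \ref{L;preparation} (compare $(6.1)$), identifies $H$ with the Young subgroup $\sym{\nu\cup(p,p)}$, and $g$ swaps the two distinguished $\sym{p}$-factors; hence $L(s)$ is $\sym{n}$-conjugate to $(\sym{p}\wr C_2)\times\sym{\lambda_i-p}\times\sym{\lambda_j-p}\times\prod_{k\neq i,j}\sym{\lambda_k}$, and $\F^s$ is trivial on the base $\sym{p}\times\sym{p}$ while the $C_2$-quotient acts by $\sgn$.

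To locate $Y^\mu$ inside $(\F^s){\uparrow^{\sym{n}}}$, I would exploit that $p>2$ gives
\[
\Ind_{\sym{p}\times\sym{p}}^{\sym{p}\wr C_2}\F\cong\F\oplus\F^{\sgn_{C_2}},
\]
which upon induction in stages yields the decomposition $M^{\nu\cup(p,p)}\cong T\oplus(\F^s){\uparrow^{\sym{n}}}$ with $T=\Ind_{\sym{\nu}\times(\sym{p}\wr C_2)}^{\sym{n}}\F$. The containment $Y^\mu\mid M^{\nu\cup(p,p)}$ follows from Theorem \ref{T;Donkin}: take $\sigma[0]=r_{\nu\cup(p,p)}$ and distribute the $p$-adic digits of $q_{\nu\cup(p,p)}/p$ among the higher $\sigma[i]$. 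The crucial combinatorial observation is $\mu(0)=\overline{r_{\nu\cup(p,p)}}$, valid because every part of $\overline{q_{\nu\cup(p,p)}}$ is divisible by $p$ while every part of $\overline{r_{\nu\cup(p,p)}}$ is strictly less than $p$, so the resulting $\overline{\sigma[i]}$ dominate $\mu(i)$ componentwise. Combining the Krull--Schmidt Theorem with the exclusion $Y^\mu\nmid T$ (see below) and $(5.2)$ then gives $Y^\mu\mid(\F^s){\uparrow^{\sym{n}}}\mid\Lambda^2M^\lambda$.

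The complexities follow easily. Lemma \ref{HemmerNakano} gives $c_{\sym{n}}(Y^\mu)=(n-|\mu(0)|)/p$; since the two appended $p$'s contribute $0$'s to $r_{\nu\cup(p,p)}$ and reducing two parts by $p$ leaves their residues mod $p$ unchanged, $|\mu(0)|=|r_\lambda|=n-p\cdot\mathrm{rank}(\sym{\lambda})$, whence $c_{\sym{n}}(Y^\mu)=\mathrm{rank}(\sym{\lambda})$. Since $\nu_p(2)=0$ when $p>2$, Lemmas \ref{L;compare} and \ref{L;symmetriccomplexity} give $c_{\sym{n}}(\Lambda^2M^\lambda)\leq\mathrm{rank}(\sym{\lambda})$, and combining with Lemma \ref{L;complexity}(i) forces all three complexities to agree.

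The main obstacle will be the exclusion $Y^\mu\nmid T$. Unlike in Lemmas \ref{L;np=2} and \ref{L;two-part}, the complementary summand $T$ here is induced from the wreath-product stabiliser $\sym{p}\wr C_2$ rather than from a Young subgroup, so Theorem \ref{T;Donkin} does not apply to $T$ directly; moreover, the naive analogue $\tilde\eta=\nu\cup(2p)$ fails, since the trivial module splits off $\Ind_{\sym{p}\wr C_2}^{\sym{2p}}\F$ and consequently $M^{\nu\cup(2p)}$ is in fact a summand of $T$. The cleanest route appears to be a direct Brauer-quotient verification: compute $W(s)(P_\mu)$ at a vertex $P_\mu$ of $Y^\mu$ using the explicit basis $\mathcal{O}^e(s)$ of $W(s)\cong(\F^s){\uparrow^{\sym{n}}}$ together with Lemma \ref{L;exteriorBrauer}, and then apply Lemma \ref{L;Brauerquotient}(iii) to detect $Y^\mu$ as a summand of $(\F^s){\uparrow^{\sym{n}}}$.
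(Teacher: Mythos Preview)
Your setup is exactly the paper's: the same element $g=\prod(a_k,b_k)$, the same identification of $L(s)$ via Lemma~\ref{L;subgroup4} as $(R(\{t^\lambda\})\cap R(\{t\}))\rtimes\langle g\rangle$, the same splitting $M^{\nu\cup(p,p)}\cong T\oplus (\F^s){\uparrow^{\sym{n}}}$ coming from $\Ind_{\sym{p}\times\sym{p}}^{\sym p\wr C_2}\F\cong\F\oplus\F^{\sgn_{C_2}}$, and the same complexity computations. You have also correctly located the genuine difficulty: excluding $Y^\mu$ from the ``wrong'' half $T$, which is not a Young permutation module and indeed contains $M^{\nu\cup(2p)}$.

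Where your proposal stalls is precisely at this exclusion. The Brauer-quotient route you sketch is underdeveloped and would be awkward to carry out. The paper bypasses the obstacle by a sharper decomposition: instead of the two-term split $M^\eta=T\oplus(\F^s){\uparrow^{\sym{n}}}$, it expands $M^{(p,p)}$ into Young modules, giving
\[
M^\eta\cong(\F_H\boxtimes Y^{(p,p)}){\uparrow^{\sym{n}}}\ \oplus\ \bigoplus_{(p,p)\lhd\gamma}[M^{(p,p)}:Y^\gamma]\,(\F_H\boxtimes Y^\gamma){\uparrow^{\sym{n}}}.
\]
By Lemmas~\ref{L;complexity}(iii),(iv) and~\ref{HemmerNakano}, only the terms with $\gamma\in\{(p,p),(2p)\}$ can have complexity $r$; the $(2p)$-term is $M^{\tilde\eta}$ with $\tilde\eta=\nu\cup(2p)$. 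Now $Y^\mu\nmid M^{\tilde\eta}$ \emph{can} be verified by Theorem~\ref{T;Donkin}: any $p$-adic decomposition $\tilde\eta=\sum p^i\sigma[i]$ with $|\sigma[i]|=|\mu(i)|$ forces $\sigma[0]=r_{\tilde\eta}=r_\eta$, and then $\ell(\mu(1))=\ell(q_\eta)=\ell(q_{\overline\nu})+2>\ell(q_{\overline\nu})+1=\ell(q_{\tilde\eta})\geq\ell(\sigma[1])$, so $\overline{\sigma[1]}\ntrianglelefteq\mu(1)$. Hence $Y^\mu\mid(\F_H\boxtimes Y^{(p,p)}){\uparrow^{\sym{n}}}$. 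Finally---and this is the second idea you are missing---the paper shows separately that $Y^{(p,p)}$ lies in the \emph{sign} half $M=(\F'){\uparrow^{\sym{2p}}}$ of $M^{(p,p)}$ (because $M\mid\Lambda^2M^{(p,p)}$ is a sum of Young modules when $p>2$, has complexity $2$, and $\F\nmid M$), so $(\F_H\boxtimes Y^{(p,p)}){\uparrow^{\sym{n}}}\mid(\F^s){\uparrow^{\sym{n}}}$, completing the chain $Y^\mu\mid(\F^s){\uparrow^{\sym{n}}}\mid\Lambda^2M^\lambda$.

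In short: the trick is not to prove $Y^\mu\nmid T$, but to refine the decomposition so that the complement of the piece carrying $Y^\mu$ becomes the Young permutation module $M^{\nu\cup(2p)}$, to which Donkin's criterion applies directly.
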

\begin{proof}
Let $r=\mathrm{rank}(\sym{\lambda})$, $\O_1=\{1,\ldots,p\}$ and $\O_2=\{p+1,\ldots,2p\}$. Let $\{u\}\in\mathcal{T}(\lambda)$, where we have $\O_1\subseteq R_i(\{u\})$ and $\O_2\subseteq R_j(\{u\})$. Let $\{v\}\in\mathcal{T}(\lambda)$, where we have $R_i(\{v\})=(R_i(\{u\})\setminus\O_1)\cup\O_2$, $R_j(\{v\})=(R_j(\{u\})\setminus\O_2)\cup\O_1$ and $R_k(\{v\})=R_k(\{u\})$ for all $1\leq k\leq \ell$ and $k\notin\{i,j\}$. We may assume that $t=\{u\}\wedge\{v\}\in\mathcal{T}(\lambda)_2^e$. Set $e=\prod_{i=1}^p(i,p+i)$ and note that $L(t)=(R(\{u\})\cap R(\{v\}))\rtimes\langle e\rangle$ by Lemma \ref{L;subgroup4}. Also notice that $L(t)=H\times K$, where the subgroup $H$ is $\sym{n}$-conjugate to $\sym{\overline{\nu}}$ and $K=\sym{(p,p)}\rtimes\langle e\rangle$. Recall that $W(t)\cong (\F^t){\uparrow^{\sym{n}}}$, where $\F^t$ denotes a one-dimensional $\F L(t)$-module. Therefore, $\F^t\cong\F\boxtimes\F'$ as an $\F[H\times K]$-module, where $\F'=\langle \{v\}\rangle_\F$. Note that $ev=-v$ and $gv=v$ for all $g\in \sym{(p,p)}$. Let $M=(\F'){\uparrow^{\sym{2p}}}$. By Lemma \ref{L;subgroup4} and $(5.2)$, observe that $M\mid \Lambda^2M^{(p,p)}$. As $p>2$, it is clear that both $\Lambda^2M^{(p,p)}$ and $M$ are direct sums of Young modules. By Lemmas \ref{L;complexity} (i), (iv) and \ref{HemmerNakano}, $c_{\sym{2p}}(M)=2$ and $Y^{(p,p)}\mid M$. So $(\F_H\boxtimes Y^{(p,p)}){\uparrow^{\sym{n}}}\mid(\F_H\boxtimes M){\uparrow^{\sym{n}}}\cong(\F^t){\uparrow^{\sym{n}}}$. By Lemmas \ref{L;complexity} (iii), (iv) and \ref{HemmerNakano}, $(\F_H\boxtimes Y^{(p,p)}){\uparrow^{\sym{n}}}$ has complexity $r$. Write $\eta$ for $\nu\cup(p,p)$ and note that
\begin{align*}
M^{\eta}\cong(\F_H\boxtimes M^{(p,p)}){\uparrow^{\sym{n}}}\cong (\F_H\boxtimes Y^{(p,p)}){\uparrow^{\sym{n}}}\oplus \bigoplus_{(p,p)\lhd\gamma}[M^{(p,p)}:Y^\gamma](\F_H\boxtimes Y^{\gamma}){\uparrow^{\sym{n}}}.
\end{align*}
By Lemma \ref{HemmerNakano}, $c_{\sym{n}}(M^\eta)=r$. Let ${\tilde{\eta}}=\nu\cup(2p)$. By the displayed formula, the Krull-Schmidt Theorem and Lemmas \ref{L;complexity} (i), (iii), (iv), \ref{HemmerNakano}, for any $\alpha\vdash n$ satisfying that $Y^\alpha\mid M^\eta$ and $c_{\sym{n}}(Y^\alpha)=r$, we have either $Y^\alpha\mid (\F_H\boxtimes Y^{(p,p)}){\uparrow^{\sym{n}}}$ or $Y^\alpha\mid M^{\tilde{\eta}}$. By Theorem \ref{T;Donkin} and Lemma \ref{HemmerNakano}, $Y^\mu\mid M^\eta$ and $c_{\sym{n}}(Y^\mu)=r$. Let $\mu$ have the $p$-adic expansion $\sum_{i=0}^mp^i\mu(i)$ for some $m\in\mathbb{N}$. For any a $p$-adic decomposition $\tilde{\eta}=\sum_{i=0}^mp^i\sigma[i]$, if $\sigma[i]\models|\mu(i)|$ for all $0\leq i\leq m$, $\mu(0)=\overline{r_\eta}$ and $r_\eta=r_{\tilde{\eta}}=\sigma[0]$. Moreover, $\ell(\mu(1))=\ell(q_\eta)=\ell(q_{\overline{\nu}})+2>\ell(q_{\overline{\nu}})+1=\ell(q_{\tilde{\eta}})\geq \ell(\sigma[1])$. Therefore, $Y^\mu\nmid M^{\tilde{\eta}}$ by Theorem \ref{T;Donkin}, which implies that $Y^\mu\mid (\F_H\boxtimes Y^{(p,p)}){\uparrow^{\sym{n}}}$. By $(5.2)$ and the above discussion, $Y^\mu\mid(\F^t){\uparrow^{\sym{n}}}$ and $(\F^t){\uparrow^{\sym{n}}}\mid\Lambda^2M^\lambda$. Since $p>2$, by Lemmas \ref{L;complexity} (i), \ref{L;compare} and \ref{L;symmetriccomplexity}, $c_{\sym{n}}(Y^\mu)=c_{\sym{n}}(\Lambda^2M^\lambda)=r$. The proof is now complete.
\end{proof}
We now present some lemmas for the case $p=2$. For convenience, we define some partitions of $n$. Let $p=2$,  $\lambda=(\lambda_1,\ldots,\lambda_\ell)\vdash n$ and $\lambda\neq (n)$. Set
\[\lambda_{i,j}=\begin{cases}
(\lambda_1,\ldots,\lambda_i-1,\ldots,\lambda_j-1,\ldots,\lambda_\ell)\cup(2), & \text{if}\ \ell(r_\lambda)>0\ \text{and}\ 1\leq i<j\leq\ell,\\
(\lambda_1,\ldots,\lambda_i-2,\ldots,\lambda_j-2,\ldots,\lambda_\ell)\cup(4), & \text{if}\ \ell(r_\lambda)=0\ \text{and}\ 1\leq i<j\leq\ell.\\
\end{cases}
\]
\begin{lem}\label{L;n>=2}
Let $p=2$ and $\lambda=(\lambda_1,\ldots,\lambda_\ell)\vdash n$. If $2\nmid \lambda_i$ and $2\nmid\lambda_j$ for some $1\leq i<j\leq \ell$, then $M^{\lambda_{i,j}}\mid \Lambda^2M^\lambda$ and
$c_{\sym{n}}(M^{\lambda_{i,j}})=c_{\sym{n}}(\Lambda^2M^\lambda)=\mathrm{rank}(\sym{\lambda})+1$.
\end{lem}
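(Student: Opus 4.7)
The plan is to exhibit a single exterior-square vector whose stabilizer is $\sym{n}$-conjugate to $\sym{\lambda_{i,j}}$, and then to use Lemmas \ref{L;compare}, \ref{L;symmetriccomplexity}, and \ref{HemmerNakano} to pin down the complexity on both sides.

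First I would mimic the opening of Lemma \ref{L;np=2}: pick $a_i\in R_i(\{t^\lambda\})$ and $a_j\in R_j(\{t^\lambda\})$, set $t=(a_i,a_j)t^\lambda$, and form $s=\{t^\lambda\}\wedge\{t\}$. Since $i\neq j$ and $\lambda_i,\lambda_j$ are odd (so $\{t\}\neq\{t^\lambda\}$), we have $s\in\mathcal{T}(\lambda)_2^e$, and Lemma \ref{L;preparation} applied with $p\nmid\lambda_i$ and $p\nmid\lambda_j$ gives
\[
L(s)=(R(\{t^\lambda\})\cap R(\{t\}))\times\langle (a_i,a_j)\rangle.
\]
Inspecting the factorization (6.1), $L(s)$ is the direct product of $\sym{S_i}$, $\sym{S_j}$, $\prod_{k\neq i,j}\sym{R_k(\{t^\lambda\})}$, and $\sym{\{a_i,a_j\}}$, so it is $\sym{n}$-conjugate to $\sym{\lambda_{i,j}}$.

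Now I would exploit the fact that $p=2$ forces $\F^s=\F$ by Notation \ref{N;notation}(v). Consequently
\[
W(s)\cong (\F^s)\!\uparrow^{\sym{n}}\ \cong\ (\F_{L(s)})\!\uparrow^{\sym{n}}\ \cong\ M^{\lambda_{i,j}},
\]
and $(5.2)$ yields $W(s)\mid \Lambda^2M^\lambda$, so $M^{\lambda_{i,j}}\mid \Lambda^2M^\lambda$. This is the main structural content.

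For the complexity statement, set $r=\mathrm{rank}(\sym{\lambda})$. Since $\lambda_i$ and $\lambda_j$ are odd, $\lfloor(\lambda_i-1)/2\rfloor=\lfloor\lambda_i/2\rfloor$ and likewise for $j$, so $\mathrm{rank}(\sym{\lambda_{i,j}})=r+1$; by Lemma \ref{HemmerNakano}, $c_{\sym{n}}(M^{\lambda_{i,j}})=r+1$. Lemma \ref{L;complexity}(i) then gives the lower bound $c_{\sym{n}}(\Lambda^2 M^\lambda)\geq r+1$. For the matching upper bound, Lemma \ref{L;compare} yields $c_{\sym{n}}(\Lambda^2M^\lambda)\leq c_{\sym{n}}(S^2M^\lambda)$, and Lemma \ref{L;symmetriccomplexity} with $a=2$, $\nu_2(2)=1$, applied after checking $1+r\leq\lfloor n/2\rfloor$, computes $c_{\sym{n}}(S^2M^\lambda)=r+1$. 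The required inequality $r+1\leq\lfloor n/2\rfloor$ follows because $|r_\lambda|\geq 2$ forces $n\geq 2r+2$. Combining the two bounds completes the proof. No step looks obstructive; the only thing requiring care is the arithmetic confirming $r+1\leq\lfloor n/2\rfloor$ so that Lemma \ref{L;symmetriccomplexity} lands in the middle case rather than the capped one.
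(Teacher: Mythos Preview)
Your proof is correct and follows essentially the same route as the paper: pick $a_i,a_j$, form $s=\{t^\lambda\}\wedge\{(a_i,a_j)t^\lambda\}$, use Lemma~\ref{L;preparation} and $(6.1)$ to identify $L(s)$ with a conjugate of $\sym{\lambda_{i,j}}$, then invoke $p=2$ so that $W(s)\cong M^{\lambda_{i,j}}$, and finish the complexity count via Lemmas~\ref{L;compare} and~\ref{L;symmetriccomplexity}. The only cosmetic differences are that the paper cites Lemma~\ref{L;complexity}(iv) rather than Lemma~\ref{HemmerNakano} for $c_{\sym{n}}(M^{\lambda_{i,j}})=r+1$, and obtains $r+1\leq\lfloor n/2\rfloor$ by observing $r+1=\mathrm{rank}(L(s))\leq\mathrm{rank}(\sym{n})$ rather than via $|r_\lambda|\geq 2$; both arguments are equivalent.
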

\begin{proof}
Let $r=\text{rank}(\sym{\lambda})$, $a\in R_i(\{t^\lambda\})$ and $b\in R_j(\{t^\lambda\})$. Let $t=(a,b)t^\lambda$ and note that $\{t\}\neq \{t^\lambda\}$. By Notation \ref{N;notation} (v), also notice that $u=\{t^\lambda\}\wedge\{t\}\in\mathcal{T}(\lambda)_2^e$. As $i\neq j$, $2\nmid\lambda_i$ and $2\nmid\lambda_j$, by Lemma \ref{L;preparation} and $(6.1)$, $L(u)=(R(\{t^\lambda\})\cap R(\{t\}))\times \langle (a,b)\rangle$, where $L(u)$ has $2$-rank $r+1$ and $L(u)$ is $\sym{n}$-conjugate to $\sym{\lambda_{i,j}}$. As $p=2$ and $(5.2)$ holds, $M^{\lambda_{i,j}}\cong(\F_{L(u)}){\uparrow^{\sym{n}}}$ and $(\F_{L(u)}){\uparrow^{\sym{n}}}\mid \Lambda^2M^\lambda$. Therefore, $M^{\lambda_{i,j}}\mid \Lambda^2M^\lambda$. By Lemma \ref{L;complexity} (iv), we have $c_{\sym{n}}(M^{\lambda_{i,j}})=r+1$. This implies that $c_{\sym{n}}(\Lambda^2M^\lambda)\geq r+1$ by Lemma \ref{L;complexity} (i). As $p=2$ and $r+1=\mathrm{rank}(L(u))\leq \mathrm{rank}(\sym{n})\leq \frac{n}{2}$, by Lemmas \ref{L;compare} and \ref{L;symmetriccomplexity}, notice that $c_{\sym{n}}(\Lambda^2M^\lambda)\leq r+1$. We have
$c_{\sym{n}}(M^{\lambda_{i,j}})=c_{\sym{n}}(\Lambda^2M^\lambda)=r+1$. The lemma follows.
\end{proof}
\begin{lem}\label{L;n=1}
Let $p=2$ and $\lambda=(\lambda_1,\ldots,\lambda_\ell)\vdash n$. If $\ell(r_\lambda)=1$ and $2\nmid \lambda_i+\lambda_j$ for some $1\leq i<j\leq\ell$, then $M^{\lambda_{i,j}}\mid \Lambda^2M^\lambda$ and
$c_{\sym{n}}(M^{\lambda_{i,j}})=c_{\sym{n}}(\Lambda^2M^\lambda)=\mathrm{rank}(\sym{\lambda})$.
\end{lem}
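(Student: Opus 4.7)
The plan is to mimic the proof of Lemma \ref{L;n>=2}, with the key twist being an upper bound on $c_{\sym{n}}(\Lambda^2M^\lambda)$ coming not from $S^2M^\lambda$ but from the fact that $\ell(r_\lambda)=1$ forces $n$ to be odd, thereby capping $\mathrm{rank}(\sym{n})$ at $r$.

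First I would pick $a\in R_i(\{t^\lambda\})$ and $b\in R_j(\{t^\lambda\})$, set $t=(a,b)t^\lambda$ and $u=\{t^\lambda\}\wedge\{t\}\in\mathcal{T}(\lambda)_2^e$. The hypothesis $2\nmid\lambda_i+\lambda_j$ means exactly one of $\lambda_i,\lambda_j$ is divisible by $p=2$, so Lemma \ref{L;preparation} gives
\[L(u)=(R(\{t^\lambda\})\cap R(\{t\}))\times\langle(a,b)\rangle,\]
where the first factor is $\sym{n}$-conjugate to a Young subgroup of $p$-rank $r-1$, so $L(u)$ has $p$-rank $r$. Using the explicit description (6.1) and the fact that $\ell(r_\lambda)=1>0$, $L(u)$ is $\sym{n}$-conjugate to $\sym{\lambda_{i,j}}$ (with $\lambda_{i,j}$ defined by the first branch). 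Since $p=2$, $(5.2)$ yields $\F^u=\F$ and $(\F^u){\uparrow^{\sym{n}}}\cong M^{\lambda_{i,j}}$, while $(5.2)$ also gives $M^{\lambda_{i,j}}\mid\Lambda^2M^\lambda$.

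Next I would compute complexities. By Lemma \ref{HemmerNakano} (or Lemma \ref{L;complexity} (iv)), $c_{\sym{n}}(M^{\lambda_{i,j}})=\mathrm{rank}(\sym{\lambda_{i,j}})$. A direct check — WLOG $\lambda_i$ odd and $\lambda_j$ even — gives
\[\left\lfloor\tfrac{\lambda_i-1}{2}\right\rfloor+\left\lfloor\tfrac{\lambda_j-1}{2}\right\rfloor+1=\left\lfloor\tfrac{\lambda_i}{2}\right\rfloor+\left\lfloor\tfrac{\lambda_j}{2}\right\rfloor,\]
so $\mathrm{rank}(\sym{\lambda_{i,j}})=r$. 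Hence Lemma \ref{L;complexity} (i) gives the lower bound $c_{\sym{n}}(\Lambda^2M^\lambda)\geq c_{\sym{n}}(M^{\lambda_{i,j}})=r$.

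The matching upper bound is where this lemma departs from Lemma \ref{L;n>=2}: Lemmas \ref{L;compare} and \ref{L;symmetriccomplexity} only give $c_{\sym{n}}(\Lambda^2M^\lambda)\leq c_{\sym{n}}(S^2M^\lambda)$, which here may equal $r+1$ and is therefore useless. The crucial observation is that $\ell(r_\lambda)=1$ forces exactly one part of $\lambda$ to be odd, so $n$ is odd, and then $r=\sum_k\lfloor\lambda_k/2\rfloor=(n-1)/2=\lfloor n/2\rfloor=\mathrm{rank}(\sym{n})$. Since $\Lambda^2M^\lambda$ is a $p$-permutation module, Lemma \ref{L;complexity} (v) yields $c_{\sym{n}}(\Lambda^2M^\lambda)\leq\mathrm{rank}(\sym{n})=r$, completing the argument. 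The only real obstacle is recognising that the parity constraint $\ell(r_\lambda)=1$ gives the tight ambient bound on $p$-ranks; without it one would be forced into a direct Brauer-quotient analysis as in Lemma \ref{L;two-part}.
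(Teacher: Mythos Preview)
Your proposal is correct and follows essentially the same approach as the paper's proof, including the key observation that $\ell(r_\lambda)=1$ forces $r=\lfloor n/2\rfloor=\mathrm{rank}(\sym{n})$, yielding the upper bound via Lemma \ref{L;complexity} (v). One minor inaccuracy in a side remark: your claim that $c_{\sym{n}}(S^2M^\lambda)$ ``may equal $r+1$'' is actually false here---since $r=\lfloor n/2\rfloor$, Lemma \ref{L;symmetriccomplexity} (third case) gives $c_{\sym{n}}(S^2M^\lambda)=\lfloor n/2\rfloor=r$, so that route would also have worked---but this does not affect the validity of your argument.
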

\begin{proof}
Let $r=\text{rank}(\sym{\lambda})$, $a\in R_i(\{t^\lambda\})$ and $b\in R_j(\{t^\lambda\})$. Let $t=(a,b)t^\lambda$ and note that $\{t\}\neq \{t^\lambda\}$. By Notation \ref{N;notation} (v), also notice that $u=\{t^\lambda\}\wedge\{t\}\in\mathcal{T}(\lambda)_2^e$. As $i\neq j$ and $2\nmid\lambda_i+\lambda_j$, by Lemma \ref{L;preparation} and $(6.1)$, $L(u)=(R(\{t^\lambda\})\cap R(\{t\}))\times \langle (a,b)\rangle$, where $L(u)$ has $2$-rank $r$ and $L(u)$ is $\sym{n}$-conjugate to $\sym{\lambda_{i,j}}$. As $p=2$ and $(5.2)$ holds, $M^{\lambda_{i,j}}\cong(\F_{L(u)}){\uparrow^{\sym{n}}}$ and
$(\F_{L(u)}){\uparrow^{\sym{n}}}\mid \Lambda^2M^\lambda$. Therefore, $M^{\lambda_{i,j}}\mid \Lambda^2M^\lambda$. By Lemma \ref{L;complexity} (iv), note that $c_{\sym{n}}(M^{\lambda_{i,j}})=r$, which implies that $c_{\sym{n}}(\Lambda^2M^\lambda)\geq r$ by Lemma \ref{L;complexity} (i). As $p=2$ and $\ell(r_\lambda)=1$, we have $r=\lfloor \frac{n}{2}\rfloor$. Therefore, by Lemma \ref{L;complexity} (v), note that $c_{\sym{n}}(\Lambda^2M^\lambda)\leq \mathrm{rank}(\sym{n})=r$. So
we obtain that $c_{\sym{n}}(M^{\lambda_{i,j}})=c_{\sym{n}}(\Lambda^2M^\lambda)=r$. The lemma follows.
\end{proof}
To continue the discussion, given $\lambda\vdash n$, recall that $Y^{s(\lambda)}\mid M^\lambda$ and $Y^{s(\lambda)}$ is a Scott module. Also recall that $s(\lambda)$ can be explicitly determined by $\lambda$.
\begin{lem}\label{L;n=0}
Let $p=2$ and $\lambda\vdash n$. If $\lambda\neq (n)$ and $\ell(r_\lambda)=0$, then $Y^{s(\lambda_{i,j})}\mid \Lambda^2M^\lambda$ and
$c_{\sym{n}}(Y^{s(\lambda_{i,j})})=c_{\sym{n}}(\Lambda^2M^\lambda)=\mathrm{rank}(\sym{\lambda})$ for all $1\leq i<j\leq\ell(\lambda)$.
\end{lem}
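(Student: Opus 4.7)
The plan is to adapt the construction from Lemma \ref{L;2pparts} to the $p=2$ setting, where the triviality of $\F^t$ forces me to extract a Scott summand of a genuine permutation module rather than leverage signed Young modules. I will exhibit $Y^{s(\lambda_{i,j})}$ as a direct summand of some $W(t)\mid \Lambda^2 M^\lambda$ in which $t=\{u\}\wedge\{v\}$ is built by swapping two two-element blocks between rows $i$ and $j$.

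Concretely, set $\O_1=\{1,2\}$, $\O_2=\{3,4\}$ and $e=(1,3)(2,4)$. I pick $\{u\}\in\mathcal{T}(\lambda)$ with $\O_1\subseteq R_i(\{u\})$ and $\O_2\subseteq R_j(\{u\})$, and define $\{v\}$ by swapping $\O_1$ and $\O_2$ between rows $i$ and $j$ of $\{u\}$; the hypothesis $\ell(r_\lambda)=0$ forces $\lambda_i,\lambda_j\geq 2$, so such tabloids exist and $\{u\}\neq\{v\}$. Since $e$ is an involution with $e\{u\}=\{v\}$, Lemma \ref{L;subgroup4} yields $L(t)=(R(\{u\})\cap R(\{v\}))\rtimes\langle e\rangle$; a direct bookkeeping of the row-stabilizers then identifies this as $H\times K$, where $H$ is $\sym{n}$-conjugate to $\sym{\overline{\nu}}$ (for $\nu=(\lambda_1,\ldots,\lambda_i-2,\ldots,\lambda_j-2,\ldots,\lambda_\ell)$) and $K=(\sym{\O_1}\times\sym{\O_2})\rtimes\langle e\rangle\cong \sym{2}\wr\sym{2}$ is a Sylow $2$-subgroup of $\sym{\{1,2,3,4\}}$. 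Consequently a Sylow $2$-subgroup $Q$ of $L(t)$ is also a Sylow $2$-subgroup of $\sym{\lambda_{i,j}}=\sym{\nu}\times\sym{4}$.

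Since $p=2$ gives $\F^t=\F$ by Notation \ref{N;notation}(v), $W(t)\cong(\F_{L(t)})\uparrow^{\sym{n}}$, so $W(t)\mid\Lambda^2 M^\lambda$ via (5.2), and $Sc_{\sym{n}}(Q)\mid W(t)$. Applied instead to $\sym{\lambda_{i,j}}$, the same Scott module $Sc_{\sym{n}}(Q)$ is a summand of $M^{\lambda_{i,j}}$, whose summands are all Young modules; being the unique Young summand that is simultaneously a Scott module (by Theorem \ref{T;Young-Scott} together with the discussion following Lemma \ref{L;Young-Scott2}), it must equal $Y^{s(\lambda_{i,j})}$. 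For the complexities, $\ell(r_\lambda)=0$ gives $r:=\mathrm{rank}(\sym{\lambda})=n/2=\mathrm{rank}(\sym{n})$, and a quick count shows $\mathrm{rank}(\sym{\lambda_{i,j}})=r$ as well. Thus Lemma \ref{L;complexity}(ii) gives $c_{\sym{n}}(Y^{s(\lambda_{i,j})})=r$, while Lemma \ref{L;complexity}(v) yields $c_{\sym{n}}(\Lambda^2 M^\lambda)\leq r$; combined with Lemma \ref{L;complexity}(i) this forces equality.

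The main obstacle I foresee is the second step, the identification of $L(t)$: I must verify that the twist by $e$ combines with the component-wise stabilizers to produce precisely a full Sylow $2$-subgroup of $\sym{4}$ (rather than a proper subgroup of it), so that the Sylow $2$-subgroups of $L(t)$ and of $\sym{\lambda_{i,j}}$ coincide up to $\sym{n}$-conjugacy and the Scott-module identification goes through cleanly. Once that structural identification is in place, everything else is immediate from the earlier lemmas.
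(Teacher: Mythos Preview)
Your approach is essentially identical to the paper's: both construct a tabloid pair by swapping two $2$-element blocks between rows $i$ and $j$ via a double transposition $e$, apply Lemma~\ref{L;subgroup4} to identify $L(t)=(R(\{u\})\cap R(\{v\}))\rtimes\langle e\rangle$, observe that its Sylow $2$-subgroup is $\sym{n}$-conjugate to one of $\sym{\lambda_{i,j}}$, and finish with the Scott-module identification $Sc_{\sym{n}}(Q)\cong Y^{s(\lambda_{i,j})}$ and the same complexity count. Your anticipated obstacle is exactly the content of the paper's display~(6.2), and the resulting $K=(\sym{2}\times\sym{2})\rtimes\langle e\rangle$ is indeed the full dihedral group of order $8$, so the Sylow identification goes through.
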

\begin{proof}
Let $\lambda=(\lambda_1,\ldots,\lambda_\ell)$ and $r=\text{rank}(\sym{\lambda})$. For any given $1\leq i<j\leq \ell$, since $\ell(r_\lambda)=0$, note that $2\mid \lambda_i$ and $2\mid \lambda_j$. Let $\{a,c\}\subseteq R_i(\{t^\lambda\})$ and $\{b,d\}\subseteq R_j(\{t^\lambda\})$. Let $e=(a,b)(c,d)\in\sym{n}$. Let $t=et^\lambda$ and note that $\{t\}\neq\{t^\lambda\}$. By Notation \ref{N;notation} (v), also notice that $u=\{t^\lambda\}\wedge\{t\}\in\mathcal{T}(\lambda)_2^e$. Moreover, we have $e\{t^\lambda\}=\{t\}$ and $e\{t\}=\{t^\lambda\}$. Let $S_i=R_i(\{t^\lambda\})\setminus\{a,c\}$ and  $S_j=R_j(\{t^\lambda\})\setminus\{b,d\}$. According to Lemma \ref{L;subgroup4}, observe that $L(u)=(R(\{t^\lambda\})\cap R(\{t\}))\rtimes\langle e\rangle$, where
\begin{align}
R(\{t^\lambda\})\cap R(\{t\})=\sym{S_i}\times\sym{S_j}\times (\prod_{\substack{k=1\\
                  k\neq i,j\\
                  } }^{\ell}\sym{R_k(\{t^\lambda\})})\times \sym{\{a,c\}}\times \sym{\{b,d\}}\leq\sym{\lambda}.
\end{align}
Note that a Sylow $2$-subgroup $P$ of $L(u)$ is $\sym{n}$-conjugate to a Sylow $2$-subgroup of $\sym{\lambda_{i,j}}$. Therefore, $Sc_{\sym{n}}(P)\cong Y^{s(\lambda_{i,j})}$. Since $p=2$ and $(5.2)$ holds, we have $Y^{s(\lambda_{i,j})}\mid(\F_{L(u)}){\uparrow^{\sym{n}}}$ and $(\F_{L(u)}){\uparrow^{\sym{n}}}\mid \Lambda^2M^\lambda$. So $Y^{s(\lambda_{i,j})}\mid \Lambda^2M^\lambda$. As $2\mid \lambda_i$ and $2\mid \lambda_j$, by $(6.2)$, note that $\mathrm{rank}(P)=r$. By Lemma \ref{L;complexity} (ii), we deduce that $c_{\sym{n}}(Y^{s(\lambda_{i,j})})=r$. Note that $c_{\sym{n}}(\Lambda^2M^\lambda)\geq r$ by Lemma \ref{L;complexity} (i). As $\ell(r_\lambda)=0$,  $r=\mathrm{rank}(\sym{n})=\frac{n}{2}$. Therefore, by Lemma \ref{L;complexity} (v), notice that $c_{\sym{n}}(\Lambda^2M^\lambda)\leq \mathrm{rank}(\sym{n})=r$. We have $c_{\sym{n}}(Y^{s(\lambda_{i,j})})=c_{\sym{n}}(\Lambda^2M^\lambda)=r$. The lemma follows.
\end{proof}

\begin{cor}\label{C;exteriorsquare}
Let $\lambda=(\lambda_1,\ldots,\lambda_\ell)\vdash n$, $\lambda\neq (n)$ and $r=\mathrm{rank}(\sym{\lambda})$.
\begin{enumerate}
\item [\em (i)] If $p>2$, then \[c_{\sym{n}}(\Lambda^2M^\lambda)=\begin{cases}
r-1, &\text{if}\ \ell=2,\ p\mid \lambda_1\ \text{and}\ \lambda_2<p,\\
r, &\text{otherwise}.\end{cases}\]
\item [\em (ii)] If $p=2$, then  \[c_{\sym{n}}(\Lambda^2M^\lambda)=\begin{cases}
r+1, &\text{if}\ \ell(r_\lambda)\geq 2,\\
r, &\text{otherwise}.\end{cases}\]
\end{enumerate}
\end{cor}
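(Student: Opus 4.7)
The plan is to deduce both parts as immediate consequences of the preceding lemmas, each of which already computes $c_{\sym{n}}(\Lambda^2 M^\lambda)$ exactly (not merely as a bound) in the situation it covers. The proof is therefore just a bookkeeping exercise: I need only verify that every partition $\lambda\vdash n$ with $\lambda\neq(n)$ falls into one of the cases treated by a prior lemma, and then read off the answer. No fresh Brauer-quotient or complexity computation is required.

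For part (i), I would split on the part structure of $\lambda$ relative to $p$. If at least two parts of $\lambda$ are not divisible by $p$, then Lemma \ref{L;np=2} yields $c_{\sym{n}}(\Lambda^2 M^\lambda)=r$. If at least two parts are $\geq p$, then Lemma \ref{L;2pparts} yields the same answer. Outside both of these cases, at most one part can be $\geq p$, so at least $\ell-1$ parts lie in $\{1,\ldots,p-1\}$ and are therefore not divisible by $p$; combined with ``at most one part not divisible by $p$'', this forces $\ell-1\leq 1$, i.e., $\ell=2$. Then $\lambda_1\geq\lambda_2$ together with ``at most one part is $\geq p$'' forces $\lambda_2<p$, whence $p\nmid\lambda_2$, and hence $p\mid\lambda_1$ (since otherwise we would again be in the first case). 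This is precisely the hypothesis of Lemma \ref{L;two-part}, which gives complexity $r-1$, matching the exceptional value in the statement.

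For part (ii), I would stratify by $\ell(r_\lambda)$, the number of odd parts of $\lambda$. When $\ell(r_\lambda)\geq 2$, Lemma \ref{L;n>=2} applies with any two indices whose parts are odd and yields $c_{\sym{n}}(\Lambda^2 M^\lambda)=r+1$. When $\ell(r_\lambda)=1$, let $\lambda_i$ denote the unique odd part; since $\lambda\neq(n)$ we have $\ell\geq 2$, so I can pair $\lambda_i$ with any even part $\lambda_j$ (with $j\neq i$) to obtain $2\nmid\lambda_i+\lambda_j$, and Lemma \ref{L;n=1} gives the value $r$. When $\ell(r_\lambda)=0$, all parts of $\lambda$ are even and $\ell\geq 2$, so Lemma \ref{L;n=0} applies directly to any pair of indices, again yielding $r$. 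The only substantive ``obstacle'' here is simply confirming that the case divisions are exhaustive in each part; once that is done, the corollary follows by citing the appropriate lemma in each branch.
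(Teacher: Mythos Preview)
Your proposal is correct and follows essentially the same approach as the paper: both arguments reduce to a case analysis that invokes Lemmas \ref{L;np=2}, \ref{L;two-part}, \ref{L;2pparts} for part (i) and Lemmas \ref{L;n>=2}, \ref{L;n=1}, \ref{L;n=0} for part (ii). The paper organizes the case split in (i) slightly differently (conditioning on $\ell\geq 3$ with $\lambda_2<p$, versus $\lambda_2\geq p$, versus the residual case $\ell=2$, $\lambda_2<p$), but the content is identical and the same lemmas are cited in each branch.
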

\begin{proof}
For (i), if $\ell\geq 3$ and $\lambda_2<p$, by Lemma \ref{L;np=2}, note that $c_{\sym{n}}(\Lambda^2M^\lambda)=r$. If $\lambda_2\geq p$, by Lemma \ref{L;2pparts}, also notice that $c_{\sym{n}}(\Lambda^2M^\lambda)=r$. For the remaining case, as $\lambda\neq (n)$, we have $\ell=2$ and $\lambda_2<p$. This case is solved by Lemmas \ref{L;np=2} and \ref{L;two-part}. The proof of (i) is now complete.

For (ii), if $\ell(r_\lambda)\geq 2$, for some $1\leq i<j\leq \ell$, we have $2\nmid \lambda_i$ and $2\nmid \lambda_j$. By Lemma \ref{L;n>=2}, we get the desired complexity for $\Lambda^2M^\lambda$. For the case $\ell(r_\lambda)<2$, according to Lemmas \ref{L;n=1} and \ref{L;n=0}, note that $c_{\sym{n}}(\Lambda^2M^\lambda)=r$. This completes the proof.
\end{proof}
\begin{prop}\label{P;exteriorYoung}
Let $\lambda\vdash n$ and $\lambda\neq (n)$. Then there exists some $\mu\vdash n$ such that $Y^\mu\mid \Lambda^2M^\lambda$ and $c_{\sym{n}}(Y^\mu)=c_{\sym{n}}(\Lambda^2M^\lambda)$. Moreover, $\mu$ is explicitly determined by $\lambda$.
\end{prop}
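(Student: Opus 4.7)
I would prove the proposition by a complete case analysis on $p$ and on the residue pattern of the parts of $\lambda$ modulo $p$, assembling Lemmas \ref{L;np=2}, \ref{L;two-part}, \ref{L;2pparts}, \ref{L;n>=2}, \ref{L;n=1} and \ref{L;n=0} together with the values of $c_{\sym{n}}(\Lambda^2M^\lambda)$ computed in Corollary \ref{C;exteriorsquare}. In each case the partition $\mu$ will be displayed as an explicit combinatorial function of $\lambda$.

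For $p > 2$, I split into three mutually exclusive subcases. If $\ell(r_\lambda) \geq 2$, pick $i < j$ with $p \nmid \lambda_i$ and $p \nmid \lambda_j$ and apply Lemma \ref{L;np=2}. Otherwise, if there exist $i < j$ with $\lambda_i \geq p$ and $\lambda_j \geq p$, apply Lemma \ref{L;2pparts}. The only remaining possibility, given $\lambda \neq (n)$ and the failure of the two previous hypotheses, is $\lambda = (\lambda_1, \lambda_2)$ with $p \mid \lambda_1$ and $0 < \lambda_2 < p$, which is Lemma \ref{L;two-part}. A short inspection confirms these three alternatives are exhaustive, and in each of them the invoked lemma writes $\mu$ in closed form.

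For $p = 2$, I split on $\ell(r_\lambda)$. If $\ell(r_\lambda) = 0$, Lemma \ref{L;n=0} directly delivers $Y^{s(\lambda_{i,j})}$ for any $1 \leq i < j \leq \ell(\lambda)$, and $s(\cdot)$ is explicit. If $\ell(r_\lambda) \geq 1$, Lemma \ref{L;n>=2} or Lemma \ref{L;n=1} supplies $M^{\lambda_{i,j}} \mid \Lambda^2 M^\lambda$ with the complexity match $c_{\sym{n}}(M^{\lambda_{i,j}}) = c_{\sym{n}}(\Lambda^2 M^\lambda)$. Here I would insert one extra step: decompose $M^{\lambda_{i,j}}$ into Young modules via James' theorem and select a summand $Y^\mu$ whose complexity attains $\mathrm{rank}(\sym{\lambda_{i,j}})$. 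Existence is immediate from Lemma \ref{L;complexity}(i); to make $\mu$ explicit, choose among the partitions permitted by Theorem \ref{T;Donkin} the one that minimises $|\mu(0)|$ and therefore maximises $c_{\sym{n}}(Y^\mu)$ according to Lemma \ref{HemmerNakano}, which can be written down from the $2$-adic expansion of $\lambda_{i,j}$.

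The principal obstacle is the last explicitness step in the $p = 2$ subcases with $\ell(r_\lambda) \geq 1$: one must pin $\mu$ down in closed form and verify both $Y^\mu \mid M^{\lambda_{i,j}}$ and $c_{\sym{n}}(Y^\mu) = \mathrm{rank}(\sym{\lambda_{i,j}})$ using the combinatorial criteria of Theorem \ref{T;Donkin} and Lemma \ref{HemmerNakano}. Apart from this bookkeeping, the proof is a direct reduction to the lemmas already proved in this section.
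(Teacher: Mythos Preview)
Your proposal is correct and follows essentially the same strategy as the paper: a case split on $p$ and on the residue pattern of the parts of $\lambda$, reducing to Lemmas \ref{L;np=2}, \ref{L;two-part}, \ref{L;2pparts}, \ref{L;n>=2}, \ref{L;n=1}, \ref{L;n=0} and then, in the $p=2$ cases with $\ell(r_\lambda)\geq 1$, extracting an explicit Young summand of $M^{\lambda_{i,j}}$ via Theorem \ref{T;Donkin} and Lemma \ref{HemmerNakano}. The only cosmetic difference is the organisation of the $p>2$ subcases: the paper parametrises them by the set $S=\{(i,j):1\le i<j\le\ell,\ \lambda_i<p\text{ or }\lambda_j\ge p\}$ rather than by $\ell(r_\lambda)$, but your exhaustiveness check (that the residual case forces $\ell=2$, $p\mid\lambda_1$, $0<\lambda_2<p$) is exactly the same computation the paper implicitly uses when $S=\varnothing$.
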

\begin{proof}
Let $\lambda=(\lambda_1,\ldots,\lambda_\ell)$ and $R=\{\nu\vdash n: c_{\sym{n}}(Y^\nu)=c_{\sym{n}}(\Lambda^2M^\lambda)\}$. By Corollary \ref{C;exteriorsquare} and Lemma \ref{HemmerNakano}, note that $R$ can be explicitly determined. We have two cases.
\begin{enumerate}[\text{Case} 1:]
\item $p>2$.
\end{enumerate}
Let $S=\{(i,j): 1\leq i<j\leq \ell,\ \lambda_i<p\ \text{or}\ \lambda_j\geq p\}$ and assume that $(u,v)\in S$. If $\lambda_u<p$, let $\alpha=(\lambda_1,\ldots,\lambda_u-1,\ldots,\lambda_v-1,\ldots,\lambda_\ell)\cup(1^2)$ and $\mu=\overline{r_\alpha}+\overline{q_\alpha}$. If $\lambda_v\geq p$, let $\beta=(\lambda_1,\ldots,\lambda_u-p,\ldots,\lambda_v-p,\ldots,\lambda_\ell)\cup(p,p)$ and $\mu=\overline{r_\beta}+\overline{q_\beta}$. For the two possibilities, according to Lemmas \ref{L;np=2} and \ref{L;2pparts}, note that $Y^\mu\mid \Lambda^2M^\lambda$ and
$c_{\sym{n}}(Y^\mu)=c_{\sym{n}}(\Lambda^2M^\lambda)$. We thus assume that $S=\varnothing$. As $\lambda\neq (n)$, note that we have $\ell=2$, $\lambda_1\geq p$ and $\lambda_2<p$. In this case, if $p\nmid \lambda_1$, let $\gamma=(\lambda_1-1,\lambda_2-1)\cup(1^2)$ and $\mu=\overline{r_\gamma}+\overline{q_\gamma}$. If $p\mid \lambda_1$, set $\mu=\gamma$. For the two possibilities, by Lemmas \ref{L;np=2} and \ref{L;two-part}, we also have $Y^\mu\mid \Lambda^2M^\lambda$ and $c_{\sym{n}}(Y^\mu)=c_{\sym{n}}(\Lambda^2M^\lambda)$.
\begin{enumerate}[\text{Case} 2:]
\item $p=2$.
\end{enumerate}
If $\ell(r_\lambda)>0$, as $\lambda\neq (n)$, let $T=\{\nu: 1\leq i<j\leq \ell,\ Y^\nu\mid M^{\lambda_{i,j}},\ 2\nmid \lambda_i\lambda_j\ \text{or}\ 2\nmid \lambda_i+\lambda_j\}$ and note that $T\neq \varnothing$. Moreover, $T$ can be explicitly determined by Theorem \ref{T;Donkin}. Let $\mu\in R\cap T$. If $\ell(r_\lambda)=0$, as $\lambda\neq (n)$, for any $1\leq i<j\leq \ell$, let $\mu=s(\lambda_{i,j})$. The case $p=2$ is solved by Lemmas \ref{L;n>=2}, \ref{L;n=1} and \ref{L;n=0}. The lemma follows.
\end{proof}
We illustrate Proposition \ref{P;exteriorYoung} with an example. Let $\lambda=(3^2,2)\vdash 8$. By the proof of Proposition \ref{P;exteriorYoung}, the detected $\mu$ of Proposition \ref{P;exteriorYoung} has the following possibilities.
\[\begin{cases} (2^4),\ (4,2^2), & \text{if}\ p=2,\\
(5,3), & \text{if}\ p=3,\\
(2^3,1^2)\ ,(3,2,1^3), & \text{if}\ p>3.\\
\end{cases}\]
Theorem \ref{T;B} is now proved by combining Propositions \ref{P;symmetriccase} and \ref{P;exteriorYoung}.
\section{Proof of Theorem \ref{T;C}}
In this section, we provide the proof of Theorem \ref{T;C}. For our purpose, recall the definitions in Notation \ref{N;notation}. Given $\lambda\models n$, also recall that $\overline{\lambda}$ is the partition whose parts are exactly the non-zero parts of $\lambda$.

Let $\lambda\vdash n$ and $\overline{r_\lambda}=(p-1,1)$. Therefore, we have $p\mid n$ and $d_{\lambda,p}=1$. Moreover, $\mathfrak{S}_{\lambda,p}=H_{\lambda,p}\times\sym{\{n-p+1,\ldots,n\}}$. Recall that $\mathcal{O}_{\lambda,p}=\{g\mathfrak{s}_\lambda: g\in P_{\lambda,p}\}$. By Lemma \ref{L;defi} (ii), label $\O_{\lambda,p}=\{\{s_1\},\ldots, \{s_{p}\}\}$ and recall that $t_{\lambda,p}=\{s_1\}\odot\cdots\odot\{s_{p}\}\in\mathcal{T}(\lambda)_{p}^s$. As $H_{\lambda,p}\leq K(t_{\lambda,p})$ by Lemma \ref{L;subgroup1} and $\overline{r_\lambda}=(p-1,1)$, note that $\mathfrak{S}_{\lambda,p}\leq K(t_{\lambda,p})$.
\begin{lem}\label{L;stabilizer}
Let $\lambda\vdash n$ and $\overline{r_\lambda}=(p-1,1)$. Then $K(t_{\lambda,p})=\mathfrak{S}_{\lambda,p}$.
\end{lem}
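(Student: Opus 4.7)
The plan is to establish the equality by two inclusions, one of which is essentially handed to us by the paragraph immediately preceding the statement, and the other by a direct appeal to Lemma \ref{L;subgroup2}. For the inclusion $\mathfrak{S}_{\lambda,p}\leq K(t_{\lambda,p})$ I would unpack the remark in that paragraph: the factor $H_{\lambda,p}$ lies in $K(t_{\lambda,p})$ by Lemma \ref{L;subgroup1}, so it suffices to show that the remaining factor $\sym{\{n-p+1,\ldots,n\}}$ also stabilises $t_{\lambda,p}$.

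To do this I would exploit the very restrictive hypothesis $\overline{r_\lambda}=(p-1,1)$. Because $|r_\lambda|=p=p\,d_{\lambda,p}$, every element of $\{n-p+1,\ldots,n\}$ occurs in $\mathfrak{s}_\lambda$; more precisely, these $p$ entries are distributed across exactly two rows of $\mathfrak{s}_\lambda$, one containing $p-1$ of them and the other containing a single one. Consequently the stabiliser of $\mathfrak{s}_\lambda$ inside $\sym{\{n-p+1,\ldots,n\}}\cong\sym{p}$ is conjugate to $\sym{p-1}\times\sym{1}$, so the $\sym{\{n-p+1,\ldots,n\}}$-orbit of $\mathfrak{s}_\lambda$ has cardinality $p!/(p-1)!=p$. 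Lemma \ref{L;defi}(ii) supplies $|\mathcal{O}_{\lambda,p}|=p^{d_{\lambda,p}}=p$, and since $\langle s_{\lambda,p,1}\rangle\leq\sym{\{n-p+1,\ldots,n\}}$ we have $\mathcal{O}_{\lambda,p}\subseteq$ this $\sym{p}$-orbit; equality of sizes then forces the two orbits to coincide. Hence $\sym{\{n-p+1,\ldots,n\}}$ permutes $\{\{s_1\},\ldots,\{s_p\}\}$ and therefore fixes $t_{\lambda,p}=\{s_1\}\odot\cdots\odot\{s_p\}$, establishing the claimed inclusion.

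For the reverse inclusion $K(t_{\lambda,p})\leq\mathfrak{S}_{\lambda,p}$ I would simply invoke Lemma \ref{L;subgroup2}, which applies because $d_{\lambda,p}=1>0$. Combining the two inclusions yields the equality $K(t_{\lambda,p})=\mathfrak{S}_{\lambda,p}$. I do not expect a substantial obstacle here: the only delicate point is the orbit size-matching in the first paragraph, and that is settled by a single stabiliser computation together with Lemma \ref{L;defi}(ii).
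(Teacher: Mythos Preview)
Your proposal is correct and follows essentially the same approach as the paper: the inclusion $\mathfrak{S}_{\lambda,p}\leq K(t_{\lambda,p})$ is exactly the content of the paragraph preceding the lemma (which you unpack via the orbit-size match), and the reverse inclusion is Lemma~\ref{L;subgroup2}, which is all the paper's proof invokes. The only difference is that you spell out the orbit comparison that the paper compresses into the phrase ``As $H_{\lambda,p}\leq K(t_{\lambda,p})$ by Lemma~\ref{L;subgroup1} and $\overline{r_\lambda}=(p-1,1)$''.
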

\begin{proof}
It suffices to show that $K(t_{\lambda,p})\leq\mathfrak{S}_{\lambda,p}$. The inequality is from Lemma \ref{L;subgroup2}.
\end{proof}

Let $\lambda\vdash n$ and $1<a\in \mathbb{N}$. For any $t=\{u_1\}\odot\cdots\odot\{u_a\}\in \mathcal{T}(\lambda)_a^s$, define $n_{t,i}=|\bigcap_{j=1}^aR_i(\{u_j\})|$ for all $1\leq i\leq \ell(\lambda)$ and put $\phi(t)=(n_{t,1},\ldots,n_{t,\ell(\lambda)})$. For instance, let $p=3$ and $\lambda=(5,4)\vdash 9$. As $r_\lambda=(2,1)$, $\phi(t_{\lambda,p})=q_\lambda=(3^2)$. Let $t_1$, $t_2\in \mathcal{T}(\lambda)_a^s$. If there exists some $g\in\sym{n}$ such that $gt_1=t_2$, note that $\phi(t_1)=\phi(t_2)$. We shall need the following result.
\begin{lem}\label{L;orbit}
Let $\lambda\vdash n$ and $\overline{r_\lambda}=(p-1,1)$. Then
$\mathcal{O}^s(t_{\lambda,p})=\{t\in\mathcal{T}(\lambda)_p^s: \phi(t)=q_\lambda\}$.
\end{lem}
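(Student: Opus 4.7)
The plan is to prove the two inclusions separately. The forward inclusion $\mathcal{O}^s(t_{\lambda,p})\subseteq\{t\in\mathcal{T}(\lambda)_p^s:\phi(t)=q_\lambda\}$ is immediate from the observation made immediately before the lemma: $\phi$ is invariant under the diagonal $\sym{n}$-action on $\mathcal{T}(\lambda)_p^s$, and a direct check using the construction of $\mathfrak{s}_\lambda$ and its $P_{\lambda,p}$-orbit in Notation~\ref{N;notation} (cf.\ Lemma~\ref{L;defi}(ii)) shows $\phi(t_{\lambda,p})=q_\lambda$. The substantive content is the reverse inclusion; my strategy is to show that any $t=\{u_1\}\odot\cdots\odot\{u_p\}\in\mathcal{T}(\lambda)_p^s$ satisfying $\phi(t)=q_\lambda$ has exactly the same combinatorial shape as $t_{\lambda,p}$, and can therefore be transported to it by a single element of $\sym{n}$.

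Under the hypothesis $\overline{r_\lambda}=(p-1,1)$, exactly two indices $a$ and $b$ satisfy $r_a=p-1$ and $r_b=1$, while $r_i=0$ for all other $i$. For $i\notin\{a,b\}$, the equality $|R_i(\{u_j\})|=\lambda_i=pq_i=n_{t,i}$ forces $R_i(\{u_1\})=\cdots=R_i(\{u_p\})=:C_i$. Setting $C_a=\bigcap_j R_a(\{u_j\})$ and $C_b=\bigcap_j R_b(\{u_j\})$, we have $|C_a|=pq_a$ and $|C_b|=pq_b$, and thus $R_a(\{u_j\})=C_a\sqcup E_j^a$ and $R_b(\{u_j\})=C_b\sqcup\{x_j\}$ with $|E_j^a|=p-1$. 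A size count shows that $D:=\mathbf{n}\setminus\bigl(C_a\sqcup C_b\sqcup\bigsqcup_{i\neq a,b}C_i\bigr)$ has exactly $p$ elements, and the row-partition of each $\{u_j\}$ forces $E_j^a\sqcup\{x_j\}=D$, so $E_j^a=D\setminus\{x_j\}$.

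The defining condition $\bigcap_j R_a(\{u_j\})=C_a$ translates into $\bigcap_j E_j^a=\emptyset$, i.e.\ $D\setminus\{x_1,\dots,x_p\}=\emptyset$; since the multiset $\{x_1,\dots,x_p\}$ and the set $D$ both have cardinality $p$, the $x_j$ are pairwise distinct and exhaust $D$, so the tabloids $\{u_j\}$ are pairwise distinct. Consequently, $t$ is determined by the data $\bigl(C_a,C_b,(C_i)_{i\neq a,b},D\bigr)$: its $p$ summands are precisely the $p$ tabloids obtained by placing one element of $D$ in row $b$ and the remaining $p-1$ elements in row $a$ (the other rows being fixed by the $C_i$). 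An identical analysis applied to $t_{\lambda,p}$, using the explicit description of $\mathfrak{s}_\lambda$ and the cyclic action of $s_{\lambda,p,1}=(n-p+1,\dots,n)$ on the last $p$ letters, yields the analogous data $\bigl(R_a(\{t^{q_\lambda}\}),R_b(\{t^{q_\lambda}\}),(R_i(\{t^{q_\lambda}\}))_{i\neq a,b},\{n-p+1,\dots,n\}\bigr)$, with matching cardinalities throughout.

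The proof concludes by choosing any $g\in\sym{n}$ with $g(C_i)=R_i(\{t^{q_\lambda}\})$ for every $i$ and $g(D)=\{n-p+1,\dots,n\}$ (such $g$ exists because the two collections partition $\mathbf{n}$ into blocks of the same sizes). Then $g\{u_j\}$ is the unique member of $\mathcal{O}_{\lambda,p}$ whose distinguished element of $\{n-p+1,\dots,n\}$ in row $b$ is $g(x_j)$, so $g$ induces a bijection between the multisets $\{\{u_1\},\dots,\{u_p\}\}$ and $\{\{s_1\},\dots,\{s_p\}\}$. By the symmetry of $\odot$, $gt=t_{\lambda,p}$, giving $t\in\mathcal{O}^s(t_{\lambda,p})$. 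The main obstacle is the structural bookkeeping of the second paragraph --- keeping the disjointness relations and cardinalities straight, including in the symmetric small case $p=2$ where $p-1=1$ --- together with verifying in the final step that $g$ induces a bijection of the full $p$-element multisets rather than just a partial matching; both issues are resolved by the rigidity established in the third paragraph.
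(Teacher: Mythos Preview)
Your proof is correct and takes a genuinely different route from the paper's. The paper proves the equality by a counting argument: after observing the easy inclusion $\mathcal{O}^s(t_{\lambda,p})\subseteq S:=\{t:\phi(t)=q_\lambda\}$, it computes $|S|$ directly as a multinomial expression, then invokes Lemma~\ref{L;stabilizer} (which identifies $K(t_{\lambda,p})$ with $\mathfrak{S}_{\lambda,p}$) together with the orbit--stabilizer relation $|\mathcal{O}^s(t_{\lambda,p})|=n!/|K(t_{\lambda,p})|$ to see that the two cardinalities coincide. Your approach is instead constructive: you dissect an arbitrary $t\in S$ into a rigid block structure $(C_a,C_b,(C_i)_{i\neq a,b},D)$, verify it matches that of $t_{\lambda,p}$, and exhibit an explicit $g\in\sym{n}$ carrying one to the other. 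Your argument is self-contained and does not rely on Lemma~\ref{L;stabilizer}; indeed, it essentially \emph{reproves} that lemma en route, since your rigidity analysis shows that the stabilizer of $t_{\lambda,p}$ is exactly the group preserving the block partition $(C_a,C_b,(C_i),D)$. The paper's proof is shorter because it front-loads that work, while yours gives more structural insight into why every element of $S$ looks the same up to relabelling.
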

\begin{proof}
Let $S=\{t\in\mathcal{T}(\lambda)_p^s: \phi(t)=q_\lambda\}$. By the definition of $t_{\lambda,p}$, observe that $\phi(t_{\lambda,p})=q_\lambda$. So we have $\mathcal{O}^s(t_{\lambda,p})\subseteq S$. It suffices to show that $|\O^s(t_{\lambda,p})|=|S|$. Let $\lambda=(\lambda_1,\ldots,\lambda_\ell)$, where $\lambda_u\equiv p-1\pmod p$ and $\lambda_v\equiv 1\pmod p$ for some $1\leq u\neq v\leq \ell$. As $\overline{r_\lambda}=(p-1,1)$, by the definition of $S$, we have
\begin{align}
|S|=\frac{n!}{p!(\lambda_u-p+1)!(\lambda_v-1)!\prod_{i\neq u,v}(\lambda_i!)}.
\end{align}
Recall that $\mathcal{O}^s(t_{\lambda,p})$ is an $\F$-basis of $V(t_{\lambda,p})$ and $V(t_{\lambda,p})\cong(\F_{K(t_{\lambda,p})}){\uparrow^{\sym{n}}}$. By Lemma \ref{L;stabilizer} and the definition of $\mathfrak{S}_{\lambda,p}$, note that $|\O^s(t_{\lambda,p})||K(t_{\lambda,p})|=|\mathcal{O}^s(t_{\lambda,p})||\mathfrak{S}_{\lambda,p}|=n!$ and $\mathfrak{S}_{\lambda,p}$ is $\sym{n}$-conjugate to $\sym{\eta}$, where $\eta=q_\lambda\cup (p)$. By $(7.1)$, $|\O^s(t_{\lambda,p})|=|S|$.
\end{proof}
\begin{nota}\label{N;subgroups}
We shall introduce some elementary abelian $p$-subgroups of $\sym{n}$.
\begin{enumerate}[(i)]
\item Let $m\in \mathbb{N}_0$ and $0\leq pm\leq n$. If $m>0$, let $E_m$ be the elementary abelian $p$-subgroup $\langle \bigcup_{i=1}^m\{s_i\}\rangle$ of $\sym{n}$, where $s_i=((i-1)p+1, \ldots, ip).$ Set $E_0=1$ and note that $\text{rank}(E_m)=m$. Let $s\in \mathbb{N}_0$ and $0\leq 4s\leq n$. If $s>0$, let $K_s$ be the elementary abelian $2$-subgroup $\langle \bigcup_{i=1}^s\{k_{i,1},k_{i,2}\}\rangle$ of $\sym{n}$, where $k_{i,1}=(4i-3,4i-2)(4i-1,4i)$ and $k_{i,2}=(4i-3,4i-1)(4i-2,4i).$ Set $K_0=1$ and note that $\text{rank}(K_s)=2s$.
\item Let $2\mid n$ and $x$, $y_x\in \mathbb{N}_0$, where $0\leq 4x\leq n$ and $y_x=\frac{n-4x}{2}$. If $x<\frac{n}{4}$, let $H_x$ be the elementary abelian $2$-subgroup $\langle\bigcup_{i=1}^{y_x}\{s_{x,i}\}\rangle$ of $\sym{n}$, where we define $s_{x,i}=(4x+2i-1,4x+2i)$. Set $H_{n/4}=1$ if $4\mid n$. Also note that $\text{rank}(H_x)=y_x$. If $p=2$, observe that $H_0$=$E_{n/2}$.
\end{enumerate}
\end{nota}
We need the following lemmas as preparation.
\begin{lem}\label{L;elementaryabelian}
Let $p\mid n$ and $E$ be an elementary abelian $p$-subgroup of $\sym{n}$ with $p$-rank $\frac{n}{p}$.
\begin{enumerate}
\item [\em (i)] If $p>2$, then $E$ is $\sym{n}$-conjugate to $E_{n/p}$.
\item [\em (ii)] If $p=2$, then $E$ is $\sym{n}$-conjugate to some $K_\ell\times H_{\ell}$, where $0\leq\ell\leq\lfloor\frac{n}{4}\rfloor$.
\end{enumerate}
\end{lem}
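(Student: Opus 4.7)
The plan is to analyze $E$ via its orbit decomposition on $\mathbf{n}$. Let the orbits be $O_1,\ldots,O_k$ with $|O_i|=p^{a_i}$ and $a_i\in\mathbb{N}_0$, so that $\sum_{i=1}^k p^{a_i}=n$. Since $E$ is abelian, for each $i$ the stabilizer $E_i\leq E$ of any point in $O_i$ is a normal subgroup of $E$ that does not depend on the chosen point, and the induced action of $E/E_i$ on $O_i$ is regular; in particular $E/E_i\cong(\Z/p)^{a_i}$. Faithfulness of the action of $E$ on $\mathbf{n}$ gives $\bigcap_i E_i=1$, so the natural map $E\hookrightarrow\prod_i E/E_i$ is injective, whence $\mathrm{rank}(E)\leq\sum_i a_i$.

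Combining $\mathrm{rank}(E)=n/p$ with $\sum p^{a_i}=n$ yields $\sum_i(p^{a_i}-pa_i)\leq 0$. A direct check shows that $g(a)=p^a-pa$ satisfies $g(a)\geq 0$ for every $a\in\mathbb{N}_0$, with zero set $\{1\}$ when $p>2$ and $\{1,2\}$ when $p=2$. Hence the displayed inequality is in fact an equality and each $a_i$ lies in the zero set of $g$. The derived equality $\mathrm{rank}(E)=\sum a_i$ also forces the injection $E\hookrightarrow\prod_i E/E_i$ to be an isomorphism by order count, so $E$ coincides, as a subgroup of $\prod_i\sym{O_i}$, with the full product of its regular-action blocks.

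For part (i), all $a_i=1$, hence there are exactly $n/p$ orbits, each of size $p$, and the action on each $O_i$ is that of a $p$-cycle. Choose $\sigma\in\sym{n}$ sending, after some ordering of the orbits, the $i$-th orbit bijectively onto $\{(i-1)p+1,\ldots,ip\}$ and adjusting within each block so that the generator of $E/E_i$ becomes $s_i=((i-1)p+1,\ldots,ip)$. Then $\sigma E\sigma^{-1}=E_{n/p}$.

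For part (ii), each $a_i\in\{1,2\}$; let $\ell$ be the number of orbits of size $4$ and set $y_\ell=(n-4\ell)/2$, which is then the number of orbits of size $2$, with $0\leq\ell\leq\lfloor n/4\rfloor$. On a size-$4$ orbit, the regular action of the Klein four-group is unique up to $\sym{4}$-conjugacy and is thus conjugate to $\langle(1,2)(3,4),(1,3)(2,4)\rangle$; on a size-$2$ orbit the action is a single transposition. Mapping the size-$4$ orbits onto the blocks $\{4i-3,4i-2,4i-1,4i\}$ for $1\leq i\leq\ell$ (turning each acting Klein four-group into $\langle k_{i,1},k_{i,2}\rangle$) and the size-$2$ orbits onto $\{4\ell+2i-1,4\ell+2i\}$ for $1\leq i\leq y_\ell$ produces $\sigma\in\sym{n}$ with $\sigma E\sigma^{-1}=K_\ell\times H_\ell$. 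The main subtlety throughout is the passage from the orbit-by-orbit normal forms to a global conjugator: this is justified precisely by the fact, established above, that $E$ is the direct product of its projections to the $\sym{O_i}$, so conjugators on the individual orbits can be chosen independently and assembled into $\sigma$.
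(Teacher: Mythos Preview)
Your proof is correct. The paper does not supply its own argument here: it simply records that part~(i) is well-known and cites \cite[Lemma 3.7]{Jiang} for part~(ii). Your self-contained treatment via the orbit decomposition and the arithmetic constraint $\sum_i(p^{a_i}-pa_i)\leq 0$, which forces each $a_i=1$ when $p>2$ and $a_i\in\{1,2\}$ when $p=2$, is a clean and elementary way both to pin down the possible orbit sizes and to see that the injection $E\hookrightarrow\prod_i E/E_i$ is an isomorphism, so that $E$ really is the direct product of its regular blocks. The conjugacy then follows block by block exactly as you describe. This is more informative than the paper's bare citation.
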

\begin{proof}
(i) is well-known and (ii) is proved in \cite[Lemma 3.7]{Jiang}.
\end{proof}
\begin{lem}\label{L;lemma}
Let $\lambda\vdash n$, $\overline{r_\lambda}=(p-1,1)$ and $b=\frac{n}{p}$. Let $t=\{u_1\}\odot\cdots\odot\{u_p\}\in\mathcal{T}(\lambda)_p^s$, where $\{u_i\}\neq\{u_j\}$ for all $1\leq i\neq j\leq p$. Let $S=\{\{u_1\},\ldots,\{u_p\}\}$ and assume that $c_{\sym{n}}(V(t))=c_{\sym{n}}(S^pM^\lambda)$.
\begin{enumerate}
\item [\em (i)] There exists an elementary abelian $p$-subgroup $E$ of $K(t)$ with $p$-rank $b$.
\item [\em (ii)] If $E_b\leq K(t)$, then there exists a unique $s_k\in\{s_1,\ldots,s_b\}$ such that $\langle s_k\rangle$ acts on $S$ transitively. Moreover, $s_i\{u_j\}=\{u_j\}$ for all $1\leq i\neq k\leq b$ and $1\leq j\leq p$.
\item [\em (iii)] If $p=2$ and $K_\ell\times H_\ell\leq K(t)$ for some $0\leq \ell\leq \lfloor\frac{n}{4}\rfloor$, then $\ell\neq \frac{n}{4}$ and there exists a unique $s_{\ell,q}\in \{s_{\ell,1},\ldots,s_{\ell,y_\ell}\}$ such that $\langle s_{\ell,q}\rangle$ acts on $S$ transitively. Moreover, let $U=\bigcup_{i=1}^\ell\{k_{i,1},k_{i,2}\}\cup\{s_{\ell,1},\ldots,s_{\ell,y_\ell}\}$. Then $g\{u_j\}=\{u_j\}$ for all $g\in U\setminus\{s_{\ell,q}\}$ and $1\leq j\leq 2$.
\end{enumerate}
\end{lem}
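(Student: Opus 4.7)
The plan is to derive all three parts from the hypothesis $c_{\sym{n}}(V(t))=c_{\sym{n}}(S^pM^\lambda)$, which forces $K(t)$ to contain a rank-$b$ elementary abelian subgroup, and then to analyse how such a subgroup acts on $S$ via disjoint supports. Concretely, since $\overline{r_\lambda}=(p-1,1)$ gives $|r_\lambda|=p$ and $\mathrm{rank}(\sym{\lambda})=b-1$, Lemma \ref{L;symmetriccomplexity} applied with $a=p$ lands in the case $\nu_p(a)=1$ and $\nu_p(a)+\mathrm{rank}(\sym{\lambda})=b=\lfloor n/p\rfloor$, so $c_{\sym{n}}(S^pM^\lambda)=b$. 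Hence $c_{\sym{n}}(V(t))=b$, and since $V(t)\cong(\F_{K(t)}){\uparrow^{\sym{n}}}$, Lemma \ref{L;complexity}(iv) gives $b=c_{K(t)}(\F)=\mathrm{rank}(K(t))$, which yields (i).

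For (ii), note that $K(t)$ permutes $S$ (the $\{u_j\}$ are distinct and $\odot$ is symmetric), so $E_b$ acts on $S$. The orbit of $\{u_1\}$ has size dividing both $p^b$ and $|S|=p$, so it is $1$ or $p$; it cannot be $1$, else $E_b\leq R(\{u_1\})$, a group $\sym{n}$-conjugate to $\sym{\lambda}$ of $p$-rank only $b-1$. Thus the action is transitive and, by abelianness, all point-stabilizers in $E_b$ coincide, with common index $p$. The key observation is that the supports $\mathcal{O}_i=\{(i-1)p+1,\ldots,ip\}$ of $s_1,\ldots,s_b$ are pairwise disjoint: this forces $s_1^{a_1}\cdots s_b^{a_b}$ to fix $\{u_1\}$ iff each non-trivial $s_i^{a_i}$ individually fixes $\{u_1\}$, because a non-trivial power of $s_i$ is again a $p$-cycle on $\mathcal{O}_i$, so preserving rows of $\{u_1\}$ on $\mathcal{O}_i$ forces $\mathcal{O}_i$ to sit in one row. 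Hence the stabilizer of $\{u_1\}$ equals $\langle s_i:s_i\{u_1\}=\{u_1\}\rangle$; since this has index $p$, exactly one generator $s_k$ moves $\{u_1\}$, and then $s_k$ of order $p$ acts transitively on the $p$-set $S$.

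For (iii), set $F=K_\ell\times H_\ell$; the same argument gives that the point-stabilizers of $\{u_1\}$ and $\{u_2\}$ in $F$ coincide in a common subgroup of index $2$. The factors $\langle k_{i,1},k_{i,2}\rangle$ for $1\leq i\leq\ell$ and $\langle s_{\ell,j}\rangle$ for $1\leq j\leq y_\ell$ have pairwise disjoint supports, so this stabilizer decomposes as the product of the corresponding factor-stabilizers; the total index being $2$ means exactly one factor drops by a single $\Z/2$ from its maximum while all others stay maximal. I split into two cases. Case A: some Klein factor has point-stabilizer of order $2$, meaning its block $\{4i-3,\ldots,4i\}$ admits a $2$-$2$ row-split in $\{u_1\}$ while every other Klein block and every $H$-pair lies in a single row; then every row-size $\lambda_k$ is a sum of $4$'s and $2$'s, hence even, contradicting $\overline{r_\lambda}=(1,1)$, which forces exactly two $\lambda_k$ to be odd. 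Case B: some $s_{\ell,q}$ fails to fix $\{u_1\}$, so every Klein block and every other $H$-pair sits in a single row and the pair $\{4\ell+2q-1,4\ell+2q\}$ is split $1$-$1$; precisely two rows then acquire the odd contribution, matching $\overline{r_\lambda}=(1,1)$, and $\langle s_{\ell,q}\rangle$ acts transitively on $S$. Finally, $\ell=n/4$ would force $y_\ell=0$, leaving only the impossible Case A; hence $\ell\neq n/4$.

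The main obstacle will be the case analysis in (iii): correctly tracking how the parity pattern $\overline{r_\lambda}=(1,1)$ simultaneously rules out every Klein-factor defect, permits exactly one cyclic-factor defect, and forces $\ell\neq n/4$, all from a single parity count on the row-sizes $\lambda_k$.
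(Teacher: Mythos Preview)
Your proof is correct.  Part (i) coincides with the paper's argument.  For (ii) and (iii) your route differs from the paper's: the paper first observes that any element of $p$-power order acting non-transitively on the $p$-set $S$ must fix it pointwise, then argues by contradiction that some generator must act transitively, that no two can (producing a product fixing $S$ and hence forcing a whole $p$-block into one row), and in (iii) that no Klein generator $k_{i,j}$ can, the last step via a somewhat ad hoc chase of which rows of $\{u_1\}$ versus $\{u_2\}$ contain the pairs $\{a,b\},\{c,d\}$ coming from the fixing element of the Klein factor.  You instead exploit the disjoint supports of the factors to decompose the point-stabiliser of $\{u_1\}$ as the direct product of the stabilisers in each factor; the total index being $p$ (respectively $2$) then immediately singles out the unique factor with non-full stabiliser, and in (iii) the parity count on the row-sizes $\lambda_k$ eliminates the Klein-factor case in one stroke.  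Your argument is shorter and more uniform, and it makes the conclusion $\ell\neq n/4$ transparent (no cyclic factors available); the paper's argument is more hands-on but reaches the same parity contradiction $\overline{r_\lambda}\neq(1,1)$.
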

\begin{proof}
As $\overline{r_\lambda}=(p-1,1)$, note that $\text{rank}(\sym{\lambda})=b-1$. By Lemma \ref{L;symmetriccomplexity}, notice that  $c_{\sym{n}}(V(t))=c_{\sym{n}}(S^pM^\lambda)=b$. As $V(t)\cong(\F_{K(t)}){\uparrow^{\sym{n}}}$, by Lemma \ref{L;complexity} (iv), there exists an elementary abelian $p$-subgroup $E$ of $K(t)$ with $p$-rank $b$. (i) is shown. By the definition of $K(t)$, observe that $K(t)$ acts on $S$. For any element $g$ of $K(t)$ with $p$-power order, if $\langle g\rangle$ does not act transitively on $S$, $g$ fixes $\{u_i\}$ for all $1\leq i\leq p$.

For (ii), as $E_b\leq K(t)$, note that there exists some $s_k\in \{s_1,\ldots,s_b\}\subseteq E_b$ such that $\langle s_k\rangle$ acts transitively on $S$. Otherwise, we have $g\{u_j\}=\{u_j\}$ for all $g\in E_b$ and $1\leq j\leq p$. So $E_b$ is $\sym{n}$-conjugate to a subgroup of $\sym{\lambda}$. This fact implies that $b=\text{rank}(E_b)\leq \text{rank}(\sym{\lambda})=b-1$, which is absurd. Also note that there does not exist $s_\ell\in\{s_1,\ldots,s_b\}$ such that $\ell\neq k$ and $\langle s_\ell\rangle$ acts transitively on $S$. Otherwise, there exists some $m\in \mathbb{N}$ such that $1\leq m<p$ and $s_\ell^ms_k\{u_i\}=\{u_i\}$ for all $1\leq i\leq p$. This implies that the numbers $(k-1)p+1,\ldots,kp$ are totally contained in some row of $\{u_i\}$ for all $1\leq i\leq p$. Therefore, by Lemma \ref{PModules}, $s_k$ fixes $\{u_i\}$ for all $1\leq i\leq p$, which is a contradiction. (ii) thus follows.

For (iii), as $K_\ell\times H_\ell\leq K(t)$, there exists some $g\in U$ such that $\langle g\rangle$ acts on $S$ transitively. Otherwise, $z\{u_j\}=\{u_j\}$ for all $z\in K_\ell\times H_\ell$ and $1\leq j\leq 2$. So $K_\ell\times H_\ell$ is $\sym{n}$-conjugate to a subgroup of $\sym{\lambda}$. We have $b=\mathrm{rank}(K_\ell\times H_\ell)\leq \mathrm{rank}(\sym{\lambda})=b-1$, which is absurd. For such an element $g$, we claim that $g\in \{s_{\ell,1},\ldots,s_{\ell,y_\ell}\}$. If $g=k_{i,1}$ or $k_{i,2}$ for some $1\leq i\leq\ell$, notice that $S=\{\{u_1\},\{u_2\}\}$ and $\langle k_{i,1}, k_{i,2}\rangle$ acts on $S$. Therefore, there exists some $e=(a,b)(c,d)\in\langle k_{i,1},k_{i,2}\rangle$ such that $e\{u_1\}=\{u_1\}$. By Lemma \ref{PModules}, this implies that $\{a,b\}\subseteq R_x(\{u_1\})$, $\{c,d\}\subseteq  R_y(\{u_1\})$ and $x\neq y$. Otherwise, if $a,b,c,d$ lie in a single row of $\{u_1\}$, then $g\{u_1\}=\{u_1\}$ by Lemma \ref{PModules}. This is a contradiction. As $g\{u_1\}=\{u_2\}$ and $e\in\langle k_{i,1},k_{i,2}\rangle$, we get that $\{a,b\}\subseteq R_y(\{u_2\})$ and $\{c,d\}\subseteq  R_x(\{u_2\})$. Therefore, for all $h\in U\setminus\{k_{i,1},k_{i,2}\}$, as $\{a,b\}\subseteq R_x(h\{u_1\})$ and $K(t)$ acts on $S$, $h\{u_1\}=\{u_1\}$. By Lemma \ref{PModules}, this forces that all the parts of $\lambda$ are even, which contradicts with the fact that $\overline{r_\lambda}=(1^2)$. The claim is shown. By this claim, $\ell\neq \frac{n}{4}$ and there exists some $s_{\ell,q}\in U$ such that $\langle s_{\ell,q}\rangle$ acts on $S$ transitively. One can write a proof as the one given in (ii) to show that $g\{u_j\}=\{u_j\}$ for all $g\in U\setminus\{s_{\ell,q}\}$ and $1\leq j\leq 2$. The proof is now complete.
\end{proof}
\begin{lem}\label{L;lemma2}
Let $p=2$, $\lambda\vdash n$, $\overline{r_\lambda}=(1^2)$ and
$t=\{u_1\}\wedge\{u_2\}\in\mathcal{T}(\lambda)_2^e$. Assume that $c_{\sym{n}}(W(t))=c_{\sym{n}}(\Lambda^2 M^\lambda)$.
\begin{enumerate}
\item [\em (i)] There exists an elementary abelian $2$-subgroup $E$ of $L(t)$ with $2$-rank $\frac{n}{2}$.
\item [\em (ii)] If $K_\ell\times H_\ell\leq L(t)$ for some $0\leq \ell\leq \lfloor\frac{n}{4}\rfloor$, then $\ell\neq\frac{n}{4}$ and there exists a unique $s_{\ell,q}\in \{s_{\ell,1},\ldots,s_{\ell,y_\ell}\}$ such that $\langle s_{\ell,q}\rangle$ acts on $\{\{u_1\},\{u_2\}\}$ transitively. Moreover, let $U=\bigcup_{i=1}^\ell\{k_{i,1},k_{i,2}\}\cup\{s_{\ell,1},\ldots,s_{\ell,y_\ell}\}$. Then $g\{u_j\}=\{u_j\}$ for all $g\in U\setminus\{s_{\ell,q}\}$ and $1\leq j\leq 2$.
\end{enumerate}
\end{lem}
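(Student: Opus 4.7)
The plan is to closely follow Lemma \ref{L;lemma}, exploiting the fact that since $p=2$, the one-dimensional $\F L(t)$-module $\F^t$ coincides with the trivial module, so $W(t) \cong (\F_{L(t)}){\uparrow^{\sym{n}}}$. By Lemma \ref{L;complexity} (iv), $c_{\sym{n}}(W(t)) = c_{L(t)}(\F) = \mathrm{rank}(L(t))$. For part (i), I would compute $c_{\sym{n}}(\Lambda^2 M^\lambda)$ using Corollary \ref{C;exteriorsquare} (ii): since $\overline{r_\lambda} = (1^2)$ forces $\ell(r_\lambda)=2$ and $\mathrm{rank}(\sym{\lambda}) = (n-2)/2$, one gets $c_{\sym{n}}(\Lambda^2 M^\lambda) = n/2$. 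The hypothesis then yields $\mathrm{rank}(L(t)) = n/2$, which is (i).

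For part (ii), I would first observe that $L(t)$ acts on the two-element set $\{\{u_1\},\{u_2\}\}$, giving a homomorphism $\phi \colon L(t) \to \mathbb{Z}/2$ whose kernel is $R(\{u_1\}) \cap R(\{u_2\})$. If $\phi$ were trivial on $K_\ell \times H_\ell$, then this subgroup would embed into $R(\{u_1\})$, which is $\sym{n}$-conjugate to $\sym{\lambda}$ of $2$-rank $n/2 - 1$; but $\mathrm{rank}(K_\ell \times H_\ell) = 2\ell + y_\ell = n/2$, absurd. Hence some element of $K_\ell \times H_\ell$ swaps $\{u_1\}$ and $\{u_2\}$. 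To rule out $k_{i,1}$ or $k_{i,2}$ as a swapping element I would repeat verbatim the row-tracking argument from the proof of Lemma \ref{L;lemma} (iii): the Klein-four group $\langle k_{i,1}, k_{i,2}\rangle$ admits a unique non-trivial element $e=(a,b)(c,d)$ with $e\{u_1\}=\{u_1\}$; Lemma \ref{PModules} places $\{a,b\}$ and $\{c,d\}$ in two distinct rows of $\{u_1\}$ (and, after transport by the swapping $k_{i,j}$, in the same two rows of $\{u_2\}$ with the row indices interchanged); then for every $h\in U\setminus\{k_{i,1},k_{i,2}\}$ the element $h$ fixes $\{a,b\}$ pointwise, so $h\{u_1\}$ must be the unique tabloid in $\{\{u_1\},\{u_2\}\}$ containing $\{a,b\}$ at the correct row index, namely $\{u_1\}$ itself. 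The elementary abelian subgroup $\langle U\setminus\{k_{i,1}\}\rangle$ therefore stabilises $\{u_1\}$, and its orbits on $\mathbf{n}$ all have even size (length four on the untouched Klein-four blocks and length two elsewhere), so Lemma \ref{PModules} forces every part of $\lambda$ to be even, contradicting $\overline{r_\lambda} = (1^2)$. Consequently the swap must be realised by some $s_{\ell,q}$, which in particular requires $y_\ell>0$, i.e.\ $\ell \neq n/4$.

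For uniqueness, if $s_{\ell,q_1}$ and $s_{\ell,q_2}$ both swapped, then their product would fix both $\{u_1\}$ and $\{u_2\}$, and by Lemma \ref{PModules} each of its two disjoint transpositions would lie inside a single row of $\{u_1\}$; but then each $s_{\ell,q_j}$ alone, acting by a single transposition within that row, would also fix $\{u_1\}$, contradicting the swapping assumption. The ``Moreover'' clause then follows immediately: no $k_{i,j}$ swaps (by the previous paragraph) and no $s_{\ell,q'}$ with $q'\neq q$ swaps (by uniqueness), so every other element of $U$ lies in $\ker\phi$ and fixes both $\{u_1\}$ and $\{u_2\}$. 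The main obstacle is the row-tracking bookkeeping in the $k_{i,j}$ ruling-out step, but it transfers verbatim from the proof of Lemma \ref{L;lemma} (iii); everything else is a routine rank count.
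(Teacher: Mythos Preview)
Your approach is correct and is exactly what the paper does: its proof simply says to mimic Lemma~\ref{L;lemma} (i) and (iii), using that $p=2$ forces $W(t)\cong(\F_{L(t)}){\uparrow^{\sym n}}$ and that Corollary~\ref{C;exteriorsquare} (ii) gives $c_{\sym n}(\Lambda^2 M^\lambda)=\tfrac{n}{2}$.

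One small slip in your sketch: after showing that every $h\in U\setminus\{k_{i,1},k_{i,2}\}$ fixes $\{u_1\}$, you pass to ``$\langle U\setminus\{k_{i,1}\}\rangle$ stabilises $\{u_1\}$'', but you have not shown that $k_{i,2}$ fixes $\{u_1\}$ --- only that the unique non-trivial $e\in\langle k_{i,1},k_{i,2}\rangle$ with $e\{u_1\}=\{u_1\}$ does, and $e$ may well be $k_{i,1}k_{i,2}$. The fix is immediate: work with the subgroup generated by $(U\setminus\{k_{i,1},k_{i,2}\})\cup\{e\}$, whose orbits on the $i$-th block are $\{a,b\}$ and $\{c,d\}$ (size two), so every orbit on $\mathbf n$ has even size and Lemma~\ref{PModules} still forces all parts of $\lambda$ to be even.
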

\begin{proof}
As $p=2$ and $\overline{r_\lambda}=(1^2)$, note that $W(t)\cong(\F_{L(t)}){\uparrow^{\sym{n}}}$ and $\mathrm{rank}(\sym{\lambda})=\frac{n}{2}-1$. By Corollary \ref{C;exteriorsquare} (ii), $c_{\sym{n}}(W(t))=c_{\sym{n}}(\Lambda^2M^\lambda)=\frac{n}{2}$. Therefore, (i) and (ii) are shown by mimicking the proofs given in (i) and (iii) of Lemma \ref{L;lemma} respectively.
\end{proof}

\begin{lem}\label{L;disjoint}
Let $\lambda\vdash n$ and $\overline{r_\lambda}=(p-1,1)$. Let $t=\{u_1\}\odot\cdots\odot\{u_p\}\in\mathcal{T}(\lambda)_p^s$ and $c_{\sym{n}}(V(t))=c_{\sym{n}}(S^pM^\lambda)$. Then $\{u_i\}\neq\{u_j\}$ for all $1\leq i\neq j\leq p$.
\end{lem}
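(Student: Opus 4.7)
The plan is to argue by contradiction. Suppose that some two of $\{u_1\}, \ldots, \{u_p\}$ coincide, and let $\{v_1\}, \ldots, \{v_m\}$ denote the distinct tabloids among them, with multiplicities $a_1, \ldots, a_m$ summing to $p$. Because at least one $a_i \geq 2$, we have $m \leq p-1 < p$. The hypothesis $\overline{r_\lambda} = (p-1,1)$ implies $|r_\lambda| = p$, so $p \mid n$ and $\mathrm{rank}(\sym{\lambda}) = (n-p)/p = n/p - 1$; Lemma \ref{L;symmetriccomplexity} (case $\nu_p(p)=1$ with $r+\nu_p(p)\leq\lfloor n/p\rfloor$) then yields $c_{\sym{n}}(S^p M^\lambda) = n/p$. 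Since $V(t) \cong (\F_{K(t)})\!\uparrow^{\sym{n}}$, Lemma \ref{L;complexity} (iv) gives $c_{\sym{n}}(V(t)) = \mathrm{rank}(K(t))$, so it will suffice to show $\mathrm{rank}(K(t)) < n/p$.

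The next step is to analyze the structure of $K(t)$. Every $g \in K(t)$ permutes the multiset $(\{u_1\}, \ldots, \{u_p\})$, which amounts to permuting the set $\{\{v_1\}, \ldots, \{v_m\}\}$ while preserving multiplicities. This furnishes a homomorphism $K(t) \to \sym{m}$ whose kernel is precisely $N := \bigcap_{i=1}^{m} R(\{v_i\})$. Because $m < p$, the order of $\sym{m}$ equals $m!$ and is coprime to $p$, so the image in $\sym{m}$ of any elementary abelian $p$-subgroup of $K(t)$ must be trivial. Hence every elementary abelian $p$-subgroup of $K(t)$ lies in $N$, which gives $\mathrm{rank}(K(t)) \leq \mathrm{rank}(N)$, reducing the problem to a bound on $\mathrm{rank}(N)$.

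The subgroup $N$ is the Young subgroup corresponding to the common refinement $\mathcal{R}$ of the row partitions of $\{v_1\}, \ldots, \{v_m\}$, so
\[
\mathrm{rank}(N) = \sum_{B \in \mathcal{R}} \lfloor |B|/p \rfloor \leq \sum_{B \in \mathcal{R}} |B|/p = n/p,
\]
with equality only when $p$ divides $|B|$ for every block $B$. If this were so, then each row of $\{v_1\}$, being a disjoint union of blocks of $\mathcal{R}$, would have size divisible by $p$, forcing every part of $\lambda$ to be divisible by $p$---contradicting $\overline{r_\lambda} = (p-1, 1)$. Therefore some block of $\mathcal{R}$ has size not divisible by $p$, and the inequality above is strict, giving $\mathrm{rank}(N) \leq n/p - 1$. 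Combining everything yields
\[
c_{\sym{n}}(V(t)) = \mathrm{rank}(K(t)) \leq n/p - 1 < n/p = c_{\sym{n}}(S^p M^\lambda),
\]
contradicting the hypothesis, so no such coincidence can occur.

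The main delicate step will be isolating the correct group-theoretic constraint supplied by $m < p$, namely that the homomorphism $K(t) \to \sym{m}$ is trivial on every Sylow $p$-subgroup; once this is in hand, both the numerical bound on $\mathrm{rank}(N)$ forced by $\overline{r_\lambda} = (p-1,1)$ and the complexity translation through $V(t) \cong (\F_{K(t)})\!\uparrow^{\sym{n}}$ proceed routinely.
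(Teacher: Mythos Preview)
Your proposal is correct and takes essentially the same approach as the paper: both argue that if fewer than $p$ distinct tabloids occur, then (since $m!$ is coprime to $p$) every Sylow $p$-subgroup of $K(t)$ fixes each tabloid, whence $\mathrm{rank}(K(t))\leq\mathrm{rank}(\sym{\lambda})=n/p-1<c_{\sym{n}}(S^pM^\lambda)$. The only difference is that you take a detour through the common refinement $\mathcal{R}$ to bound $\mathrm{rank}(N)$, whereas the paper simply notes $N\leq R(\{v_1\})$, which is $\sym{n}$-conjugate to $\sym{\lambda}$, giving $\mathrm{rank}(N)\leq\mathrm{rank}(\sym{\lambda})=n/p-1$ immediately.
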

\begin{proof}
Let $r=\mathrm{rank}(\sym{\lambda})$. Note that $K(t)$ acts on the set of all the mutually distinct $\lambda$-tabloids among $\{u_1\},\ldots,\{u_p\}$. If the desired assertion is false, a Sylow $p$-subgroup $P$ of $K(t)$ fixes $\{u_i\}$ for all $1\leq i\leq p$. So $P$ is $\sym{n}$-conjugate to a subgroup of $\sym{\lambda}$. As $\overline{r_\lambda}=(p-1,1)$ and $V(t)\cong(\F_{K(t)}){\uparrow^{\sym{n}}}$, by Lemmas \ref{L;complexity} (iv) and \ref{L;symmetriccomplexity}, $c_{\sym{n}}(V(t))\leq r$ and $c_{\sym{n}}(S^pM^\lambda)=r+1$, which contradicts with the fact $c_{\sym{n}}(V(t))=c_{\sym{n}}(S^pM^\lambda)$.
\end{proof}

\begin{lem}\label{L;p>2}
Let $p>2$, $\lambda\vdash n$ and $\overline{r_\lambda}=(p-1,1)$. If $t=\{u_1\}\odot\cdots\odot\{u_p\}\in\mathcal{T}(\lambda)_p^s$ and $c_{\sym{n}}(V(t))=c_{\sym{n}}(S^pM^\lambda)$, then $\phi(t)=q_\lambda$.
\end{lem}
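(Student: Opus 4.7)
The plan is to exploit the hypothesis $\overline{r_\lambda}=(p-1,1)$, which forces $\mathrm{rank}(\sym{\lambda})=b-1$ with $b=n/p$. By Lemma \ref{L;symmetriccomplexity} then $c_{\sym{n}}(S^pM^\lambda)=b$, so $K(t)$ must contain an elementary abelian $p$-subgroup of $p$-rank $b$. Since $p>2$, Lemma \ref{L;elementaryabelian}~(i) says every such subgroup is $\sym{n}$-conjugate to $E_b$, and a direct check confirms that $\phi$ is invariant under the diagonal action of $\sym{n}$ on $\mathcal{T}(\lambda)_p^s$: conjugating by $g$ relabels entries row by row and preserves the size of $\bigcap_jR_i(\{u_j\})$. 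Hence I would replace $t$ by a suitable $\sym{n}$-translate and assume outright that $E_b\leq K(t)$. Lemma \ref{L;disjoint} then allows us to assume the $\{u_j\}$ are pairwise distinct, whereupon Lemma \ref{L;lemma}~(ii) produces a unique index $k$ such that $\langle s_k\rangle$ permutes $\{\{u_1\},\ldots,\{u_p\}\}$ as a single $p$-cycle while each $s_i$ with $i\neq k$ stabilises every $\{u_j\}$.

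Write $O_i=\{(i-1)p+1,\ldots,ip\}$ and $A=O_k$, so $\mathbf{n}=\bigsqcup_iO_i$. The first step is to show that for each $i\neq k$, the orbit $O_i$ lies in a common row $m_i$ of every $\{u_j\}$. Since $s_i$ fixes $\{u_1\}$, Lemma \ref{PModules} supplies $m_i$ with $O_i\subseteq R_{m_i}(\{u_1\})$; as $\langle s_k\rangle$ acts transitively on $\{\{u_j\}\}$ and $s_k$ fixes $O_i$ pointwise (the cycles $s_k$, $s_i$ act on disjoint subsets of $\mathbf{n}$), applying an appropriate power of $s_k$ transfers this containment to every $\{u_j\}$. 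This already places $(b-1)p=n-p$ entries inside $\bigcup_m\bigcap_jR_m(\{u_j\})$.

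The crucial step is to analyse the remaining orbit $A$. I claim that $A\cap\bigcap_jR_m(\{u_j\})$ is $s_k$-stable for every row $m$: if $x$ lies in it, then for each $j$, $s_k(x)\in R_m(s_k\{u_j\})$, and since $s_k$ merely permutes the set $\{\{u_1\},\ldots,\{u_p\}\}$ we get $s_k(x)\in\bigcap_jR_m(\{u_j\})$; together with $s_k(A)=A$ the claim follows. Because $s_k$ acts on $A$ as a $p$-cycle, the intersection is either empty or equals $A$. The latter case would force $A\subseteq R_m(\{u_1\})$, making every row of $\{u_1\}$ an $s_k$-stable union of $O_i$'s (together with $A$ in row $m$) and thereby $s_k\{u_1\}=\{u_1\}$, contradicting transitivity. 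Hence $A$ is disjoint from $\bigcap_jR_m(\{u_j\})$ for every $m$.

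The conclusion is then pure counting. For each row $m$, $R_m(\{u_1\})$ is a union of $\langle s_i:i\neq k\rangle$-orbits on $\mathbf{n}$, hence has the shape $\bigsqcup_{i\in I_m}O_i\sqcup B_m$ with $B_m\subsetneq A$ (strict by the previous step). Thus $\lambda_m=p|I_m|+|B_m|$ with $0\leq|B_m|<p$, and uniqueness of division by $p$ forces $|I_m|=q_m$ and $|B_m|=r_m$. Combining the first two steps with the third yields $\bigcap_jR_m(\{u_j\})=\bigsqcup_{i\in I_m}O_i$, of size $pq_m$, so $\phi(t)=q_\lambda$. The main obstacle is the $s_k$-stability dichotomy in step three, where one must argue carefully that the preservation of $\bigcap_jR_m(\{u_j\})$ by $s_k$ follows purely from its action as a permutation of the multiset of factors of $t$.
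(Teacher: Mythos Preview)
Your proposal is correct and follows essentially the same route as the paper: reduce to $E_b\leq K(t)$ via $\sym{n}$-invariance of $\phi$, invoke Lemma~\ref{L;disjoint} and Lemma~\ref{L;lemma}~(ii) to isolate the unique transitive generator $s_k$, pin each $O_i$ with $i\neq k$ into a fixed row of every $\{u_j\}$, and then argue that no element of $A=O_k$ survives in any row-wise intersection. The one noteworthy difference is your handling of the last point: the paper uses the explicit shape $\overline{r_\lambda}=(p-1,1)$ to see directly that $A$ splits $(p-1,1)$ across the two rows $x,y$ of $\{u_1\}$ with $\lambda_x\equiv p-1$, $\lambda_y\equiv 1\pmod p$, and then observes that transitivity of $\langle s_k\rangle$ on $S$ forces each $a\in A$ out of $\bigcap_jR_x(\{u_j\})$ and $\bigcap_jR_y(\{u_j\})$. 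Your $s_k$-stability dichotomy for $A\cap\bigcap_jR_m(\{u_j\})$ reaches the same conclusion without appealing to the specific residues, which is a little cleaner and would in fact go through for any nonzero $\overline{r_\lambda}$; the paper's version is shorter because the hypothesis already names the two relevant rows.
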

\begin{proof}
Let $\lambda=(\lambda_1,\ldots,\lambda_\ell)$ and $b=\frac{n}{p}$. By Lemma \ref{L;disjoint}, notice that $\{u_1\},\ldots,\{u_p\}$ are mutually distinct. Let $S=\{\{u_1\},\ldots,\{u_p\}\}$. By Lemma \ref{L;lemma} (i), there exists an elementary abelian $p$-subgroup $E$ of $K(t)$ with $p$-rank $b$. As $p>2$, by Lemma \ref{L;elementaryabelian} (i), $E$ is $\sym{n}$-conjugate to $E_b$. As $\phi(t)=\phi(gt)$ for all $g\in \sym{n}$, we may assume further that $E=E_b$.  By Lemmas \ref{L;lemma} (ii) and \ref{PModules}, there exists some $s_k$ such that $1\leq k\leq b$ and $\langle s_k\rangle$ acts transitively on $S$. Moreover, for all $1\leq i\neq k\leq b$, $s_i$ fixes $\{u_1\}$ and the numbers $(i-1)p+1,\ldots,ip$ are totally contained in some row of $\{u_1\}$. As $\overline{r_\lambda}=(p-1,1)$, also note that $p-1$ members of $\{(k-1)p+1,\ldots, kp\}$ are in $R_x(\{u_1\})$ and the remaining member of $\{(k-1)p+1,\ldots, kp\}$ is in $R_y(\{u_1\})$, where  $\lambda_x\equiv p-1\pmod p$ and $\lambda_y\equiv 1\pmod p$.
Moreover, for any $a\in\{(k-1)p+1,\ldots, kp\}$, we have $a\notin\bigcap_{i=1}^pR_x(\{u_i\})$ and $a\notin\bigcap_{i=1}^pR_y(\{u_i\})$. As $\langle s_k\rangle$ acts transitively on $S$, by the above discussion and the definition of $\phi(t)$, we have $\phi(t)=q_\lambda$.
\end{proof}
\begin{lem}\label{L;p=2}
Let $p=2$, $\lambda\vdash n$ and $\overline{r_\lambda}=(1^2)$. If $t=\{u_1\}\odot\{u_2\}\in\mathcal{T}(\lambda)_2^s$ and $c_{\sym{n}}(V(t))=c_{\sym{n}}(S^2M^\lambda)$, then $\phi(t)=q_\lambda$.
\end{lem}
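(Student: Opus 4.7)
The plan is to mirror the argument of Lemma \ref{L;p>2}, using Lemma \ref{L;elementaryabelian} (ii) in place of (i) and Lemma \ref{L;lemma} (iii) in place of (ii). By Lemma \ref{L;disjoint} we have $\{u_1\}\neq\{u_2\}$, so $S=\{\{u_1\},\{u_2\}\}$. Lemma \ref{L;lemma} (i) yields an elementary abelian $2$-subgroup $E\leq K(t)$ of rank $b=n/2$. Since $\phi(t)=\phi(gt)$ for every $g\in\sym{n}$ and $K(gt)=gK(t)g^{-1}$, I would replace $t$ by a suitable $\sym{n}$-translate and thereby assume $E=K_\ell\times H_\ell$ for some $0\leq \ell\leq \lfloor n/4\rfloor$ by Lemma \ref{L;elementaryabelian} (ii). Lemma \ref{L;lemma} (iii) then gives $\ell\neq n/4$ together with a unique $s_{\ell,q}\in\{s_{\ell,1},\ldots,s_{\ell,y_\ell}\}$ such that $\langle s_{\ell,q}\rangle$ acts transitively on $S$, while every element of $U\setminus\{s_{\ell,q}\}$ (with $U$ as in Lemma \ref{L;lemma} (iii)) fixes both $\{u_1\}$ and $\{u_2\}$.

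Next I would analyse how $\mathbf{n}$ distributes across the rows of $\{u_1\}$ and $\{u_2\}$. For each $1\leq i\leq\ell$, the pair $k_{i,1},k_{i,2}$ fixes $\{u_1\}$ and generates a Klein four group acting transitively on $O_i=\{4i-3,4i-2,4i-1,4i\}$, so Lemma \ref{PModules} forces $O_i$ to lie in a single row $R_{a_i}(\{u_1\})$ of $\{u_1\}$. Analogously, for every $j\neq q$, the pair $\{4\ell+2j-1,4\ell+2j\}$ lies in a single row $R_{c_j}(\{u_1\})$. Since $s_{\ell,q}$ fixes every point of $O_i$ and of each such pair while sending $\{u_1\}$ to $\{u_2\}$, the identity $s_{\ell,q}(R_{a_i}(\{u_1\}))=R_{a_i}(\{u_2\})$ forces these same orbits to occupy rows $R_{a_i}(\{u_2\})$ and $R_{c_j}(\{u_2\})$ of $\{u_2\}$ with identical row indices.

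Finally, the two leftover numbers $4\ell+2q-1$ and $4\ell+2q$ are swapped by $s_{\ell,q}$; if they lay in a common row of $\{u_1\}$ then $s_{\ell,q}\{u_1\}=\{u_1\}$, contradicting the transitivity of $\langle s_{\ell,q}\rangle$ on $S$. So they sit in two distinct rows $R_x(\{u_1\})$ and $R_y(\{u_1\})$. A parity count then identifies $\{x,y\}$: any row without a leftover number has length equal to a sum of $4$'s and $2$'s, hence even, while the two rows $x$ and $y$ have odd length, and the hypothesis $\overline{r_\lambda}=(1^2)$ provides precisely two odd parts. The identity $s_{\ell,q}\{u_1\}=\{u_2\}$ interchanges the two singleton orbits between rows $R_x$ and $R_y$ of $\{u_2\}$, giving $n_{t,i}=\lambda_i=2q_i$ for $i\neq x,y$ and $n_{t,x}=\lambda_x-1=2q_x$, $n_{t,y}=\lambda_y-1=2q_y$. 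Hence $\phi(t)=q_\lambda$, as desired. The main subtlety is verifying that the size-$4$ and size-$2$ orbits land in the \emph{same} row indices of $\{u_1\}$ and $\{u_2\}$; this is exactly where the combination \textquotedblleft $s_{\ell,q}$ fixes these points pointwise but swaps $\{u_1\},\{u_2\}$\textquotedblright\ pays off.
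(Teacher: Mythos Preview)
Your proposal is correct and follows essentially the same route as the paper's proof: reduce to $E=K_\ell\times H_\ell$ via Lemma \ref{L;elementaryabelian} (ii), invoke Lemma \ref{L;lemma} (iii) to isolate the unique $s_{\ell,q}$, and then use Lemma \ref{PModules} to show that every $\langle g\rangle$-orbit for $g\in U\setminus\{s_{\ell,q}\}$ lies in a single row of $\{u_1\}$ (and hence of $\{u_2\}$), forcing the two points moved by $s_{\ell,q}$ into the two odd-length rows. Your parity count and the explicit row-index matching via $s_{\ell,q}(R_j(\{u_1\}))=R_j(\{u_2\})$ spell out a little more detail than the paper does, but the argument is the same.
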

\begin{proof}
Let $\lambda=(\lambda_1,\ldots,\lambda_\ell)$, $b=\frac{n}{2}$ and $S=\{\{u_1\},\{u_2\}\}$. By Lemma \ref{L;disjoint}, we have $\{u_1\}\neq\{u_2\}$. By Lemma \ref{L;lemma} (i), there exists an elementary abelian $2$-subgroup $E$ of $K(t)$ with $2$-rank $b$. By Lemma \ref{L;elementaryabelian} (ii), $E$ is $\sym{n}$-conjugate to $K_c\times H_c$ for some $0\leq c\leq\lfloor \frac{b}{2}\rfloor$. As $\phi(t)=\phi(gt)$ for all $g\in\sym{n}$, we may assume that $E=K_c\times H_c$. By Lemmas \ref{L;lemma} (iii) and \ref{PModules}, there exists some $s_{c,q}$ such that $1\leq q\leq y_c$ and $\langle s_{c,q}\rangle$ acts transitively on $S$. Moreover, let $U=\bigcup_{i=1}^c\{k_{i,1},k_{i,2}\}\cup\{s_{c,1},\ldots,s_{c,y_c}\}$. For all $g\in U\setminus\{s_{c,q}\}$, $g$ fixes $\{u_1\}$ and each orbit of $\mathrm{Supp}(g)$ under the action of $\langle g\rangle$ is totally contained in some row of $\{u_1\}$, where $\mathrm{Supp}(g)=\{i\in \mathbf{n}: g(i)\neq i\}$. Since $\overline{r_\lambda}=(1^2)$, note that $R_x(\{u_1\})$ contains a member of $\{4c+2q-1, 4c+2q\}$ and $R_y(\{u_1\})$ contains the other member of $\{4c+2q-1, 4c+2q\}$, where $2\nmid\lambda_x$ and $2\nmid\lambda_y$. For any $a\in\{4c+2q-1,4c+2q\}$, note that $a\not\in (\bigcap_{i=1}^2R_x(\{u_i\}))\cup(\bigcap_{i=1}^2R_y(\{u_i\}))$. As $s_{c,q}\{u_1\}=\{u_2\}$, by the above discussion and the definition of $\phi(t)$, $\phi(t)=q_\lambda$.
\end{proof}
\begin{cor}\label{C;decomposition}
Let $\lambda\vdash n$ and $\overline{r_\lambda}=(p-1,1)$. For some $z\in\mathbb{N}$, there exists a decomposition
$$ S^pM^\lambda=V(t_{\lambda,p})\oplus\bigoplus_{i=1}^zV(t_i),$$ where $t_1,\ldots, t_z\in \mathcal{T}(\lambda)_p^s$, $V(t_{\lambda,p})\cong M^\mu$, $\mu=q_\lambda\cup(p)$ and $c_{\sym{n}}(V(t_i))<c_{\sym{n}}(S^pM^\lambda)$ for all $1\leq i\leq z$.
\end{cor}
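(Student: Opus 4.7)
The plan is to assemble the decomposition (5.1) with Lemmas \ref{L;stabilizer}, \ref{L;orbit}, \ref{L;p>2} and \ref{L;p=2}; essentially all the work has been done, and what remains is bookkeeping. First I would invoke (5.1) to write
\[
S^pM^\lambda=\bigoplus_{i=1}^{x}V(t_i),
\]
where $t_1,\ldots,t_x\in\mathcal{T}(\lambda)_p^s$ form a complete system of orbit representatives for the $\sym{n}$-action on $\mathcal{T}(\lambda)_p^s$. Since $\mathcal{O}^s(t_{\lambda,p})$ is one of these orbits, I may arrange $t_1=t_{\lambda,p}$ and let $z=x-1$.

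Next I would identify $V(t_{\lambda,p})$ with $M^\mu$. By Lemma \ref{L;stabilizer}, $K(t_{\lambda,p})=\mathfrak{S}_{\lambda,p}$. The hypothesis $\overline{r_\lambda}=(p-1,1)$ forces $p\mid n$, $d_{\lambda,p}=1$, and $r_{\lambda,p}$ is the zero composition, so $\lambda^p=q_\lambda$ and $\lambda^p\cup(p^{d_{\lambda,p}})=q_\lambda\cup(p)=\mu$. By Notation \ref{N;notation}(iv), $\mathfrak{S}_{\lambda,p}$ is $\sym{n}$-conjugate to $\sym{\mu}$, so
\[
V(t_{\lambda,p})\cong(\F_{K(t_{\lambda,p})}){\uparrow^{\sym{n}}}\cong(\F_{\sym{\mu}}){\uparrow^{\sym{n}}}=M^\mu.
\]

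Finally, for the complexity statement, fix $i\geq 2$ and suppose toward a contradiction that $c_{\sym{n}}(V(t_i))=c_{\sym{n}}(S^pM^\lambda)$; Lemma \ref{L;complexity}(i) always guarantees $\leq$. If $p>2$, Lemma \ref{L;p>2} gives $\phi(t_i)=q_\lambda$; if $p=2$, then $\overline{r_\lambda}=(1^2)$ and Lemma \ref{L;p=2} likewise gives $\phi(t_i)=q_\lambda$. In either case Lemma \ref{L;orbit} forces $t_i\in\mathcal{O}^s(t_{\lambda,p})=\mathcal{O}^s(t_1)$, contradicting the fact that $t_1,\ldots,t_x$ represent distinct orbits. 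Hence $c_{\sym{n}}(V(t_i))<c_{\sym{n}}(S^pM^\lambda)$ for all $1\leq i\leq z$, completing the proof.

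The only potential subtlety is making sure the two cases $p>2$ and $p=2$ are both covered by the preceding $\phi$-rigidity lemmas; since $(p-1,1)=(1^2)$ when $p=2$, this is automatic and no further case analysis is needed.
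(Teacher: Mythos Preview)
Your proof is correct and follows essentially the same approach as the paper: both invoke the decomposition (5.1), use Lemma \ref{L;stabilizer} together with the description of $\mathfrak{S}_{\lambda,p}$ to identify $V(t_{\lambda,p})\cong M^\mu$, and then combine Lemma \ref{L;orbit} with Lemmas \ref{L;p>2} and \ref{L;p=2} to rule out any other summand of maximal complexity. Your explicit verification that $\lambda^p=q_\lambda$ (hence $\mu=q_\lambda\cup(p)$) is a bit more detailed than the paper's, but the argument is otherwise identical.
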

\begin{proof}
By $(5.1)$, for some $z\in \mathbb{N}$, recall that $S^pM^\lambda= V(t_{\lambda,p})\oplus\bigoplus_{i=1}^zV(t_i)$, where $t_1,\ldots,t_z\in\mathcal{T}(\lambda)_p^s$.
By Lemma \ref{L;stabilizer} and the definition of $\mathfrak{S}_{\lambda,p}$, note that $V(t_{\lambda,p})\cong M^\mu$, where $\mu=q_\lambda\cup(p)$. For all $1\leq i\leq z$, as $V(t_i)=\langle \mathcal{O}^s(t_i)\rangle_\F$ and $\mathcal{O}^s(t_{\lambda,p})\cap\mathcal{O}^s(t_i)=\varnothing$, by Lemmas \ref{L;complexity} (i), \ref{L;orbit}, \ref{L;p>2} and \ref{L;p=2}, $c_{\sym{n}}(V(t_i))<c_{\sym{n}}(S^pM^\lambda)$ for all $1\leq i\leq z$.
\end{proof}
\begin{cor}\label{C;decomposition2}
Let $p=2$, $\lambda\vdash n$ and $\overline{r_\lambda}=(1^2)$. For some $z\in\mathbb{N}$, there exists a decomposition
$$ \Lambda^2M^\lambda=\bigoplus_{i=1}^zW(t_i),$$ where $t_1,\ldots, t_z\in \mathcal{T}(\lambda)_2^e$, $W(t_1)\cong M^\mu$, $\mu=q_\lambda\cup(2)$ and $c_{\sym{n}}(W(t_i))<c_{\sym{n}}(\Lambda^2M^\lambda)$ for all $1<i\leq z$.
\end{cor}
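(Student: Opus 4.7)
Since $\overline{r_\lambda}=(1^2)$, there exist indices $1\le u<v\le \ell(\lambda)$ with $\lambda_u,\lambda_v$ both odd, so that $\lambda_u-1=2q_u$, $\lambda_v-1=2q_v$, and $\lambda_k=2q_k$ for all $k\neq u,v$. Choose $a\in R_u(\{t^\lambda\})$ and $b\in R_v(\{t^\lambda\})$, put $\{t\}=(a,b)\{t^\lambda\}\neq\{t^\lambda\}$, and let $t_1=\{t^\lambda\}\wedge\{t\}\in\mathcal{T}(\lambda)_2^e$. The first step is to invoke Lemma \ref{L;preparation} and $(6.1)$ to read off $L(t_1)=(R(\{t^\lambda\})\cap R(\{t\}))\times\langle(a,b)\rangle$ as a Young subgroup $\sym{n}$-conjugate to $\sym{\mu}$ with $\mu=q_\lambda\cup(2)$. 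Because $p=2$ forces the one-dimensional $\F L(t_1)$-module $\F^{t_1}$ to be trivial, this yields $W(t_1)\cong (\F_{L(t_1)}){\uparrow^{\sym{n}}}\cong M^\mu$, the declared distinguished summand.

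Using $(5.2)$, write $\Lambda^2M^\lambda=\bigoplus_{i=1}^zW(t_i)$ and reorder the summands so that this $t_1$ is first. By Corollary \ref{C;exteriorsquare}(ii), $c_{\sym{n}}(\Lambda^2M^\lambda)=\mathrm{rank}(\sym{\lambda})+1=\tfrac{n}{2}$, and Lemma \ref{HemmerNakano} applied to $M^\mu$ gives $c_{\sym{n}}(W(t_1))=\mathrm{rank}(\sym{\mu})=\tfrac{n}{2}$, which matches the global maximum. The task then reduces to proving $c_{\sym{n}}(W(t_i))<\tfrac{n}{2}$ for every $i>1$.

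For this I would import the exterior counterpart of $\phi$: for $t=\{u_1\}\wedge\{u_2\}\in\mathcal{T}(\lambda)_2^e$ set $\phi^e(t)=(|R_1(\{u_1\})\cap R_1(\{u_2\})|,\ldots,|R_{\ell(\lambda)}(\{u_1\})\cap R_{\ell(\lambda)}(\{u_2\})|)$, which is well-defined on $\mathcal{T}(\lambda)_2^e$ and constant along $\mathcal{O}^e(t)$. A direct unwinding of the definition of $t_1$ yields $\phi^e(t_1)=q_\lambda$. Next, mimicking the proof of Lemma \ref{L;p=2} with Lemma \ref{L;lemma2} replacing Lemma \ref{L;lemma}, I would show that any $t$ with $c_{\sym{n}}(W(t))=\tfrac{n}{2}$ satisfies $\phi^e(t)=q_\lambda$: an elementary abelian $2$-subgroup of $L(t)$ of rank $\tfrac{n}{2}$ is $\sym{n}$-conjugate to some $K_c\times H_c$ by Lemma \ref{L;elementaryabelian}(ii), and the unique-transposition/fixing pattern of Lemma \ref{L;lemma2}(ii) pins down the row-intersection profile as precisely $q_\lambda$. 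A size count parallel to Lemma \ref{L;orbit}, comparing $[\sym{n}:L(t_1)]$ with the number of pairs of $\lambda$-tabloids realising $\phi^e=q_\lambda$, then identifies $\mathcal{O}^e(t_1)$ with $\{t\in\mathcal{T}(\lambda)_2^e:\phi^e(t)=q_\lambda\}$. Since the orbits $\mathcal{O}^e(t_i)$ in the decomposition are pairwise disjoint, every $i>1$ gives $\phi^e(t_i)\neq q_\lambda$, whence $c_{\sym{n}}(W(t_i))<\tfrac{n}{2}$.

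The main obstacle is transplanting the analysis of Lemma \ref{L;p=2} into the exterior setting. Fortunately the transplant is smooth because $p=2$ trivialises the sign module and turns each $W(t)$ into the transitive permutation module $(\F_{L(t)}){\uparrow^{\sym{n}}}$, while Lemma \ref{L;lemma2} was set up precisely as the exterior analogue of Lemma \ref{L;lemma}; the crucial combinatorial content -- a single generator of $K_c\times H_c$ swaps $\{u_1\}$ with $\{u_2\}$ while all other generators fix both tabloids -- is already in place, so the row-intersection analysis runs in parallel to the symmetric case without any genuinely new ingredient.
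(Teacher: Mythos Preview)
Your proposal is correct and follows essentially the same approach as the paper: both hinge on Lemma~\ref{L;lemma2} (together with Lemma~\ref{L;elementaryabelian}(ii)) to show that any $W(t)$ of maximal complexity arises from two tabloids swapped by a single transposition whose support meets the two odd rows, forcing $W(t)\cong M^{q_\lambda\cup(2)}$. The only presentational difference is that you mirror the symmetric-power development by introducing an invariant $\phi^e$ and a size count in the style of Lemma~\ref{L;orbit}, whereas the paper first picks $t_1$ to have maximal complexity, uses Lemma~\ref{L;subgroup4} to identify $W(t_1)\cong M^\mu$, and then argues directly that any other $t_x$ of maximal complexity lies in $\mathcal{O}^e(t_1)$; your route is slightly longer but more parallel to Corollary~\ref{C;decomposition}, while the paper's shortcut avoids the counting step altogether.
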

\begin{proof}
Let $\lambda=(\lambda_1,\ldots,\lambda_\ell)$. By $(5.2)$, for some $z\in \mathbb{N}$, we have $\Lambda^2M^\lambda=\bigoplus_{i=1}^zW(t_i)$, where $t_1,\ldots,t_z\in \mathcal{T}(\lambda)_2^e$. By Lemma \ref{L;complexity} (i), we can assume that $t_1=\{u\}\wedge\{v\}$ and $c_{\sym{n}}(W(t_1))=c_{\sym{n}}(\Lambda^2M^\lambda)$. By Lemmas \ref{L;lemma2} (i), (ii) and \ref{L;elementaryabelian} (ii), there exists some $(a,b)\in L(t_1)$ such that $(a,b)\{u\}=\{v\}$. Moreover, as $\overline{r_\lambda}=(1^2)$, a member of $\{a,b\}$ is in $R_c(\{u\})$ and the other member of $\{a,b\}$ is in $R_d(\{u\})$, where $1\leq c\neq d\leq \ell$, $2\nmid\lambda_c$ and $2\nmid\lambda_d$. As $p=2$, by Lemma \ref{L;subgroup4}, $W(t_1)\cong(\F_{L(t_1)}){\uparrow^{\sym{n}}}\cong M^\mu$, where $\mu=q_\lambda\cup (2)$. If we have $c_{\sym{n}}(W(t_x))=c_{\sym{n}}(\Lambda^2M^\lambda)$ for some $1< x\leq z$, note that $\O^e(t_x)\cap\O^e(t_1)=\varnothing$. Let $t_x=\{\tilde{u}\}\wedge\{\tilde{v}\}$. By Lemmas \ref{L;lemma2} (i), (ii) and \ref{L;elementaryabelian} (ii) again, there exists some $(j,k)\in L(t_x)$ such that $(j,k)\{\tilde{u}\}=\{\tilde{v}\}$. Moreover, as $\overline{r_\lambda}=(1^2)$, a member of $\{j,k\}$ is in $R_c(\{\tilde{u}\})$ and the other member of $\{j,k\}$ is in $R_d(\{\tilde{u}\})$. It is easy to note that there exists some $g\in\sym{n}$ such that $gt_1=t_x$, which contradicts with the fact $\O^e(t_x)\cap\O^e(t_1)=\varnothing$. Therefore, by Lemma \ref{L;complexity} (i), $c_{\sym{n}}(W(t_i))<c_{\sym{n}}(\Lambda^2M^\lambda)$ for all $1<i\leq z$.
The proof is now complete.
\end{proof}
Theorem \ref{T;C} is proved by Lemma \ref{L;complexity} (i), Corollaries \ref{C;decomposition}, \ref{C;decomposition2} and Theorem \ref{T;Donkin}. Theorems \ref{T;B} and \ref{T;C} motivate us to ask the following question.
\begin{ques*}
Let $\lambda\vdash n$ and $1<a\in \mathbb{N}$. For any $\mu\vdash n$ satisfying $Y^\mu\mid S^aM^\lambda$ and $c_{\sym{n}}(Y^\mu)=c_{\sym{n}}(S^aM^\lambda)$, can one determine the partition $\mu$ explicitly ? Similarly, if $\lambda\neq(n)$, for any $\mu\vdash n$ satisfying $Y^\mu\mid \Lambda^2M^\lambda$ and $c_{\sym{n}}(Y^\mu)=c_{\sym{n}}(\Lambda^2M^\lambda)$, can one determine the partition $\mu$ explicitly ?
\end{ques*}
\section{A result of some $p$-subgroups of $\sym{n}$}
The main result of this section is Theorem \ref{T;conjugacy}. This result is important for the proof of Theorem \ref{T;D}. Given $m\in \mathbb{N}$, let $\mathcal{M}_m$ be the set of all the $(m\times m)$-matrices over $\mathbb{N}_0$. We fix the required notation as follows.
\begin{nota}\label{N;notation1}
Let $1<m\in \mathbb{N}$ and $\mathcal{D}_m=\{M\in \mathcal{M}_m: M\ \text{is a diagonal matrix}\}$.
\begin{enumerate}[(i)]
\item Let $S\subseteq \mathbf{n}$ and $P_S$ be a fixed Sylow $p$-subgroup of $\sym{S}$. For any $g\in \sym{n}$, set $\mathrm{Supp}(g)=\{i\in \mathbf{n}:g(i)\neq i\}$. So $g=1$ if and only if $\mathrm{Supp}(g)=\varnothing$. Set $g\varnothing=\varnothing$ and put $gT=\{g(i):i\in T\}$ for any non-empty subset $T$ of $\mathbf{n}$.
\item Let $M=(a_{i,j})\in\mathcal{M}_m$. For all $1\leq i,j\leq m$, write $a_{i,j}=\sum_{k\geq 0}p^ka_{i,j,k}$, where $a_{i,j,k}\in \mathbb{N}_0$ and $a_{i,j,k}<p$ for all $k\geq0$. Define
    \begin{align*}
    &D(M)=(\sum_{1\leq i\leq m}a_{i,i,0},\cdots, \sum_{1\leq i\leq m}a_{i,i,k},\cdots),\\ &U(M)=(\sum_{1\leq i<j\leq m}a_{i,j,0},\cdots, \sum_{1\leq i<j\leq m}a_{i,j,k},\cdots),\\
    &T(M)=(\sum_{1\leq i,j\leq m}a_{i,j,0},\cdots, \sum_{1\leq i,j\leq m}a_{i,j,k},\cdots).
    \end{align*}
Let $M^t$ be the transpose of $M$. If $M=M^t$, then $T(M)=D(M)+2U(M)$, where the computation is the usual computation of sequences. To write the defined sequences down explicitly, we shall end these sequences with their last non-zero entries and use $(0)$ to denote an infinite sequence of zeros.
\item Define $\mathcal{M}_{m,n}=\{(a_{i,j})\in\mathcal{M}_m:
    \sum_{i=1}^m\sum_{j=1}^ma_{i,j}=n\}\setminus \mathcal{D}_m$. For any given $M=(b_{i,j})\in \mathcal{M}_{m,n}$, we assign a subset $s_{i,j,M}$ of $\mathbf{n}$ to $b_{i,j}$ for any $1\leq i, j\leq m$. In this assignment, for all $1\leq i,j,u,v\leq m$, we require that $|s_{i,j,M}|=b_{i,j}$ and $s_{i,j,M}\cap s_{u,v,M}=\varnothing$ if $(i,j)\neq(u,v)$. By this assignment, we define
   \begin{align}
    P_M=\begin{cases} \displaystyle\prod_{1\leq i,j\leq m}P_{s_{i,j,M}}, & \text{if}\ M\neq M^t\ \text{or}\ p>2,\\
    \displaystyle (\!\prod_{1\leq k\leq m}P_{s_{k,k,M}}\times \!\!\!\!\prod_{1\leq i<j\leq m}\!\!\!\!\!(P_{s_{i,j,M}}\!\times\!P_{s_{i,j,M}}^{e_M}))\rtimes\langle e_M\rangle, & \text{otherwise},
    \end{cases}
    \end{align}
 where $e_M$ is a fixed involution of $\sym{n}$ and we have $\mathrm{Supp}(e_M)=\mathbf{n}\setminus \bigcup_{k=1}^ms_{k,k,M}$, $P_{s_{i,j,M}}^{e_M}=e_{M}P_{s_{i,j,M}}e_{M}$ and $e_{M}s_{i,j,M}=s_{j,i,M}$ for all $1\leq i,j\leq m$. For any $1\leq i<j\leq m$, note that $P_{s_{i,j,M}}^{e_M}$ is $\sym{s_{j,i,M}}$-conjugate to $P_{s_{j,i,M}}$. In particular, for any $1\leq i<j\leq m$, $|P_{s_{i,j,M}}^{e_M}|=|P_{s_{j,i,M}}|$. A $p$-subgroup $P$ of $\sym{n}$ is called a Young $p$-subgroup of $\sym{n}$ if $P$ is $\sym{n}$-conjugate to a Sylow $p$-subgroup of some Young subgroup of $\sym{n}$. Let $\mathcal{Y}_n$ be the set of all the Young $p$-subgroups of $\sym{n}$.
\end{enumerate}
\end{nota}
\begin{eg}\label{E;example2}
An example illustrates most of the definitions in Notation \ref{N;notation1}. Let $p=2$ and $M_1$, $M_2\in \mathcal{M}_{3,10}$, where
\begin{equation*}
M_1=\begin{pmatrix}
2 & 1 & 2\\
1 & 2 & 0\\
2 & 0 & 0\\
\end{pmatrix}\ \text{and}\
M_2=\begin{pmatrix}
0 & 2 & 2\\
2 & 0 & 1\\
2 & 1 & 0\\
\end{pmatrix}.
\end{equation*}
Note that $D(M_1)=(0,2)\neq (0)=D(M_2)$, $U(M_1)=(1^2)\neq(1,2)=U(M_2)$ and $T(M_1)=(2,4)=T(M_2)$.
Moreover, by $(8.1)$, observe that $P_{M_1}$ is $\sym{10}$-conjugate to $\langle (1,2), (3,4), (5,6), (7,8), (1,5)(2,6)(9,10)\rangle$. By $(8.1)$ again, we also observe that $P_{M_2}$ is $\sym{10}$-conjugate to $\langle (1,2), (3,4), (5,6), (7,8), (1,3)(2,4)(5,7)(6,8)(9,10)\rangle$.
\end{eg}
Let $\varnothing\neq S\subseteq \mathbf{n}$. We recall some facts of the construction of a Sylow $p$-subgroup of $\sym{S}$. Let $i\in\mathbb{N}_0$ and $P_i$ be a Sylow $p$-subgroup of $\sym{p^i}$. If $|S|=\sum_{j\geq 0}p^jn_j$, where $n_j\in\mathbb{N}_0$ and $n_j<p$ for all $j\geq 0$, then $P_S\cong\prod_{j\geq 0}(P_j)^{n_j}$. If $p=2$ and $i>0$, recall that $P_i=(P_{i-1,1}\times P_{i-1,2})\rtimes\langle w\rangle$, where the groups $P_{i-1,1}$ and $P_{i-1,2}$ are $\sym{2^i}$-conjugate to $P_{i-1}$ and $wP_{i-1,1}w=P_{i-1,2}$ for the involution $w$. Also recall that $\mathbf{n}/P$ is the set of orbits of $\mathbf{n}$ under the action of a $p$-subgroup $P$ of $\sym{n}$. To prove the main result of this section, we need some lemmas as a preparation. The first lemma is well-known.
\begin{lem}\label{L;Youngsubgroups}
Let $P$ be a $p$-subgroup of $\sym{n}$ and $\mathbf{n}/P=\{\mathcal{O}_1,\ldots,\mathcal{O}_s\}$. Then $P\in \mathcal{Y}_n$ if and only if $P$ is a Sylow $p$-subgroup of $\sym{\mathcal{O}_1}\times\cdots\times\sym{\mathcal{O}_s}$.
\end{lem}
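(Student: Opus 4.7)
The plan is to prove the two implications separately, with the forward direction being the one that requires a bit more thought; the main tool will be the classical structure theorem for Sylow $p$-subgroups of symmetric groups.

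For the ($\Leftarrow$) direction, observe that $\sym{\mathcal{O}_1}\times\cdots\times\sym{\mathcal{O}_s}$ is $\sym{n}$-conjugate to the Young subgroup $\sym{\nu}$ where $\nu=(|\mathcal{O}_1|,\ldots,|\mathcal{O}_s|)\models n$, because $\{\mathcal{O}_1,\ldots,\mathcal{O}_s\}$ is a set-partition of $\mathbf{n}$ and every such partition is conjugate to the standard one giving $\sym{\nu}$. Hence any Sylow $p$-subgroup $P$ of $\sym{\mathcal{O}_1}\times\cdots\times\sym{\mathcal{O}_s}$ is $\sym{n}$-conjugate to a Sylow $p$-subgroup of $\sym{\nu}$, so $P\in\mathcal{Y}_n$ by definition.

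For the ($\Rightarrow$) direction, suppose $P\in\mathcal{Y}_n$, so $P$ is $\sym{n}$-conjugate to a Sylow $p$-subgroup of some Young subgroup $\sym{\lambda}$. Since the statement to be proved is invariant under $\sym{n}$-conjugation (the orbits get relabeled accordingly), we may assume $P$ itself is a Sylow $p$-subgroup of $\sym{\lambda}=\sym{\lambda_1}\times\cdots\times\sym{\lambda_\ell}$ acting in the standard way on the corresponding blocks $B_1,\ldots,B_\ell$ of $\mathbf{n}$. Then $P=P_1\times\cdots\times P_\ell$, where each $P_i$ is a Sylow $p$-subgroup of $\sym{B_i}\cong\sym{\lambda_i}$. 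The key ingredient is the classical fact that, if $\lambda_i=\sum_{j}p^{a_{i,j}}$ is the $p$-adic expansion of $\lambda_i$, then $P_i$ decomposes (up to $\sym{B_i}$-conjugacy) as a direct product of Sylow $p$-subgroups of symmetric groups of degrees $p^{a_{i,j}}$ acting on pairwise disjoint subsets of $B_i$ whose union is $B_i$; in particular, these subsets are exactly the orbits of $P_i$ on $B_i$ and $P_i$ acts on each such orbit $\mathcal{O}$ as a full Sylow $p$-subgroup of $\sym{\mathcal{O}}$.

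Combining these decompositions across all $1\leq i\leq\ell$, the orbits $\{\mathcal{O}_1,\ldots,\mathcal{O}_s\}$ of $P$ on $\mathbf{n}$ are precisely the union, over $i$, of the orbits of $P_i$ on $B_i$, and $P$ factors as a direct product $Q_1\times\cdots\times Q_s$ where each $Q_j$ is a Sylow $p$-subgroup of $\sym{\mathcal{O}_j}$. This realises $P$ as a Sylow $p$-subgroup of $\sym{\mathcal{O}_1}\times\cdots\times\sym{\mathcal{O}_s}$, completing the proof. The only step that needs any genuine work is the structural fact about orbits of $P_i$ on $B_i$ invoked above, but since this is standard for Sylow $p$-subgroups of symmetric groups (and was already used implicitly when introducing $P_S$ in Notation \ref{N;notation1}), no new argument is required; the lemma is indeed routine, justifying the author's remark that it is well-known.
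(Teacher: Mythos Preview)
Your proof is correct, and the paper itself offers no proof at all---the lemma is simply asserted as ``well-known.'' The argument you give is exactly the standard one: for ($\Leftarrow$) observe that the orbit-stabilizer product is conjugate to a Young subgroup, and for ($\Rightarrow$) reduce by conjugation to a genuine Sylow $p$-subgroup of $\sym{\lambda}$ and invoke the classical direct-product decomposition of Sylow $p$-subgroups of symmetric groups (recalled just after Notation~\ref{N;notation1} in the paper) to identify the $P$-orbits with the transitive factors. One very minor remark: when you say the decomposition of $P_i$ holds ``up to $\sym{B_i}$-conjugacy,'' you should note (as you implicitly do) that since any two Sylow $p$-subgroups of $\sym{B_i}$ are conjugate, the specific $P_i$ at hand literally has such a direct-product decomposition, with the blocks being the $P_i$-orbits on $B_i$; this is what makes the orbit identification go through without ambiguity.
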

\begin{lem}\label{L;p=2Youngsubgroups}
Let $p=2$ and $1<m\in \mathbb{N}$. If $M\in \mathcal{M}_{m,n}$ and $M=M^t$, then $P_M\in \mathcal{Y}_n$ if and only if $U(M)=(0^a,1)$ for some $a\in \mathbb{N}_0$.
\end{lem}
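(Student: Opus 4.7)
The plan is to use Lemma \ref{L;Youngsubgroups}: since $P_M$ is already a $2$-subgroup of $\prod_{\mathcal{O} \in \mathbf{n}/P_M}\sym{\mathcal{O}}$, it is Young if and only if $\nu_2(|P_M|)$ equals $\sum_{\mathcal{O}}\nu_2(|\sym{\mathcal{O}}|)$. I would therefore first describe the $P_M$-orbits explicitly and then compare these two $2$-adic valuations; it will turn out that their difference is exactly $|U(M)|-1$, where $|U(M)|$ denotes the sum of the entries of $U(M)$.

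For the orbit description, $e_M$ fixes $\bigcup_k s_{k,k,M}$ pointwise, so the orbits inside $s_{k,k,M}$ are just the $P_{s_{k,k,M}}$-orbits, of sizes $2^c$ for each $c$ with $a_{k,k,c}=1$. For $i<j$, the subgroup of $P_M$ acting on the $e_M$-invariant set $s_{i,j,M}\cup s_{j,i,M}$ is generated by $P_{s_{i,j,M}}$, $P_{s_{i,j,M}}^{e_M}=e_M P_{s_{i,j,M}} e_M$, and the restriction of $e_M$; since $e_M$ swaps $s_{i,j,M}$ with $s_{j,i,M}$ and does not mix the pair $\{i,j\}$ with any other pair, each orbit there has the shape $\mathcal{O}\cup e_M\mathcal{O}$ for a $P_{s_{i,j,M}}$-orbit $\mathcal{O}$ on $s_{i,j,M}$, of size $2^{c+1}$ for the same $c$. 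Feeding these orbit sizes into $\nu_2((2^b)!)=2^b-1$ and comparing with the formula $\nu_2(|P_M|)=\sum_k\nu_2(|P_{s_{k,k,M}}|)+2\sum_{i<j}\nu_2(|P_{s_{i,j,M}}|)+1$ coming from $(8.1)$, a short expansion via $a_{i,j}=\sum_c 2^c a_{i,j,c}$ and $\nu_2(|P_{s_{i,j,M}}|)=\sum_c(2^c-1)a_{i,j,c}$ yields
\begin{equation*}
\sum_{\mathcal{O}}\nu_2(|\sym{\mathcal{O}}|)-\nu_2(|P_M|)=\sum_{i<j,c}a_{i,j,c}-1=|U(M)|-1.
\end{equation*}

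Since $M\in\mathcal{M}_{m,n}$ is not diagonal we have $|U(M)|\geq 1$, and the equality of valuations holds precisely when $|U(M)|=1$, which is exactly the statement $U(M)=(0^a,1)$ for some $a\in\mathbb{N}_0$. The main bookkeeping obstacle is to verify that $e_M$ really pairs $P_{s_{i,j,M}}$-orbits on $s_{i,j,M}$ bijectively with $P_{s_{i,j,M}}^{e_M}$-orbits on $s_{j,i,M}$ without mixing distinct pairs $\{i,j\}$; this will follow cleanly from the block structure of $e_M$ (i.e.\ $e_Ms_{i,j,M}=s_{j,i,M}$) and the conjugation relation $P_{s_{i,j,M}}^{e_M}=e_M P_{s_{i,j,M}} e_M$, so once this is pinned down the remainder of the argument is pure arithmetic.
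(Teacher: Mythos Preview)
Your argument is correct. Both your proof and the paper's rest on the same two ingredients: the description of the $P_M$-orbits on $\mathbf{n}$ (sizes $2^c$ on the diagonal blocks and $2^{c+1}$ on each pair $s_{i,j,M}\cup s_{j,i,M}$), together with the Sylow criterion of Lemma~\ref{L;Youngsubgroups}. The difference is in how the order comparison is carried out. The paper introduces an auxiliary Young $2$-subgroup $Q_M=D_M\times\prod_{i<j}Q_{i,j,M}$ with the same orbit multiset as $P_M$, observes that $P_M\in\mathcal{Y}_n$ forces $|P_M|=|Q_M|$, and then extracts the constraint $U(M)=(0^a,1)$ by a two-step case analysis on the inequalities $|P_{s_{i,j,M}}|^2\le|Q_{i,j,M}|$. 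You bypass $Q_M$ entirely and compute the deficit directly, obtaining the closed identity
\[
\sum_{\mathcal{O}\in\mathbf{n}/P_M}\nu_2\bigl(|\sym{\mathcal{O}}|\bigr)-\nu_2\bigl(|P_M|\bigr)=|U(M)|-1,
\]
which handles both implications at once. Your route is shorter and more transparent; the paper's route has the mild advantage that the auxiliary $Q_M$ makes the comparison group visible, but this is not needed elsewhere. The bookkeeping point you flag (that $e_M$ pairs $P_{s_{i,j,M}}$-orbits on $s_{i,j,M}$ with their $e_M$-images on $s_{j,i,M}$ without mixing distinct pairs $\{i,j\}$) is exactly what the paper uses when it asserts that $P_M$ and $Q_M$ have matching orbit sizes, and follows from $e_Ms_{i,j,M}=s_{j,i,M}$ as you say.
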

\begin{proof}
Recall the assignment of $M$ in Notation \ref{N;notation1} (iii) and let $D_M=\prod_{i=1}^mP_{s_{i,i,M}}$. As $M=M^t$, notice that $|s_{i,j,M}|=|s_{j,i,M}|$ for any $1\leq i,j\leq m$. If $U(M)=(0^a,1)$ for some $a\in \mathbb{N}_0$, by $(8.1)$ and the definition of $U(M)$, it is obvious to see that $P_M\in \mathcal{Y}_n$. Conversely, for any $1\leq i<j\leq m$, let $S_{i,j,M}=s_{i,j,M}\cup s_{j,i,M}$ and $Q_{i,j,M}$ be a fixed Sylow $2$-subgroup of $\sym{S_{i,j,M}}$. We define $Q_M=D_M\times \prod_{1\leq i<j\leq m}Q_{i,j,M}$ and note that $Q_M\in \mathcal{Y}_n$. Moreover, by $(8.1)$, for all $i\geq 0$, the number of $P_M$-orbits of $\mathbf{n}$ of size $2^i$ is equal to the number of $Q_M$-orbits of $\mathbf{n}$ of size $2^i$. As we have assumed that $P_M\in \mathcal{Y}_n$, by Lemma \ref{L;Youngsubgroups}, note that $P_M$ is $\sym{n}$-conjugate to $Q_M$. In particular, $|P_M|=|Q_M|$. For any $1\leq i<j\leq m$, notice that $|P_{s_{i,j,M}}\times P_{s_{j,i,M}}|\leq |Q_{i,j,M}|$. For all $w\in \mathbb{N}_0$, if we have $1\leq u<v\leq m$, $s_{u,v,M}\neq \varnothing$ and $P_{s_{u,v,M}}$ is not $\sym{n}$-conjugate to a Sylow $2$-subgroup of $\sym{2^w}$, by the construction of a Sylow $2$-subgroup of $\sym{S_{u,v,M}}$, note that $|P_{s_{u,v,M}}\times P_{s_{v,u,M}}|<4|P_{s_{u,v,M}}\times P_{s_{v,u,M}}|\leq |Q_{u,v,M}|$. By $(8.1)$, we thus have
\begin{align*}
|P_M|&=2|D_M||P_{s_{u,v,M}}\times P^{e_M}_{s_{u,v,M}}|\prod_{\substack{1\leq i<j\leq m\\
                  (i,j)\neq (u,v)\\
                  } }|P_{s_{i,j,M}}\times P^{e_M}_{s_{i,j,M}}|\\
&=2|D_M||P_{s_{u,v,M}}\times P_{s_{v,u,M}}|\prod_{\substack{1\leq i<j\leq m\\
                  (i,j)\neq (u,v)\\
                  } }|P_{s_{i,j,M}}\times P_{s_{j,i,M}}|\\
&<4|D_M||P_{s_{u,v,M}}\times P_{s_{v,u,M}}|\prod_{\substack{1\leq i<j\leq m\\
                  (i,j)\neq (u,v)\\
                  } }|P_{s_{i,j,M}}\times P_{s_{j,i,M}}|\leq |D_M|\prod_{1\leq i<j\leq m}|Q_{i,j,M}|,
\end{align*}
which contradicts with the fact that $|P_M|=|Q_M|$. Hence, for any $1\leq i<j\leq m$, $P_{s_{i,j,M}}$ is $\sym{n}$-conjugate to a Sylow $2$-subgroup of $\sym{2^k}$ for some $k\in \mathbb{N}_0$. Therefore, $2|P_{s_{i,j,M}}\times P_{s_{j,i,M}}|=|Q_{i,j,M}|$ for all $1\leq i<j\leq m$ and $s_{i,j,M}\neq \varnothing$. By $(8.1)$, we have
\begin{align}
|P_M|=2|D_M|\prod_{1\leq i<j\leq m}|P_{s_{i,j,M}}\times P_{s_{j,i,M}}|
=|D_M|\prod_{1\leq i<j\leq m}|Q_{i,j,M}|=|Q_M|,
\end{align}
which forces that $|s_{u,v}|=|s_{v,u}|=2^k$ for some $1\leq u<v\leq m$ and $k\in\mathbb{N}_0$. For any $1\leq i<j\leq m$, $(8.2)$ also implies that $s_{i,j,M}\neq \varnothing$ only if $i=u$ and $j=v$. By the definition of $U(M)$, $U(M)=(0^a,1)$ for some $a\in\mathbb{N}_0$. The lemma follows.
\end{proof}
Let $1<m\in \mathbb{N}$ and $M\in \mathcal{M}_{m,n}$. If $p=2$, $M=M^t$ and $U(M)=(0^a,1)$ for some $a\in \mathbb{N}_0$, set $T'(M)=D(M)+(0^{a+1},1,0,0,\dots)$, where the sum is the usual sum of sequences. Otherwise, put $T'(M)=T(M)$. Recall the assignment of $M$ in $(8.1)$ and let $p=2$. For any $1\leq i,j\leq m$, if $s_{i,j,M}\neq \varnothing$, there exist unique $x\in \mathbb{N}$ and $m_1,\ldots,m_x\in \mathbb{N}_0$ such that $0\leq m_1<\cdots<m_x$, $|s_{i,j,M}|=\sum_{u=1}^x2^{m_u}$ and
\begin{equation}
P_{s_{i,j,M}}=Q_{i,j,M,m_1}\times\cdots\times Q_{i,j,M,m_x},
\end{equation}
where, for any $k\in\{m_1,\ldots,m_x\}$, $Q_{i,j,M,k}$ is $\sym{n}$-conjugate to a Sylow $2$-subgroup of $\sym{2^k}$. The permutations are multiplied from right to left. So $(1,2)(2,3)=(1,2,3)$.
\begin{lem}\label{L;T'(M)}
Let $1<m\in \mathbb{N}$ and $M_1$, $M_2\in \mathcal{M}_{m,n}$. If $P_{M_1}$, $P_{M_2}\in \mathcal{Y}_n$, then $P_{M_1}$ is $\sym{n}$-conjugate to $P_{M_2}$ if and only if $T'(M_1)=T'(M_2)$.
\end{lem}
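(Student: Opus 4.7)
My plan is to reformulate the conjugacy question in terms of orbit-size multisets on $\mathbf{n}$ and then verify that $T'(M)$ records exactly this data for $P_M$.

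First, I would invoke Lemma~\ref{L;Youngsubgroups}: a Young $p$-subgroup $P$ of $\sym{n}$ with orbits $\O_1,\ldots,\O_s$ on $\mathbf{n}$ is a Sylow $p$-subgroup of $\sym{\O_1}\times\cdots\times\sym{\O_s}$, and two such groups are $\sym{n}$-conjugate exactly when their multisets of orbit sizes coincide. Since every $P$-orbit has $p$-power size, this is equivalent to the sequences $(n_k(P))_{k\geq 0}$ being equal, where $n_k(P)$ counts the $P$-orbits of size $p^k$. It therefore suffices to prove that $n_k(P_M)=T'(M)_k$ for all $k\geq 0$ whenever $P_M\in\mathcal{Y}_n$.

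I would then split into the two cases of definition~$(8.1)$. When $p>2$ or $M\neq M^t$, the group $P_M=\prod_{i,j}P_{s_{i,j,M}}$ acts as a direct product on the disjoint pieces $s_{i,j,M}$; writing $|s_{i,j,M}|=a_{i,j}=\sum_k p^k a_{i,j,k}$ in base $p$, the classical fact that a Sylow $p$-subgroup of $\sym{a_{i,j}}$ has exactly $a_{i,j,k}$ orbits of size $p^k$ yields $n_k(P_M)=\sum_{i,j}a_{i,j,k}=T(M)_k=T'(M)_k$. When $p=2$ and $M=M^t$, Lemma~\ref{L;p=2Youngsubgroups} forces $U(M)=(0^a,1)$ for some $a\in\mathbb{N}_0$, so there is a unique pair $u<v$ with $s_{u,v,M}\neq\varnothing$, and $|s_{u,v,M}|=|s_{v,u,M}|=2^a$. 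The diagonal factor $\prod_k P_{s_{k,k,M}}$ again contributes $D(M)_k$ orbits of size $2^k$ by the argument just given. On $s_{u,v,M}\cup s_{v,u,M}$, since $|s_{u,v,M}|=2^a$ is a power of $2$, both $P_{s_{u,v,M}}$ and $P_{s_{u,v,M}}^{e_M}$ act transitively on their respective supports, and $e_M$ swaps these supports, so the whole factor $(P_{s_{u,v,M}}\times P_{s_{u,v,M}}^{e_M})\rtimes\langle e_M\rangle$ acts transitively on a set of size $2^{a+1}$, producing exactly one additional orbit of that size. Hence $n_k(P_M)=D(M)_k+\delta_{k,a+1}=T'(M)_k$.

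The main technical point is the bookkeeping in the symmetric case: one must check that $(P_{s_{u,v,M}}\times P_{s_{u,v,M}}^{e_M})\rtimes\langle e_M\rangle$ really fuses $s_{u,v,M}$ and $s_{v,u,M}$ into a single orbit of size $2^{a+1}$, which reduces to the transitivity of any Sylow $2$-subgroup of $\sym{2^a}$ on its natural $2^a$ points. Combining both cases with the opening reduction then gives that $P_{M_1}$ is $\sym{n}$-conjugate to $P_{M_2}$ if and only if $T'(M_1)=T'(M_2)$.
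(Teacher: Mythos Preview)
Your proposal is correct and follows essentially the same approach as the paper: both arguments reduce the conjugacy question via Lemma~\ref{L;Youngsubgroups} to comparing multisets of orbit sizes on $\mathbf{n}$, and then identify the $k$th entry of $T'(M)$ with the number of $P_M$-orbits of size $p^k$. You give more explicit detail in the orbit count (particularly the bookkeeping in the $p=2$, $M=M^t$ case), but the underlying strategy is identical.
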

\begin{proof}
Let $M\in \mathcal{M}_{m,n}$ and $P_M\in \mathcal{Y}_n$. By $(8.1)$, Lemma \ref{L;p=2Youngsubgroups} and the definition of $T'(M)$, for any $i>0$, note that the $i$th entry of $T'(M)$ is the number of orbits of $\mathbf{n}$ of size $p^{i-1}$ under the action of $P_M$. If $T'(M_1)=T'(M_2)$, as $P_{M_1}$, $P_{M_2}\in \mathcal{Y}_n$, by Lemma \ref{L;Youngsubgroups}, $P_{M_1}$ is $\sym{n}$-conjugate to $P_{M_2}$. Conversely, if $P_{M_1}$ is $\sym{n}$-conjugate to $P_{M_2}$, then there is a size preserving one-to-one correspondence between the members of $\mathbf{n}/P_{M_1}$ and $\mathbf{n}/P_{M_2}$. By the explanation of the entries of $T'(M)$, $T'(M_1)=T'(M_2)$.
\end{proof}
\begin{lem}\label{L;conjugacy}
Let $p=2$, $1<m\in \mathbb{N}$ and $M_1$, $M_2\in\mathcal{M}_{m,n}$. If $M_1=M_1^t$, $M_2=M_2^t$, $D(M_1)=D(M_2)$ and $U(M_1)=U(M_2)$, then $P_{M_1}$ is $\sym{n}$-conjugate to $P_{M_2}$.
\end{lem}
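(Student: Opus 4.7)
The plan is to construct an explicit element $\sigma \in \sym{n}$ with $\sigma P_{M_1}\sigma^{-1} = P_{M_2}$, by matching the $2$-adic atomic decompositions of $M_1$ and $M_2$. First, I would exploit the freedom in the definition of $P_{M_k}$ (for $k \in \{1, 2\}$): for each entry $a_{i,j}^{(k)}$ with $2$-adic expansion $\sum_{\ell \geq 0} 2^\ell a_{i,j,\ell}^{(k)}$, partition $s_{i,j,M_k}$ into ``atomic'' subsets of size $2^\ell$ with multiplicities $a_{i,j,\ell}^{(k)}$, and write $P_{s_{i,j,M_k}}$ as the direct product of Sylow $2$-subgroups on these atoms. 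For $i \neq j$, also arrange $e_{M_k}$ so that it maps each atomic subset of $s_{i,j,M_k}$ bijectively onto an atomic subset of $s_{j,i,M_k}$ of the same size. Two such standard forms yield $\sym{n}$-conjugate groups: for Sylow choices this follows from Sylow's theorem applied inside each $\sym{s_{i,j,M_k}}$, while the adjustment $\tilde g := g \cdot (e_{M_k} g e_{M_k})$ for $g \in \sym{s_{i,j,M_k}}$ produces an element commuting with $e_{M_k}$ and intertwining different Sylow choices consistently on both sides of the pair $\{s_{i,j,M_k}, s_{j,i,M_k}\}$.

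Next I would match atomic data across $M_1$ and $M_2$. The hypothesis $D(M_1) = D(M_2)$ gives $\sum_{i} a_{i,i,\ell}^{(1)} = \sum_{i} a_{i,i,\ell}^{(2)}$ for every $\ell \geq 0$, so the number of diagonal atoms of size $2^\ell$ agrees for $M_1$ and $M_2$; similarly $U(M_1) = U(M_2)$ delivers the same count for off-diagonal pair atoms (each such pair atom being two $2^\ell$-subsets interchanged by the relevant $e_{M_k}$). Fix a type- and size-preserving bijection $\Phi$ between atoms of $M_1$ and atoms of $M_2$. For each diagonal atom $\alpha$ of $M_1$ matched to $\beta := \Phi(\alpha)$ of size $2^\ell$, Sylow's theorem in $\sym{2^\ell}$ produces a bijection $\psi_\alpha \colon \alpha \to \beta$ conjugating the Sylow on $\alpha$ to the Sylow on $\beta$. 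For each pair atom, writing $\alpha = \alpha_1 \sqcup \alpha_2$ and $\beta = \beta_1 \sqcup \beta_2$ with $e_{M_1}$ swapping $\alpha_1, \alpha_2$ and $e_{M_2}$ swapping $\beta_1, \beta_2$, first choose $\psi_1 \colon \alpha_1 \to \beta_1$ conjugating the Sylow on $\alpha_1$ to that on $\beta_1$, and then define
\begin{equation*}
\psi_2 := (e_{M_2}|_{\beta_1 \to \beta_2}) \circ \psi_1 \circ (e_{M_1}|_{\alpha_1 \to \alpha_2})^{-1}.
\end{equation*}
Using the identity $P_{\alpha_2} = e_{M_1} P_{\alpha_1} e_{M_1}$ (from the construction of the standard form) and its analogue for $M_2$, a short calculation verifies that $\psi_2$ conjugates the Sylow on $\alpha_2$ to that on $\beta_2$ and that $\psi_1 \sqcup \psi_2$ intertwines $e_{M_1}|_{\alpha_1 \cup \alpha_2}$ with $e_{M_2}|_{\beta_1 \cup \beta_2}$.

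Assembling all the local bijections $\psi_\alpha$ yields a single $\sigma \in \sym{n}$ which, by construction, conjugates every Sylow direct factor of $P_{M_1}$ onto the corresponding factor of $P_{M_2}$ and satisfies $\sigma e_{M_1} \sigma^{-1} = e_{M_2}$. Since $P_{M_k}$ is generated by its Sylow direct factors together with $e_{M_k}$, it follows that $\sigma P_{M_1}\sigma^{-1} = P_{M_2}$, as required. The main obstacle is the rigidity of the pair-atom step: the Sylow-matching condition on $\alpha_2 \to \beta_2$ and the $e$-compatibility condition cannot be satisfied by independent choices of $\psi_1$ and $\psi_2$, and the forced definition $\psi_2 := e_{M_2} \psi_1 e_{M_1}^{-1}$ is what makes both hold simultaneously, precisely because the Sylow on $\alpha_2$ is the $e_{M_1}$-conjugate of that on $\alpha_1$ (and likewise for $M_2$), so $e$-compatibility automatically forces Sylow-matching on the second half of every pair atom.
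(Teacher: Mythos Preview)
Your approach is essentially the same as the paper's: both decompose each $P_{s_{i,j,M_k}}$ into its $2$-adic ``atoms'' (the factors $Q_{i,j,M_k,\ell}$ of (8.3)), use $D(M_1)=D(M_2)$ to match diagonal atoms and $U(M_1)=U(M_2)$ to match off-diagonal pair atoms size by size, and then assemble a global conjugating element from local Sylow-to-Sylow bijections.

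The one genuine difference is in the endgame. The paper chooses each local conjugator $g_{a,b}$ only subject to carrying $Q_{a,b,M_1,d_i}$ to $Q_{a_i,b_i,M_2,d_i}$ and $Q_{a,b,M_1,d_i}^{e_{M_1}}$ to $Q_{a_i,b_i,M_2,d_i}^{e_{M_2}}$ as groups; this does \emph{not} force $g e_{M_1} g^{-1}=e_{M_2}$, so the paper finishes by showing that $g e_{M_1} g^{-1}$ still swaps $s_{i,j,M_2}$ with $s_{j,i,M_2}$, hence $gP_{M_1}g^{-1}$ and $P_{M_2}$ are both Sylow $2$-subgroups of the common overgroup $\bigl(\prod_{k}\sym{s_{k,k,M_2}}\times\prod_{i<j}(\sym{s_{i,j,M_2}}\times\sym{s_{j,i,M_2}})\bigr)\rtimes\langle e_{M_2}\rangle$, and applies Sylow's theorem once more. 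Your forced definition $\psi_2:=e_{M_2}\,\psi_1\,e_{M_1}^{-1}$ on the second half of each pair atom guarantees $\sigma e_{M_1}\sigma^{-1}=e_{M_2}$ on the nose, so the final Sylow step is unnecessary. Both arguments are correct; yours is a little more direct, at the cost of the extra (easy) verification that the forced $\psi_2$ still conjugates $P_{\alpha_2}=e_{M_1}P_{\alpha_1}e_{M_1}$ to $P_{\beta_2}=e_{M_2}P_{\beta_1}e_{M_2}$, which is exactly the computation you sketch.
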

\begin{proof}
Recall the assignments of $M_1$ and $M_2$ in Notation \ref{N;notation1} (iii). By $(8.1)$, we have
\begin{align*}
&  P_{M_1}=\displaystyle (\prod_{1\leq k\leq m}P_{s_{k,k,M_1}}\times \!\!\!\!\prod_{1\leq i<j\leq m}\!\!\!\!(P_{s_{i,j,M_1}}\times P^{e_{M_1}}_{s_{i,j,M_1}}))\rtimes\langle e_{M_1}\rangle,\\
& P_{M_2}=\displaystyle (\prod_{1\leq k\leq m}P_{s_{k,k,M_2}}\times \!\!\!\!\prod_{1\leq i<j\leq m}\!\!\!\!(P_{s_{i,j,M_2}}\times P^{e_{M_2}}_{s_{i,j,M_2}}))\rtimes\langle e_{M_2}\rangle.
\end{align*}
For any given $s_{a,b,M_1}\neq \varnothing$, if we have $a\leq b$, by $(8.3)$, there exist unique $c\in \mathbb{N}$ and $d_1,\ldots,d_c\in \mathbb{N}_0$ such that $0\leq d_1<\cdots<d_c$ and
\begin{align}
P_{s_{a,b,M_1}}=Q_{a,b,M_1,d_1}\times\cdots\times Q_{a,b,M_1,d_c}.
\end{align}
If $a<b$, as $U(M_1)=U(M_2)$, by the definitions of $U(M_1)$ and $U(M_2)$, there are $Q_{a_1,b_1,M_2,d_1},\ldots, Q_{a_c,b_c,M_2,d_c}$, where $1\leq a_i<b_i\leq m$ for all $1\leq i\leq c$. Moreover, we also have $e_{M_2}Q_{a_1,b_1,M_2,d_1}e_{M_2},\ldots, e_{M_2}Q_{a_c,b_c,M_2,d_c}e_{M_2}$. According to $(8.4)$ and the Sylow Theorem, it is clear that there exists some $g_{a,b}\in \sym{n}$ such that $$g_{a,b}Q_{a,b,M_1,d_i}g_{a,b}^{-1}=Q_{a_i,b_i,M_2,d_i}\ \text{and}\  g_{a,b}Q^{e_{M_1}}_{a,b,M_1,d_i}g_{a,b}^{-1}=Q^{e_{M_2}}_{a_i,b_i,M_2,d_i}$$
for all $1\leq i\leq c$. If $a=b$, as $D(M_1)=D(M_2)$, by the definitions of $D(M_1)$ and $D(M_2)$, there are $Q_{a_1,a_1,M_2,d_1},\ldots, Q_{a_c,a_c,M_2,d_c}$, where $1\leq a_i\leq m$ for all $1\leq i\leq c$. By $(8.4)$ and the Sylow Theorem, notice that there exists some $g_{a,a}\in\sym{n}$ such that $$g_{a,a}Q_{a,a,M_1,d_i}g_{a,a}^{-1}=Q_{a_i,a_i,M_2,d_i}$$ for all $1\leq i\leq c$. Also set $g_{i,j}=1$ for all $s_{i,j,M_1}=\varnothing$. Since $D(M_1)=D(M_2)$ and $U(M_1)=U(M_2)$, for any $1\leq i,j,u,v\leq m$, if $(i,j)\neq (u,v)$, by the definitions of $g_{i,j}$ and $g_{u,v}$, we can require that $\mathrm{Supp}(g_{i,j})\cap\mathrm{Supp}(g_{u,v})=\varnothing$. Set $g=\prod_{i=1}^m\prod_{j=1}^mg_{i,j}$. By the definition of $g$, note that
\begin{equation*}
\displaystyle g(\!\!\prod_{1\leq k\leq m} P_{s_{k,k,M_1}}\times \!\!\!\!\prod_{1\leq i<j\leq m}\!\!\!\!(P_{s_{i,j,M_1}}\times P^{e_{M_1}}_{s_{i,j,M_1}}))g^{-1}=\displaystyle (\!\!\prod_{1\leq k\leq m}P_{s_{k,k,M_2}}\times \!\!\!\!\prod_{1\leq i<j\leq m}\!\!\!\!(P_{s_{i,j,M_2}}\times P^{e_{M_2}}_{s_{i,j,M_2}})).
\end{equation*}

As $U(M_1)=U(M_2)$, by the definitions of $e_{M_1}$ and $e_{M_2}$, if $ge_{M_1}g^{-1}$ is a product of $h$ mutually disjoint $2$-cycles, so is $e_{M_2}$. For all $1\leq i<j\leq m$, we claim that $ge_{M_1}g^{-1}$ swaps $s_{i,j,M_2}$ and $s_{j,i,M_2}$. Let $e_{M_1}=(i_1,j_1)\cdots(i_h,j_h)$ for some $h\in \mathbb{N}$, where $\mathrm{Supp}((i_u,j_u))\cap\mathrm{Supp}((i_v,j_v))=\varnothing$ for any $1\leq u\neq v\leq h$. By the definition of $e_{M_1}$, for any $1\leq k\leq h$, note that $i_k\in s_{x,y,M_1}$ and $j_k\in s_{y,x,M_1}$ for some $x\neq y$. We have $ge_{M_1}g^{-1}=(g(i_1),g(j_1))\cdots(g(i_h),g(j_h))$. The claim follows by the definition of $g$. By this claim, both $gP_{M_1}g^{-1}$ and $P_{M_2}$ are $\sym{n}$-conjugate to Sylow $2$-subgroups of $(\prod_{k=1}^m\sym{s_{k,k,M_2}}\times\prod_{1\leq i<j\leq m}(\sym{s_{i,j,M_2}}\times \sym{s_{j,i,M_2}}))\rtimes\langle e_{M_2}\rangle.$ The lemma thus follows.
\end{proof}
\begin{lem}\label{L;conjugacy1}
Let $p=2$, $1<m\in\mathbb{N}$ and $M_1$, $M_2\in \mathcal{M}_{m,n}$. Let $M_1=M_1^t$, $M_2=M_2^t$ and $P_{M_1}$, $P_{M_2}\not\in\mathcal{Y}_n$. If $P_{M_1}$ is $\sym{n}$-conjugate to $P_{M_2}$, then $D(M_1)=D(M_2)$ and $U(M_1)=U(M_2)$.
\end{lem}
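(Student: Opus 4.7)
The plan is to extract $D(M)$ and $U(M)$ from the $\sym{n}$-conjugacy class of $P_M$ via an orbit-by-orbit analysis. For each $P_M$-orbit $\mathcal{O}$ on $\mathbf{n}$, let $S_{\mathcal{O}}(P_M)=\{g\in P_M:g|_{\mathbf{n}\setminus\mathcal{O}}=\mathrm{id}\}$ and let $K_{\mathcal{O}}(P_M)\le\sym{\mathcal{O}}$ be its image under restriction to $\mathcal{O}$. Since $\sym{n}$-conjugation sends orbits to orbits and $S_{\mathcal{O}}(P_{M_1})$ to $S_{g\mathcal{O}}(P_{M_2})$, the orbit-size distribution of $P_M$ on $\mathbf{n}$, and for each orbit $\mathcal{O}$ the integer $|K_{\mathcal{O}}(P_M)|$, are conjugation invariants.

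Next I would classify the non-trivial $P_M$-orbits into two types according to the decomposition $(8.1)$: call $\mathcal{O}$ \emph{diagonal} if $\mathcal{O}\subseteq s_{k,k,M}$ for some $k$, and \emph{off-diagonal} if $\mathcal{O}=\mathcal{O}_1\cup e_M\mathcal{O}_1$ for a $P_{s_{i,j,M}}$-orbit $\mathcal{O}_1\subseteq s_{i,j,M}$ with $i<j$. Counting with binary-digit expansions: the number of diagonal orbits of size $2^c$ equals $D(M)_{c+1}$, the number of off-diagonal orbits of size $2^c$ equals $U(M)_c$ (each $P_{s_{i,j,M}}$-orbit of size $2^{c-1}$ is paired by $e_M$ into an orbit of size $2^c$), and the number of $P_M$-fixed points equals $D(M)_1$ (a point in $s_{i,j,M}$ with $i\neq j$ is always moved by $e_M$).

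The crux is to separate the two types using $|K_{\mathcal{O}}(P_M)|$, and this is where the hypothesis $P_M\notin\mathcal{Y}_n$ enters. Writing a general element of $P_M$ via $(8.1)$ as $(x,\epsilon)$ with $\epsilon\in\{1,e_M\}$, I claim $\epsilon=1$ for every element of $S_{\mathcal{O}}(P_M)$. Indeed, if $\epsilon=e_M$ then $e_M$ nontrivially permutes $s_{i',j',M}\cup s_{j',i',M}$ for every $i'<j'$ with $s_{i',j',M}\neq\varnothing$, and Lemma \ref{L;p=2Youngsubgroups} together with $P_M\notin\mathcal{Y}_n$ guarantees at least one such pair whose off-diagonal orbit is disjoint from $\mathcal{O}$; the factor $x$ preserves each $s_{u,v,M}$ separately and therefore cannot undo the swap effected by $e_M$ on $\mathbf{n}\setminus\mathcal{O}$. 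With $\epsilon=1$ forced, a direct inspection of $(8.1)$ gives $|K_{\mathcal{O}}(P_M)|=2^{2^c-1}$ when $\mathcal{O}$ is diagonal (the full Sylow $2$-subgroup of $\sym{\mathcal{O}}$ is realized inside $P_{s_{k,k,M}}$) and $|K_{\mathcal{O}}(P_M)|=2^{2^c-2}$ when $\mathcal{O}$ is off-diagonal (only the bipartition-preserving subgroup $P_{s_{i,j,M}}\times P_{s_{i,j,M}}^{e_M}$, restricted to $\mathcal{O}$, is realized; it has index $2$ in the full Sylow $2$-subgroup).

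These two values of $|K_{\mathcal{O}}(P_M)|$ differ, so for each $c\ge 1$ the numbers of diagonal and of off-diagonal orbits of size $2^c$ are separately conjugation invariants, as is the fixed-point count. Translating back via the formulas of the second paragraph produces $D(M_1)=D(M_2)$ and $U(M_1)=U(M_2)$. The main obstacle is the analysis of $\epsilon=e_M$: it genuinely requires $P_M\notin\mathcal{Y}_n$, because if there were only a single off-diagonal pair of size a power of two, then $e_M$ could participate in $S_{\mathcal{O}}(P_M)$ and the two values of $|K_{\mathcal{O}}(P_M)|$ would coincide, making the two orbit types indistinguishable from conjugation data alone.
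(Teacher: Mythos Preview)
Your proof is correct and takes a genuinely different route from the paper's. The paper fixes a conjugating element $g$ with $gP_{M_1}g^{-1}=P_{M_2}$ and, in four steps, shows first that $g$ must carry the index-$2$ normal subgroup $R_{M_1}=\prod_k P_{s_{k,k,M_1}}\times\prod_{i<j}(P_{s_{i,j,M_1}}\times P_{s_{i,j,M_1}}^{e_{M_1}})$ onto $R_{M_2}$ (a delicate case analysis using long cycles and the hypothesis $P_{M_1}\notin\mathcal{Y}_n$), then that the $e_M$-coset is preserved, then that the diagonal piece $\prod_k P_{s_{k,k,M_1}}$ is carried to $\prod_k P_{s_{k,k,M_2}}$; from these it reads off $T(M_1)=T(M_2)$ and $D(M_1)=D(M_2)$, hence $U(M_1)=U(M_2)$.

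Your argument instead extracts $D(M)$ and $U(M)$ directly from conjugacy invariants: the multiset of pairs $(|\mathcal{O}|,|S_{\mathcal{O}}(P_M)|)$ over orbits $\mathcal{O}$. The key observation that $e_M$ never contributes to $S_{\mathcal{O}}(P_M)$ (because $P_M\notin\mathcal{Y}_n$ forces at least two off-diagonal orbits, so some off-diagonal orbit lies outside $\mathcal{O}$ and $xe_M$ visibly moves it) is exactly the place where the non-Young hypothesis enters, and your use of it is cleaner than the paper's Step~1. The orders $2^{2^c-1}$ versus $2^{2^c-2}$ then separate diagonal from off-diagonal orbits of size $2^c$. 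This approach is more conceptual and avoids the element-by-element chase; the paper's approach, on the other hand, yields the stronger intermediate statement that $g$ actually conjugates $R_{M_1}$ to $R_{M_2}$ and the diagonal factors to the diagonal factors, which could be useful elsewhere but is not needed for the lemma as stated.
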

\begin{proof}
Recall the assignments of $M_1$ and $M_2$ in Notation \ref{N;notation1} (iii). Since $P_{M_1}$ is $\sym{n}$-conjugate to $P_{M_2}$, let $gP_{M_1}g^{-1}=P_{M_2}$ for some $g\in \sym{n}$. For any $x\in\{1,2\}$, set
\begin{align*}
\mathfrak{S}_{M_x}=\prod_{1\leq i,j\leq m}\!\!\sym{s_{i,j,M_x}} \ \text{and}\ R_{M_x}=\displaystyle \prod_{1\leq k\leq m}P_{s_{k,k,M_x}}\times \!\!\!\!\prod_{1\leq i<j\leq m}\!\!\!\!(P_{s_{i,j,M_x}}\times P^{e_{M_x}}_{s_{i,j,M_x}}).
\end{align*}
By $(8.1)$, for any $x\in\{1,2\}$, note that $R_{M_x}\leq \mathfrak{S}_{M_x}$, $P_{M_x}=R_{M_x}\rtimes\langle e_{M_x}\rangle$ and $|s_{i,j,M_x}|=|s_{j,i,M_x}|$ for any $1\leq i,j\leq m$. We split the whole proof into four steps.
\begin{enumerate}[\text{Step} 1:]
\item Prove that $gR_{M_1}g^{-1}=R_{M_2}$.
\end{enumerate}
For any $u\leq v$ and $s_{u,v,M_2}\neq \varnothing$, by $(8.3)$, we have $P_{s_{u,v,M_2}}=Q_{u,v,M_2,k_1}\times\cdots\times Q_{u,v,M_2,k_s}$ for unique $s\in \mathbb{N}$ and mutually distinct non-negative integers $k_1,\ldots,k_s$. We may assume that $Q_{u,v,M_2,k_i}\neq 1$ for all $1\leq i\leq s$. Therefore, for any given $1\leq i\leq s$, $Q_{u,v,M_2,k_i}=(Q_{k_i,1}\times Q_{k_i,2})\rtimes\langle w\rangle$, where $Q_{k_i,j}$ is $\sym{n}$-conjugate to a Sylow $2$-subgroup of $\sym{2^{k_i-1}}$ for any $j\in\{1,2\}$ and $wQ_{k_i,1}w=Q_{k_i,2}$ for the involution $w$. Let $\sigma_{k_i,1}$ be a $2^{k_i-1}$-cycle of $Q_{k_i,1}$ and set $\sigma_{k_i,2}=w\sigma_{k_i,1}w$. Define $\rho_{k_i}=\sigma_{k_i,1}w$ and note that $\rho_{k_i}$ is a $2^{k_i}$-cycle of $Q_{u,v,M_2,k_i}$. Moreover, $\rho_{k_i}^2=\sigma_{k_i,1}w\sigma_{k_i,1}w=\sigma_{k_i,1}\sigma_{k_i,2}$. As $gP_{M_1}g^{-1}=P_{M_2}$, we have $\rho_{k_i}=gqg^{-1}$ or $\rho_{k_i}=gqe_{M_1}g^{-1}$ for some $q\in R_{M_1}$. As $e_{M_1}\in N_{\sym{n}}(R_{M_1})$, we have $\rho_{k_i}^2=grg^{-1}$ for some $r\in R_{M_1}$. Since $\mathrm{Supp}(\sigma_{k_i,1})\cap\mathrm{Supp}(\sigma_{k_i,2})=\varnothing$, there are $r_1$, $r_2\in\sym{n}$ such that $r=r_1r_2$, $\mathrm{Supp}(r_1)\cap \mathrm{Supp}(r_2)=\varnothing$ and $\sigma_{k_i,j}=gr_jg^{-1}$ for any $j\in\{1,2\}$. As $r\in R_{M_1}$ and both $\sigma_{k_i,1}$ and $\sigma_{k_i,2}$ are $2^{k_i-1}$-cycles, note that $\mathrm{Supp}(\sigma_{k_i,1})=\mathrm{Supp}(gr_1g^{-1})\subseteq gs_{a,b,M_1}$ and $\mathrm{Supp}(\sigma_{k_i,2})=\mathrm{Supp}(gr_2g^{-1})\subseteq gs_{c,d,M_1}$ for some $1\leq a,b,c,d\leq m$. We claim that $\rho_{k_i}=ghg^{-1}$ for some $h\in R_{M_1}$. Otherwise, if $\rho_{k_i}=ghe_{M_1}g^{-1}$ for some $h\in R_{M_1}$, we shall show the following results.
\begin{enumerate}[(i)]
\item We have $a\neq b$ and $(c,d)=(b,a)$.
\item We have $|gs_{a,b,M_1}|=|gs_{b,a,M_1}|=|\mathrm{Supp}(\sigma_{k_i,1})|=2^{k_i-1}$.
\item We have $|gs_{j,k,M_1}|=|gs_{k,j,M_1}|=0$ for any $1\leq j\neq k\leq m$ and $\{j,k\}\neq\{a,b\}$.
\end{enumerate}

For (i), assume that $a=b$. Recall that $\rho_{k_i}=\sigma_{k_i,1}w=ghe_{M_1}g^{-1}$ and  $\sigma_{k_i,j}=gr_jg^{-1}$ for any $j\in\{1,2\}$. Let $w=gxg^{-1}$ for some $x\in P_{M_1}$. Note that $r_2\neq 1$. Otherwise, as $|\mathrm{Supp}(\sigma_{k_i,1})|=|\mathrm{Supp}(\sigma_{k_i,2})|$, we have $\sigma_{k_i,1}=\sigma_{k_i,2}=gr_2g^{-1}=1$ and $\rho_{k_i}$ is a transposition. As $\rho_{k_i}=ghe_{M_1}g^{-1}$, it is clear to see that $he_{M_1}=(\alpha,\beta)\in P_{M_1}$ and $h=(\alpha,\beta)e_{M_1}\in R_{M_1}$. As $P_{M_1}\not\in\mathcal{Y}_n$, by Lemma \ref{L;p=2Youngsubgroups} and the definition of $e_{M_1}$, note that $|\mathrm{Supp}(e_{M_1})|>3$. Therefore, by the definition of $e_{M_1}$, there exist $\gamma$, $\eta\in \mathbf{n}$ such that $e_{M_1}(\gamma)=\eta$, $\eta\notin\{\alpha,\beta\}$, $\gamma\in s_{\lambda,\mu,M_1}$ and $\eta\in s_{\mu,\lambda,M_1}$ for some $1\leq \lambda\neq \mu\leq m$. So $h(\gamma)=\eta$, which contradicts with the fact that $h\in R_{M_1}$. As $w^2=x^2=1$,
\begin{align*}
r_2=g^{-1}gr_2g^{-1}g=g^{-1}\sigma_{k_i,2}g=g^{-1}w\sigma_{k_i,1}wg=g^{-1}gxg^{-1}gr_1g^{-1}gxg^{-1}g=xr_1x.
\end{align*}
As $\mathrm{Supp}(\sigma_{k_i,1})\subseteq gs_{a,a,M_1}$,
$\mathrm{Supp}(\sigma_{k_i,2})\subseteq gs_{c,d,M_1}$, $\sigma_{k_i,1}=gr_1g^{-1}$ and $\sigma_{k_i,2}=gr_2g^{-1}$, we have $\mathrm{Supp}(r_1)\subseteq s_{a,a,M_1}$ and $\mathrm{Supp}(r_2)\subseteq s_{c,d,M_1}$. As $x\in P_{M_1}$, by $(8.1)$, note that $xs_{a,a,M_1}=s_{a,a,M_1}$ and $\mathrm{Supp}(r_2)=\mathrm{Supp}(xr_1x)\subseteq s_{a,a,M_1}\cap s_{c,d,M_1}$, which forces that $c=d=a$. Otherwise, as $s_{a,a,M_1}\cap s_{c,d,M_1}=\varnothing$, we get $r_2=1$, which is a contradiction. So
$\mathrm{Supp}(he_{M_1})=\mathrm{Supp}(g^{-1}\rho_{k_i}g)=\mathrm{Supp}(r_1)\cup\mathrm{Supp}(r_2)\subseteq s_{a,a,M_1}$ and $he_{M_1}\in\sym{s_{a,a,M_1}}\leq \sym{M_1}.$ As $h\in R_{M_1}\leq \sym{M_1}$, we get $e_{M_1}\in \sym{M_1}$, which contradicts with the definition of $e_{M_1}$. So $a\neq b$. As $r_2=xr_1x$, $\mathrm{Supp}(r_1)\subseteq s_{a,b,M_1}$ and $x\in P_{M_1}$, by $(8.1)$, we have $\mathrm{Supp}(r_2)\subseteq s_{a,b,M_1}$ or $\mathrm{Supp}(r_2)\subseteq s_{b,a,M_1}$. If $\{c,d\}\neq \{a,b\}$, then $s_{a,b,M_1}\cap s_{c,d,M_1}=s_{b,a,M_1}\cap s_{c,d,M_1}=\varnothing$. Hence, we get $\mathrm{Supp}(r_2)=\varnothing$ and $r_2=1$, which is absurd. So $\{c,d\}=\{a,b\}$. If $(a,b)=(c,d)$, then we have
$\mathrm{Supp}(he_{M_1})=\mathrm{Supp}(g^{-1}\rho_{k_i}g)=\mathrm{Supp}(r_1)\cup\mathrm{Supp}(r_2)\subseteq s_{a,b,M_1}$ and $he_{M_1}\in \sym{s_{a,b,M_1}}\leq \sym{M_1}$. As $h\in R_{M_1}\leq \sym{M_1}$, we also get $e_{M_1}\in \sym{M_1}$, which is absurd. So we have $(c,d)=(b,a)$.

To prove (ii), let $S=\mathrm{Supp}(\sigma_{k_i,1})$ and $T=\mathrm{Supp}(\sigma_{k_i,2})$. Recall that $S\subseteq gs_{a,b,M_1}$. So $|S|\leq |gs_{a,b,M_1}|=|gs_{b,a,M_1}|$. Assume that this inequality is strict. We pick $y\in (gs_{a,b,M_1})\setminus S$ and note that $y\not\in S\cup T=\mathrm{Supp}(\rho_{k_i})$. Since $h\in R_{M_1}$ and $e_{M_1}s_{a,b,M_1}=s_{b,a,M_1}$, we have $hs_{b,a,M_1}=s_{b,a,M_1}$ and $z=ge_{M_1}g^{-1}(y)\in gs_{b,a,M_1}$. We thus have $y=\rho_{k_i}(y)=ghg^{-1}ge_{M_1}g^{-1}(y)=ghg^{-1}(z)\in gs_{a,b,M_1}\cap gs_{b,a,M_1},$ which forces that $a=b$. By (i), this is impossible. So $|S|=|gs_{a,b,M_1}|=|gs_{b,a,M_1}|=2^{k_i-1}$.

For (iii), suppose that $|gs_{j,k,M_1}|=|gs_{k,j,M_1}|\neq 0$ for some $1\leq j\neq k\leq m$ and $\{j,k\}\neq\{a,b\}$. Then $g(s_{j,k,M_1}\cup s_{k,j,M_1})\cap g(s_{a,b,M_1}\cup s_{b,a,M_1})=\varnothing$. By (i), recall that $\mathrm{Supp}(\rho_{k_i})=\mathrm{Supp}(\sigma_{k_i,1})\cup\mathrm{Supp}(\sigma_{k_i,2})\subseteq gs_{a,b,M_1}\cup gs_{b,a,M_1}$. So there exists some $y\in gs_{j,k,M_1}$ such that $y\not\in \mathrm{Supp}(\rho_{k_i})$. As $h\in R_{M_1}$ and $e_{M_1}s_{j,k,M_1}=s_{k,j,M_1}$, we have $hs_{k,j,M_1}=s_{k,j,M_1}$ and $z=ge_{M_1}g^{-1}(y)\in gs_{k,j,M_1}$. As $j\neq k$, by mimicking the proof of (ii), we deduce that $y\in gs_{j,k,M_1}\cap gs_{k,j,M_1}=\varnothing$, which is absurd. (iii) is shown. 

By (i), (ii), (iii), the definition of $U(M_1)$ and Lemma \ref{L;p=2Youngsubgroups}, we get $P_{M_1}\in \mathcal{Y}_n$, which is a contradiction. So $\rho_{k_i}=ghg^{-1}$ for some $h\in R_{M_1}$. The claim is shown. Since $\rho_{k_i}$ is a $2^{k_i}$-cycle and $h\in R_{M_1}$, there exist some $1\leq e,f\leq m$ such that $\mathrm{Supp}(h)\subseteq s_{e,f,M_1}$. For any $\varrho\in Q_{u,v,M_2,k_i}$, if $\varrho=g\kappa e_{M_1}g^{-1}$ for some $\kappa\in R_{M_1}$, note that $\mathrm{Supp}(\kappa e_{M_1})=\mathrm{Supp}(g^{-1}\varrho g)\subseteq\mathrm{Supp}(g^{-1}\rho_{k_i}g)=\mathrm{Supp}(h)\subseteq s_{e,f,M_1}.$ In particular, $\kappa e_{M_1}\in\sym{s_{e,f,M_1}}\leq \sym{M_1}$. Since $\kappa\in R_{M_1}\leq \sym{M_1}$, we get $e_{M_1}\in\mathfrak{S}_{M_1}$, which contradicts with the definition of $e_{M_1}$. So $\varrho\in gR_{M_1}g^{-1}$. As we choose $Q_{u,v,M_2,k_i}$ arbitrarily, we have $P_{s_{u,v,M_2}}\leq gR_{M_1}g^{-1}.$ As $gP_{M_1}g^{-1}=P_{M_2}$ and $P_{M_1}\leq N_{\sym{n}}(R_{M_1})$, if $u<v$, then $g^{-1}e_{M_2}g\in P_{M_1}$ and $P^{e_{M_2}}_{s_{u,v,M_2}}\leq gR_{M_1}g^{-1}$. As $P_{s_{u,v,M_2}}$ is chosen arbitrarily, we have shown $R_{M_2}\leq gR_{M_1}g^{-1}$. As $|R_{M_1}|=|R_{M_2}|$, we have $gR_{M_1}g^{-1}=R_{M_2}$.

\begin{enumerate}[\text{Step} 2:]
\item Prove that $e_{M_2}=g\rho e_{M_1}g^{-1}$ for some $\rho\in R_{M_1}$.
\end{enumerate}
If $e_{M_2}=g\rho g^{-1}$ for some $\rho\in R_{M_1}$, by $(8.1)$ and Step $1$, we have $P_{M_2}\leq gR_{M_1}g^{-1}$, which contradicts with the equality $|P_{M_2}|=|gP_{M_1}g^{-1}|=2|R_{M_1}|$. We are done.
\begin{enumerate}[\text{Step} 3:]
\item Prove that $g(\prod_{k=1}^mP_{s_{k,k,M_1}})g^{-1}=\prod_{k=1}^mP_{s_{k,k,M_2}}$.
\end{enumerate}
It suffices to show that $\prod_{k=1}^mP_{s_{k,k,M_2}}\leq g(\prod_{k=1}^mP_{s_{k,k,M_1}})g^{-1}$. For any $s_{u,u,M_2}\neq\varnothing$, by $(8.3)$, recall that $P_{s_{u,u,M_2}}=Q_{u,u,M_2,k_1}\times\cdots\times Q_{u,u,M_2,k_s}$ for unique $s\in \mathbb{N}$ and mutually distinct non-negative integers $k_1,\ldots,k_s$. We may assume that $Q_{u,u,M_2,k_i}\neq 1$ for all $1\leq i\leq s$. For any given $Q_{u,u,M_2,k_i}$, let $\rho_{k_i}$ be a $2^{k_i}$-cycle of $Q_{u,u,M_2,k_i}$. By Steps $1$ and $2$, we have $\rho_{k_i}=gzg^{-1}$ and $e_{M_2}=g\rho e_{M_1}g^{-1}$ for some $z$, $\rho\in R_{M_1}$. As $\rho_{k_i}$ is a $2^{k_i}$-cycle, there exist some $1\leq a,b\leq m$ such that $\mathrm{Supp}(z)\subseteq s_{a,b,M_1}$. As $\rho\in R_{M_1}$ and $e_{M_1}s_{a,b,M_1}=s_{b,a,M_1}$, notice that $\rho s_{b,a,M_1}=s_{b,a,M_1}$ and $\rho e_{M_1}s_{a,b,M_1}=s_{b,a,M_1}$. If $a\neq b$, as $\rho_{k_i}=e_{M_2}\rho_{k_i}e_{M_2}=g\rho e_{M_1}g^{-1}gzg^{-1}ge_{M_1}\rho^{-1}g^{-1}=g\rho e_{M_1}ze_{M_1}\rho^{-1}g^{-1}$, we have
$\mathrm{Supp}(\rho_{k_i})=\mathrm{Supp}(gzg^{-1})=\mathrm{Supp}(g\rho e_{M_1}ze_{M_1}\rho^{-1}g^{-1})\subseteq gs_{a,b,M_1}\cap gs_{b,a,M_1}=\varnothing$. Therefore, we get $\rho_{k_i}=1$, which is a contradiction as $Q_{u,u,M_2, k_i}\neq 1$. So we have $\mathrm{Supp}(\rho_{k_i})=\mathrm{Supp}(gzg^{-1})\subseteq gs_{a,a,M_1}$. For any $\tau\in Q_{u,u,M_2,k_i}$, we have $\mathrm{Supp}(\tau)\subseteq \mathrm{Supp}(\rho_{k_i})\subseteq gs_{a,a,M_1}$. By Step $1$, we get $\tau\in gP_{s_{a,a,M_1}}g^{-1}\leq g(\prod_{k=1}^mP_{s_{k,k,M_1}})g^{-1}$. As we choose $P_{s_{u,u,M_2}}$ and $Q_{u,u,M_2,k_i}$ arbitrarily, the desired inequality is shown.
\begin{enumerate}[\text{Step} 4:]
\item Obtain the conclusion.
\end{enumerate}
By Step $1$, $(8.1)$, $(8.3)$, the definitions of $T(M_1)$ and $T(M_2)$, it is clear to see that $T(M_1)=T(M_2)$. By Step $3$, $(8.1)$, $(8.3)$, the definitions of $D(M_1)$ and $D(M_2)$, also note that $D(M_1)=D(M_2)$. Since $M_1=M_1^t$ and $M_2=M_2^t$, recall that we have $T(M_i)=D(M_i)+2U(M_i)$ for any $i\in\{1,2\}$, where the computation is the usual computation of sequences. So $U(M_1)=U(M_2)$. The proof is now complete.
\end{proof}

We are ready to deduce the main result of this section.
\begin{thm}\label{T;conjugacy}
Let $1<m\in \mathbb{N}$ and $M_1$, $M_2\in \mathcal{M}_{m,n}$. Then $P_{M_1}$ is $\sym{n}$-conjugate to $P_{M_2}$ if and only if precisely one of the following cases occurs:
\begin{enumerate}[(i)]
\item [\em (i)] $P_{M_1}$, $P_{M_2}\in \mathcal{Y}_n$ and $T'(M_1)=T'(M_2)$;
\item [\em (ii)] $P_{M_1}$, $P_{M_2}\not\in \mathcal{Y}_n$, $D(M_1)=D(M_2)$ and $U(M_1)=U(M_2)$.
\end{enumerate}
\end{thm}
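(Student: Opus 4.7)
The plan is to assemble the four preparatory lemmas \ref{L;p=2Youngsubgroups}, \ref{L;T'(M)}, \ref{L;conjugacy}, and \ref{L;conjugacy1}, which between them encode all the technical content of the theorem. First I would record the structural observation that $P_M \in \mathcal{Y}_n$ whenever $M \neq M^t$ or $p > 2$: this is immediate from the first branch of $(8.1)$, since the factors $P_{s_{i,j,M}}$ live in the mutually disjoint symmetric groups $\sym{s_{i,j,M}}$ and their product is a Sylow $p$-subgroup of the Young subgroup $\prod_{i,j} \sym{s_{i,j,M}}$. Consequently, $P_M \notin \mathcal{Y}_n$ can occur only when $p = 2$ and $M = M^t$ (and then exactly when $U(M) \neq (0^a,1)$, by Lemma \ref{L;p=2Youngsubgroups}). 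Since $\mathcal{Y}_n$-membership is plainly invariant under $\sym{n}$-conjugation, cases (i) and (ii) are mutually exclusive, so at most one of them can hold for any given pair $(M_1,M_2)$.

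For the ``if'' direction I would verify each case in turn. Case (i) is exactly Lemma \ref{L;T'(M)}. For Case (ii), the hypothesis $P_{M_1},P_{M_2} \notin \mathcal{Y}_n$ together with the preliminary observation forces $p=2$ and $M_1 = M_1^t$, $M_2 = M_2^t$, so Lemma \ref{L;conjugacy} applies directly to give $\sym{n}$-conjugacy.

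For the ``only if'' direction, assume $P_{M_1}$ and $P_{M_2}$ are $\sym{n}$-conjugate. By the conjugation-invariance of $\mathcal{Y}_n$, either both groups lie in $\mathcal{Y}_n$ or neither does. In the first subcase Lemma \ref{L;T'(M)} yields $T'(M_1) = T'(M_2)$, giving (i). In the second subcase the structural observation forces $p=2$ with $M_1 = M_1^t$ and $M_2 = M_2^t$, and Lemma \ref{L;conjugacy1} then yields $D(M_1) = D(M_2)$ and $U(M_1) = U(M_2)$, giving (ii). Together with the mutual exclusivity noted above, this establishes that precisely one of (i), (ii) holds.

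The main conceptual and computational difficulty was already absorbed into Lemma \ref{L;conjugacy1}, where one had to show that a conjugating element $g$ necessarily respects the semidirect-product decomposition $R_M \rtimes \langle e_M \rangle$; with that technical backbone in hand, the theorem itself reduces to a clean bookkeeping step that packages $T'(M)$ as a complete conjugacy invariant in the Young case and the pair $(D(M), U(M))$ as a complete invariant in the non-Young case.
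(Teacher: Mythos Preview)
Your proposal is correct and follows essentially the same approach as the paper's proof, which likewise observes that $P_M\notin\mathcal{Y}_n$ forces $p=2$ and $M=M^t$ via $(8.1)$, and then invokes Lemmas \ref{L;T'(M)}, \ref{L;conjugacy}, and \ref{L;conjugacy1} to handle the two cases. Your write-up is more explicit about the mutual exclusivity and the bookkeeping, but the logical skeleton is identical.
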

\begin{proof}
If $P_{M_1}$ is $\sym{n}$-conjugate to $P_{M_2}$, it is clear that we have either $P_{M_1}$, $P_{M_2}\in \mathcal{Y}_n$ or $P_{M_1}$, $P_{M_2}\notin \mathcal{Y}_n$. When $P_{M_1}$, $P_{M_2}\notin \mathcal{Y}_n$. By $(8.1)$, we deduce that $p=2$, $M_1=M_1^t$ and $M_2=M_2^t$. Therefore, the theorem follows by Lemmas \ref{L;T'(M)}, \ref{L;conjugacy} and \ref{L;conjugacy1}.
\end{proof}
We close this section by providing an example of Theorem \ref{T;conjugacy}. In Example \ref{E;example2}, by Lemma \ref{L;p=2Youngsubgroups}, note that $P_{M_1}$, $P_{M_2}\notin \mathcal{Y}_{10}$. Since $D(M_1)\neq D(M_2)$ and $U(M_1)\neq U(M_2)$, Theorem \ref{T;conjugacy} tells us that $P_{M_1}$ is not $\sym{10}$-conjugate to $P_{M_2}$.
\section{Proof of Theorem \ref{T;D}}
In this section, we provide the proof of Theorem \ref{T;D}. Let $\lambda \vdash n$. For convenience, we shall mainly work on $\Lambda^2M^\lambda$ and show the corresponding results for $S^2M^\lambda$ similarly. For our purpose, recall the definitions in Notations \ref{N;notation} (v) and \ref{N;notation1}. Let $\sym{\lambda}\backslash\sym{n}/\sym{\lambda}$ be a fixed complete set of representatives of $(\sym{\lambda},\sym{\lambda})$-double cosets in $\sym{n}$. As usual, we require that $1\in \sym{\lambda}\backslash\sym{n}/\sym{\lambda}$.

To start our discussion, we introduce the required notation. Let $m\in \mathbb{N}$ and $M=(a_{i,j})\in \mathcal{M}_{m,n}$. We set $r(M)=(\sum_{i=1}^m a_{1,i},\ldots,\sum_{i=1}^m a_{m,i})$. Similarly, we put $c(M)=(\sum_{i=1}^ma_{i,1},\ldots,\sum_{i=1}^m a_{i,m})$. Let
$\lambda=(\lambda_1,\ldots,\lambda_\ell)\vdash n$ and $\lambda\neq(n)$. Define $\mathcal{M}_\lambda=\{M\in \mathcal{M}_{\ell,n}: \lambda=r(M)=c(M)\}$ and
$\mathbf{n}_i^\lambda=\{(\sum_{j=1}^{i-1}\lambda_j)+1,\ldots,\sum_{j=1}^{i}\lambda_j\}$ for all $1\leq i\leq \ell$. We shall repeatedly use the following theorem. It is in fact a special case of \cite[Lemma 1.3.8,\ Theorem 1.3.10]{GJ3}.
\begin{thm}\label{T;JamesKerber}
Let $\lambda\vdash n$ and $\lambda\neq (n)$. If $g\in \sym{\lambda}\backslash\sym{n}/\sym{\lambda}$ and $h\in \sym{\lambda}g\sym{\lambda}$, then $|\mathbf{n}_i^\lambda\cap h\mathbf{n}_{j}^\lambda|=|\mathbf{n}_i^\lambda\cap g\mathbf{n}_{j}^\lambda|$ for all $1\leq i,j\leq \ell(\lambda)$. Moreover, the map sending $g$ to $(|\mathbf{n}_i^\lambda\cap g\mathbf{n}_{j}^\lambda|)$ is a bijection between $(\sym{\lambda}\backslash\sym{n}/\sym{\lambda})\setminus\{1\}$ and $\mathcal{M}_\lambda$.
\end{thm}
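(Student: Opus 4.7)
The plan is to prove both assertions by working out the orbit structure of the $\sym{\lambda}$-action (on the left and right) on the sets $\mathbf{n}_i^\lambda$. I will use two key observations throughout: (a) every element of $\sym{\lambda}$ sends each $\mathbf{n}_i^\lambda$ to itself setwise, and (b) for any $g \in \sym{n}$, both $\{g\mathbf{n}_j^\lambda : 1 \leq j \leq \ell(\lambda)\}$ and $\{\mathbf{n}_i^\lambda : 1 \leq i \leq \ell(\lambda)\}$ are set partitions of $\mathbf{n}$, hence the intersection matrix $M(g) = (|\mathbf{n}_i^\lambda \cap g\mathbf{n}_j^\lambda|)$ has row sums and column sums both equal to $\lambda$.

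For the first assertion, I would write $h = agb$ with $a, b \in \sym{\lambda}$ and compute $\mathbf{n}_i^\lambda \cap h\mathbf{n}_j^\lambda = \mathbf{n}_i^\lambda \cap ag\mathbf{n}_j^\lambda$ (because $b\mathbf{n}_j^\lambda = \mathbf{n}_j^\lambda$) $= a(\mathbf{n}_i^\lambda \cap g\mathbf{n}_j^\lambda)$ (because $a^{-1}\mathbf{n}_i^\lambda = \mathbf{n}_i^\lambda$), whose cardinality equals $|\mathbf{n}_i^\lambda \cap g\mathbf{n}_j^\lambda|$. Using (b), this shows the map $g \mapsto M(g)$ is well-defined on double cosets and lands in $\mathcal{M}_\ell$ with row and column sums $\lambda$. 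Next I would verify that $M(g)$ is diagonal if and only if $g \in \sym{\lambda}$: the ``only if'' direction uses that a diagonal $M(g)$ with row sum $\lambda_i$ forces $g\mathbf{n}_i^\lambda \subseteq \mathbf{n}_i^\lambda$, hence $g$ preserves the partition; the converse is immediate. This confirms the map restricts to $(\sym{\lambda} \backslash \sym{n}/\sym{\lambda}) \setminus \{1\} \to \mathcal{M}_\lambda$.

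For the bijection, I would handle surjectivity first because it is constructive: given $M = (a_{i,j}) \in \mathcal{M}_\lambda$, partition each $\mathbf{n}_i^\lambda$ into subsets $S_{i,j}$ with $|S_{i,j}| = a_{i,j}$ (possible because $\sum_j a_{i,j} = \lambda_i$), partition each $\mathbf{n}_j^\lambda$ into subsets $T_{i,j}$ with $|T_{i,j}| = a_{i,j}$ (possible because $\sum_i a_{i,j} = \lambda_j$), pick arbitrary bijections $T_{i,j} \to S_{i,j}$, and glue them into $g \in \sym{n}$ with $g\mathbf{n}_j^\lambda \cap \mathbf{n}_i^\lambda = S_{i,j}$; non-diagonality of $M$ forces $g \notin \sym{\lambda}$.

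The main obstacle, and the most delicate step, is injectivity. Suppose $M(g) = M(h) = (a_{i,j})$. The strategy is to produce $\sigma \in \sym{\lambda}$ with $\sigma g \sym{\lambda} = h\sym{\lambda}$. Since $|g\mathbf{n}_j^\lambda \cap \mathbf{n}_i^\lambda| = |h\mathbf{n}_j^\lambda \cap \mathbf{n}_i^\lambda| = a_{i,j}$ for every $(i,j)$, I would choose, for each pair $(i,j)$, a bijection from $g\mathbf{n}_j^\lambda \cap \mathbf{n}_i^\lambda$ to $h\mathbf{n}_j^\lambda \cap \mathbf{n}_i^\lambda$ and assemble these into a single permutation $\sigma$ of $\mathbf{n}$. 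The key point is that the sets $g\mathbf{n}_j^\lambda \cap \mathbf{n}_i^\lambda$ (as $j$ varies) partition $\mathbf{n}_i^\lambda$, and similarly for $h$, so $\sigma$ preserves each $\mathbf{n}_i^\lambda$, i.e., $\sigma \in \sym{\lambda}$. By construction $\sigma g \mathbf{n}_j^\lambda = h\mathbf{n}_j^\lambda$ for every $j$, hence $h^{-1}\sigma g$ preserves every $\mathbf{n}_j^\lambda$ and lies in $\sym{\lambda}$, giving $\sym{\lambda} g \sym{\lambda} = \sym{\lambda} h \sym{\lambda}$. This finishes the bijection.
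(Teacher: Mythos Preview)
Your proof is correct. The paper itself does not prove this theorem at all; it simply records it as a special case of \cite[Lemma 1.3.8, Theorem 1.3.10]{GJ3} (James--Kerber) and moves on. What you have written is essentially the standard argument that appears in that reference: the invariance of the intersection numbers under left and right multiplication by $\sym{\lambda}$ is exactly how well-definedness is established there, and your constructive surjectivity together with the $\sigma$-gluing argument for injectivity is the classical way to parametrise $(\sym{\lambda},\sym{\lambda})$-double cosets by matrices with prescribed row and column sums. So there is no genuine difference in approach to report --- you have supplied the proof the paper chose to outsource.
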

Let $\lambda\vdash n$, $\lambda\neq (n)$ and $M=(a_{i,j})\in\mathcal{M}_\lambda$. By Theorem \ref{T;JamesKerber}, there exists the unique $g\in(\sym{\lambda}\backslash\sym{n}/\sym{\lambda})\setminus\{1\}$ such that $|\mathbf{n}_i^\lambda\cap g\mathbf{n}_{j}^\lambda|=a_{i,j}$ for all $1\leq i,j\leq \ell(\lambda)$. In particular, we have $\{t^\lambda\}\neq g\{t^\lambda\}$. Set $t_{M,s}=\{t^{\lambda}\}\odot g\{t^\lambda\}$ and $t_{M,e}=\{t^{\lambda}\}\wedge g\{t^\lambda\}$. According to Notation \ref{N;notation} (v), note that $t_{M,s}\in \mathcal{T}(\lambda)_2^s$ and $t_{M,e}\in \mathcal{T}(\lambda)_2^e$.
\begin{lem}\label{L;Orbits}
Let $\lambda\vdash n$, $\lambda\neq (n)$ and $M\in \mathcal{M}_\lambda$. Then $\mathcal{O}^e(t_{M,e})=\mathcal{O}^e(t_{M^t,e})$. In particular, $W(t_{M,e})=W(t_{M^t,e})$.
\end{lem}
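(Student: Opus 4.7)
My plan is to show that $t_{M^t,e}$ lies in $\mathcal{O}^e(t_{M,e})$; by symmetry in $M$ and $M^t$ this gives the equality of orbits, and then $W(t_{M,e})=W(t_{M^t,e})$ follows at once from the definition $W(t)=\langle\mathcal{O}^e(t)\rangle_\F$.

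First I would identify the double coset representative corresponding to $M^t$. Let $g\in(\sym{\lambda}\backslash\sym{n}/\sym{\lambda})\setminus\{1\}$ be the representative attached to $M$, so $|\mathbf{n}_i^\lambda\cap g\mathbf{n}_j^\lambda|=a_{i,j}$. Then $|\mathbf{n}_j^\lambda\cap g^{-1}\mathbf{n}_i^\lambda|=|g\mathbf{n}_j^\lambda\cap \mathbf{n}_i^\lambda|=a_{i,j}$, which is exactly the $(j,i)$-entry of $M^t$. Hence $g^{-1}$ produces the matrix $M^t$. By Theorem \ref{T;JamesKerber}, $g^{-1}$ lies in the unique double coset $\sym{\lambda} h\sym{\lambda}$, where $h$ is the representative assigned to $M^t$. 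Note that $M^t\in\mathcal{M}_\lambda$ because $r(M^t)=c(M)=\lambda$, $c(M^t)=r(M)=\lambda$, and $M^t$ is non-diagonal iff $M$ is (which is ensured by $\mathcal{M}_\lambda\subseteq\mathcal{M}_{\ell,n}$).

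Next, write $g^{-1}=\sigma h\tau$ with $\sigma,\tau\in\sym{\lambda}$. Since $\tau$ fixes the tabloid $\{t^\lambda\}$, we obtain $g^{-1}\{t^\lambda\}=\sigma h\{t^\lambda\}$. Now I compute
\begin{align*}
g^{-1}\,t_{M,e}&=g^{-1}\bigl(\{t^\lambda\}\wedge g\{t^\lambda\}\bigr)=g^{-1}\{t^\lambda\}\wedge\{t^\lambda\}=-\{t^\lambda\}\wedge g^{-1}\{t^\lambda\}\\
&=-\{t^\lambda\}\wedge \sigma h\{t^\lambda\}=-\sigma\bigl(\{t^\lambda\}\wedge h\{t^\lambda\}\bigr)=-\sigma\,t_{M^t,e},
\end{align*}
using $\sigma\{t^\lambda\}=\{t^\lambda\}$ in the penultimate step. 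Therefore $k\,t_{M,e}=-t_{M^t,e}$ where $k=\sigma^{-1}g^{-1}\in\sym{n}$, which by the definition of $\mathcal{O}^e$ in Notation \ref{N;notation} (v) gives $t_{M^t,e}\in\mathcal{O}^e(t_{M,e})$.

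Finally, interchanging the roles of $M$ and $M^t$ (the argument is entirely symmetric, replacing $g$ by $h$) yields $t_{M,e}\in\mathcal{O}^e(t_{M^t,e})$, and thus $\mathcal{O}^e(t_{M,e})=\mathcal{O}^e(t_{M^t,e})$. The equality $W(t_{M,e})=W(t_{M^t,e})$ is immediate. The only subtlety to watch out for is the sign change induced by the anti-commutativity of $\wedge$, but that is precisely what the $\pm$ in the definition of $\mathcal{O}^e$ is designed to absorb, so no obstruction arises.
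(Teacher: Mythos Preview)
Your proof is correct and follows essentially the same route as the paper: both identify $g^{-1}$ as lying in the double coset attached to $M^t$, write $g^{-1}=\sigma h\tau$ with $\sigma,\tau\in\sym{\lambda}$, and then compute $(\sigma^{-1}g^{-1})t_{M,e}=-t_{M^t,e}$ using the anti-commutativity of $\wedge$ and the fact that $\sigma,\tau$ fix $\{t^\lambda\}$. The only cosmetic difference is that the paper concludes equality of the orbits directly from $t_{M^t,e}\in\mathcal{O}^e(t_{M,e})$ (since these sets are orbits and hence either equal or disjoint), whereas you invoke symmetry in $M$ and $M^t$; either way closes the argument.
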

\begin{proof}
Let $M=(a_{i,j})$ and $M^t=(b_{i,j})$. By Theorem \ref{T;JamesKerber}, there exists the unique $g\in(\sym{\lambda}\backslash\sym{n}/\sym{\lambda})\setminus\{1\}$ such that $t_{M,e}=\{t^\lambda\}\wedge g\{t^\lambda\}$ and $|\mathbf{n}_i^\lambda\cap g\mathbf{n}_{j}^\lambda|=a_{i,j}$ for all $1\leq i,j\leq \ell(\lambda)$. Note that $b_{i,j}=|\mathbf{n}_j^\lambda\cap g\mathbf{n}_i^\lambda|=|g^{-1}\mathbf{n}_j^\lambda\cap \mathbf{n}_i^\lambda|=|\mathbf{n}_i^\lambda\cap  g^{-1}\mathbf{n}_j^\lambda|$ for all $1\leq i,j\leq \ell(\lambda)$. By Theorem \ref{T;JamesKerber}, there exists the unique $h\in(\sym{\lambda}\backslash\sym{n}/\sym{\lambda})\setminus\{1\}$ such that $g^{-1}\in\sym{\lambda}h\sym{\lambda}$ and $t_{M^t,e}=\{t^\lambda\}\wedge h\{t^\lambda\}$. Therefore, $g^{-1}=uhv$ for some $u$, $v\in \sym{\lambda}$. Let $x=(gu)^{-1}\in\sym{n}$ and notice that $u\{t^\lambda\}=v\{t^\lambda\}=\{t^\lambda\}$. We thus have
\begin{align*}
xt_{M,e}=(u^{-1}g^{-1})(\{t^\lambda\}\wedge g\{t^\lambda\})&=u^{-1}(g^{-1}\{t^\lambda\}\wedge\{t^\lambda\})\\
&=-u^{-1}(\{t^\lambda\}\wedge g^{-1}\{t^\lambda\})\\
&=-u^{-1}(\{t^\lambda\}\wedge uhv\{t^\lambda\})\\
&=-(\{t^\lambda\}\wedge h\{t^\lambda\})=-t_{M^t,e},
\end{align*}
which shows that $\mathcal{O}^e(t_{M,e})=\O^e(t_{M^t,e})$ by the definitions of $\mathcal{O}^e(t_{M,e})$ and $\mathcal{O}^e(t_{M^t,e})$. The second equality also follows by the definitions of $W(t_{M,e})$ and $W(t_{M^t,e})$.
\end{proof}
Let $\lambda\vdash n$ and $\{t\}\in\mathcal{T}(\lambda)$. Recall that $R(\{t\})=\{g\in\sym{n}: g\{t\}=\{t\}\}$.
\begin{lem}\label{L;Orbits1}
Let $\lambda\vdash n$, $\lambda\neq (n)$ and $M_1$, $M_2\in \mathcal{M}_\lambda$. If $M_1\neq M_2$, then   we have $\O^e(t_{M_1,e})=\O^e(t_{M_2,e})$ if and only if $M_1^t=M_2$. In particular, if $M_1\neq M_2$, then we have $W(t_{M_1,e})=W(t_{M_2,e})$ if and only if $M_1^t=M_2$.
\end{lem}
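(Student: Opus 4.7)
The ``if'' direction is immediate from Lemma \ref{L;Orbits}: if $M_1^t=M_2$, then
$\mathcal{O}^e(t_{M_1,e})=\mathcal{O}^e(t_{M_1^t,e})=\mathcal{O}^e(t_{M_2,e})$, and hence $W(t_{M_1,e})=W(t_{M_2,e})$ by the definition of $W$.

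For the ``only if'' direction, the plan is to unravel what $\O^e(t_{M_1,e})=\O^e(t_{M_2,e})$ means combinatorially. By Theorem \ref{T;JamesKerber}, for each $k\in\{1,2\}$ there is a unique $g_k\in(\sym{\lambda}\backslash\sym{n}/\sym{\lambda})\setminus\{1\}$ with $t_{M_k,e}=\{t^\lambda\}\wedge g_k\{t^\lambda\}$ and $|\mathbf{n}_i^\lambda\cap g_k\mathbf{n}_j^\lambda|=(M_k)_{i,j}$. The assumption gives some $h\in\sym{n}$ with $ht_{M_1,e}=\pm t_{M_2,e}$, which forces the multiset of tabloids $\{h\{t^\lambda\},hg_1\{t^\lambda\}\}$ to equal $\{\{t^\lambda\},g_2\{t^\lambda\}\}$. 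This splits into two cases according to which tabloid $h\{t^\lambda\}$ lands on.

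In Case (a), $h\{t^\lambda\}=\{t^\lambda\}$ and $hg_1\{t^\lambda\}=g_2\{t^\lambda\}$. Then $h\in R(\{t^\lambda\})=\sym{\lambda}$ and $g_2^{-1}hg_1\in\sym{\lambda}$, so $g_1\in\sym{\lambda}g_2\sym{\lambda}$; as $g_1,g_2$ are chosen from a fixed set of double-coset representatives, this gives $g_1=g_2$ and therefore $M_1=M_2$, contradicting the hypothesis $M_1\neq M_2$. In Case (b), $h\{t^\lambda\}=g_2\{t^\lambda\}$ and $hg_1\{t^\lambda\}=\{t^\lambda\}$, which yields $g_2^{-1}h\in\sym{\lambda}$ and $hg_1\in\sym{\lambda}$. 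Combining, $g_1\in\sym{\lambda}g_2^{-1}\sym{\lambda}$, so the matrix associated to $g_1$ by Theorem \ref{T;JamesKerber} equals the matrix associated to $g_2^{-1}$. The final ingredient is the identity $|\mathbf{n}_i^\lambda\cap g_2^{-1}\mathbf{n}_j^\lambda|=|\mathbf{n}_j^\lambda\cap g_2\mathbf{n}_i^\lambda|=(M_2)_{j,i}$, already used inside the proof of Lemma \ref{L;Orbits}, which identifies the matrix of $g_2^{-1}$ as $M_2^t$. Hence $M_1=M_2^t$, i.e.\ $M_1^t=M_2$, as required.

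The last assertion about $W(t_{M_1,e})=W(t_{M_2,e})$ follows by applying the already-proven equivalence together with Lemma \ref{L;Orbits}: the $\Leftarrow$ part is Lemma \ref{L;Orbits}, and for $\Rightarrow$, equality of the spans forces equality of the $\F$-bases $\O^e(t_{M_1,e})$ and $\O^e(t_{M_2,e})$ up to signs, which is precisely the condition already handled. The only genuine subtlety is the sign bookkeeping in Case (b) (an extra $-1$ appears when swapping the two wedge factors), but this does not affect the argument because $\O^e$ is defined to be closed under negation; no other step presents a real obstacle.
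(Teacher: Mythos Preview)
Your argument is correct and follows essentially the same route as the paper's proof: both set up $t_{M_k,e}=\{t^\lambda\}\wedge g_k\{t^\lambda\}$, use the orbit equality to match the unordered pairs of tabloids, rule out the ``same-order'' case via $g_1\neq g_2$, and then in the ``swapped'' case deduce $g_1\in\sym{\lambda}g_2^{-1}\sym{\lambda}$ and identify the matrix of $g_2^{-1}$ as $M_2^t$. One small imprecision: $\O^e(t)$ is a subset of the fixed basis $\mathcal{T}(\lambda)_2^e$, so equality of spans forces exact equality of the orbits (not merely ``up to signs''), and $\O^e$ is not literally closed under negation---but this does not affect your conclusion.
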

\begin{proof}
By Lemma \ref{L;Orbits}, it suffices to show that the equality $\O^e(t_{M_1,e})=\O^e(t_{M_2,e})$ implies that $M_1^t=M_2$. Let $t_{M_1,e}=\{t^\lambda\}\wedge g_1\{t^\lambda\}$ and $t_{M_2,e}=\{t^\lambda\}\wedge g_2\{t^\lambda\}$, where $g_1$, $g_2\in (\sym{\lambda}\backslash\sym{n}/\sym{\lambda})\setminus\{1\}$ and $g_1\neq g_2$. If $\O^e(t_{M_1,e})=\O^e(t_{M_2,e})$, we claim that $g_1^{-1}\in \sym{\lambda}g_2\sym{\lambda}$. As $g_1\neq g_2$ and $\O^e(t_{M_1,e})=\O^e(t_{M_2,e})$, by the definitions of $\O^e(t_{M_1,e})$ and $\O^e(t_{M_2,e})$, we have $gt_{M_2,e}=-t_{M_1,e}$ for some $g\in\sym{n}$. So $g\{t^\lambda\}=g_1\{t^\lambda\}$ and $gg_2\{t^\lambda\}=\{t^\lambda\}$. Therefore, $g_1^{-1}g=u$ and $gg_2=v$, where $u$, $v\in R(\{t^{\lambda}\})=\sym{\lambda}$. We thus have   $g_1^{-1}=ug^{-1}vv^{-1}=ug^{-1}gg_2v^{-1}=ug_2v^{-1}\in\sym{\lambda}g_2\sym{\lambda}$. The claim is shown. Let $M_1=(a_{i,j})$ and $M_2=(b_{i,j})$. By this claim and Theorem \ref{T;JamesKerber}, for any $1\leq i,j\leq \ell(\lambda)$, $b_{i,j}=|\mathbf{n}_i^\lambda\cap g_1^{-1}\mathbf{n}_j^\lambda|=|g_1\mathbf{n}_i^\lambda\cap \mathbf{n}_j^\lambda|=|\mathbf{n}_j^\lambda\cap g_1\mathbf{n}_i^\lambda|=a_{j,i}$. So $M_1^t=M_2$ and the first assertion is shown. By Notation \ref{N;notation} (v), note that $W(t_{M_1,e})=W(t_{M_2,e})$ if and only if $\O^e(t_{M_1,e})=\O^e(t_{M_2,e})$. The second assertion follows by the first one.
\end{proof}
\begin{lem}\label{L;Orbits2}
Let $\lambda\vdash n$, $\lambda\neq (n)$ and $M\in \mathcal{M}_\lambda$. Let  $g\in (\sym{\lambda}\backslash\sym{n}/\sym{\lambda})\setminus\{1\}$ and $t_{M,e}=\{t^\lambda\}\wedge g\{t^\lambda\}$. Then $L(t_{M,e})=(R(\{t^\lambda\})\cap R(g\{t^\lambda\}))\rtimes\langle z\rangle$ for some involution $z\in\sym{n}$ if and only if $M=M^t$.
\end{lem}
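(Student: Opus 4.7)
The plan is to chain three equivalences, reducing the semidirect-product form of $L(t_{M,e})$ first to a swap question about the $\lambda$-tabloids $\{t^\lambda\}$ and $g\{t^\lambda\}$, then to a double coset identity in $\sym{n}$, and finally, via the bijection of Theorem~\ref{T;JamesKerber}, to the symmetry $M=M^t$.

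First I would invoke the dichotomy already established in Lemmas~\ref{L;subgroup3} and \ref{L;subgroup4}: the group $L(t_{M,e})$ has exactly two possible shapes; it equals $R(\{t^\lambda\})\cap R(g\{t^\lambda\})$ when no element of $L(t_{M,e})$ swaps $\{t^\lambda\}$ with $g\{t^\lambda\}$, and it equals $(R(\{t^\lambda\})\cap R(g\{t^\lambda\}))\rtimes\langle z\rangle$ for a suitable involution $z$ otherwise. Since the semidirect product in the statement forces the index of $R(\{t^\lambda\})\cap R(g\{t^\lambda\})$ in $L(t_{M,e})$ to be $2$, the statement reduces to showing that there exists some $h\in L(t_{M,e})$ with $h\{t^\lambda\}=g\{t^\lambda\}$ and $hg\{t^\lambda\}=\{t^\lambda\}$ if and only if $M=M^t$.

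For the forward direction of this reduced equivalence, any such $h$ satisfies $h\in g\sym{\lambda}$ (from $h\{t^\lambda\}=g\{t^\lambda\}$ together with $R(\{t^\lambda\})=\sym{\lambda}$) and also $h\in\sym{\lambda}g^{-1}$ (from $hg\{t^\lambda\}=\{t^\lambda\}$); equating these memberships yields $g^{-1}\in\sym{\lambda}g\sym{\lambda}$. For the reverse direction, given $g^{-1}=vgu$ with $u,v\in\sym{\lambda}$, I would set $h=gu=v^{-1}g^{-1}$ and verify directly that $h\{t^\lambda\}=g\{t^\lambda\}$ and $hg\{t^\lambda\}=v^{-1}\{t^\lambda\}=\{t^\lambda\}$, so that $ht_{M,e}=-t_{M,e}$ places $h$ in $L(t_{M,e})$ and produces a swap, after which Lemma~\ref{L;subgroup4} delivers the desired involution $z$.

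To close the loop, I would translate $g^{-1}\in\sym{\lambda}g\sym{\lambda}$ into the symmetry of $M$ using the identity
\[
|\mathbf{n}_i^\lambda\cap g^{-1}\mathbf{n}_j^\lambda|=|g\mathbf{n}_i^\lambda\cap\mathbf{n}_j^\lambda|=|\mathbf{n}_j^\lambda\cap g\mathbf{n}_i^\lambda|,
\]
which shows that the matrix in $\mathcal{M}_\lambda$ associated to $g^{-1}$ is exactly $M^t$. Since Theorem~\ref{T;JamesKerber} provides a bijection between $(\sym{\lambda}\backslash\sym{n}/\sym{\lambda})\setminus\{1\}$ and $\mathcal{M}_\lambda$, the membership $g^{-1}\in\sym{\lambda}g\sym{\lambda}$ is equivalent to $M=M^t$. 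There is no substantive obstacle here; the only point that deserves a careful word is that the two shapes of $L(t_{M,e})$ supplied by Lemmas~\ref{L;subgroup3} and \ref{L;subgroup4} are mutually exclusive, so that the existence of a genuine semidirect decomposition with involution $z$ indeed forces an element of $L(t_{M,e})$ to interchange $\{t^\lambda\}$ and $g\{t^\lambda\}$.
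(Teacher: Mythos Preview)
Your proposal is correct and follows essentially the same strategy as the paper: reduce via Lemmas~\ref{L;subgroup3} and \ref{L;subgroup4} to the existence of a swap, and link this to $M=M^t$ through the double coset bijection of Theorem~\ref{T;JamesKerber}. The reverse direction is identical to the paper's (find $h=gu$ from $g^{-1}=vgu$ and apply Lemma~\ref{L;subgroup4}).

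The only noteworthy difference is in the forward direction. You argue uniformly through the double coset: from a swapping element $h$ you extract $h\in g\sym{\lambda}\cap\sym{\lambda}g^{-1}$, hence $g^{-1}\in\sym{\lambda}g\sym{\lambda}$, and then the bijection gives $M=M^t$. The paper instead works combinatorially with the involution $z$ itself: since $z$ normalizes $R=\prod_{i,j}\sym{\mathbf{n}_i^\lambda\cap g\mathbf{n}_j^\lambda}$ it permutes the blocks $\mathbf{n}_i^\lambda\cap g\mathbf{n}_j^\lambda$, and tracking rows under $z\{t^\lambda\}=g\{t^\lambda\}$ together with $z^2=1$ shows $z$ sends the $(i,j)$ block to the $(j,i)$ block, whence $a_{i,j}=a_{j,i}$ directly. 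Your route is cleaner and symmetric between the two directions; the paper's route yields the extra structural information that $z$ literally swaps the $(i,j)$ and $(j,i)$ blocks, a fact used later (e.g.\ in the proof of Proposition~\ref{P;Orbits3}(ii)) to build an explicit involution with that block-swapping property.
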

\begin{proof}
Let $M=(a_{i,j})$ and $R=R(\{t^\lambda\})\cap R(g\{t^\lambda\})$. Note that
\begin{align}
R=\sym{\lambda}\cap g\sym{\lambda}g^{-1}=\prod_{1\leq i,j\leq\ell(\lambda)} \sym{\mathbf{n}_i^\lambda\cap g\mathbf{n}_j^\lambda}.
\end{align}
Moreover, for any $1\leq i\leq \ell(\lambda)$, the $i$th row of $\{t^\lambda\}$ contains exactly all the members of $\mathbf{n}_i^\lambda$. Similarly, the $i$th row of $g\{t^\lambda\}$ contains exactly all the members of $g\mathbf{n}_i^\lambda$. If $L(t_{M,e})=R\rtimes \langle z\rangle$ and $z^2=1$, as $z$ normalizes $R$, by $(9.1)$, given $1\leq i,j\leq \ell(\lambda)$, note that $z(\mathbf{n}_i^\lambda\cap g\mathbf{n}_j^\lambda)=\mathbf{n}_x^\lambda\cap g\mathbf{n}_y^\lambda$ for some $1\leq x,y\leq \ell(\lambda)$. Moreover, by Lemmas \ref{L;subgroup3} and \ref{L;subgroup4}, also note that $z$ swaps $\{t^\lambda\}$ and $g\{t^\lambda\}$. For any $1\leq i,j\leq\ell(\lambda)$, $\mathbf{n}_i^\lambda\cap g\mathbf{n}_j^\lambda$ lies in the $i$th row of $\{t^\lambda\}$. As $z\{t^\lambda\}=g\{t^\lambda\}$, there exists some $1\leq k\leq \ell(\lambda)$ such that $z(\mathbf{n}_i^\lambda\cap g\mathbf{n}_j^\lambda)=\mathbf{n}_k^\lambda\cap g\mathbf{n}_i^\lambda$. Since $z^2=1$, we also have  $z(\mathbf{n}_k^\lambda\cap g\mathbf{n}_i^\lambda)=\mathbf{n}_i^\lambda\cap g\mathbf{n}_j^\lambda$. This forces that $j=k$ as $\mathbf{n}_k^\lambda\cap g\mathbf{n}_i^\lambda$ is in the $k$th row of $\{t^\lambda\}$. Therefore, by the definition of $t_{M,e}$ and Theorem \ref{T;JamesKerber}, we have $a_{i,j}=|\mathbf{n}_i^\lambda\cap g\mathbf{n}_j^\lambda|=|\mathbf{n}_j^\lambda\cap g\mathbf{n}_i^\lambda|=a_{j,i}$ for any $1\leq i,j\leq \ell(\lambda)$. We thus have $M=M^t$.

Conversely, we claim that $h\{t^\lambda\}=g\{t^\lambda\}$ and $hg\{t^\lambda\}=\{t^\lambda\}$ for some $h\in \sym{n}$. As $M=M^t$, by the definition of $t_{M,e}$ and Theorem \ref{T;JamesKerber}, for any  $1\leq i,j\leq \ell(\lambda)$,
\begin{align}
|\mathbf{n}_i^\lambda\cap g\mathbf{n}_j^\lambda|=a_{i,j}=a_{j,i}=|\mathbf{n}_j^\lambda\cap g\mathbf{n}_i^\lambda|=|g^{-1}\mathbf{n}_j^\lambda\cap \mathbf{n}_i^\lambda|=|\mathbf{n}_i^\lambda\cap g^{-1}\mathbf{n}_j^\lambda|.
\end{align}
By $(9.2)$ and Theorem \ref{T;JamesKerber}, note that $g^{-1}\in \sym{\lambda}g\sym{\lambda}$. So $g^{-1}=ugv$ for some $u$, $v\in\sym{\lambda}$. Set $h=(gu)^{-1}$. As $R(\{t^\lambda\})=\sym{\lambda}$, we have
\begin{align*}
h\{t^\lambda\}=u^{-1}g^{-1}\{t^\lambda\}=u^{-1}ugv\{t^\lambda\}=g\{t^\lambda\}\ \text{and}\
hg\{t^\lambda\}=u^{-1}g^{-1}g\{t^\lambda\}=\{t^\lambda\}.
\end{align*}
Therefore, $h$ swaps $\{t^\lambda\}$ and $g\{t^\lambda\}$ and the claim is shown. By this claim and Lemma \ref{L;subgroup4}, we deduce that $L(t_{M,e})=R\rtimes \langle z\rangle$ for some involution $z\in\sym{n}$.
\end{proof}
\begin{lem}\label{L;symmetricpowercase}
Let $\lambda\vdash n$, $\lambda\neq (n)$ and $M\in \mathcal{M}_\lambda$.
\begin{enumerate}[(i)]
\item [\em (i)] Let $N\in \mathcal{M}_\lambda$. Then $V(t_{M,s})=V(t_{N,s})$ if and only if $N=M$ or $N=M^t$.
\item [\em (ii)]Let $g\in (\sym{\lambda}\backslash \sym{n}/\sym{\lambda})\setminus \{1\}$ and $t_{M,s}=\{t^\lambda\}\odot g\{t^\lambda\}$. Then we have $M=M^t$ if and only if $K(t_{M,s})=(R(\{t^\lambda\})\cap R(g\{t^\lambda\}))\rtimes\langle z\rangle$ for some involution $z\in\sym{n}$.
\end{enumerate}
\end{lem}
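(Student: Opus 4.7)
The plan is to adapt the proofs of Lemmas \ref{L;Orbits}, \ref{L;Orbits1}, and \ref{L;Orbits2} to the symmetric-square setting. For $M\in\mathcal{M}_\lambda$, write $g_M$ for the unique nontrivial double-coset representative provided by Theorem \ref{T;JamesKerber}, so that $t_{M,s}=\{t^\lambda\}\odot g_M\{t^\lambda\}$. The key observation underlying both parts is that, since $\odot$ is unordered and $g_M\neq 1$ makes $\{t^\lambda\}$ and $g_M\{t^\lambda\}$ distinct, every $h\in K(t_{M,s})$ must either fix both components individually (equivalently, $h\in R(\{t^\lambda\})\cap R(g_M\{t^\lambda\})$) or swap them. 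This is the symmetric-square version of the dichotomy already handled by Lemmas \ref{L;subgroup3} and \ref{L;subgroup4}.

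For (i), the ``if'' direction reduces to showing $V(t_{M,s})=V(t_{M^t,s})$: following Lemma \ref{L;Orbits}, I would write $g_M^{-1}=ug_{M^t}v$ with $u,v\in\sym{\lambda}$ and set $x=(g_M u)^{-1}$. Then $xt_{M,s}=u^{-1}(g_M^{-1}\{t^\lambda\}\odot\{t^\lambda\})=\{t^\lambda\}\odot g_{M^t}\{t^\lambda\}=t_{M^t,s}$, where the symmetry of $\odot$ replaces the sign appearing in the exterior analogue. For the converse, given $V(t_{M,s})=V(t_{N,s})$ with $N\neq M$, pick $g\in\sym{n}$ with $gt_{N,s}=t_{M,s}$. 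The dichotomy combined with the uniqueness in Theorem \ref{T;JamesKerber} excludes the fix-both case (which would force $g_M=g_N$), so $g$ swaps the two components, yielding $g_N\in\sym{\lambda}g_M^{-1}\sym{\lambda}$; the entry-counting of Lemma \ref{L;Orbits1} then gives $N=M^t$.

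For (ii), the ``only if'' direction mimics Lemma \ref{L;Orbits2} with $\wedge$ replaced by $\odot$: $M=M^t$ together with Theorem \ref{T;JamesKerber} gives $g^{-1}\in\sym{\lambda}g\sym{\lambda}$, producing an explicit element swapping $\{t^\lambda\}$ and $g\{t^\lambda\}$, and Lemma \ref{L;subgroup4} then supplies the desired involution $z$ and the semidirect-product description. For the ``if'' direction, the hypothesized decomposition forces $|K(t_{M,s})|=2|R(\{t^\lambda\})\cap R(g\{t^\lambda\})|$, so $z$ lies outside the intersection and, by the dichotomy, must swap the two components. Writing $z=gu=vg^{-1}$ with $u,v\in\sym{\lambda}$ gives $g^{-1}\in\sym{\lambda}g\sym{\lambda}$, and Theorem \ref{T;JamesKerber} then yields $a_{i,j}=|\mathbf{n}_i^\lambda\cap g\mathbf{n}_j^\lambda|=|\mathbf{n}_i^\lambda\cap g^{-1}\mathbf{n}_j^\lambda|=a_{j,i}$, i.e., $M=M^t$.

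The only genuinely non-routine point is the ``if'' half of (ii): the abstract semidirect-product hypothesis does not immediately say that $z$ swaps the two components, only that $K(t_{M,s})$ strictly contains $R(\{t^\lambda\})\cap R(g\{t^\lambda\})$. The two-element-orbit dichotomy rescues us, since any element outside that intersection must exchange $\{t^\lambda\}$ and $g\{t^\lambda\}$. Once this point is established, everything else is a direct translation of the exterior-square arguments of Section $6$.
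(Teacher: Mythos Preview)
Your proposal is correct and follows essentially the same approach as the paper: the paper's proof literally reads ``by mimicking the proof of Lemma~\ref{L;Orbits}\ldots\ we now can show (i) and (ii) by mimicking the proofs given in Lemmas~\ref{L;Orbits1} and~\ref{L;Orbits2} respectively,'' and that is exactly what you do. Your handling of the ``if'' direction of (ii) via the double-coset relation $g^{-1}\in\sym{\lambda}g\sym{\lambda}$ is a slight streamlining of the orbit-permutation argument in Lemma~\ref{L;Orbits2}, but it is equivalent; note only that Lemmas~\ref{L;Orbits}, \ref{L;Orbits1}, \ref{L;Orbits2} live in Section~9, not Section~6.
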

\begin{proof}
By mimicking the proof of Lemma \ref{L;Orbits}, we prove that $\O^s(t_{M,s})=\O^s(t_{M^t,s})$ and $V(t_{M,s})=V(t_{M^t,s})$. We now can show (i) and (ii) by mimicking the proofs given in Lemmas \ref{L;Orbits1} and \ref{L;Orbits2} respectively.
\end{proof}
Let $\lambda\vdash n$ and $\lambda\neq (n)$. For our following results, let $\mathcal{S}_\lambda=\{M\in \mathcal{M}_\lambda: M=M^t\}$. If $\mathcal{S}_\lambda\varsubsetneqq\mathcal{M}_\lambda$, note that $2\mid |\mathcal{M}_\lambda|-|\mathcal{S}_\lambda|$ and let
$2x_\lambda+|\mathcal{S}_\lambda|=|\mathcal{M}_\lambda|$. Pick $M_i\in \mathcal{M}_\lambda\setminus \mathcal{S}_\lambda$ for each $1\leq i\leq x_\lambda$ and require that $\{M_i,M_i^t\}\cap\{M_j,M_j^t\}=\varnothing$ for all $1\leq i\neq j\leq x_\lambda$.
Set $\mathcal{N}_{\lambda}=\{M_i: 1\leq i\leq x_\lambda\}$. If $\mathcal{S}_\lambda=\mathcal{M}_\lambda$, put $\mathcal{N}_\lambda=\varnothing$.
\begin{prop}\label{P;Orbits3}
Let $\lambda\vdash n$ and $\lambda\neq (n)$.
\begin{enumerate}[(i)]
\item [\em (i)] We have
$\Lambda^2M^\lambda=\bigoplus_{M\in{\mathcal{S}_\lambda\cup\mathcal{N}_\lambda}}\!\!W(t_{M,e})$.
\item [\em (ii)] If $M\in\mathcal{S}_\lambda\cup\mathcal{N}_\lambda$, then $P_M$ is $\sym{n}$-conjugate to a Sylow $p$-subgroup of $L(t_{M,e})$.
\end{enumerate}
\end{prop}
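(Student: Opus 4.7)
For part (i), the plan is to enumerate the $\sym{n}$-orbits on $\mathcal{T}(\lambda)_2^e$ (under the signed action from Notation \ref{N;notation} (v)) via Theorem \ref{T;JamesKerber}, and then to collapse the labelling by the transpose relation. By $(5.2)$, $\Lambda^2M^\lambda$ decomposes as a direct sum of modules $W(t)$ indexed by one representative per orbit. Given any $\{u\}\wedge\{v\}\in\mathcal{T}(\lambda)_2^e$, the transitivity of $\sym{n}$ on $\mathcal{T}(\lambda)$ lets me assume $\{u\}=\{t^\lambda\}$, so the orbit contains some $\{t^\lambda\}\wedge h\{t^\lambda\}$ with $h\in\sym{n}\setminus\sym{\lambda}$. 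Writing $h=ugv$ with $u,v\in\sym{\lambda}$ and $g\in(\sym{\lambda}\backslash\sym{n}/\sym{\lambda})\setminus\{1\}$, the computation $u^{-1}(\{t^\lambda\}\wedge h\{t^\lambda\})=\{t^\lambda\}\wedge g\{t^\lambda\}$ shows the orbit contains $t_{M,e}$ where $M\in\mathcal{M}_\lambda$ is the matrix corresponding to $g$ by Theorem \ref{T;JamesKerber}. Lemmas \ref{L;Orbits} and \ref{L;Orbits1} then show $W(t_{M_1,e})=W(t_{M_2,e})$ precisely when $M_2\in\{M_1,M_1^t\}$, so the set $\mathcal{S}_\lambda\cup\mathcal{N}_\lambda$ selects exactly one representative from each equivalence class; since distinct orbits give disjoint subsets of the basis $\mathcal{T}(\lambda)_2^e$, the sum is automatically direct, yielding (i).

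For part (ii), fix $M\in\mathcal{S}_\lambda\cup\mathcal{N}_\lambda$ and let $g\in(\sym{\lambda}\backslash\sym{n}/\sym{\lambda})\setminus\{1\}$ be the double coset representative corresponding to $M=(a_{i,j})$, so that by $(9.1)$
\[
R:=R(\{t^\lambda\})\cap R(g\{t^\lambda\})=\prod_{1\le i,j\le\ell(\lambda)}\sym{\mathbf{n}_i^\lambda\cap g\mathbf{n}_j^\lambda},
\]
a Young subgroup whose block sizes are the entries of $M$. If $M\in\mathcal{N}_\lambda$, Lemma \ref{L;Orbits2} rules out the semidirect product structure, so Lemma \ref{L;subgroup3} gives $L(t_{M,e})=R$; its Sylow $p$-subgroup $\prod P_{\mathbf{n}_i^\lambda\cap g\mathbf{n}_j^\lambda}$ is $\sym{n}$-conjugate to $\prod P_{s_{i,j,M}}=P_M$ by the first case of $(8.1)$. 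If $M\in\mathcal{S}_\lambda$, Lemma \ref{L;Orbits2} produces an involution $z\in\sym{n}$ swapping $\{t^\lambda\}$ and $g\{t^\lambda\}$ with $L(t_{M,e})=R\rtimes\langle z\rangle$; when $p>2$ the Sylow $p$-subgroup of $L(t_{M,e})$ equals that of $R$ and is again conjugate to $P_M$.

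The main technical point is the remaining case $p=2$, $M=M^t$. Here I plan to read off the action of $z$ on the $R$-orbits from $z\mathbf{n}_i^\lambda=g\mathbf{n}_i^\lambda$ and $zg\mathbf{n}_j^\lambda=\mathbf{n}_j^\lambda$, obtaining $z(\mathbf{n}_i^\lambda\cap g\mathbf{n}_j^\lambda)=\mathbf{n}_j^\lambda\cap g\mathbf{n}_i^\lambda$. For $i<j$ the two orbits have equal size $a_{i,j}=a_{j,i}$ and get swapped, while for $i=j$ the orbit is preserved setwise. A Sylow $2$-subgroup of $L(t_{M,e})=R\rtimes\langle z\rangle$ thus has the wreath-type structure
\[
\Bigl(\prod_k P_{\mathbf{n}_k^\lambda\cap g\mathbf{n}_k^\lambda}\times\prod_{i<j}\bigl(P_{\mathbf{n}_i^\lambda\cap g\mathbf{n}_j^\lambda}\times z P_{\mathbf{n}_i^\lambda\cap g\mathbf{n}_j^\lambda}z\bigr)\Bigr)\rtimes\langle z\rangle,
\]
which matches the second case of $(8.1)$ precisely. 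An appeal to the Sylow Theorem and the freedom to conjugate within $\sym{n}$ the chosen subsets $s_{i,j,M}$ onto $\mathbf{n}_i^\lambda\cap g\mathbf{n}_j^\lambda$ and the involution $e_M$ onto $z$ then identifies this Sylow $2$-subgroup with $P_M$ up to $\sym{n}$-conjugation. The delicate part, and the one I expect to require the most care, is verifying that the above matching of involutions is compatible with the matching of block subgroups --- i.e.\ that one can simultaneously conjugate both the partition data $\{s_{i,j,M}\}$ and the involution $e_M$ into their counterparts coming from $z$ --- which is ultimately where the setup of Notation \ref{N;notation1} (iii) pays off.
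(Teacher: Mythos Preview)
Your argument for (i) and for the cases $M\in\mathcal{N}_\lambda$ and $(p>2,\,M\in\mathcal{S}_\lambda)$ of (ii) is essentially identical to the paper's. The only real divergence is in the case $p=2$, $M=M^t$.

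There you take the involution $z$ handed to you by Lemma~\ref{L;Orbits2} (ultimately by Lemma~\ref{L;subgroup4}) and deduce $z(\mathbf{n}_i^\lambda\cap g\mathbf{n}_j^\lambda)=\mathbf{n}_j^\lambda\cap g\mathbf{n}_i^\lambda$ from $z\{t^\lambda\}=g\{t^\lambda\}$ and $zg\{t^\lambda\}=\{t^\lambda\}$. That deduction is fine, but it only gives \emph{setwise} preservation of the diagonal blocks $\mathbf{n}_k^\lambda\cap g\mathbf{n}_k^\lambda$; your displayed Sylow $2$-subgroup need not be a group unless $z$ normalises each $P_{\mathbf{n}_k^\lambda\cap g\mathbf{n}_k^\lambda}$. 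This is exactly the ``delicate part'' you flag. The fix is short: write $z=z'z''$ with $z'\in R$ the restriction of $z$ to $\bigcup_k(\mathbf{n}_k^\lambda\cap g\mathbf{n}_k^\lambda)$ and $z''$ supported on the off-diagonal blocks; then $z''\in Rz$ is again an involution, $R\rtimes\langle z\rangle=R\rtimes\langle z''\rangle$, and $z''$ fixes the diagonal blocks pointwise so matches $e_M$ from $(8.1)$ directly.

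The paper sidesteps this entirely by going in the opposite direction: it \emph{defines} $z$ to be the involution swapping $\mathbf{n}_i^\lambda\cap g\mathbf{n}_j^\lambda$ with $\mathbf{n}_j^\lambda\cap g\mathbf{n}_i^\lambda$ for $i<j$ and fixing all other points, then checks via the computation $(9.3)$ that this $z$ indeed swaps $\{t^\lambda\}$ and $g\{t^\lambda\}$, so Lemma~\ref{L;subgroup4} gives $L(t_{M,e})=R\rtimes\langle z\rangle$ with the \emph{right} $z$ from the start. This makes the match with $(8.1)$ immediate and avoids the compatibility verification you anticipate.
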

\begin{proof}
As $\lambda\neq (n)$, by the definitions of $\mathcal{S}_\lambda$ and $\mathcal{N}_\lambda$, note that $\mathcal{S}_\lambda\cup \mathcal{N}_\lambda\neq \varnothing$. Recall that $\mathcal{T}(\lambda)_2^e$ is an $\F$-basis of $\Lambda^2M^\lambda$. Set $U=\mathcal{S}_\lambda\cup \mathcal{N}_\lambda$. For any distinct $M_1$, $M_2\in U$, by Lemma \ref{L;Orbits1} and the definitions of $\mathcal{S}_\lambda$, $\mathcal{N}_\lambda$, $\O^e(t_{M_1,e})$ and $\O^e(t_{M_2,e})$, note that $\O^e(t_{M_1,e})\cap\O^e(t_{M_2,e})=\varnothing$. Therefore, as $W(t_{M,e})=\langle \O^e(t_{M,e})\rangle_\F$ for all $M\in U$, to get the desired equality, it suffices to show that $\mathcal{T}(\lambda)_2^e=\bigcup _{M\in U}\O^e(t_{M,e})$. It is clear that $\bigcup _{M\in U}\O^e(t_{M,e})\subseteq \mathcal{T}(\lambda)_2^e$. For any $t\in \mathcal{T}(\lambda)_2^e$, there exists some $g\in\sym{n}$ such that $x=\{t^\lambda\}\wedge g\{t^\lambda\}\in \mathcal{T}(\lambda)_2^e$ and $t\in \O^e(x)$. Note that $g=uhv$, where $h\in (\sym{\lambda}\backslash\sym{n}/\sym{\lambda})\setminus\{1\}$ and $u$, $v\in\sym{\lambda}$. By Theorem \ref{T;JamesKerber}, there exists $N\in \mathcal{M}_\lambda$ such that $t_{N,e}=\{t^\lambda\}\wedge h\{t^\lambda\}$. As $u$, $v\in \sym{\lambda}=R(\{t^\lambda\})$, notice that $ut_{N,e}=x$, which implies that $\O^e(x)=\O^e(t_{N,e})$ by the definitions of $\O^e(x)$ and $\O^e(t_{N,e})$. Therefore, we have $t\in \O^e(x)=\O^e(t_{N,e})\subseteq \bigcup _{M\in U}\O^e(t_{M,e})$ by Lemma \ref{L;Orbits1} and the definitions of $\mathcal{S}_\lambda$ and $\mathcal{N}_\lambda$. So $\mathcal{T}(\lambda)_2^e\subseteq\bigcup _{M\in U}\O^e(t_{M,e})$ and the desired equality thus follows.
(i) is shown.

Let $M=(a_{i,j})\in\mathcal{S}_\lambda\cup\mathcal{N}_\lambda$. By Theorem \ref{T;JamesKerber}, there exists $g\in(\sym{\lambda}\backslash\sym{n}/\sym{\lambda})\setminus\{1\}$ such that $a_{i,j}=|\mathbf{n}_i^\lambda\cap g\mathbf{n}_j^\lambda|$ for any $1\leq i,j\leq \ell(\lambda)$. So $t_{M,e}=\{t^\lambda\}\wedge g\{t^\lambda\}$. If $p>2$ or $M\neq M^t$, by Lemmas \ref{L;Orbits2}, \ref{L;subgroup3} and \ref{L;subgroup4}, note that a Sylow $p$-subgroup $P$ of $L(t_{M,e})$ is also a Sylow $p$-subgroup of $R(\{t^\lambda\})\cap R(g\{t^\lambda\})$. By $(9.1)$ and $(8.1)$, $P_M$ is $\sym{n}$-conjugate to $P$. If $p=2$ and $M=M^t$,
let $z$ be an involution of $\sym{n}$ that swaps $\mathbf{n}_i^\lambda\cap g\mathbf{n}_j^\lambda$ and $\mathbf{n}_j^\lambda\cap g\mathbf{n}_i^\lambda$ for all $1\leq i<j\leq \ell(\lambda)$ and fixes the remaining numbers of $\mathbf{n}$. Note that
\begin{align}
z\mathbf{n}_i^\lambda=z(\mathbf{n}_i^\lambda\cap g\mathbf{n})=z(\bigcup_{j=1}^{\ell(\lambda)}(\mathbf{n}_i^\lambda\cap g\mathbf{n}_j^\lambda))=\bigcup_{j=1}^{\ell(\lambda)}(\mathbf{n}_j^\lambda\cap g\mathbf{n}_i^\lambda)=\mathbf{n}\cap g\mathbf{n}_i^\lambda=g\mathbf{n}_i^\lambda.
\end{align}
As $z^2=1$, by $(9.3)$, we have $z\{t^\lambda\}=g\{t^\lambda\}$ and $zg\{t^\lambda\}=\{t^\lambda\}$. By Lemma \ref{L;subgroup4}, we thus have $L(t_{M,e})=(R(\{t^\lambda\})\cap R(g\{t^\lambda\}))\rtimes \langle z\rangle$. Therefore, by $(9.1)$, $(8.1)$ and the definition of $z$, note that $P_M$ is $\sym{n}$-conjugate to a Sylow $2$-subgroup of $L(t_{M,e})$. (ii) is also proved. The proof is now complete.
\end{proof}
\begin{prop}\label{P;Orbits4}
Let $\lambda\vdash n$.
\begin{enumerate}[(i)]
\item [\em (i)] We have
$S^2M^\lambda=V(t_\lambda)\oplus\!\bigoplus_{M\in{\mathcal{S}_\lambda\cup\mathcal{N}_\lambda}}\!\!V(t_{M,s}),$
where $t_\lambda=\{t^\lambda\}\odot\{t^\lambda\}$ and $K(t_\lambda)=\sym{\lambda}$.
\item [\em (ii)]If $\mathcal{S}_\lambda\cup \mathcal{N}_\lambda\neq \varnothing$ and $M\in\mathcal{S}_\lambda\cup\mathcal{N}_\lambda$, then $P_M$ is $\sym{n}$-conjugate to a Sylow $p$-subgroup of $K(t_{M,s})$.
\end{enumerate}
\end{prop}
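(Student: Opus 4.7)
The plan is to mirror the proof of Proposition \ref{P;Orbits3}, with Lemma \ref{L;symmetricpowercase} (the symmetric-product analogue of Lemmas \ref{L;Orbits}, \ref{L;Orbits1}, \ref{L;Orbits2}) replacing its exterior counterparts and Theorem \ref{T;JamesKerber} parameterizing the double cosets as before. The case $\lambda=(n)$ is dealt with separately at the outset: here $\mathcal{M}_\lambda=\varnothing$ so $\mathcal{S}_\lambda\cup\mathcal{N}_\lambda=\varnothing$, $S^2M^{(n)}=S^2\F=\F=V(t_\lambda)$, and $K(t_\lambda)=\sym{n}=\sym{(n)}$, making (i) immediate and (ii) vacuous. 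From now on assume $\lambda\neq(n)$.

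For part (i), the first step will be to verify $K(t_\lambda)=\sym{\lambda}$, which is immediate since $gt_\lambda=g\{t^\lambda\}\odot g\{t^\lambda\}=t_\lambda$ is equivalent to $g\{t^\lambda\}=\{t^\lambda\}$, i.e., $g\in R(\{t^\lambda\})=\sym{\lambda}$. Next, $(5.1)$ expresses $S^2M^\lambda=\bigoplus V(t_i)$ as $t_i$ ranges over a complete set of representatives of the $\sym{n}$-orbits on $\mathcal{T}(\lambda)_2^s$, and the candidate set $\{t_\lambda\}\cup\{t_{M,s}:M\in\mathcal{S}_\lambda\cup\mathcal{N}_\lambda\}$ is shown to be such a set. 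Pairwise disjointness of the associated orbits will follow from Lemma \ref{L;symmetricpowercase} (i) together with the selection rules baked into the definition of $\mathcal{N}_\lambda$, and $\mathcal{O}^s(t_\lambda)$ is automatically disjoint from every $\mathcal{O}^s(t_{M,s})$ since its elements have the form $\{t\}\odot\{t\}$ while those of $\mathcal{O}^s(t_{M,s})$ have the form $\{u\}\odot\{v\}$ with $\{u\}\neq\{v\}$. For coverage, take an arbitrary $t=\{u\}\odot\{v\}\in\mathcal{T}(\lambda)_2^s$, conjugate by some $h\in\sym{n}$ with $h\{u\}=\{t^\lambda\}$ to replace $t$ by $\{t^\lambda\}\odot h\{v\}$, and invoke Theorem \ref{T;JamesKerber} to conclude that either $h\{v\}=\{t^\lambda\}$ (so $\mathcal{O}^s(t)=\mathcal{O}^s(t_\lambda)$) or $\mathcal{O}^s(t)=\mathcal{O}^s(t_{N,s})$ for some $N\in\mathcal{M}_\lambda$, which by Lemma \ref{L;symmetricpowercase} (i) collapses to $\mathcal{O}^s(t_{M,s})$ for a unique $M\in\mathcal{S}_\lambda\cup\mathcal{N}_\lambda$.

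For part (ii), fix $M\in\mathcal{S}_\lambda\cup\mathcal{N}_\lambda$ and let $g\in(\sym{\lambda}\backslash\sym{n}/\sym{\lambda})\setminus\{1\}$ be the representative provided by Theorem \ref{T;JamesKerber}, so $t_{M,s}=\{t^\lambda\}\odot g\{t^\lambda\}$; set $R=R(\{t^\lambda\})\cap R(g\{t^\lambda\})$, which by $(9.1)$ is a Young subgroup whose Sylow $p$-subgroups are $\sym{n}$-conjugate to $\prod_{1\leq i,j\leq\ell(\lambda)}P_{s_{i,j,M}}$. When $M\neq M^t$, Lemma \ref{L;symmetricpowercase} (ii) together with Lemma \ref{L;subgroup3} gives $K(t_{M,s})=R$, and the first branch of $(8.1)$ defines $P_M=\prod_{i,j}P_{s_{i,j,M}}$, giving the required conjugacy. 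When $M=M^t$ and $p>2$, Lemma \ref{L;symmetricpowercase} (ii) gives $K(t_{M,s})=R\rtimes\langle z\rangle$ with $|z|=2$ coprime to $p$, so a Sylow $p$-subgroup of $K(t_{M,s})$ is a Sylow $p$-subgroup of $R$, again matching $P_M$ from the first branch of $(8.1)$. When $M=M^t$ and $p=2$, the involution $z$ is chosen explicitly as in the construction in the proof of Proposition \ref{P;Orbits3} (ii): take $z$ to be the product of transpositions swapping $\mathbf{n}_i^\lambda\cap g\mathbf{n}_j^\lambda$ with $\mathbf{n}_j^\lambda\cap g\mathbf{n}_i^\lambda$ for $1\leq i<j\leq\ell(\lambda)$ and fixing the remaining points; this $z$ plays the role of $e_M$ in the second branch of $(8.1)$, and together with $(9.1)$ identifies a Sylow $2$-subgroup of $K(t_{M,s})$ with $P_M$ up to $\sym{n}$-conjugacy.

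The main obstacle will be the final case $p=2$, $M=M^t$ of part (ii), where one has to verify that the concrete involution $z$ constructed above is compatible with the semidirect-product structure in $(8.1)$ and permutes the components $P_{s_{i,j,M}}$ in precisely the prescribed way. This mirrors the analogous step for $L(t_{M,e})$ in Proposition \ref{P;Orbits3} (ii), so no essentially new difficulties are expected beyond that routine verification.
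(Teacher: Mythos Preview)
Your proposal is correct and follows essentially the same approach as the paper, which also disposes of $\lambda=(n)$ separately and then directs the reader to mimic the proof of Proposition~\ref{P;Orbits3} with Lemma~\ref{L;symmetricpowercase} replacing Lemmas~\ref{L;Orbits1} and~\ref{L;Orbits2}. One small refinement: in the case $M\neq M^t$ you should also cite Lemma~\ref{L;subgroup4}, since Lemma~\ref{L;symmetricpowercase}~(ii) only rules out $K(t_{M,s})=R\rtimes\langle z\rangle$, and it is Lemma~\ref{L;subgroup4} that converts this into the nonexistence of any swapping element so that Lemma~\ref{L;subgroup3} applies.
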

\begin{proof}
By the definitions of $\mathcal{S}_\lambda$ and $\mathcal{N}_\lambda$, note that $\mathcal{S}_\lambda\cup \mathcal{N}_\lambda=\varnothing$ if and only if $\lambda=(n)$. As $S^2M^{(n)}\cong \F$, the proposition holds for $S^2M^{(n)}$. We thus assume that $\lambda\neq (n)$. Therefore, $\mathcal{S}_\lambda\cup \mathcal{N}_\lambda\neq \varnothing$. Observe that $\O^s(t_\lambda)\cap \O^s(t_{M,s})=\varnothing$ for any $M\in\mathcal{S}_\lambda\cup \mathcal{N}_\lambda$ and $K(t_\lambda)=\sym{\lambda}$. By Lemma \ref{L;symmetricpowercase} (i) and mimicking the proof of Proposition \ref{P;Orbits3} (i), (i) is shown. By Lemma \ref{L;symmetricpowercase} (ii) and mimicking the proof of Proposition \ref{P;Orbits3} (ii), (ii) is also proved. The proof is now complete.
\end{proof}
\begin{lem}\label{L;Homlemma}
Let $\lambda\vdash n$ and $\lambda\neq (n)$. If $M\in \mathcal{S}_\lambda\cup\mathcal{N}_\lambda$, then
\begin{align*}
\dim_\F\mathrm{Hom}_{\F\sym{n}}(\F, W(t_{M,e}))=\begin{cases}
0, & \text{if}\ p>2\ \text{and}\ M\in \mathcal{S}_\lambda,\\
1, &\text{otherwise}.
\end{cases}
\end{align*}
\end{lem}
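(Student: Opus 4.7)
The plan is to compute $\mathrm{Hom}_{\F\sym{n}}(\F, W(t_{M,e}))$ via Frobenius reciprocity and then analyze how $L(t_{M,e})$ acts on the one-dimensional module $\F^{t_{M,e}}$. Recall from Notation \ref{N;notation} (v) that $W(t_{M,e})\cong (\F^{t_{M,e}}){\uparrow^{\sym{n}}}$, so Frobenius reciprocity gives
\[
\mathrm{Hom}_{\F\sym{n}}(\F, W(t_{M,e}))\cong \mathrm{Hom}_{\F L(t_{M,e})}(\F, \F^{t_{M,e}}),
\]
which has dimension $1$ if $\F^{t_{M,e}}$ is the trivial $\F L(t_{M,e})$-module and $0$ otherwise.

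Next, I would identify the action of $L(t_{M,e})$ on $\F^{t_{M,e}}$. Let $g\in(\sym{\lambda}\backslash\sym{n}/\sym{\lambda})\setminus\{1\}$ be such that $t_{M,e}=\{t^\lambda\}\wedge g\{t^\lambda\}$. For any $h\in L(t_{M,e})$, we have $h t_{M,e}=\pm t_{M,e}$, where the sign is $+1$ precisely when $h$ preserves each of $\{t^\lambda\}$ and $g\{t^\lambda\}$, and $-1$ when $h$ swaps them (by the antisymmetry of $\wedge$). Hence the action of $L(t_{M,e})$ on $\F^{t_{M,e}}$ is trivial if and only if no element of $L(t_{M,e})$ swaps $\{t^\lambda\}$ and $g\{t^\lambda\}$.

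If $M\in\mathcal{N}_\lambda$, then $M\neq M^t$, so by Lemma \ref{L;Orbits2} combined with Lemma \ref{L;subgroup3} one obtains $L(t_{M,e})=R(\{t^\lambda\})\cap R(g\{t^\lambda\})$; in particular, no element of $L(t_{M,e})$ swaps $\{t^\lambda\}$ and $g\{t^\lambda\}$, so $\F^{t_{M,e}}\cong\F$ and the Hom-space has dimension $1$. If instead $M\in\mathcal{S}_\lambda$, Lemma \ref{L;Orbits2} produces an involution $z\in L(t_{M,e})$ with $z\{t^\lambda\}=g\{t^\lambda\}$ and $zg\{t^\lambda\}=\{t^\lambda\}$, so $zt_{M,e}=-t_{M,e}$. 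When $p=2$ this means $z$ acts trivially on $\F^{t_{M,e}}$ and so does everything in $R(\{t^\lambda\})\cap R(g\{t^\lambda\})$, giving $\F^{t_{M,e}}\cong\F$ and dimension $1$. When $p>2$ the action of $z$ on $\F^{t_{M,e}}$ is genuinely $-1\neq 1$, so $\F^{t_{M,e}}\ncong \F$ as $\F L(t_{M,e})$-modules and the Hom-space is $0$.

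There is no real obstacle here: once Frobenius reciprocity has reduced the problem to a one-dimensional character computation, everything follows from the structural description of $L(t_{M,e})$ established in Lemmas \ref{L;subgroup3}, \ref{L;subgroup4}, and \ref{L;Orbits2}, together with the elementary observation that swapping two wedge factors introduces a sign that is visible only in characteristic different from $2$.
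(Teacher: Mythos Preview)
Your proposal is correct and follows essentially the same route as the paper's proof: reduce via Frobenius reciprocity to whether $\F^{t_{M,e}}$ is trivial over $L(t_{M,e})$, then use Lemmas~\ref{L;subgroup3}, \ref{L;subgroup4}, and \ref{L;Orbits2} to determine whether a swapping involution exists, and finally observe that the resulting sign $-1$ is invisible exactly when $p=2$. The only cosmetic difference is that the paper invokes the remark $\F^t=\F$ for $p=2$ directly from Notation~\ref{N;notation}(v), while you argue it via $-1=1$ in characteristic~$2$; also, in the $M\in\mathcal{N}_\lambda$ case you should explicitly cite Lemma~\ref{L;subgroup4} alongside \ref{L;subgroup3} (as the paper does) to rule out the existence of a swapping element before concluding $L(t_{M,e})=R(\{t^\lambda\})\cap R(g\{t^\lambda\})$.
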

\begin{proof}
As $M\in \mathcal{S}_\lambda\cup \mathcal{N}_\lambda\subseteq \mathcal{M}_\lambda$, by Theorem \ref{T;JamesKerber} and the definition of $t_{M,e}$, there exists $g\in (\sym{\lambda}\backslash \sym{n}/\sym{\lambda})\setminus\{1\}$ such that $t_{M,e}=\{t^\lambda\}\wedge g\{t^\lambda\}$. Let $N=\langle \{t_{M,e}\}\rangle_\F$ and note that $N$ is a one-dimensional $\F L(t_{M,e})$-module. As $W(t_{M,e})=\langle \O^e(t_{M,e})\rangle_\F$, also notice that $W(t_{M,e})\cong N{\uparrow^{\sym{n}}}$. If $p>2$ and $M\in \mathcal{S}_\lambda$, by Lemma \ref{L;Orbits2}, there exists some $z\in\sym{n}$ such that $z\in L(t_{M,e})$, $z^2=1$, $z\{t^\lambda\}=g\{t^\lambda\}$ and $zg\{t^\lambda\}=\{t^\lambda\}$. We thus have $zt_{M,e}=-t_{M,e}$ and $N\not\cong \F$ as $\F L(t_{M,e})$-modules. By the Frobenius reciprocity, we get $\dim_\F\mathrm{Hom}_{\F\sym{n}}(\F, W(t_{M,e}))=0$.

If $M\in \mathcal{N}_\lambda$, note that $M\neq M^t$. By Lemmas \ref{L;Orbits2}, \ref{L;subgroup3} and \ref{L;subgroup4}, also note that $L(t_{M,e})=R(\{t^\lambda\})\cap R(g\{t^\lambda\})$. So $N\cong \F$ as $\F L(t_{M,e})$-modules. By the Frobenius reciprocity, notice that $\dim_\F\mathrm{Hom}_{\F\sym{n}}(\F, W(t_{M,e}))=1$. If $p=2$, then it is clear that $W(t_{M,e})\cong (\F_{L(t_{M,e})}){\uparrow^{\sym{n}}}$. We thus have $\dim_\F\mathrm{Hom}_{\F\sym{n}}(\F, W(t_{M,e}))=1$ by the Frobenius reciprocity. The lemma follows by the three cases.
\end{proof}
\begin{cor}\label{C;hom}
Let $\lambda\vdash n$.
\begin{enumerate}[(i)]
\item [\em (i)] We have $\dim_\F\mathrm{Hom}_{\F\sym{n}}(\F, S^2M^\lambda)=1+|\mathcal{S}_\lambda|+|\mathcal{N}_\lambda|$.
\item [\em(iii)] If $\lambda\neq (n)$, then we have \[\dim_\F\mathrm{Hom}_{\F\sym{n}}(\F, \Lambda^2M^\lambda)=\begin{cases}
|\mathcal{N}_\lambda|, & \text{if}\ p>2,\\
|\mathcal{S}_\lambda|+|\mathcal{N}_\lambda|, &\text{if}\ p=2.
\end{cases}\]
\end{enumerate}
\end{cor}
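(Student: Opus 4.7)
The plan is to establish both statements by decomposing the relevant square modules via Propositions \ref{P;Orbits3} and \ref{P;Orbits4}, using the additivity of $\dim_\F\mathrm{Hom}_{\F\sym{n}}(\F,-)$ on direct sums, and then computing each summand's contribution through Frobenius reciprocity (invoking Lemma \ref{L;Homlemma} for the exterior case).

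For part (ii) the argument is essentially immediate. By Proposition \ref{P;Orbits3} (i),
\[
\dim_\F\mathrm{Hom}_{\F\sym{n}}(\F,\Lambda^2M^\lambda)=\sum_{M\in\mathcal{S}_\lambda\cup\mathcal{N}_\lambda}\dim_\F\mathrm{Hom}_{\F\sym{n}}(\F,W(t_{M,e})),
\]
and Lemma \ref{L;Homlemma} evaluates every term on the right: each $M\in\mathcal{N}_\lambda$ contributes $1$ regardless of $p$, while each $M\in\mathcal{S}_\lambda$ contributes $0$ when $p>2$ and $1$ when $p=2$. Summing these cases gives exactly the stated formula.

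For part (i), Proposition \ref{P;Orbits4} (i) gives
\[
S^2M^\lambda=V(t_\lambda)\oplus\bigoplus_{M\in\mathcal{S}_\lambda\cup\mathcal{N}_\lambda}V(t_{M,s}).
\]
The module $V(t_\lambda)\cong(\F_{\sym{\lambda}}){\uparrow^{\sym{n}}}$ is a transitive permutation module, so Frobenius reciprocity yields $\dim_\F\mathrm{Hom}_{\F\sym{n}}(\F,V(t_\lambda))=1$. For each $M\in\mathcal{S}_\lambda\cup\mathcal{N}_\lambda$, the description in Notation \ref{N;notation} (v) gives $V(t_{M,s})\cong(\F_{K(t_{M,s})}){\uparrow^{\sym{n}}}$, so Frobenius reciprocity again yields $\dim_\F\mathrm{Hom}_{\F\sym{n}}(\F,V(t_{M,s}))=1$. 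Adding these contributions produces $1+|\mathcal{S}_\lambda|+|\mathcal{N}_\lambda|$, as required. The edge case $\lambda=(n)$, for which $\mathcal{S}_\lambda\cup\mathcal{N}_\lambda=\varnothing$ and $S^2M^{(n)}\cong\F$, is consistent with this formula.

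There is no substantive obstacle: both statements reduce to the uniformity of the Hom-dimension on the symmetric-power summands (always $1$, by the permutation-module description) contrasted with the characteristic-sensitive computation already packaged in Lemma \ref{L;Homlemma} for the exterior-power summands. The only point deserving care is ensuring in the symmetric case that Lemma \ref{L;symmetricpowercase} correctly classifies when distinct members of $\mathcal{S}_\lambda\cup\mathcal{N}_\lambda$ produce genuinely different summands $V(t_{M,s})$, so that the direct-sum decomposition has no hidden redundancy, but this is exactly the content of the selection of $\mathcal{N}_\lambda$ as a set of representatives of the transpose pairs.
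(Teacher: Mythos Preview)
Your proposal is correct and follows essentially the same route as the paper: for (i) you use Proposition \ref{P;Orbits4} (i) together with the fact that each $V(t)\cong(\F_{K(t)}){\uparrow^{\sym{n}}}$ and Frobenius reciprocity, and for (ii) you use Proposition \ref{P;Orbits3} (i) together with Lemma \ref{L;Homlemma}. The only minor point the paper makes explicit is the observation $|\mathcal{S}_\lambda\cup\mathcal{N}_\lambda|=|\mathcal{S}_\lambda|+|\mathcal{N}_\lambda|$, which you use implicitly.
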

\begin{proof}
By the definitions of $\mathcal{S}_\lambda$ and $\mathcal{N}_\lambda$, we can observe that $|\mathcal{S}_\lambda\cup \mathcal{N}_\lambda|=|\mathcal{S}_\lambda|+|\mathcal{N}_\lambda|$. For (i), for any $t\in\mathcal{T}(\lambda)_2^s$, recall that $V(t)\cong (\F_{K(t)}){\uparrow^{\sym{n}}}$. Therefore, (i) follows by Proposition \ref{P;Orbits4} (i) and the Frobenius reciprocity. (ii) is also shown by Proposition \ref{P;Orbits3} (i) and Lemma \ref{L;Homlemma}.
\end{proof}
Let $\lambda=(\lambda_1,\ldots,\lambda_\ell)\vdash n$ and $\mathcal{R}_\lambda=\mathcal{S}_\lambda\cup\mathcal{N}_\lambda\cup\{D_\lambda\}$,  where $D_\lambda=\mathrm{diag}(\lambda_1,\ldots,\lambda_\ell)$. Let $P_{D_\lambda}$ be a Sylow $p$-subgroup of $\sym{\lambda}$ and $T'(D_\lambda)=T(D_\lambda)$. We shall define an equivalence relation $\sim$ for $\mathcal{R}_{\lambda}$. For any $M$, $N\in \mathcal{R}_\lambda$, write $M\sim N$ if and only if $P_M$ is $\sym{n}$-conjugate to $P_N$. Let $[M]_{R_\lambda}$ be the equivalence class of $M$ with respect to $\sim$. Set $\mathcal{X}_\lambda=\mathcal{S}_\lambda\cup\mathcal{N}_\lambda$ if $p=2$. Otherwise, put $\mathcal{X}_\lambda=\mathcal{N}_\lambda$. If $\mathcal{X}_\lambda\neq\varnothing$, as $\mathcal{X}_\lambda\subseteq \mathcal{R}_\lambda$, the restriction of $\sim$ is also an equivalence relation for $\mathcal{X}_\lambda$. For any $R\in \mathcal{X}_\lambda$, let $[R]_{\mathcal{X}_\lambda}=\{X\in \mathcal{X}_\lambda: X\sim R\}$. Note that $[R]_{\mathcal{X}_\lambda}$ is the equivalence class of $R$ with respect to the restriction of $\sim$. By $(8.1)$, Lemma \ref{L;p=2Youngsubgroups} and Theorem \ref{T;conjugacy}, it is easy to compute both $|[M]_{\mathcal{R}_\lambda}|$ and $|[R]_{\mathcal{X}_\lambda}|$. The main result of this section is as follows.
\begin{thm}\label{T;multiplicity}
Let $\lambda\vdash n$ and $P$ be a $p$-subgroup of $\sym{n}$.
\begin{enumerate}[(i)]
\item [\em (i)] We have $Sc_{\sym{n}}(P)\mid S^2M^\lambda$ if and only if $P$ is $\sym{n}$-conjugate to $P_M$ for some $M\in \mathcal{R}_\lambda$. Moreover, for any $M\in \mathcal{R}_\lambda$, $[S^2M^\lambda:Sc_{\sym{n}}(P_M)]=|[M]_\mathcal{R_\lambda}|$.
\item [\em (ii)]If $\lambda\neq (n)$ and $\mathcal{X}_\lambda\neq \varnothing$, $Sc_{\sym{n}}(P)\mid \Lambda^2M^\lambda$ if and only if $P$ is $\sym{n}$-conjugate to $P_M$ for some $M\in \mathcal{X}_\lambda$. Moreover, for any $M\in \mathcal{X}_\lambda$, $[\Lambda^2M^\lambda:Sc_{\sym{n}}(P_M)]=|[M]_{\mathcal{X}_\lambda}|$.
\item [\em (iii)] If $\lambda\neq (n)$, then $\mathcal{X}_\lambda=\varnothing$ if and only if $p>2$ and $\ell(\lambda)=2$. If $p>2$ and $\ell(\lambda)=2$, then $[\Lambda^2M^\lambda:Sc_{\sym{n}}(Q)]=0$ for any $p$-subgroup $Q$ of $\sym{n}$.
\end{enumerate}
\end{thm}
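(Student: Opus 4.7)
The plan is to combine the explicit direct sum decompositions from Propositions \ref{P;Orbits4}\,(i) and \ref{P;Orbits3}\,(i) with the standard Scott-module fact that every transitive permutation module $(\F_H){\uparrow^G}$ contains a unique Scott summand, namely $Sc_G(Q)$ for $Q$ a Sylow $p$-subgroup of $H$, with multiplicity exactly $1$ (since $\dim_\F\mathrm{Hom}_{\F G}(\F,(\F_H){\uparrow^G})=1$ by Frobenius reciprocity). Since $Sc_{\sym{n}}(Q_1)\cong Sc_{\sym{n}}(Q_2)$ iff $Q_1$ is $\sym{n}$-conjugate to $Q_2$, the multiplicity of a Scott module in $S^2M^\lambda$ or $\Lambda^2M^\lambda$ is obtained by collecting the contributing summands into equivalence classes under $\sim$.

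For (i), Proposition \ref{P;Orbits4}\,(i) writes $S^2M^\lambda=V(t_\lambda)\oplus\bigoplus_{M\in\mathcal{S}_\lambda\cup\mathcal{N}_\lambda}V(t_{M,s})$. The term $V(t_\lambda)\cong M^\lambda=(\F_{\sym{\lambda}}){\uparrow^{\sym{n}}}$ contributes $Sc_{\sym{n}}(P_{D_\lambda})$, while each $V(t_{M,s})\cong(\F_{K(t_{M,s})}){\uparrow^{\sym{n}}}$ contributes $Sc_{\sym{n}}(P_M)$ by Proposition \ref{P;Orbits4}\,(ii). Grouping the indices by $\sim$ on $\mathcal{R}_\lambda=\mathcal{S}_\lambda\cup\mathcal{N}_\lambda\cup\{D_\lambda\}$ yields both the classification of Scott summands of $S^2M^\lambda$ and the multiplicity $|[M]_{\mathcal{R}_\lambda}|$; Corollary \ref{C;hom}\,(i) provides the consistency check $|\mathcal{R}_\lambda|=\dim_\F\mathrm{Hom}_{\F\sym{n}}(\F,S^2M^\lambda)$.

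For (ii), apply Proposition \ref{P;Orbits3}\,(i), giving $\Lambda^2M^\lambda=\bigoplus_{M\in\mathcal{S}_\lambda\cup\mathcal{N}_\lambda}W(t_{M,e})$, and use Lemma \ref{L;Homlemma} as the key dichotomy: when $p>2$ and $M\in\mathcal{S}_\lambda$, the summand $W(t_{M,e})$ has no trivial submodule and hence no Scott summand; when $M\in\mathcal{X}_\lambda$, Lemmas \ref{L;Orbits2} and \ref{L;subgroup3} together with the proof of Lemma \ref{L;Homlemma} show that $W(t_{M,e})\cong(\F_{L(t_{M,e})}){\uparrow^{\sym{n}}}$ is a transitive permutation module whose unique Scott summand is $Sc_{\sym{n}}(P_M)$ by Proposition \ref{P;Orbits3}\,(ii). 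Counting equivalence classes under the restriction of $\sim$ to $\mathcal{X}_\lambda$ then delivers the multiplicity $|[M]_{\mathcal{X}_\lambda}|$, again consistent with Corollary \ref{C;hom}\,(ii). For (iii), the characterization $\mathcal{X}_\lambda=\varnothing\iff p>2$ and $\ell(\lambda)=2$ is combinatorial: if $\ell(\lambda)=2$, then any $M=(a_{i,j})\in\mathcal{M}_\lambda$ must satisfy $a_{1,2}=a_{2,1}$ from the row- and column-sum equations, so $\mathcal{N}_\lambda=\varnothing$; conversely, when $p=2$ and $\lambda\neq(n)$ one has $\mathcal{M}_\lambda\neq\varnothing$ via any non-identity double coset in Theorem \ref{T;JamesKerber}, and when $p>2$ with $\ell(\lambda)\geq 3$ the cyclic-type matrix defined by $M_{i,i}=\lambda_i-1$, $M_{i,i+1}=1$ for $1\leq i\leq\ell-1$, $M_{\ell,1}=1$, and all other entries zero, lies in $\mathcal{M}_\lambda\setminus\mathcal{S}_\lambda$. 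The second statement of (iii) is then immediate from the analysis in (ii): under its hypotheses, every summand of $\Lambda^2M^\lambda$ has trivial $\sym{n}$-fixed subspace, hence no Scott summand.

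The main obstacle is the careful bookkeeping in (ii): one must verify that $W(t_{M,e})$ really is a transitive permutation module (so that the Scott-summand machinery applies in its clean form) precisely for $M\in\mathcal{X}_\lambda$, and that the swapping-element argument of Lemma \ref{L;Homlemma} fully accounts for the vanishing of the fixed subspace in the complementary case $p>2$, $M\in\mathcal{S}_\lambda$. Once this is in hand, the remaining counting and the combinatorial construction in (iii) are routine.
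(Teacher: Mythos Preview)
Your proposal is correct and follows essentially the same route as the paper: decompose via Propositions~\ref{P;Orbits4}\,(i) and~\ref{P;Orbits3}\,(i), use Lemma~\ref{L;Homlemma} to isolate which summands are transitive permutation modules (and hence carry a unique Scott summand determined by Proposition~\ref{P;Orbits3}\,(ii) or~\ref{P;Orbits4}\,(ii)), and then count via the equivalence classes under~$\sim$. The only cosmetic difference is in~(iii), where you supply an explicit cyclic-type matrix witnessing $\mathcal{M}_\lambda\neq\mathcal{S}_\lambda$ when $\ell(\lambda)\geq 3$, while the paper leaves this as ``not too difficult to see''.
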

\begin{proof}
By Proposition \ref{P;Orbits4} (i) and the Krull-Schmidt Theorem, we observe that $Sc_{\sym{n}}(P)\mid S^2M^\lambda$ if and only if $Sc_{\sym{n}}(P)\mid V(t_\lambda)$ or $Sc_{\sym{n}}(P)\mid V(t_{N,s})$ for some $N\in \mathcal{S}_\lambda\cup\mathcal{N}_\lambda$. As $K(t_\lambda)=\sym{\lambda}$ by Proposition \ref{P;Orbits4} (i) and $V(t_\lambda)\cong (\F_{K(t_\lambda)}){\uparrow^{\sym{n}}}$, note that $Sc_{\sym{n}}(P)\mid V(t_\lambda)$ if and only if $P$ is $\sym{n}$-conjugate to $P_{D_\lambda}$. For any $U\in \mathcal{S}_\lambda\cup\mathcal{N}_\lambda$, as $V(t_{U,s})\cong (\F_{K(t_{U,s})}){\uparrow^{\sym{n}}}$, by Proposition \ref{P;Orbits4} (ii), also note that $Sc_{\sym{n}}(P)\mid V(t_{U,s})$ if and only if $P$ is $\sym{n}$-conjugate to $P_U$. As $\mathcal{R}_\lambda=\mathcal{S}_\lambda\cup \mathcal{N}_\lambda\cup\{D_\lambda\}$, the first statement of (i) is shown. The second statement of (i) follows by Proposition \ref{P;Orbits4} (i), (ii) and the definition of $[M]_{\mathcal{R}_\lambda}$.

For (ii), if $p>2$ and $U\in \mathcal{S}_\lambda$, by Lemma \ref{L;Homlemma}, note that $W(t_{U,e})$ does not have a trivial $\F\sym{n}$-submodule. As $Sc_{\sym{n}}(P)$ contains a trivial $\F\sym{n}$-submodule, we thus have $Sc_{\sym{n}}(P)\nmid W(t_{U,e})$. If $U\in \mathcal{N}_\lambda$, by Lemma \ref{L;Homlemma}, note that $W(t_{U,e})\cong (\F_{L(t_{U,e})}){\uparrow^{\sym{n}}}$. If $p=2$ and $U\in \mathcal{X}_\lambda$, also note that $W(t_{U,e})\cong (\F_{L(t_{U,e})}){\uparrow^{\sym{n}}}$. By Proposition \ref{P;Orbits3} (i), the Krull-Schmidt Theorem and the three facts, notice that $Sc_{\sym{n}}(P)\mid \Lambda^2M^\lambda$ if and only if $Sc_{\sym{n}}(P)\mid W(t_{N,e})$ for some $N\in \mathcal{X}_\lambda$. For any $U\in \mathcal{X}_\lambda$, the last two facts also tell us that $W(t_{U,e})\cong (\F_{L(t_{U,e})}){\uparrow^{\sym{n}}}$. Therefore, by Proposition \ref{P;Orbits3} (ii), note that $Sc_{\sym{n}}(P)\mid W(t_{U,e})$ if and only if $P$ is $\sym{n}$-conjugate to $P_U$. The first statement of (ii) is shown. The second statement of (ii) follows by Proposition \ref{P;Orbits3} (i), (ii), the first statement of (ii) and the definition of $[M]_{\mathcal{X}_\lambda}$.

For (iii), if $p>2$ and $\ell(\lambda)=2$, by the definitions of $\mathcal{N}_\lambda$, $\mathcal{M}_\lambda$ and $\mathcal{S}_\lambda$, note that $\mathcal{X}_\lambda=\mathcal{N}_\lambda\subseteq\mathcal{M}_\lambda=\mathcal{S}_\lambda$ and $\mathcal{S}_\lambda\cap \mathcal{N}_\lambda=\varnothing$. So $\mathcal{X}_\lambda=\varnothing$. Conversely, for the case $p=2$, we have $\mathcal{X}_\lambda=\mathcal{S}_\lambda\cup \mathcal{N}_\lambda$. As $\lambda\neq (n)$, Proposition \ref{P;Orbits3} (i) forces that $\mathcal{X}_\lambda\neq \varnothing$. For the case $p>2$, we have $\mathcal{X}_\lambda=\mathcal{N}_\lambda$. By the definitions of $\mathcal{M}_\lambda$, $\mathcal{N}_\lambda$ and $\mathcal{S}_\lambda$, the condition $\mathcal{X}_\lambda=\varnothing$ forces that $\mathcal{M}_\lambda=\mathcal{S}_\lambda$. It is not too difficult to see that $\mathcal{M}_\lambda=\mathcal{S}_\lambda$ if and only if $\ell(\lambda)=2$. The first statement of (iii) follows. If $p>2$ and $\ell(\lambda)=2$, by the first statement of (iii), we have   $\mathcal{X}_\lambda=\mathcal{N}_\lambda=\varnothing$. By Corollary \ref{C;hom} (ii), this fact implies that $\Lambda^2M^\lambda$ has no trivial $\F\sym{n}$-submodule. As any $\F\sym{n}$-Scott module contains a trivial $\F\sym{n}$-submodule, the second statement of (iii) follows.
\end{proof}
Theorem \ref{T;D} is established by Theorem \ref{T;multiplicity}. We end this paper with an example.

Let $p=2$, $n=4$ and $\lambda=(2,1^2)\vdash 4$. Note that $\mathcal{M}_\lambda\cup\{D_\lambda\}$ contains precisely
\begin{align*}
& M_1=\begin{pmatrix}
2& 0& 0\\
0 & 0 & 1\\
0 & 1 & 0\\
\end{pmatrix},\
M_2=\begin{pmatrix}
1& 1 & 0\\
1 & 0 & 0\\
0 & 0 & 1\\
\end{pmatrix},\
M_3=\begin{pmatrix}
1& 1 & 0\\
0 & 0 & 1\\
1 & 0 & 0\\
\end{pmatrix},\
M_4=\begin{pmatrix}
1& 0 & 1\\
1 & 0 & 0\\
0 & 1 & 0\\
\end{pmatrix},\\
& M_5=\begin{pmatrix}
1& 0 & 1\\
0 & 1 & 0\\
1 & 0 & 0\\
\end{pmatrix},\
M_6=\begin{pmatrix}
0& 1 & 1\\
1 & 0 & 0\\
1 & 0 & 0\\
\end{pmatrix},\
M_7=\begin{pmatrix}
2& 0 & 0\\
0 & 1 & 0\\
0 & 0 & 1\\
\end{pmatrix}.\
\end{align*}
Also note that $\mathcal{S}_\lambda=\{M_1,M_2,M_5,M_6\}$ and let $\mathcal{N}_\lambda=\{M_3\}$. By $(8.1)$, Lemma \ref{L;p=2Youngsubgroups} and Theorem \ref{T;conjugacy}, $\{M_1\}$, $\{M_2,M_5,M_7\}$, $\{M_3\}$, $\{M_6\}$ are all the equivalence classes of $\mathcal{R}_\lambda$ with respect to $\sim$. Similarly, $\{M_1\}$, $\{M_2,M_5\}$, $\{M_3\}$, $\{M_6\}$ are all the equivalence classes of $\mathcal{X}_\lambda$ with respect to $\sim$. Let $Sc_i=Sc_{\sym{4}}(P_{M_i})$ for all $1\leq i\leq 7$ and note that $Sc_2\cong Sc_5\cong Sc_7$. Let $m_{i,s}=[S^2M^{(2,1^2)}:Sc_i]$ and $m_{i,e}=[\Lambda^2M^{(2,1^2)}:Sc_i]$. By Theorem \ref{T;multiplicity}, we have
$$m_{2,s}=3,\ m_{2,e}=2\ \text{and}\ m_{1,s}=m_{3,s}=m_{6,s}=m_{1,e}=m_{3,e}=m_{6,e}=1.$$
\subsection*{Acknowledgement}
The author thanks his supervisor Dr. Kay Jin Lim for some suggestions of refining this paper. He also thanks Professor Ping Jin and Professor Gang Chen for their constant encouragement.

\end{document}